\documentclass[11pt]{article}
\newif\ifjournalversion
\journalversionfalse
\usepackage{mathtools}
\usepackage[T1]{fontenc}
\usepackage{amsfonts}
\usepackage{amsmath}
\usepackage{amssymb}
\usepackage{amsthm}
\usepackage{bbm}
\usepackage{bm}
\usepackage{mathrsfs}
\usepackage{xcolor}
\usepackage{comment}
\usepackage{needspace}
\usepackage[colorlinks=true, linkcolor=blue, citecolor=blue, urlcolor=blue]{hyperref}
\mathtoolsset{showonlyrefs}
\usepackage[
  backend=biber,
  style=numeric,
  sorting=nyt,
  giveninits=true,
  maxbibnames=99,
  doi=true,
  isbn=false,
  url=false,
  eprint=true
]{biblatex}

\DeclareNameAlias{author}{family-given}
\DeclareNameAlias{editor}{family-given}
\DeclareFieldFormat[article]{title}{#1}
\DeclareFieldFormat[incollection]{title}{#1}
\DeclareFieldFormat[inproceedings]{title}{#1}
\DeclareFieldFormat[book]{title}{\mkbibemph{#1}}
\DeclareFieldFormat[online,unpublished]{title}{#1} 
\renewbibmacro{in:}{}

\AtEveryBibitem{%
  \clearfield{eprintclass}%
  \clearfield{primaryclass}%
}
\providebibmacro*{eprint:arxiv}{}
\renewbibmacro*{eprint:arxiv}{%
  \printtext[eprint:arxiv]{%
    \printfield{eprinttype}%
    \setunit{\addcolon}%
    \printfield{eprint}}} 

\DeclareMathOperator*{\argmax}{argmax}

\DeclareMathOperator{\spt}{spt}
\DeclareMathOperator{\supp}{supp}
\DeclareMathOperator{\dist}{dist}
\DeclareMathOperator{\Lip}{Lip}

\DeclareMathOperator{\diam}{diam}

\newcommand{\RR}{\mathbb{R}}
\newcommand{\R}{\RR}

\newcommand{\eps}{\varepsilon}

\newcommand{\cR}{\mathcal{R}}

\newcommand{\Id}{\mathbbm{I}}

\newcommand{\1}{\mathbf{1}}
 
\newcommand{\QOT}{{\rm QOT}}

\usepackage[top=27mm,bottom=27mm,left=29mm,right=29mm]{geometry}
\newcommand{\mykill}[1]{}

\numberwithin{equation}{section}
\usepackage[capitalize, noabbrev]{cleveref}
\makeatletter
\AtBeginDocument{%
  \let\label@in@display\cref@old@label@in@display
  \let\ltx@label\cref@old@label
}
\makeatother %
\crefname{equation}{}{} %

\theoremstyle{plain}
\newtheorem{theorem}{Theorem}[section]
\newtheorem{proposition}[theorem]{Proposition}
\newtheorem{lemma}[theorem]{Lemma}
\newtheorem{corollary}[theorem]{Corollary}
\theoremstyle{definition}

\newtheorem{remark}[theorem]{Remark}

\newtheorem{assumption}[theorem]{Assumption}
\crefname{assumption}{Assumption}{Assumptions}
\Crefname{assumption}{Assumption}{Assumptions}
\theoremstyle{remark}

\AddToHook{env/proposition/begin}{\crefalias{theorem}{proposition}}
\AddToHook{env/lemma/begin}{\crefalias{theorem}{lemma}}
\AddToHook{env/corollary/begin}{\crefalias{theorem}{corollary}}
\AddToHook{env/definition/begin}{\crefalias{theorem}{definition}}
\AddToHook{env/remark/begin}{\crefalias{theorem}{remark}}
\AddToHook{env/example/begin}{\crefalias{theorem}{example}}
\AddToHook{env/assumption/begin}{\crefalias{theorem}{assumption}}
\crefname{theorem}{Theorem}{Theorems}
\crefname{proposition}{Proposition}{Propositions}
\crefname{lemma}{Lemma}{Lemmas}
\crefname{corollary}{Corollary}{Corollaries}
\crefname{definition}{Definition}{Definitions}
\crefname{remark}{Remark}{Remarks}
\crefname{example}{Example}{Examples}
\crefname{assumption}{Assumption}{Assumptions}

\renewenvironment{thebibliography}[1]{%
\begin{oldthebibliography}{#1}%
\setlength{\baselineskip}{.9em}
\linespread{1}
\small
\setlength{\parskip}{.30ex}%
\setlength{\itemsep}{.25em}%
}%
{%
\end{oldthebibliography}%
}
\usepackage{tocloft}

\newcommand{\dd}{\,\mathrm d}
\newcommand{\intr}{\operatorname{int}}
\DeclareMathOperator{\id}{id}
\newcommand{\OT}{\operatorname{OT}}

\newcommand{\HH}{\mathcal H^{d-1}}
\newcommand{\cE}{\mathcal E}

\newcommand{\BMO}{\operatorname{BMO}}
\newcommand{\tr}{\operatorname{tr}}

\hypersetup{
 pdftitle={Geometry and Convergence of Quadratically Regularized Optimal Transport I},
 pdfsubject={Quadratically regularized optimal transport, support geometry, convergence of potentials},
 pdfkeywords={optimal transport, quadratic regularization, support concentration, boundary regularity, reversible Markov kernel},
 pdfauthor={Alberto Gonzalez-Sanz, Marcel Nutz}
}
\title{\vspace{-1em}Geometry and Convergence of Quadratically Regularized Optimal Transport I}
\author{Alberto Gonz{\'a}lez-Sanz\thanks{Department of Statistics, Columbia University, ag4855@columbia.edu.} \and Marcel Nutz\thanks{Departments of Mathematics and Statistics, Columbia University, mnutz@columbia.edu.}}
\date{\today}
\begin{document}

\setlength{\abovedisplayskip}{9.4pt plus 2.5pt minus 4.5pt}
\setlength{\belowdisplayskip}{9.4pt plus 2.5pt minus 4.5pt}
\setlength{\abovedisplayshortskip}{0pt plus 2.5pt}
\setlength{\belowdisplayshortskip}{6pt plus 2.5pt minus 2.5pt}
\setlength{\abovecaptionskip}{6pt}
\setlength{\belowcaptionskip}{3pt}

\maketitle
\vspace{-1em}
\begin{abstract}
We establish sharp convergence rates for quadratically regularized optimal transport with quadratic cost in the regime of small regularization $\varepsilon$. In particular, we quantify the sparsity of the support of the regularized optimizer. For smooth marginal densities in $\R^d$, this support lies within a distance of order $\varepsilon^{1/(d+2)}$ from the Brenier graph, and every section of the support is sandwiched between balls with radii of that order. The geometry of the support is closely linked to the dual potentials. We show that the potentials satisfy uniform two-sided Hessian bounds and converge uniformly at rate $\varepsilon^{2/(d+2)}$, while their gradients converge at rate $\varepsilon^{1/(d+2)}$. The rates are sharp, and we further identify the regularity threshold where convergence breaks down.
\end{abstract}
\noindent\textit{Keywords} optimal transport; regularization; sparsity; potentials; boundary regularity\\
\noindent\textit{AMS 2020 Subject Classification} primary 49Q22; secondary 49N15, 90C25
\medskip
\setcounter{tocdepth}{2}
\tableofcontents

\section{Introduction}
\label{sec:main}

Optimal transport provides a geometric framework for comparing probability distributions, with applications in statistics, machine learning and image processing. Regularization plays a central role in these applications by facilitating computation and improving statistical sample complexity. We study quadratically regularized optimal transport (QOT), a sparse alternative to entropic regularization. Let $\Pi(\mu,\nu)$ denote the set of couplings of two probability measures $\mu,\nu$ on $\R^d$. Given a regularization parameter $\eps>0$, the QOT problem with quadratic transport cost is
\begin{equation}\label{eq:problem}
 \QOT_\eps=\min_{\pi\in\Pi(\mu,\nu)}\left\{\frac12\int|x-y|^2\dd\pi+\frac\eps2\int\left(\frac{\dd\pi}{\dd(\mu\otimes\nu)}\right)^2\dd(\mu\otimes\nu)\right\},
\end{equation}
where $|\cdot|$ is the Euclidean norm and $\mu\otimes\nu$ is the product measure. We write $\pi_\eps$ for its unique minimizer and $\OT:=\QOT_0$ for the classical (unregularized) value corresponding to $\eps=0$.

A key motivation for quadratic regularization is to preserve localization of the transport while making the optimization problem strictly convex; see \cite{Muzellec.2017.AAAI,BSR,EssidSolomon.18}. Indeed, entropic (Kullback--Leibler) regularization produces couplings with full support, even when the classical (unregularized) optimal transport is concentrated on a graph. This ``overspreading'' can be undesirable when the coupling itself, rather than only the optimal value, is of interest, as illustrated in image processing by \cite{BSR}. The QOT density has the representation (e.g., \cite{Nutz})
\begin{equation}\label{eq:slack}
 h_\eps(x,y)=\frac{\dd\pi_\eps}{\dd(\mu\otimes\nu)}(x,y)=\frac{(q_\eps(x,y))_+}{\eps},\qquad q_\eps(x,y)=x\cdot y-u_\eps(x)-v_\eps(y),
\end{equation}
where $u_\eps,v_\eps$ are the convex transformed dual potentials, and these functions are unique up to an additive constant in the setting considered below. In contrast to the exponential density of entropic regularization, the positive part in \eqref{eq:slack} allows the density to vanish, so that the support of $\pi_\eps$ can be sparse.

Despite the limited smoothness of this formulation and the lack of strong concavity of its dual, recent work has established parametric sample complexity \cite{GdBN}, finite-sample bounds that match or improve on their entropic counterparts \cite{GNSFiniteSample}, and linear convergence of several dual algorithms \cite{GNRVPL}.  %
Thus QOT combines sparsity with rigorous computational and statistical guarantees, making it an increasingly attractive alternative to entropic regularization.

A fundamental question is how sparse the coupling actually is as $\eps\downarrow0$: how large are its support sections, what is their geometry, and how closely do they follow the classical optimal transport map? This question has motivated numerous recent works including \cite{GSN,WX,NCN,GK,GGK}. Sharp results have been obtained in one dimension, in special situations such as when $\mu=\nu$, and most recently there are partial results for the interior geometry of the sections (see also \cref{se:previousResults} for a discussion of related literature). A global description for general marginal pairs, however, remained open.

We give an essentially complete (at the level of sharp scaling laws) answer to this quantitative sparsity question, for smooth marginals in arbitrary dimension $d\ge1$. Every support section is sandwiched between balls with radii comparable to $\eps^{1/(d+2)}$ and lies within distance $C\eps^{1/(d+2)}$ of the classical Brenier image. These estimates hold uniformly over all sections; in particular, the Hausdorff distance between the regularized support and the Brenier graph has exact order $\eps^{1/(d+2)}$. Matching lower bounds show that the rates cannot be improved. These results determine the sharp size and location of every support section and the rate at which the support converges to the classical transport graph.

The support analysis is closely coupled with a global regularity and convergence theory for the dual potentials. We establish two-sided Hessian bounds uniform in $\eps$ and prove that the potentials converge uniformly at the rate $\eps^{2/(d+2)}$, while their gradients converge uniformly at rate $\eps^{1/(d+2)}$. Both rates are sharp, and we identify $C^{1,1}$ as the regularity threshold at which convergence fails. Together, our results characterize the global convergence behavior of the potentials.

The analytic theory developed here is the foundation for the companion paper \cite{GNProfiles}. Using our Hessian bounds and convergence estimates for the potentials, it shows that the Brenier graph is indeed contained in the regularized support for small $\eps$; more precisely, \cref{rk:BrenierGraphInclusion} below holds. Thus our support localization amounts to concentration around, rather than merely near, the classical Brenier graph. The companion further identifies the limiting shapes and coefficients that arise when the support sections and potentials are centered at their classical counterparts and rescaled by the aforementioned rates. Overall, our results convey that quadratic regularization induces sparsity while maintaining high fidelity to classical optimal transport. Beyond these implications, our findings can also be used to upgrade several existing results in QOT, for instance the Polyak--{\L}ojasiewicz inequality and algorithmic analysis in \cite{GNRVPL}. On the other hand, forthcoming work will adapt the techniques to substantially improve the convergence and regularity theory of entropic optimal transport.

\subsection{Assumptions and main results}

The following is our main assumption on the marginals $\mu,\nu$ and their supports $X,Y$.

\begin{assumption}\label{ass:geometrydata}
Let $0<\alpha<1$. The sets $X,Y\subset\R^d$ are the closures of bounded uniformly convex domains with $C^{3,\alpha}$ boundaries. The probability measures $\mu=\rho_0\1_X\dd x$ and $\nu=\rho_1\1_Y\dd y$ have densities $\rho_0\in C^{1,\alpha}(X)$ and $\rho_1\in C^{1,\alpha}(Y)$ with uniform bounds $0<m\le\rho_i\le M<\infty$ on the supports $X$ and $Y$, respectively.
\end{assumption}

For $d=1$, the sets $X,Y$ are simply nondegenerate compact intervals. Recalling the potentials and the slack from \eqref{eq:slack}, we consider 
\begin{align}\label{eq:sections}
 S_{\eps,x}&=\{y\in Y:q_\eps(x,y)\ge0\},&
 z_{\eps,x}&=\nabla u_\eps(x),\nonumber\\
 K_{\eps,y}&=\{x\in X:q_\eps(x,y)\ge0\},&
 \zeta_{\eps,y}&=\nabla v_\eps(y).
\end{align}
The closed convex sets $S_{\eps,x}$ and $K_{\eps,y}$ are the sections of the support $\spt\pi_\eps$; see \eqref{eqv:eq:support}. The gradients $z_{\eps,x}$ and $\zeta_{\eps,y}$ can be interpreted as barycenters under marginals restricted to the corresponding sections; cf.~\eqref{eqv:eq:centers}. Here and below, gradients at boundary points of $X$ or $Y$ are defined as the continuous extensions of the interior gradients. 

We can now state our first main result on the optimal supports and potentials for small~$\eps$. We write $A\asymp B$ if $cB\le A\le CB$ for some constants $c,C>0$ independent of sufficiently small~$\eps$ and of arguments $x,y$. Throughout, we use the length scale
\begin{equation}\label{eq:length}
 \ell=\eps^{1/(d+2)}.
\end{equation}

\begin{theorem}[Uniform ellipticity and section geometry]\label{thm:geometry}
Suppose \cref{ass:geometrydata} holds. Let $u_\eps,v_\eps$ be the convex transformed potentials of the optimizer in \eqref{eq:problem}. There are constants $c,C,\eps_0>0$ such that, for $0<\eps<\eps_0$,
\begin{equation}\label{eq:hessians}
 c\Id\preceq D^2u_\eps\preceq C\Id\quad\text{a.e. on }X,\qquad
 c\Id\preceq D^2v_\eps\preceq C\Id\quad\text{a.e. on }Y.
\end{equation}
The sections~\eqref{eq:sections} of $\spt\pi_\eps$ satisfy, with $\ell$ as in~\eqref{eq:length},
\begin{align}\label{eq:balls}
 B(\nabla u_\eps(x),c\ell)&\subset S_{\eps,x}
     \subset B(\nabla u_\eps(x),C\ell) &&(x\in X),\nonumber\\
 B(\nabla v_\eps(y),c\ell)&\subset K_{\eps,y}
     \subset B(\nabla v_\eps(y),C\ell) &&(y\in Y).
\end{align}
Recalling the slack $q_\eps(x,y)=x\cdot y-u_\eps(x)-v_\eps(y)$, we then have
\begin{equation}\label{eq:scales}
\begin{aligned}
 |S_{\eps,x}|&\asymp\eps^{d/(d+2)},&
 |K_{\eps,y}|&\asymp\eps^{d/(d+2)},\\
 \max_{y\in Y}q_\eps(x,y)&\asymp\eps^{2/(d+2)},&
 \max_{x\in X}q_\eps(x,y)&\asymp\eps^{2/(d+2)}.
\end{aligned}
\end{equation}
The constants in \eqref{eq:balls}--\eqref{eq:scales} are uniform in the respective base point $x,y$ and in $0<\eps<\eps_0$.
\end{theorem}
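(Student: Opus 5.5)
The plan is to derive everything from the first-order (marginal) conditions
\[
 \int_Y (q_\eps(x,y))_+\,\rho_1(y)\dd y=\eps\quad(x\in X),\qquad \int_X (q_\eps(x,y))_+\,\rho_0(x)\dd x=\eps\quad(y\in Y),
\]
together with the barycenter identity $\nabla u_\eps(x)=\int_Y y\,(q_\eps)_+\rho_1\dd y/\eps$ obtained by differentiating in $x$. The Hessian bounds \eqref{eq:hessians} are then converted into the ball sandwich \eqref{eq:balls}. So the argument has two stages.

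\emph{Stage 1 (Hessian bounds $\Rightarrow$ geometry).} Fix $x$ and put $z=\nabla u_\eps(x)$. The function $y\mapsto q_\eps(x,y)$ is concave with Hessian $-D^2v_\eps$. Once \eqref{eq:hessians} holds, its Hessian lies between $-C\Id$ and $-c\Id$, so $q_\eps(x,\cdot)$ is comparable to a paraboloid $m-\tfrac12\langle A(y-y^*),y-y^*\rangle$ with $A\asymp\Id$, where $m=\max_y q_\eps(x,y)$. Inserting this into the marginal constraint gives $\eps\asymp m\cdot m^{d/2}$, provided a fixed fraction of the ball of radius $\sim\sqrt m$ around $y^*$ lies in $Y$. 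Uniform convexity and $C^2$ boundary of $Y$ guarantee such a fraction even for $y^*$ on or near $\partial Y$. Hence $m\asymp\eps^{2/(d+2)}=\ell^2$ and the section has radius $\asymp\ell$. To center at $z$ rather than at the maximizer $y^*$, use the barycenter identity: the weighted barycenter of a region squeezed between two balls of radius $\asymp\ell$ around $y^*$ lies within $C\ell$ of $y^*$. Conversely, since the weight is comparable to $m$ on a concentric sub-ball, the barycenter cannot be too close to the edge. A small inner ball $B(z,c\ell)$ therefore remains inside $S_{\eps,x}$; near $\partial Y$ the density weighting keeps $z$ at distance $\gtrsim\ell$ from the boundary of the section. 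The volume and $\max q$ statements of \eqref{eq:scales} follow at once, and the same argument with the roles of $x$ and $y$ swapped handles $K_{\eps,y}$.

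\emph{Stage 2 (the Hessian bounds themselves).} This is the main obstacle. Differentiating the marginal constraint twice in $x$ gives
\[
 D^2u_\eps(x)=\frac1\eps\,\mathrm{Cov}\text{-type terms}+\text{boundary terms},
\]
more precisely $D^2u_\eps(x)\,|S_{\eps,x}|_{\rho_1}\approx\int_{S_{\eps,x}}(y-z)\otimes(y-z)\ldots$ plus a flux term over $\partial S_{\eps,x}$. This makes $D^2u_\eps$ the inverse of an averaged $D^2v_\eps$ over the section, the quadratic-regularization analogue of $D^2u=(D^2v)^{-1}\circ\nabla u$. I would close the bounds by a continuity/bootstrap argument.
\begin{itemize}
 \item Start from a priori bounds: $u_\eps,v_\eps$ are Lipschitz uniformly since $X,Y$ are bounded, and they converge to Brenier potentials $u_0,v_0$, which are $C^{2,\alpha}$ and uniformly convex up to the boundary by Caffarelli's boundary regularity under \cref{ass:geometrydata}.
 \item Assume a weak bound $D^2v_\eps\le\Lambda$. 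Stage 1 then gives sections of size $\lesssim\ell$, and the covariance identity yields $D^2u_\eps\ge c(\Lambda)$; symmetrically, lower bounds on one side give upper bounds on the other.
 \item To break circularity I would run a maximum-principle argument on $\log\det D^2u_\eps$ or on the largest eigenvalue, as in Caffarelli's $C^{1,1}$ estimates. Alternatively, I would compare $u_\eps$ with $u_0$: the uniform convergence $\|u_\eps-u_0\|_\infty\lesssim$ (small) plus semiconvexity yields gradient closeness at scale $\ell$, which bootstraps to two-sided Hessian bounds via the averaged-inverse identity.
\end{itemize}

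The delicate points are the boundary of $X$ and $Y$, where sections are truncated and the second-derivative identity acquires boundary flux contributions. There I would use the uniform convexity of $Y$, which keeps truncated sections comparable to half-balls, and $C^{3,\alpha}$ regularity to flatten the boundary locally.
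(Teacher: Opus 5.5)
There is a genuine gap, and it is in Stage~2, which carries the whole weight of the theorem. None of the mechanisms you list gives an $\eps$-uniform bound.

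\emph{The bootstrap.} The cross-implications between $D^2u_\eps$ and $D^2v_\eps$ only pass a constant around a loop without improving it, so a continuity or bootstrap argument has nothing to close with. There is also no starting point: for large $\eps$ the support is all of $X\times Y$, the row equation forces $u_\eps$ to be affine, and $D^2u_\eps=0$.

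\emph{Direction of the implications.} Your second bullet has the direction reversed. The bound $D^2v_\eps\preceq\Lambda$ limits how fast $q_\eps(x,\cdot)$ decays, so it gives \emph{inner} bounds on $S_{\eps,x}$. The outer bound $\diam S_{\eps,x}\lesssim\ell$ needs $D^2v_\eps\succeq c$ together with a height bound. Moreover, $\eps^{-1}\int_{S_{\eps,x}}(y-z_{\eps,x})\otimes(y-z_{\eps,x})\dd\nu$ is the Hessian of $u_\eps+a_\eps$, not of $u_\eps$. $D^2u_\eps$ is given solely by the free-boundary flux \eqref{eqv:eq:hessian}, and bounding it below requires the denominators $|x-\zeta_{\eps,y}|$ to be $O(\ell)$, i.e.\ control of the dual sections $K_{\eps,y}$.

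\emph{The other two routes.} A Pogorelov/Caffarelli maximum principle is not available: $u_\eps$ satisfies no local second-order equation, only the nonlocal marginal identities, and is a priori $C^{1,1}$ only with $\eps$-dependent constants. Comparison with $u_0$ is circular. The only quantitative a priori input is the averaged bound $\int|y-T(x)|^2\dd\pi_\eps\le C\ell^2$ coming from $\QOT_\eps-\OT\le C\ell^2$. Qualitative uniform convergence plus convexity gives only qualitative gradient convergence. Gradient closeness at scale $\ell$ would need $\|u_\eps-u_0\|_\infty=O(\ell^2)$ together with the very Hessian bound you are trying to prove, and in the paper the former is obtained only after, and by means of, this theorem.

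The missing idea is a way to upgrade the averaged estimate to a bound on \emph{every} section, up to $\partial X$ and $\partial Y$, with no Hessian information. The paper first proves $\sup_x\diam S_{\eps,x}+\sup_y\diam K_{\eps,y}\le C\ell$, see \eqref{geom:eq:diameters}, by a stopped Campanato iteration:
at each scale $R$ it compares $\pi_\eps$ restricted to a product of balls with an unregularized optimal plan between the restricted marginals, at a quantization cost of $CR^d\ell^2$ (\cref{geom:prop:rectangle});
the pointwise support bound \cref{geom:lem:long} at a convex boundary ensures these marginals agree with $\mu,\nu$ near the base point;
Chen--Figalli boundary regularity, applied to the comparison map in reciprocal affine coordinates, yields $E_{n+1}\le aE_n+CR_n^2+C(\ell/R_n)^2$;
the iteration stops at $R_N\approx\Lambda\ell$, and is complemented by an interior restart near the boundary and the interior result of \cite{GGK}.

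The logic then runs opposite to yours. The diameter bounds give $|\nabla_yq_\eps(x,y)|=|x-\zeta_{\eps,y}|\le\diam K_{\eps,y}\le C\ell$ on $S_{\eps,x}$. The marginal equation then gives height $\asymp\ell^2$ and volume $\asymp\ell^d$, and \cref{eqv:lem:centroid} gives inner balls at the barycenters. Only then come the Hessian bounds from \eqref{eqv:eq:hessian}, using $|x-\zeta_{\eps,y}|\asymp\ell$ on the free boundary and \cref{eqv:lem:cone} for sections meeting $\partial Y$.

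The same slack-gradient bound is what your Stage~1 tacitly needs. If the maximizer $y^*$ of $q_\eps(x,\cdot)$ lies on $\partial Y$, then $\nabla_yq_\eps(x,y^*)$ need not vanish, and uniform convexity of $Y$ does not control it. Without it, the paraboloid comparison gives $\max q_\eps(x,\cdot)\gtrsim\ell^2$ and $\diam S_{\eps,x}\lesssim(\max q_\eps(x,\cdot))^{1/2}$, but not the matching upper bound on the height. That control has to come from the dual sections, treated symmetrically.
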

We emphasize that the inclusions \eqref{eq:balls} hold also for $x\in\partial X$ and $y\in\partial Y$. The inner balls in~\eqref{eq:balls} are full Euclidean balls of $\R^d$ (not intersections of balls with $Y$ or $X$). Since they are necessarily contained in $Y$ and $X$, respectively, we have in particular
\begin{equation}\label{eq:centerboundary}
 \dist(z_{\eps,x},\partial Y)\ge c\ell,\qquad
 \dist(\zeta_{\eps,y},\partial X)\ge c\ell.
\end{equation}
The centers of these balls are the barycenters under the restricted marginal measures, as expressed in \eqref{eqv:eq:centers}. However, we show in \eqref{eq:conditional-mean-section-balls} that analogous inclusions also hold with the conditional means as centers.

\bigskip

To state the second main result, let $T=\nabla u_0:X\to Y$ be the Brenier map and $S=T^{-1}=\nabla v_0$, where the potentials $u_0,v_0$ for the classical (unregularized) optimal transport problem are chosen with
\begin{equation}\label{eq:classicaldual}
 u_0(x)+v_0(y)\ge x\cdot y,\qquad
 u_0(x)+v_0(T(x))=x\cdot T(x),\qquad T_\#\mu=\nu.
\end{equation}
More precisely, \cref{lem:classicalfinite} gives uniformly elliptic classical potentials $u_0,v_0\in C^{3,\beta}$ for some $\beta>0$; in particular, their Hessians are Lipschitz.

\begin{theorem}[Convergence of support and potentials]\label{thm:sharp}
Suppose \cref{ass:geometrydata} holds. Let $u_\eps,v_\eps$ be the convex transformed regularized potentials, and let $u_0,v_0$ be the classical convex dual potentials, so that $T=\nabla u_0:X\to Y$ and $T^{-1}=\nabla v_0:Y\to X$. Choose their additive constants by
\begin{equation}\label{eq:gauge}
 \int_Xu_\eps\dd\mu=\int_Yv_\eps\dd\nu,\qquad
 \int_Xu_0\dd\mu=\int_Yv_0\dd\nu.
\end{equation}
There are constants $c,C,\eps_1>0$ such that, for $0<\eps<\eps_1$ and every $1\le p\le\infty$,
\begin{align}\label{eq:allp}
 c\eps^{2/(d+2)}&\le\|u_\eps-u_0\|_{L^p(\mu)}\le C\eps^{2/(d+2)},\nonumber\\
 c\eps^{2/(d+2)}&\le\|v_\eps-v_0\|_{L^p(\nu)}\le C\eps^{2/(d+2)}.
\end{align}
Moreover, the uniform gradient error satisfies
\begin{equation}\label{eq:sharpgradient}
 \|\nabla u_\eps-\nabla u_0\|_{L^\infty(X)}+
 \|\nabla v_\eps-\nabla v_0\|_{L^\infty(Y)}
 \asymp\eps^{1/(d+2)}.
\end{equation}
For the sections~\eqref{eq:sections} of $\spt\pi_\eps$, we have
\begin{align}\label{eq:sharpfiber}
 c\eps^{1/(d+2)}&\le
 \sup_{y\in S_{\eps,x}}|y-T(x)|\le C\eps^{1/(d+2)}
 &&(x\in X),\nonumber\\
 c\eps^{1/(d+2)}&\le
 \sup_{x\in K_{\eps,y}}|x-T^{-1}(y)|\le C\eps^{1/(d+2)}
 &&(y\in Y).
\end{align}
Finally, the Hausdorff distance to the Brenier graph obeys
\begin{equation}\label{eq:hausdorff}
d_H(\spt\pi_\eps,\operatorname{graph}T)\asymp\eps^{1/(d+2)}.
\end{equation}
The upper bounds in \eqref{eq:allp} also hold if, instead of \eqref{eq:gauge}, the pairs are normalized by $u_\eps(x_*)=u_0(x_*)$ at a fixed point $x_*\in X$.
\end{theorem}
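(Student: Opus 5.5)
We plan to deduce \cref{thm:sharp} from \cref{thm:geometry} by a perturbative comparison between the regularized and classical dual problems. The first step is to linearize the marginal constraints. They read $\int (q_\eps(x,y))_+\dd\nu(y)=\eps$ for every $x\in X$, and symmetrically in $y$. By \eqref{eq:balls} and \eqref{eq:scales}, the slack $q_\eps(x,\cdot)$ is a concave function. It vanishes on the boundary of a convex set sandwiched between balls of radius $\asymp\ell$ around $z_{\eps,x}=\nabla u_\eps(x)$, and its maximum is $\asymp\ell^2$.

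Next we introduce $\tilde u_\eps$, the $c$-transform of $v_\eps$: $\tilde u_\eps(x)=\max_y(x\cdot y-v_\eps(y))=u_\eps(x)+\max_y q_\eps(x,y)$. Then $0\le\tilde u_\eps-u_\eps\asymp\ell^2$. Similarly $\tilde v_\eps$ satisfies $0\le \tilde v_\eps-v_\eps\asymp\ell^2$. The key observation is that $(\tilde u_\eps,v_\eps)$ is a feasible classical dual pair, i.e.\ $\tilde u_\eps\oplus v_\eps\ge x\cdot y$. It is nearly optimal: $\int\tilde u_\eps\dd\mu+\int v_\eps\dd\nu-(\int u_0\dd\mu+\int v_0\dd\nu)\le C\ell^2$, since the regularized dual value differs from the classical one by $O(\ell^2)$. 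That dual-value gap is read off from the primal value, because $\QOT_\eps-\OT\asymp\ell^2$ follows from \eqref{eq:scales} and \eqref{eq:slack}.

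To convert near-optimality into $L^\infty$ closeness we use uniform convexity. The Hessian bounds \eqref{eq:hessians}, together with those of $u_0,v_0$ from \cref{lem:classicalfinite}, give a stability estimate of the form
\[
\int\bigl(\tilde u_\eps+v_\eps\circ T-x\cdot T\bigr)\dd\mu\gtrsim \int|\nabla\tilde u_\eps-T|^2\dd\mu .
\]
Combined with the dual gap, this yields $\|\nabla u_\eps-\nabla u_0\|_{L^2}\lesssim\ell$. Interpolating with the uniform $C^{1,1}$ bound, we then upgrade this to an $L^\infty$ estimate. A function with Lipschitz gradient and small $L^2$ gradient difference on a uniformly convex domain has $\|\nabla f\|_\infty\lesssim\|\nabla f\|_2^{2/(d+2)}$. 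That alone is too weak, so instead we argue pointwise. If $|\nabla u_\eps(x)-T(x)|=\lambda\gg\ell$ at some point, the Hessian bounds keep the difference $\gtrsim\lambda$ on a ball of radius $\asymp\lambda$. The mass balance then fails there, because the sections $S_{\eps,x'}$ would be centered distance $\lambda$ off the Brenier image. This contradicts $\nabla u_{\eps\#}\mu\approx\nu$ at scale $\ell$. The approximate pushforward $\nabla u_{\eps\#}\mu\approx\nu$ itself follows from the barycenter identity \eqref{eqv:eq:centers} tested against Lipschitz functions.

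For this last step we would compare Monge--Ampère equations. Precisely, $\rho_0=\rho_1(\nabla u_\eps)\det D^2u_\eps+O(\ell^2)$ in a weak sense, and the ABP/comparison principle, with boundary control from \eqref{eq:centerboundary}, gives $\|u_\eps-u_0\|_\infty\lesssim\ell^2$. The gradient bound $\lesssim\ell$ then follows from the $C^{1,1}$ interpolation $\|\nabla f\|_\infty^2\lesssim\|f\|_\infty\|D^2f\|_\infty$. This comparison is the main obstacle: the error term must be controlled uniformly up to the boundary, and that is exactly where \eqref{eq:centerboundary} and the $C^{3,\alpha}$ boundaries enter. The gauge \eqref{eq:gauge}, or alternatively pinning at $x_*$, fixes the constant, costing at most $O(\ell^2)$.

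The remaining statements follow quickly:
\begin{itemize}
\item \emph{Upper bound in \eqref{eq:sharpfiber}.} Since $S_{\eps,x}\subset B(\nabla u_\eps(x),C\ell)$ and $|\nabla u_\eps(x)-T(x)|\le C\ell$, every point of $S_{\eps,x}$ lies within $C\ell$ of $T(x)$.
\item \emph{Lower bound in \eqref{eq:sharpfiber}.} The inner ball $B(\nabla u_\eps(x),c\ell)$ contains points at distance $\ge c\ell$ from $T(x)$.
\item \emph{Hausdorff distance \eqref{eq:hausdorff}.} The upper bound follows from the two inclusions. For the lower bound, the point $(x,\nabla u_\eps(x)+c\ell e)$ lies in $\spt\pi_\eps$ but at distance $\gtrsim\ell$ from the graph, since $T$ is bi-Lipschitz.
\item \emph{Lower bounds in \eqref{eq:allp} and \eqref{eq:sharpgradient}.} Suppose $\|u_\eps-u_0\|_{L^1}=o(\ell^2)$. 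The $L^1$ closeness and the uniform Hessian bounds give $\nabla u_\eps-\nabla u_0=o(\ell)$ on most of $X$. Since $\tilde u_\eps\oplus v_\eps\ge x\cdot y$ and $\tilde u_\eps-u_\eps\asymp\ell^2$, the identity $\int u_\eps\dd\mu+\int v_\eps\dd\nu=\int u_0\dd\mu+\int v_0\dd\nu-c'\ell^2+o(\ell^2)$ then produces a contradiction, by checking against the classical dual. The gauge splits the deficit $\asymp\ell^2$ between $u$ and $v$. For the gradient, the lower bound follows at boundary points, where $\nabla u_0(x)\in\partial Y$ but $\dist(\nabla u_\eps(x),\partial Y)\ge c\ell$ by \eqref{eq:centerboundary}.
\end{itemize}
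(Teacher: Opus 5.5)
Your $L^2$ step is sound: it recovers \eqref{bmo:eq:averaged}, i.e.\ $\|\nabla u_\eps-T\|_{L^2(\mu)}\lesssim\ell$. Everything in your final list follows once $\|\nabla u_\eps-T\|_{L^\infty}\lesssim\ell$ is known. For the $L^p$ lower bound it is simplest to note, as the paper does, that $\int_X(u_\eps-u_0)\dd\mu=-\tfrac12C_\eps-P_\eps\le-c\ell^2$ under \eqref{eq:gauge}. The gap is the upgrade from averaged to uniform control, which is the heart of the theorem, and neither mechanism you offer works. In your pointwise argument, a displacement $\lambda$ on a ball of radius $\asymp\lambda$ is excluded only by the $L^2$ budget, $\lambda^{d+2}\lesssim\ell^2$. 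That is the same $\ell^{2/(d+2)}$ you rejected (this is \eqref{bmo:eq:rough}). Mass balance is not violated locally. Linearized, $(\nabla u_\eps)_\#\mu\approx\nu$ is the divergence-form equation $-\operatorname{div}(\rho_0(DT)^{-1}\nabla W)\approx0$, which has many local solutions with large gradient. Rigidity comes only from the global Neumann problem. Even then, a flux error of size $\ell$ in $L^\infty$ controls $\nabla W$ only in $\BMO$, not in $L^\infty$.

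The Monge--Amp\`ere/ABP route also fails. The second boundary value problem carries no Dirichlet data, so there is no comparison principle to invoke; \eqref{eq:centerboundary} is a one-sided gradient condition, not boundary data. Moreover, the residual $\det D^2u_\eps-\rho_0/\rho_1(\nabla u_\eps)$ is not small pointwise or in $L^d$. Since $|\nabla(u_\eps-u_0)|\ge c\ell$ at every boundary point while $u_\eps-u_0=O(\ell^2)$, the Hessian error is of order one within distance $O(\ell)$ of $\partial X$; this is the failure of $C^{1,1}$ convergence in \cref{cor:consequences}. Already for $d=1$ this makes the residual of order one there. At best the residual is $O(\ell^2)$ weakly, against $C^2$ test functions, which ABP cannot use.

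The missing ideas are those of Sections~\ref{sec:localization}--\ref{end:sec:criterion}:
\begin{itemize}
\item the corrected potential $\Theta_\eps=u_\eps+a_\eps$ with $\nabla\Theta_\eps=m_\eps$, which makes the first-order term of $\int(\phi(y)-\phi(T(x)))\dd\pi_\eps=0$ exact;
\item bounded interpolation densities, which turn the Taylor remainder into a bounded tensor of size $\ell^2+\|\nabla W\|_\infty^2$;
\item Neumann duality mapping bounded tensors to $\BMO$ at the level of $W$ itself, so that $\BMO$ interpolation against the Hessian bound gives $\|\nabla W\|_\infty\lesssim\ell$ with no logarithmic loss;
\item absorption of $\|\nabla W\|_\infty^2$ via the coarse affine approximation from the stopped iteration.
\end{itemize}
Finally, $\BMO$ does not give $\|u_\eps-u_0\|_\infty\lesssim\ell^2$. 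The paper obtains this from the reversible nonlocal equation for $\eta=\frac12(r_\eps-s_\eps\circ T)$, with antisymmetric cubic forcing, and the finite Moser iteration of \cref{end:thm:nonlocal}. That step takes the sharp support bound as input, which is the reverse of your order.
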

Here $
 d_H(A,B)=\max\left\{\sup_{a\in A}\dist(a,B),\ \sup_{b\in B}\dist(b,A)\right\}
$ is the Hausdorff distance for the Euclidean norm on $\R^{2d}$. The lower bounds for the individual potential norms in \eqref{eq:allp} use the normalization \eqref{eq:gauge}, whereas the support and gradient statements do not depend on the normalization. We emphasize that \eqref{eq:sharpfiber} controls every point of the support rather than only a large fraction of the coupling mass.

\begin{remark}[Brenier graph inclusion]\label{rk:BrenierGraphInclusion}
The present results are used in the companion \cite{GNProfiles} to show that there are constants $c,r,R,\eps_*>0$ such that, for $0<\eps<\eps_*$ and every $x\in X$,
\begin{equation}\label{eq:companion-graph-inclusion}
q_\eps(x,T(x))\ge c\ell^2,\qquad Y\cap B(T(x),r\ell)\subset S_{\eps,x}\subset Y\cap  B(T(x),R\ell).
\end{equation}
In particular, $\operatorname{graph}T\subset\spt\pi_\eps$, i.e., the Brenier graph is contained in the regularized support for $\eps<\eps_*$. (This can fail for $\eps>\eps_*$.) Note that the neighborhoods asserted in \eqref{eq:companion-graph-inclusion} are relative to $Y$, unlike the full balls in \eqref{eq:balls}, and that this restriction is unavoidable at the boundary. The proof of \eqref{eq:companion-graph-inclusion} builds on \cref{thm:geometry,thm:sharp} by passing to a rescaled limit and classifying the solutions to a limiting equation.
\end{remark}

The gradient rate in \eqref{eq:sharpgradient} is forced by the boundary geometry. For every $x\in\partial X$, the
image $T(x)$ belongs to $\partial Y$, whereas
\eqref{eq:centerboundary} places $\nabla u_\eps(x)$ at
distance at least $c\ell$ from $\partial Y$. Consequently,
\[
 |\nabla u_\eps(x)-\nabla u_0(x)|
 \ge \dist(\nabla u_\eps(x),\partial Y)\ge c\ell.
\]
Together with the upper bound in \eqref{eq:sharpgradient},
this shows that the gradient error is of order $\ell$ at
every source boundary point. The analogous statement
holds for $v_\eps$ on $\partial Y$. Thus the gradient rate
cannot be improved even at a single boundary point,
despite the $O(\ell^2)$ uniform convergence of the
potentials in \eqref{eq:allp}. The following corollary
quantifies convergence in intermediate H\"older norms
and shows that convergence breaks down in $C^{1,1}$.

\begin{corollary}[Moments and H\"older norms]\label{cor:consequences}
Suppose \cref{ass:geometrydata} and the normalization \eqref{eq:gauge} hold.
For every fixed $1\le q<\infty$, the displacement from $T=\nabla u_0$ satisfies
\begin{equation}\label{eq:displacement}
 \int|y-\nabla u_0(x)|^q\dd\pi_\eps\asymp_q\eps^{q/(d+2)}.
\end{equation}
For $0\le\beta\le1$ and $0\le\gamma<1$,
\begin{align}\label{eq:holder}
 \|u_\eps-u_0\|_{C^{0,\beta}(X)}+
 \|v_\eps-v_0\|_{C^{0,\beta}(Y)}
 &\asymp_\beta\eps^{(2-\beta)/(d+2)},\nonumber\\
 \|u_\eps-u_0\|_{C^{1,\gamma}(X)}+
 \|v_\eps-v_0\|_{C^{1,\gamma}(Y)}
 &\asymp_\gamma\eps^{(1-\gamma)/(d+2)},
\end{align}
whereas both $\|u_\eps-u_0\|_{C^{1,1}(X)}$ and $\|v_\eps-v_0\|_{C^{1,1}(Y)}$ are bounded away from zero as $\eps\downarrow0$. Here $C^{0,0}=C^0$, $C^{0,1}$ is the Lipschitz space and $C^{1,0}=C^1$. The BMO seminorms defined in \eqref{m:display:051} satisfy
\begin{equation}\label{eq:bmosharp-main}
 [u_\eps-u_0]_{\BMO(X)}\asymp\eps^{2/(d+2)},\qquad
 [v_\eps-v_0]_{\BMO(Y)}\asymp\eps^{2/(d+2)}.
\end{equation}
The potential difference along the classical graph satisfies
\begin{equation}\label{eq:graphsum}
 \|(u_\eps-u_0)+(v_\eps-v_0)\circ T\|_{L^\infty(X)}
 \asymp\eps^{2/(d+2)}.
\end{equation}
\end{corollary}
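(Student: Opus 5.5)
We derive every item of the corollary from \cref{thm:geometry,thm:sharp}, combined with interpolation for the upper bounds and with the boundary phenomenon behind \eqref{eq:centerboundary} for the lower bounds. Write $w=u_\eps-u_0$ and, analogously, $\tilde w=v_\eps-v_0$.

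\emph{Displacement moments \eqref{eq:displacement}.} For the upper bound, note that $\pi_\eps$ is supported on $\bigcup_x\{x\}\times S_{\eps,x}$. By \eqref{eq:sharpfiber} we have $|y-T(x)|\le C\ell$ there, so the integral is at most $C^q\ell^q$. For the lower bound, I would use the structure of the density. Since $h_\eps=(q_\eps)_+/\eps$ and $\pi_\eps(\cdot\mid x)$ has density proportional to $(q_\eps(x,\cdot))_+\rho_1$ on $S_{\eps,x}\supset B(z_{\eps,x},c\ell)$, the conditional law is not concentrated within distance $c'\ell$ of any single point. To see this, combine \eqref{eq:balls} with \eqref{eq:hessians}: these give $q_\eps(x,y)\ge c\ell^2$ on a smaller ball around $z_{\eps,x}$, because $q_\eps$ is concave in $y$ with maximum of order $\ell^2$ attained near $z_{\eps,x}$ and vanishes only at distance $\asymp\ell$. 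Hence the conditional mass of $\{y:|y-T(x)|\ge c'\ell\}$ is bounded below uniformly, which gives the lower bound $c_q\ell^q$ after integrating in $x$.

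\emph{H\"older norms \eqref{eq:holder} and $C^{1,1}$.} For the upper bounds I would use the interpolation inequalities $[w]_{C^{0,\beta}}\le C\|w\|_\infty^{1-\beta}\|\nabla w\|_\infty^{\beta}$ and $[\nabla w]_{C^{0,\gamma}}\le C\|\nabla w\|_\infty^{1-\gamma}\|D^2w\|_\infty^{\gamma}$ on the convex domain $X$. The first follows by splitting $|w(x)-w(x')|$ according to whether $|x-x'|$ is smaller or larger than $\|w\|_\infty/\|\nabla w\|_\infty$, and similarly for the second. We feed in $\|w\|_\infty\lesssim\ell^2$ from \eqref{eq:allp}, $\|\nabla w\|_\infty\lesssim\ell$ from \eqref{eq:sharpgradient}, and $\|D^2w\|_\infty\le C$ from \eqref{eq:hessians} together with $u_0\in C^{3,\beta}$. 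The lower-order terms of each norm are dominated by the seminorm bound since $\ell\le1$.

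For the lower bounds, fix $x\in\partial X$. By \eqref{eq:centerboundary}, $|\nabla w(x)|\ge c\ell$, and $\|\nabla w\|_\infty\le C\ell$ with $\|w\|_\infty\le C\ell^2$. Taking a direction $e$ with $e\cdot\nabla w(x)\ge c\ell/2$ that points into $X$ (possible after adjusting, using uniform convexity and the fact that $\nabla u_\eps(x)-T(x)$ points into $Y$), the bound $\|D^2w\|\le C$ gives $w(x+te)-w(x)\ge c\ell t-Ct^2$. Choosing $t=\delta\ell$ yields a difference of order $\ell^2$ over a distance $\ell$, hence $[w]_{C^{0,\beta}}\gtrsim\ell^{2-\beta}$. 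For $C^{1,\gamma}$ we would compare $\nabla w(x)$ with $\nabla w$ at an interior point $x'$ at distance $\asymp\ell$. We can arrange $|\nabla w(x')|\le c\ell/4$: otherwise $w$ would have slope $\gtrsim\ell$ on a ball of radius $\asymp\ell$ along a segment of length $\gg\ell$, and integrating along a path of length $L\ell$ for a large constant $L$ would contradict $\|w\|_\infty\le C\ell^2$. This gives $[\nabla w]_{C^{0,\gamma}}\gtrsim\ell\cdot\ell^{-\gamma}$, and the $C^{1,1}$ case ($\gamma=1$) gives a quotient bounded below. \textbf{This interior-point selection is the main obstacle.} The cleanest route is to use the mean value theorem on $w$ along a segment of length $L\ell$ from $x$ into $X$: since the increment is $O(\ell^2)$, the average of $e\cdot\nabla w$ is $O(\ell/L)$, so some point $x'$ on the segment has small gradient component while $|x-x'|\le L\ell$.

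\emph{BMO and graph sum.} The BMO upper bound $[w]_{\BMO}\le2\|w\|_\infty\lesssim\ell^2$ is immediate. For the lower bound, the ball of radius $\asymp\ell$ from the previous step, on which $w$ varies by $\gtrsim\ell^2$ in an affine-like way with Hessian error $O(\ell^2)$, has mean oscillation $\gtrsim\ell^2$. For \eqref{eq:graphsum}, the upper bound follows from \eqref{eq:allp} with $p=\infty$, since $T$ maps into $Y$. For the lower bound, observe that
\[
(u_\eps+v_\eps\circ T)-(u_0+v_0\circ T)=-q_\eps(x,T(x)),
\]
because $u_0(x)+v_0(T(x))=x\cdot T(x)$ and $q_\eps(x,T(x))=x\cdot T(x)-u_\eps(x)-v_\eps(T(x))$. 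At a boundary point $x$, the point $T(x)\in\partial Y$ lies at distance $\le C\ell$ from $z_{\eps,x}$. I would then bound $|q_\eps(x,T(x))|\gtrsim\ell^2$ in both sign cases. If $T(x)\in S_{\eps,x}$, then \cref{rk:BrenierGraphInclusion} is unavailable here, so instead I would argue as follows. The function $q_\eps(x,\cdot)$ equals $\max_y q_\eps(x,y)-O(|y-z_{\eps,x}|^2)$ with $\max\asymp\ell^2$; since $|T(x)-z_{\eps,x}|$ may be comparable to $\ell$, this alone is inconclusive. The fallback is integration: $\int_X q_\eps(x,T(x))\dd\mu(x)$ can be computed using $T_\#\mu=\nu$ as $\int x\cdot T\dd\mu-\int u_\eps\dd\mu-\int v_\eps\dd\nu$. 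By the gauge \eqref{eq:gauge} and the optimality gap $\QOT_\eps-\OT\asymp\ell^2$ (obtained via duality, with $\int q_\eps^2/\eps\,\dd\mu\otimes\nu\asymp\ell^2$ from \eqref{eq:scales}), this integral is of order $\ell^2$, which forces the $L^\infty$ lower bound.
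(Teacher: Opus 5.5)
Your argument is correct. It follows the paper's overall plan: the upper bounds come from interpolating the $O(\ell^2)$, $O(\ell)$ and $O(1)$ bounds for $w$, $\nabla w$ and $D^2w$, and the lower bounds come from the boundary gap $|\nabla w(x)|\ge c\ell$ at $x\in\partial X$. The graph sum is handled by the mean identity, which the paper writes as $\int_X(r_\eps+s_\eps\circ T)\dd\mu=-C_\eps-2P_\eps$. That is equivalent to your duality computation; only $\QOT_\eps-\OT\ge0$ and $P_\eps\ge c\ell^2$ are needed, and the gauge plays no role.

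Three steps use a different mechanism from the paper.

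\emph{Displacement lower bound.} The paper uses only the density upper bound $h_\eps\le C\ell^{-d}$: the ball $B(T(x),a\ell)$ then carries conditional mass at most $1/2$. Your positivity core $q_\eps(x,\cdot)\ge c\ell^2$ on $B(z_{\eps,x},b\ell)$ also works; concavity, the inner and outer balls, and the height bound in \eqref{eq:scales} suffice, so \eqref{eq:hessians} is not needed. This core is exactly what the paper proves in \eqref{eq:positive-conditional-core}, but it is a longer route to the same anti-concentration.

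\emph{$C^{1,\gamma}$ lower bound.} The paper combines $[r_\eps]_{\BMO(X)}\le C_0\ell^2$ with the affine estimate \eqref{bmo:eq:affine-estimates} on $X\cap B(x,A\ell)$. Your mean-value argument along an inward segment of length $L\ell$ uses only $\|w\|_\infty\le C\ell^2$ and $|e\cdot\nabla w(x)|\ge c\ell$, so it is more elementary. The component version is all you need. Your first attempt, making the full gradient $|\nabla w(x')|$ small, is not justified and should be dropped.

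\emph{BMO upper bound.} You obtain it from $2\|w\|_\infty$. In the paper, the BMO bound of \cref{bmo:thm:main} comes logically before \eqref{eq:allp}: it yields the support localization from which \cref{end:thm:main} derives the $L^\infty$ rate. Your shortcut is therefore legitimate only because you take \cref{thm:sharp} as given, which is fine for a corollary.

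Finally, the $C^{0,\beta}$ step needs no sign adjustment. Any direction $e$ in a uniform interior cone at $x$ with $|e\cdot\nabla w(x)|\ge c\ell$ suffices, since $|w(x+te)-w(x)|\ge t|e\cdot\nabla w(x)|-Ct^2$.
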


For probability measures $P,Q$ on the same Euclidean space, we denote by $W_p(P,Q)$ their $p$-Wasserstein distance, where $1\le p\le\infty$. When $P,Q$ are transport plans, Wasserstein distance is relative to the Euclidean norm on the product space.

\begin{corollary}[Conditional concentration and Wasserstein convergence]\label{cor:conditional-concentration}
Suppose \cref{ass:geometrydata} holds. Define the canonical conditional laws by
\begin{equation}\label{eq:canonical-conditional-laws}
 \pi_\eps^x(\!\dd y)=h_\eps(x,y)\nu(\!\dd y),\qquad
 \widehat\pi_\eps^y(\!\dd x)=h_\eps(x,y)\mu(\!\dd x),
\end{equation}
which are probability measures for every $x\in X$ and $y\in Y$. Let $T=\nabla u_0$ be the Brenier map. There are $c,C,\eps_2>0$ such that, for $0<\eps<\eps_2$ and every $1\le p\le\infty$,
\begin{align}\label{eq:rowwise-moment-rates}
 c\eps^{1/(d+2)}&\le
 \bigl\|y\mapsto |y-T(x)|\bigr\|_{L^p(\pi_\eps^x)}
 \le C\eps^{1/(d+2)} &&(x\in X),\nonumber\\
 c\eps^{1/(d+2)}&\le
 \bigl\|x\mapsto |x-T^{-1}(y)|\bigr\|_{L^p(\widehat\pi_\eps^y)}
 \le C\eps^{1/(d+2)} &&(y\in Y).
\end{align}
Moreover,
\begin{equation}\label{eq:plan-wasserstein-rate}
 c\eps^{1/(d+2)}\le
 W_p\bigl(\pi_\eps,(\id,T)_\#\mu\bigr)
 \le C\eps^{1/(d+2)},\qquad 1\le p\le\infty.
\end{equation}
\end{corollary}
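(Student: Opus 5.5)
The statement to prove is \cref{cor:conditional-concentration}. I would deduce it directly from \cref{thm:geometry,thm:sharp}.

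\emph{Upper bounds.} The conditional law $\pi_\eps^x$ is supported in the section $S_{\eps,x}$. By \eqref{eq:sharpfiber}, every $y\in S_{\eps,x}$ satisfies $|y-T(x)|\le C\ell$. Hence the $L^\infty(\pi_\eps^x)$ norm, and therefore every $L^p$ norm, is at most $C\ell$; the same holds for $\widehat\pi_\eps^y$. For \eqref{eq:plan-wasserstein-rate} I use the coupling of $\pi_\eps$ with $(\id,T)_\#\mu$ given by $(x,y)\mapsto((x,y),(x,T(x)))$ under $\pi_\eps$. Its cost in $L^\infty$ is $\sup_{(x,y)\in\spt\pi_\eps}|y-T(x)|\le C\ell$, so $W_p\le W_\infty\le C\ell$ for all $p$.

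\emph{Lower bounds for the conditional laws.} Since $L^p$ norms increase in $p$ for a probability measure, it suffices to treat $p=1$. Write $\pi_\eps^x=h_\eps(x,\cdot)\nu$ with $h_\eps=(q_\eps)_+/\eps$. By \eqref{eq:balls}, $S_{\eps,x}\supset B(z_{\eps,x},c\ell)$, and this full ball lies in $Y$. The function $q_\eps(x,\cdot)$ is concave with maximum $\asymp\ell^2$ by \eqref{eq:scales}, and it vanishes on $\partial S_{\eps,x}$.

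The key claim is that $\pi_\eps^x$ gives mass at least $c$ to the set $A=\{y:|y-T(x)|\ge c'\ell\}$ for some small $c'$. To see this, I bound the mass of the small ball $B(T(x),c'\ell)$ using $h_\eps\le \max q_\eps/\eps\le C\ell^2/\eps=C\ell^{-d}$ and $\rho_1\le M$. This gives mass at most $CM|B_1|(c')^d$, which is less than $1/2$ for $c'$ small. Hence $\pi_\eps^x(A)\ge1/2$, and the $L^1$ norm is at least $c'\ell/2$. This argument needs no lower bound on $h_\eps$ inside the section, only the upper bound from \eqref{eq:scales}. The same argument applies to $\widehat\pi_\eps^y$.

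\emph{Lower bound for the plan.} I would again reduce to $W_1$ via $W_1\le W_p$, and use Kantorovich--Rubinstein duality on $\R^{2d}$ with a $1$-Lipschitz test function. The natural choice is $f(x,y)=\phi(|y-T(x)|)/L$, where $\phi(t)=\min(t,\ell)$ and $L=\sqrt{1+\Lip(T)^2}$ is bounded because $D^2u_0$ is bounded. Then $f$ vanishes on $(\id,T)_\#\mu$. Its integral against $\pi_\eps$ equals $\int\int\phi(|y-T(x)|)\,\pi_\eps^x(\dd y)\,\mu(\dd x)/L$. By the previous step this is at least $\int (c'\ell)\,\pi_\eps^x(A)\,\dd\mu/L\ge c\ell$, since $\phi\ge c'\ell$ on $A$ when $c'\le1$.

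The main point needing care is the mass estimate near $T(x)$. It uses only the density bound $h_\eps\lesssim\ell^{-d}$ from \eqref{eq:scales} together with $\rho_1\le M$, so I expect it to go through. One also has to confirm that the conditional laws are probability measures, which follows from the marginal constraints $\int h_\eps(x,y)\,\nu(\dd y)=1$.
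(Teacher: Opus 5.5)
Your proposal is correct and follows the paper's proof. The upper bounds come from \eqref{eq:sharpfiber} and the coupling $(x,y)\mapsto((x,y),(x,T(x)))$. The lower bounds come from $h_\eps\le C\ell^{-d}$, which caps the conditional mass of $B(T(x),c'\ell)$ at $1/2$; the only cosmetic difference is that for the plan you use the easy direction of Kantorovich--Rubinstein duality at $p=1$ with the $1$-Lipschitz test function $|y-T(x)|/\sqrt{1+\Lip(T)^2}$, whereas the paper argues primally that every coupling moves $(x,y)$ by at least $\dist((x,y),\operatorname{graph}T)\ge|y-T(x)|/\sqrt{1+\Lip(T)^2}$, which is the same estimate.
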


\subsection{Related literature}\label{se:previousResults}
Starting with \cite{Muzellec.2017.AAAI,BSR,EssidSolomon.18}, quadratic regularization has been studied as an alternative to
entropic regularization that allows sparse couplings. Lorenz, Manns and
Meyer \cite{LMM} provide a continuous theory with Lebesgue measure as reference, proving existence and strong duality and developing several algorithms. For the
product reference measure used here and in most of the literature, Nutz \cite{Nutz} proves
existence of potentials for general square-integrable costs.
Under connectedness
assumptions, \cite{Nutz} establishes uniqueness of the potentials and qualitative localization of the
support near the unregularized optimal support as $\eps\downarrow0$.

The scale of the regularization error in the optimal value is already well understood. Eckstein and Nutz \cite{EN} obtain $\QOT_\eps-\OT\asymp\eps^{2/(d+2)}$ using a quantization technique for the upper bound and adapting arguments of Carlier, Pegon and Tamanini \cite{CarlierPegonTamanini.22} for the lower bound. Garriz-Molina, Gonz\'alez-Sanz and Mordant \cite{GGM} determine
the exact leading constant under regularity assumptions. Their
construction uses profiles related to the Barenblatt solution of the porous medium equation.
Cardoso-Perell\'o, Gonz\'alez-Sanz and Nutz \cite{CGN} extend these asymptotics to weaker assumptions on the
marginals and general $L^p$ penalties. %

For the geometry of the optimal support, Gonz\'alez-Sanz and Nutz
\cite{GSN} treat one-dimensional marginals with strictly positive
continuous densities on compact intervals. They prove uniform
two-sided bounds on the second derivatives of the transformed
potentials and show that every support section is an interval of
length comparable to $\eps^{1/3}$. They also obtain localization
around the Monge map and gradient convergence at rate $\eps^{1/3}$
in $L^2$. The one-dimensional Sobolev embedding then gives
$C^{0,1/2}$ convergence of suitably normalized potentials at the
same rate. The present results substantially improve on \cite{GSN} when specialized to $d=1$.

In higher dimensions, Wiesel and Xu \cite{WX} obtain quantitative
bounds on support concentration and barycentric bias by combining
pointwise density estimates with Minty's trick. In the
self-transport case $\mu=\nu$, they establish the global
concentration scale $\eps^{1/(d+2)}$ under mass-spread assumptions.
Nguyen-Chi, Nguyen and Nguyen \cite{NCN} prove, under regularity assumptions, that the directed Hausdorff distance from
the regularized support to the Monge graph cannot decay faster
than $\eps^{1/(d+2)}$. In the special case of marginals with an affine Brenier
map, they obtain the matching upper bound by reducing to
self-transport and applying the results of \cite{WX}.

Gvalani and Koch \cite{GK} establish interior Hessian bounds and local
$C^{1,\gamma}$ convergence of the potentials for $\gamma<1$.
Their method combines local support estimates with a Campanato
excess iteration stopped at the regularization scale.
Building on these bounds, Gonz\'alez-Sanz, Gvalani and Koch
\cite{GGK} prove that interior support sections contain and are
contained in balls centered at the gradients of the transformed
potentials. Both radii are comparable to
$\eps^{1/(d+2)}$. They also prove interior strong convexity and local $L^2$ localization around the Monge map at that rate. These two
works apply to H\"older densities with a bi-$C^{1,\alpha}$ Brenier
map and cover $L^p$ penalties with $1<p\le2$, not only quadratic
regularization; \cite{GK} also treats entropic regularization.

Our results establish the sharp section geometry and uniform Hessian bounds up to the boundary. The original inspiration for our approach, namely to couple these two facts, is a proof of Caffarelli for classical optimal transport. Caffarelli considers so-called sections of the potential, defined as sublevel sets cut out by translates of supporting hyperplanes \cite[Section~4.1]{Figalli.17}. Showing that these sections are ball-like (i.e., satisfy inclusions similar to \eqref{eq:balls}) is a key step to obtain the Hessian bounds. Our analytical implementation builds on the local variational approach of \cite{GK}, which in turn draws from Goldman and Otto \cite{GoldmanOtto2020} and others. The potential argument is also related to Gonz\'alez-Sanz, Mordant and Sheng \cite{GMS}, who
linearize the push-forward relation around a reference Brenier map
to obtain potential stability estimates. Here we apply this idea
to a corrected potential whose gradient is the conditional mean,
and obtain the final uniform estimate through a reversible
finite-range equation.

We emphasize that the present contribution goes beyond extending interior
regularity estimates to the boundary. We obtain sharp global control of both the
location and thickness of the regularized support, together
with the optimal uniform convergence rate for each potential. We remark that the potential rate $\eps^{2/(d+2)}$ does not
follow by directly integrating our uniform gradient
estimate---this yields only $O(\eps^{1/(d+2)})$ control
of the potential oscillation, in the spirit of the
argument in
\cite[Corollary~2.7]{GSN}. We obtain the sharper rate by
exploiting cancellations in the marginal
equations.
The boundary analysis also reveals a precise obstruction to
stronger convergence: the gradient error has order
$\eps^{1/(d+2)}$ at every boundary point, and convergence
fails in $C^{1,1}$ despite uniform two-sided Hessian bounds.
Together with the matching H\"older estimates, these results
identify both the optimal global convergence rates and the
regularity at which convergence breaks down.

Finally, we compare with entropically regularized optimal transport. As entropic optimizers always have full support, we only compare results for the potentials. Qualitative  convergence of potentials was established by Gigli and Tamanini \cite{GigliTamanini.21}, 
and convergence of gradients by Chiarini, Conforti, Greco and Tamanini \cite{CCGTGradients}. For strongly log-concave marginals on $\R^d$, Chewi and Pooladian \cite{CPHessians} establish global two-sided Hessian bounds for the convex transformed entropic potentials, uniform for small $\eps$, using covariance inequalities. Under curvature and Poincar\'e assumptions on the marginals, L\'opez-Rivera \cite{LRUniform} obtains locally uniform $O(\eps^{1/(d+4)})$ bounds for the normalized potential and its gradient by combining Hessian bounds and Gagliardo--Nirenberg interpolation. The interior Hessian estimates of Gvalani and Koch \cite{GK} also yield qualitative $C^{1,\gamma}_{\mathrm{loc}}$ convergence, $\gamma<1$, for entropic potentials. More recently, Gonz\'alez-Sanz, Mordant and Sheng \cite{GMS} prove the sharp rate $\eps\log(1/\eps)$ in $L^1(\mu\otimes\nu)$ for the sum of the two potential errors, using coupling-based linearization and elliptic duality. %
Here, for QOT, we obtain a complete theory with matching uniform rates and identification of the limiting regularity. To our knowledge, this is not currently available for entropic optimal transport, but it is plausible that some of the present techniques can be used to help achieve it.

\subsection{Proof strategy}
The proof separates three questions: how large the support sections are, where they are located, and how accurately the potentials approximate their classical counterparts. These questions require different arguments. We first establish the section geometry, then locate the sections through an elliptic estimate, and finally exploit a cancellation in the marginal equations to obtain the potential rate.

\paragraph{Section geometry.}
We build on the variational comparison and stopped Campanato iteration developed for interior regularity by Gvalani and Koch \cite{GK}. The starting point is the global energy bound $\QOT_\eps-\OT\le C\ell^2$ known from \cite{EN}. Uniform convexity of the classical potentials turns it into the averaged estimate $\int|y-T(x)|^2\dd\pi_\eps\le C\ell^2$. This average does not control every section: a small amount of mass could still lie much farther than $\ell$ from the Brenier graph. The iteration extracts local information by comparing the regularized plan with unregularized optimal transport on successively smaller neighborhoods.

For these comparisons, we restrict $\pi_\eps$ to a product of source and target balls of radius~$R$ and retain the two marginals of that restriction. A quantization construction, using the cyclical monotonicity argument of \cite[Lemma~3.13]{EN}, partitions an unregularized optimal plan into small pieces and replaces each piece by the product of its marginals divided by its mass. This produces a competitor with finite penalty and exactly the required marginals. The penalty is measured against the original product reference $\mu\otimes\nu$, so that replacing the plan inside the product set leaves both global marginal constraints intact and creates no cross-term in the quadratic penalty. This gives a local comparison error at most $CR^d\ell^2$; see \cref{geom:prop:rectangle}. Importantly, the construction does not require smoothness of the restricted marginals.

A key difficulty in treating the boundary is to ensure that the unregularized comparison problem nevertheless has smooth densities near the point being studied. At a boundary pair $(p,T(p))$, where $T$ is the Brenier map, we apply affine changes whose source and target linear parts are inverse transposes. These coordinates make the derivative of $T$ the identity and place the two supports on the same side of a common tangent plane. In these coordinates, we prove a boundary counterpart of the local support estimate of \cite{GK}: small averaged displacement forces every support pair with one endpoint in a smaller ball to have its other endpoint nearby. Hence restriction to a slightly larger product of balls removes no mass from either marginal in the smaller neighborhood. There the comparison marginals agree exactly with the original smooth densities. We apply the boundary $\eps$-regularity theorem of Chen and Figalli \cite{CF}, then linearize the Monge--Amp\`ere equation and its boundary correspondence. The resulting oblique estimates give an affine approximation to the comparison map, with error controlled by its root mean-square displacement from the identity after rescaling the neighborhood to unit size, together with the rescaled variation of the boundaries and densities.

At the $n$th scale $R_n=\theta^nR_0$, we recenter, apply the current reciprocal affine change, and divide lengths by $R_n$. The current affine fit then becomes the identity in the normalized coordinates $(\xi,\zeta)$. Let $\pi^{(n)}$ denote the transformed plan multiplied by $R_n^{-d}$. Its excess is
\[
E_n=\int_{\{|\xi|<1\}\,\cup\,\{|\zeta|<1\}}|\xi-\zeta|^2\dd\pi^{(n)}(\xi,\zeta).
\]
Thus $E_n$ measures the dimensionless squared deviation from the current affine fit over pairs with at least one endpoint in the corresponding ``physical'' neighborhood of size $R_n$. Repeated comparison gives
\[
E_{n+1}\le aE_n+CR_n^2+C(\ell/R_n)^2,\qquad 0<a<1,
\]
where $0<\theta<1$ and the initial radius $R_0>0$ are fixed independently of $\eps$. The last term explains why the iteration stops at a sufficiently large fixed multiple of $\ell$. The affine changes have summable distortions, and the local support estimate at the stopping scale yields section diameters $O(\ell)$. To cover points near, but not on, the boundary, we use the boundary estimates to restart an interior iteration at a radius comparable with the distance to the boundary. This gives uniform bounds throughout both supports.

The passage from section geometry to curvature builds on the marginal differentiation formulas used in \cite{GSN,GGK}. The two diameter bounds first control the slack gradient on each section. The marginal equations then give height $\asymp\ell^2$ and volume $\asymp\ell^d$, and basic convex geometry supplies full inner balls centered at $\nabla u_\eps$ and $\nabla v_\eps$. We use the moving-section Hessian formula of \cite[Proposition~4.3]{GGK} to recover uniform curvature. Its boundary integral includes only the part of the section boundary lying inside the marginal support. Our free-boundary moment estimate shows that this part still has a nondegenerate second moment in every direction, even when the section meets the marginal boundary. This completes \cref{thm:geometry} without a quantitative estimate for the location of the centers.

\paragraph{Locating the sections through the conditional mean.}
The quantity that appears when testing the marginal constraints is the conditional mean $m_\eps(x)=\int y\,\pi_\eps^x(\dd y)$, rather than the restricted-marginal barycenter $\nabla u_\eps(x)$. We relate them through the row penalty:
\[
a_\eps(x)=\frac1{2\eps}\int_Y(q_\eps(x,y)_+)^2\dd\nu(y),\qquad \Theta_\eps=u_\eps+a_\eps,\qquad \nabla\Theta_\eps=m_\eps.
\]
The section geometry gives $\|a_\eps\|_\infty\le C\ell^2$, $\|\nabla a_\eps\|_\infty\le C\ell$, and uniform upper and lower Hessian bounds for $\Theta_\eps$. Thus the conditional mean is itself the gradient of a uniformly convex potential, and passing between the two notions of center costs only $O(\ell)$.

We adapt the coupling-based linearization and elliptic duality method used by Gonz\'alez-Sanz, Mordant and Sheng for entropic optimal  transport in \cite[Section~4.3]{GMS}. For QOT, the row-penalty correction supplies the potential whose gradient is $m_\eps$. Since $m_\eps$ need not push $\mu$ forward to~$\nu$, we use the marginal identity for the coupling itself rather than treating $m_\eps$ as a transport map. For any smooth test function $\phi$, the marginal constraints give
\[
\int\bigl(\phi(y)-\phi(T(x))\bigr)\dd\pi_\eps(x,y)=0.
\]
Taylor expansion at $T(x)$ expresses the linear term through $m_\eps-T$ and leaves a remainder quadratic in $y-T(x)$. Put $w=\Theta_\eps-u_0$ and $W=w-\int_Xw\dd\mu$, so $\nabla W=m_\eps-T$. Choosing test functions through the Neumann problem for the fixed operator $-\operatorname{div}(\rho_0(DT)^{-1}\nabla)$ turns the preceding identity into an estimate on $W$. %

To control the Taylor remainder, we must bound the densities of the interpolated points $(1-t)T(x)+ty$, $0\le t\le1$. Each such point is within $Ct\ell$ of $(1-t)T(x)+tm_\eps(x)$, a uniformly strongly monotone map. For $t>0$, only source points in a set of volume $O((t\ell)^d)$ can contribute to a given interpolation point. Together with $h_\eps\le C\ell^{-d}$ and the Jacobian factor $t^{-d}$ from the change of target variable, this gives uniformly bounded interpolation densities. Consequently, the quadratic remainder tensor is bounded by $C(\ell^2+\|m_\eps-T\|_\infty^2)$. The elliptic adjoint maps bounded tensor fields into BMO and yields
\[
[W]_{\BMO(X)}\le C\bigl(\ell^2+\|\nabla W\|_\infty^2\bigr).
\]
The BMO seminorm measures the largest mean oscillation over relative balls in $X$; its precise definition is \eqref{m:display:051}. A uniform Hessian bound alone does not yield a bound for $[W]_{\BMO(X)}$ in terms of $\ell$: gradient interpolation would produce a term $C[W]_{\BMO(X)}$ on the right-hand side, with no guarantee that $C<1$. Moving this term to the left therefore need not give an upper bound.

The additional input comes from using the stopped iteration a second time. For each prescribed accuracy $\tau>0$, there is a fixed $L_\tau$ such that, at scale $R=L_\tau\ell$ and for sufficiently small $\eps$, the root mean-square error of a constant fit to $m_\eps-T$ on each relative ball is at most $\tau R$. The fitted constant may vary between balls and need not be small. Patching the corresponding affine approximations of $W$ gives $W=V+E$ with $\|D^2V\|_\infty\le\eta_0$ for any fixed $\eta_0>0$ and errors $\|E\|_\infty=O(\ell^2)$, $\|\nabla E\|_\infty=O(\ell)$, with constants allowed to depend on $\eta_0$. Interpolating the gradient of $V$, rather than that of $W$, gives the needed small coefficient:
\[
[W]_{\BMO(X)}\le C_0\eta_0[W]_{\BMO(X)}+C_0[W]_{\BMO(X)}^2+C_{\eta_0}\ell^2.
\]
Here $\eta_0$ can be chosen independently of $\eps$, and the preliminary energy and uniform Hessian bounds ensure $[W]_{\BMO(X)}\to0$. We choose $\eta_0$ and then $\eps$ small enough that the first two terms on the right sum to at most $[W]_{\BMO(X)}/2$. Moving them to the left gives $[W]_{\BMO(X)}=O(\ell^2)$. Since $a_\eps=O(\ell^2)$, the same BMO bound holds for $u_\eps-u_0$; the uniform Hessian bounds then imply $\|\nabla u_\eps-T\|_\infty=O(\ell)$. Together with the diameter bounds, this locates every section within $O(\ell)$ of its Brenier image. In summary, the combination of the corrected conditional-mean potential, elliptic duality, and coarse affine approximation upgrades averaged concentration to sharp global localization.

\paragraph{The cancellation behind the potential rate.}
BMO control does not give an $L^\infty$ bound at the same scale. Passing directly from derivative estimates to potential estimates, as in the one-dimensional Poincar\'e--Sobolev argument of \cite[Corollary~2.7]{GSN}, would retain the slower scale $O(\ell)$. For the $O(\ell^2)$ uniform rate, we return to the two marginal equations and exploit their symmetry after pulling the target back by $T^{-1}$.

Write $r=u_\eps-u_0$ and $s=v_\eps-v_0$. The sharp support bound and the slack-gradient estimate imply $|q_\eps(x,T(x))|\le C\ell^2$. Classical dual equality therefore controls the sum $r+s\circ T$, but not yet $r$ and $s$ separately. Define
\[
e(x):=r(x)+s(T(x))=-q_\eps(x,T(x)),\qquad \eta(x):=\tfrac12\bigl(r(x)-s(T(x))\bigr).
\]
We have $e=O(\ell^2)$, while the symmetric normalization gives $\int_X\eta\dd\mu=0$. It remains to bound~$\eta$.

The pulled-back density $k(x,z)=h_\eps(x,T(z))$ has row and column integrals equal to one relative to $\mu$. Comparing $q_\eps(x,T(z))$ with $q_\eps(z,T(x))$ cancels the already controlled sum $e$. The remaining cost term is the antisymmetric part of the classical Bregman divergence:
\[
D(x,z)=u_0(x)-u_0(z)-T(z)\cdot(x-z),\qquad A(x,z)=\tfrac12\bigl(D(x,z)-D(z,x)\bigr).
\]
Although each divergence is quadratic to leading order, their antisymmetric part satisfies $|A(x,z)|\le C|x-z|^3$ as the constant-Hessian contribution cancels. Taking the divided difference of the positive-part function converts the difference of the two slacks into the difference $k(x,z)-k(z,x)$. The row and column identities then give an equation
\[
\int_XJ_\eps(x,z)\bigl(\eta(z)-\eta(x)-A(x,z)\bigr)\dd\mu(z)=0
\]
with symmetric nonnegative weights $J_\eps$ that vanish unless $|x-z|\le C\ell$. Thus every pair that enters the equation satisfies $|A(x,z)|\le C\ell^3$, and $A(z,x)=-A(x,z)$. This cubic antisymmetric defect (rather than the quadratic size of each Bregman divergence) is the cancellation that yields the additional power of $\ell$ in the potential estimate.

Normalizing the weights gives a reversible Markov operator, but the estimates obtained so far do not provide a lower bound for its transition density near the diagonal. We obtain the required lower bound after two steps: nearby rows share a region of volume comparable with $\ell^d$ on which both densities are bounded below by $c\ell^{-d}$. This overlap gives a lower bound for the two-step transition density near the diagonal, while the prescribed term in the two-step equation remains antisymmetric and bounded by $C\ell^3$. The resulting equation is covered by \cref{end:thm:nonlocal}, which controls the oscillation of its solution by the size of this term divided by the interaction length. It therefore gives $\sup_X\eta-\inf_X\eta\le C\ell^2$. Retaining the antisymmetry, rather than estimating only the row integral of this term as a scalar source, is essential for this gain.

We prove the required nonlocal estimate by an energy argument and a finite Moser iteration. The available Sobolev estimate improves integrability only after averaging with the kernel, so the equation must be used at every step. This introduces a factor $1+\ell$ that would accumulate in an infinite iteration. We stop at an exponent $p$ comparable with $\log(1/\ell)$ and use the kernel's upper bound for the final passage to $L^\infty$: at this exponent, the loss $\ell^{-d/p}$ is bounded. The constants therefore remain uniform as $\ell\downarrow0$. The zero mean of $\eta$ and the bound for $e$ now give the two uniform potential estimates. This support-to-potential argument is formulated separately in \cref{end:thm:main}; once sharp localization is known, it does not require the uniform Hessian bounds for the regularized potentials.

\subsection{Organization}
The remainder of the paper is organized as follows. Section~\ref{sec:foundations} collects preliminaries to be used throughout, including regularity of the classical potentials, marginal and moving-section identities, convex geometry estimates, and reciprocal affine rescaling. Section~\ref{sec:boundary} develops the boundary localization argument: the rectangular comparison, pointwise support control from local excess, local regularity of the unregularized comparison maps, and the one-step excess improvement. Section~\ref{sec:uniformgeometry} iterates this estimate down to the regularization scale to obtain the section geometry and uniform curvature, completing the proof of \cref{thm:geometry} in Section~\ref{geom:sec:geometry}. Section~\ref{sec:conditional-regularity} proves the conditional covariance and transport-cost excess estimates in \cref{prop:conditional-covariance}, together with the ball inclusions centered at the conditional means in \eqref{eq:conditional-mean-section-balls}. Section~\ref{sec:localization} constructs a potential for the conditional mean and combines coarse affine approximation from the stopped iteration with elliptic duality and BMO absorption to prove the support and gradient assertions of \cref{thm:sharp}, as well as the BMO bounds \eqref{eq:bmosharp-main}. Section~\ref{end:sec:criterion} derives the sharp potential estimates in all $L^p$ norms from support localization through a reversible nonlocal equation and a uniform nonlocal $L^\infty$ estimate, completing the proof of \cref{thm:sharp} in Section~\ref{end:sec:completion}. Finally, Section~\ref{sec:consequences} completes the proofs of \cref{cor:consequences,cor:conditional-concentration}.

\section{Preliminaries}
\label{sec:foundations}
We use $B(x,r)$ for the open Euclidean ball, $B_r=B(0,r)$, $|D|$ for Lebesgue volume in the relevant dimension, $\id$ for the identity map and $\Id$ for the identity matrix. Matrix norms are operator norms. For $f\in C^{1,1}(D)$ on a convex body $D$, the norm
$\|D^2f\|_\infty$ is the Lebesgue essential supremum on
$\intr D$; it equals the Lipschitz constant of $\nabla f$ on $D$. A convex body is a compact convex set with nonempty interior. Surface integrals are taken with respect to $(d-1)$-dimensional Hausdorff measure $\HH$. For $d=1$, $\HH=\mathcal H^0$ is counting measure and boundary charts use $\R^0=\{0\}$. Constants denoted by $c,C$ may change from line to line and depend only on the dimension, the fixed domains, the density bounds and the stated regularity parameters and norms. Local rescalings use finite measures of equal positive mass, with the same objective as \eqref{eq:problem}.

\subsection{Regularity of the classical potentials}
\begin{lemma}\label{lem:classicalfinite}
Under \cref{ass:geometrydata}, there are $\beta\in(0,\alpha)$ and $0<\lambda\le\Lambda<\infty$ such that
\begin{equation}\label{eq:finiteclassical}
\begin{gathered}
 u_0\in C^{3,\beta}(X),\qquad v_0\in C^{3,\beta}(Y),\\
 \lambda\Id\preceq D^2u_0\preceq\Lambda\Id\quad\text{on }X,\qquad \lambda\Id\preceq D^2v_0\preceq\Lambda\Id\quad\text{on }Y.
\end{gathered}
\end{equation}
In particular, $T:X\to Y$ and $S:Y\to X$ are mutually inverse $C^{2,\beta}$ diffeomorphisms, $A:=\rho_0(DT)^{-1}\in C^{1,\beta}(X)$ is uniformly elliptic, and $D^2u_0,D^2v_0$ are Lipschitz on $X,Y$, respectively.
\end{lemma}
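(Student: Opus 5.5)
The plan is to reduce the statement to the global regularity theory for the second boundary value problem of the Monge--Amp\`ere equation. We then bootstrap through the linearized equation to gain one derivative beyond the classical $C^{2,\beta}$ result.

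First, by Brenier--McCann there is a convex $u_0$ with $T=\nabla u_0$ pushing $\mu$ to $\nu$. It solves the second boundary value problem
\[
 \rho_1(\nabla u_0)\det D^2u_0=\rho_0\ \text{in }\intr X,\qquad \nabla u_0(X)=Y.
\]
We take $v_0=u_0^*$, which satisfies \eqref{eq:classicaldual}. Since $X,Y$ are uniformly convex with $C^{3,\alpha}\subset C^{2,\alpha}$ boundaries and $\rho_0,\rho_1$ are $C^{1,\alpha}\subset C^{0,\alpha}$ and bounded above and below, Caffarelli's global theory for this problem applies (in the form later refined by Chen--Liu--Wang). Caffarelli's boundary regularity gives $u_0\in C^{2,\beta'}(X)$ for some $\beta'>0$ and strict convexity up to the boundary. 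Urbas gives the same conclusion under the stronger $C^{3,\alpha}$ hypotheses. The same holds for $v_0$ by symmetry. Since $u_0\in C^2$ up to the boundary, $D^2u_0$ is bounded. The determinant identity $\det D^2u_0=\rho_0/\rho_1(\nabla u_0)\in[m/M,M/m]$ then bounds the smallest eigenvalue below. Together these give $\lambda\Id\preceq D^2u_0\preceq\Lambda\Id$, and likewise for $v_0$. Consequently $T$ and $S=\nabla v_0$ are mutually inverse $C^{1,\beta'}$ diffeomorphisms, with $DS=(DT)^{-1}\circ S$.

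Next we bootstrap to $C^{3,\beta}$, which is where the $C^{1,\alpha}$ densities and $C^{3,\alpha}$ boundaries are used. Write $Y=\{h<0\}$ with $h$ a uniformly convex $C^{3,\alpha}$ defining function. The boundary condition becomes the oblique condition $h(\nabla u_0)=0$ on $\partial X$. Its obliqueness $\nabla h(\nabla u_0)\cdot\nu_X\ge c>0$ follows from uniform convexity of $u_0$ and of the domains, as in Urbas. Take the difference quotient, or formally the derivative $w=\partial_e u_0$ in a tangential/interior direction. Differentiating the log-form equation
\[
 \log\det D^2u_0=\log\rho_0-\log\rho_1(\nabla u_0)
\]
shows that $w$ solves the linear equation $a^{ij}\partial_{ij}w+b^k\partial_k w=\partial_e\log\rho_0$. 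Here $a^{ij}=(D^2u_0)^{-1}\in C^{0,\beta'}$ is uniformly elliptic, and the right side is $C^{0,\alpha}$. The boundary condition differentiates similarly into a linear oblique condition with $C^{0,\beta'}$ coefficients. The interior and oblique Schauder estimates of Lieberman--Trudinger type then give $\nabla u_0\in C^{2,\beta}$ with $\beta=\min(\alpha,\beta')$, decreased slightly if needed so that $\beta<\alpha$.

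The main obstacle is making this differentiation rigorous at the boundary. Normal derivatives cannot be differentiated directly, so the step needs care. I would flatten the boundary with $C^{3,\alpha}$ charts and apply the Schauder theory for nonlinear oblique problems directly: Lieberman--Trudinger, or Urbas's higher regularity theorem, which states $u_0\in C^{k+2,\beta}$ when the data are $C^{k,\alpha}$ and the boundaries are $C^{k+2,\alpha}$, here with $k=1$. Citing that theorem is the cleanest route. The remaining claims are immediate. $T=\nabla u_0$ and $S=\nabla v_0$ lie in $C^{2,\beta}$. $DT\in C^{1,\beta}$ is uniformly positive definite, so $(DT)^{-1}\in C^{1,\beta}$, and hence $A=\rho_0(DT)^{-1}\in C^{1,\beta}$ is uniformly elliptic with eigenvalues in $[m/\Lambda,M/\lambda]$. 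Finally, $D^2u_0\in C^{1,\beta}$ on the convex set $X$ is Lipschitz, and the same holds for $D^2v_0$ on $Y$.
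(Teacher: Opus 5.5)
Your proposal follows essentially the paper's route: Caffarelli's global boundary regularity for both maps (the paper cites it through Chen--Figalli), two-sided Hessian bounds, uniform obliqueness of the boundary correspondence, and oblique Schauder estimates applied to tangential difference quotients in $C^{3,\alpha}$ charts, after which the pure normal third derivative is recovered by solving the determinant equation; the paper carries out this bootstrap by hand rather than citing a higher-regularity theorem. One correction: the $C^{2,\beta}$ oblique estimate requires the differentiated boundary condition to have coefficients and data in $C^{1,\beta}$, not $C^{0,\beta'}$ as you wrote (with $C^{0,\beta'}$ you would gain nothing). This holds here because the coefficient is $\nabla h(\nabla u_0)$ with $h\in C^{3,\alpha}$ and $\nabla u_0\in C^{1,\beta'}$, and this is exactly where the $C^{3,\alpha}$ boundaries enter.
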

\begin{proof}
The case $d=1$ is straightforward, so let $d\ge2$. By Caffarelli's global boundary regularity \cite{Caffarelli} in the form of \cite[Theorem~1.1]{CF}, applied to both transport maps, $T=\nabla u_0$ and $S=\nabla v_0=T^{-1}$ belong to $C^{1,\alpha}$ up to the boundary. The identity $DS(T(x))DT(x)=\Id$ and compactness then give the two-sided Hessian bounds in \eqref{eq:finiteclassical}.

Let $n_X,n_Y$ denote the outward unit normals. Differentiating $T(\partial X)=\partial Y$ tangentially gives $D^2u_0(x)n_Y(T(x))=a(x)n_X(x)$, where $a(x)>0$ because $T$ maps interior points to interior points and $DT$ is invertible. Thus $n_Y(T(x))\cdot n_X(x)\ge\lambda/\Lambda$, so the second boundary condition is uniformly oblique. 
The oblique Schauder estimates \cite[Lemma~6.29 and Theorem~6.30]{GT}, applied to tangential difference quotients in $C^{3,\alpha}$ boundary charts, give uniform $C^{2,\alpha}$ bounds on smaller charts. Indeed, the interior coefficients and right-hand sides are uniformly $C^{0,\alpha}$, the boundary coefficients and data are uniformly $C^{1,\alpha}$, and obliqueness persists for sufficiently small increments. Passing to the limit gives $C^{2,\beta}$ regularity of the tangential derivatives for some $\beta\in(0,\alpha)$. Solving the transformed determinant equation for the second normal derivative then yields $u_0\in C^{3,\beta}(X)$. 
The same argument applies to $v_0$. The remaining assertions follow.
\end{proof}

\subsection{Marginal identities and support sections}
We first summarize some well known facts. The transformed potentials $u_\eps,v_\eps$ are convex and belong to $C^{1,1}(X)$, $C^{1,1}(Y)$, respectively, for each fixed $\eps>0$; see \cite[Lemma~2.2 and Corollary~4.2]{GdBN}. A priori, the Lipschitz constants of their gradients may depend on $\eps$. When $\eps$ is fixed, we write $S_x,K_y,z_x,\zeta_y$ for the quantities in \eqref{eq:sections}. The following row and column marginal equations hold at every point of the closed supports:
\begin{equation}\label{eqv:eq:marginal}
 \int_Y(q_\eps(x,y))_+\dd\nu(y)=\eps\quad(x\in X),\qquad \int_X(q_\eps(x,y))_+\dd\mu(x)=\eps\quad(y\in Y).
\end{equation}
The marginal equations and concavity imply that $S_x,K_y$ are convex bodies and that the zero-slack sets are contained in their boundaries. In particular, these zero sets have zero marginal measure. Differentiating \eqref{eqv:eq:marginal} yields
\begin{equation}\label{eqv:eq:centers}
 z_x=\frac{\int_{S_x}y\dd\nu(y)}{\nu(S_x)},\qquad \zeta_y=\frac{\int_{K_y}x\dd\mu(x)}{\mu(K_y)},
\end{equation}
with the identities at boundary base points obtained by continuity. Each section contains a point of positive slack. Concavity therefore implies that every zero-slack point is a limit of positive-slack points in the same section. Positivity of the marginal densities gives
\begin{equation}\label{eqv:eq:support}
 \spt\pi_\eps=\{(x,y)\in X\times Y:q_\eps(x,y)\ge0\}.
\end{equation}

For a.e. $x\in\intr X$ and every unit vector $e$,
\begin{equation}\label{eqv:eq:hessian}
 e^TD^2u_\eps(x)e=\frac1{\nu(S_x)}\int_{\partial S_x\cap\intr Y}\frac{((y-z_x)\cdot e)^2\rho_1(y)}{|x-\zeta_y|}\dd\HH(y).
\end{equation}
The analogous identity holds with the marginals interchanged. The denominator cannot vanish on the free boundary since a stationary point of the concave function $q_\eps(x,\cdot)$ with value zero would contradict the positive row integral in \eqref{eqv:eq:marginal}. To derive the formula, differentiate the first identity in \eqref{eqv:eq:centers}. As $x$ varies in direction $e$, the outward normal velocity on the free boundary $\partial S_x\cap\intr Y$, where $q_\eps(x,y)=0$, is
$
 \frac{(y-z_x)\cdot e}{|x-\zeta_y|}.
$
Differentiating the quotient gives the quadratic numerator in \eqref{eqv:eq:hessian}. The portion on the fixed boundary $\partial Y$ contributes no normal velocity. Moreover, for each $y\in\partial Y$, the column marginal equation
and concavity imply
$\{x\in X:q_\eps(x,y)=0\}\subset\partial K_y$, so this set has
zero Lebesgue measure. Fubini's theorem therefore gives
$\mathcal H^{d-1}\bigl(\{y\in\partial Y:q_\eps(x,y)=0\}\bigr)=0$
for a.e.\ $x\in X$.
The identity then follows from the coarea formula and distributional differentiation as in \cite[Proposition~4.3]{GGK}.

\subsection{Convex geometry estimates}
\ifjournalversion
This section provides two geometric lemmas in $\R^d$. The proofs are omitted for the sake of brevity. (They are included in the arXiv version of this paper.)
\fi

\begin{lemma}[Centroid geometry]\label{eqv:lem:centroid}
Let $K\subset\R^d$ be a convex body and let $\bar z$ be its centroid with respect to a density $\rho$ satisfying $0<m\le\rho\le M<\infty$ on $K$, i.e.,
\[
\bar z:=\frac{\int_K x\rho(x)\dd x}{\int_K\rho(x)\dd x}.
\]
If $|K|\ge c_0r^d$ and $\diam K\le C_0r$ for some $c_0,C_0,r>0$, then $B(\bar z,c_1r)\subset K$ for some $c_1>0$ depending only on $d,m/M,c_0$ and $C_0$.
\end{lemma}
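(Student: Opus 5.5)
We prove \cref{eqv:lem:centroid} by combining a lower bound on the width of $K$ in every direction with a Grünbaum-type estimate for weighted centroids. We do not need John's theorem explicitly.

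\textbf{Step 1: minimal width.} Since $\diam K\le C_0r$, any projection of $K$ onto a hyperplane lies in a $(d-1)$-ball of radius $C_0r$. Let $w$ be the minimal width of $K$, attained in direction $e$. Then $K$ is contained in a slab of width $w$ times such a cylinder cross-section, so
\[
c_0r^d\le|K|\le w\,\omega_{d-1}(C_0r)^{d-1}.
\]
Hence $w\ge c_2r$ with $c_2=c_0/(\omega_{d-1}C_0^{d-1})$. For $d=1$ this simply says that $K$ is an interval of length at least $c_0r$.

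\textbf{Step 2: the centroid is deep in every direction.} We claim that for every unit vector $e$,
\[
h_K(e)-\bar z\cdot e\ge c_3 r,
\]
where $h_K$ is the support function. The inclusion $B(\bar z,c_3r)\subset K$ then follows, because a convex body contains the ball of radius $\min_e\bigl(h_K(e)-\bar z\cdot e\bigr)$ around any interior point. To prove the claim, fix $e$ and let $f(t)$ be the $(d-1)$-volume of the slice $K\cap\{x\cdot e=t\}$ for $t\in[a,b]$. Here $b-a\ge w\ge c_2r$ and $b=h_K(e)$. By Brunn--Minkowski, $f^{1/(d-1)}$ is concave on $[a,b]$. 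Consider the mass $\int_{K\cap\{x\cdot e\ge b-s\}}\rho$ lying within distance $s$ of the top. Concavity of $f^{1/(d-1)}$ gives the standard Grünbaum-type bound
\[
|K\cap\{x\cdot e\ge b-s\}|\le 1-\bigl(1-s/(b-a)\bigr)^d
\]
as a fraction of $|K|$, which is at most $d\,s/(b-a)$. With density bounds, the $\rho$-mass fraction of this cap is at most $(M/m)\,d\,s/(b-a)$.

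\textbf{Step 3: conclude.} Write $\bar z\cdot e$ as the $\rho$-weighted mean of $x\cdot e$. Every point satisfies $x\cdot e\le b$, and points outside the top cap satisfy $x\cdot e\le b-s$. Therefore
\[
b-\bar z\cdot e\ge s\bigl(1-(M/m)\,d\,s/(b-a)\bigr).
\]
Choose $s=(b-a)m/(2dM)$. Then
\[
b-\bar z\cdot e\ge \frac{(b-a)m}{4dM}\ge \frac{c_2m}{4dM}\,r.
\]
So $c_1=c_2m/(4dM)$ works and depends only on $d$, $m/M$, $c_0$, $C_0$.

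The main point to check is the cap estimate in Step 2. The cap $\{x\cdot e\ge b-s\}\cap K$ is the part near the top, so we must bound its volume from above, not below. Concavity of $g=f^{1/(d-1)}$ shows that the top cap is dominated by the corresponding portion of the cone with the same base slice. The inequality above therefore holds after reflecting $t\mapsto a+b-t$ when needed, that is, by comparing $K$ with the cone having apex at the extreme point opposite to the maximal slice. If this comparison turns out to be delicate, a cruder bound suffices. Since $\max f\le\omega_{d-1}(C_0r)^{d-1}$ and $|K|\ge c_0r^d$, the cap volume is at most $s\,\omega_{d-1}(C_0r)^{d-1}$, which is a fraction $\le C s/r$ of $|K|$. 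Running Step 3 with this bound gives the same conclusion, using only Step 1's constants and no Grünbaum-type argument.
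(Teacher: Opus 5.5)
Your argument is correct. It differs from the paper's only in how you show the centroid is deep in every direction. Step~1 is exactly the paper's width bound.

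For the depth, the paper uses the half-size homothetic copy $K_+=(K+x_+)/2\subset K$, where $e\cdot x_+=b$. This copy carries at least a fraction $2^{-d}m/M$ of the $\rho$-mass and lies at height at least $(b-a)/2$ above the lower supporting hyperplane. Hence the centroid has depth at least $(m/M)2^{-(d+1)}(b-a)$.

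You instead bound from above the mass of a thin cap at a supporting hyperplane. Your Gr\"unbaum-type cap inequality is true, but two points need fixing:
\begin{itemize}
\item Your closing justification (reflection, comparison with a cone whose apex is opposite the maximal slice) is muddled.
\item Brunn--Minkowski is more than you need. Take $x_-\in K$ a lowest point and $\tau=1-s/(b-a)$. The homothety $x\mapsto x_-+\tau(x-x_-)$ maps $K$ into $K\cap\{x\cdot e\le a+\tau(b-a)\}$, so this set has volume at least $\tau^d|K|$. This is the paper's device with ratio $\tau$ in place of $1/2$.
\end{itemize}
In slice language, concavity of $g$ and $g(a)\ge0$ give $g(a+\tau u)\ge\tau g(a+u)$, which integrates to the same bound. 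Note also that $f^{1/(d-1)}$ is undefined for $d=1$.

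Your fallback is fully rigorous and is the most economical of the three arguments. Bounding each slice by $\omega_{d-1}(C_0r)^{d-1}$ via the diameter shows that the cap carries at most a fraction $(M/m)s/(c_2r)$ of the $\rho$-mass. So neither the width estimate nor any concavity is needed for the depth bound. The choice $s=c_2mr/(2M)$ yields $c_1=c_2m/(4M)$, no worse than the paper's constant $2^{-(d+1)}c_2m/M$. Convexity then enters only through the supporting-halfspace representation at the end, exactly as in the paper.
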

\ifjournalversion\else
\begin{proof}
Write $\sigma(A):=\int_A\rho(x)\dd x$ for $A\subset K$. Fix a unit vector $e$, set $a:=\min_{x\in K}e\cdot x$ and $b:=\max_{x\in K}e\cdot x$, and denote the width in direction $e$ by $\operatorname{width}_e(K):=b-a$. Choose $x_+\in K$ with $e\cdot x_+=b$ and put $K_+:=(K+x_+)/2$. Convexity gives $K_+\subset K$, while $|K_+|=2^{-d}|K|$ and $e\cdot x-a\ge(b-a)/2$ on $K_+$. Hence
\[
e\cdot\bar z-a=\frac{1}{\sigma(K)}\int_K(e\cdot x-a)\rho(x)\dd x\ge\frac{b-a}{2}\frac{\sigma(K_+)}{\sigma(K)}\ge\frac mM\,2^{-(d+1)}\operatorname{width}_e(K).
\]
Every nonempty section of $K$ by a hyperplane perpendicular to $e$ is contained in a closed $(d-1)$-dimensional ball of radius $\diam K$. Integrating its measure over the projection interval gives
\[
|K|\le\omega_{d-1}(\diam K)^{d-1}\operatorname{width}_e(K),
\]
where $\omega_{d-1}$ is the volume of the unit ball in $\R^{d-1}$, with $\omega_0:=1$. Thus
\[
\operatorname{width}_e(K)\ge\frac{c_0}{\omega_{d-1}C_0^{d-1}}r.
\]
Combining the two estimates yields $e\cdot\bar z-a\ge c_1r$ for every unit vector $e$, with $c_1:=(m/M)2^{-(d+1)}c_0/(\omega_{d-1}C_0^{d-1})$. The supporting-halfspace representation of $K$ therefore gives $B(\bar z,c_1r)\subset K$.
\end{proof}
\fi

\begin{lemma}[Free-boundary second moments]\label{eqv:lem:cone}
Fix a convex body $\Omega\subset\R^d$. There is $\delta_*>0$, depending only on $\Omega$, with the following property. Let $K\subset\Omega$ be a convex body, and let $\bar z\in\intr K$, $\kappa>0$ and $r>0$ satisfy
\begin{equation}\label{m:display:003}
 B(\bar z,\kappa r)\subset K\subset B(\bar z,r),\qquad r<\delta_*.
\end{equation}
Then, for every unit vector $e$,
\begin{equation}\label{eqv:eq:freemoment}
 c r^{d+1}\le\int_{\partial K\cap\intr\Omega}((z-\bar z)\cdot e)^2\dd\HH(z)\le C r^{d+1}.
\end{equation}
The constants $c,C>0$ may depend on $\Omega,d,\kappa$, but not on $K,\bar z,r$ or $e$. %
\end{lemma}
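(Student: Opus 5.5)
The upper bound is elementary: $\partial K\cap\intr\Omega\subset\partial K$, and on $\partial K$ we have $|(z-\bar z)\cdot e|\le r$. The surface area of a convex body contained in $B(\bar z,r)$ is at most that of the ball, $\HH(\partial K)\le \HH(\partial B_r)=C r^{d-1}$, by monotonicity of perimeter under inclusion of convex sets. This gives the upper bound $Cr^{d+1}$. For $d=1$, counting measure on at most two endpoints gives the same conclusion.

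For the lower bound, the idea is to show that a definite portion of $\partial K$, away from the $e$-orthogonal hyperplane through $\bar z$, lies in $\intr\Omega$. First we find points of $\partial K$ in direction $e$. For each unit vector $\omega$ with $\omega\cdot e\ge 1/2$, the ray from $\bar z$ in direction $\omega$ exits $K$ at a point $z(\omega)=\bar z+t(\omega)\omega$ with $\kappa r\le t(\omega)\le r$. Then $(z(\omega)-\bar z)\cdot e\ge \kappa r/2$. The radial map $\omega\mapsto z(\omega)$ parametrizes $\partial K$ over this cap $\Sigma_e$ of the sphere, and its Jacobian is bounded below by $c(\kappa)r^{d-1}$, using $t\ge\kappa r$ and the fact that the normal makes an angle with $\omega$ bounded away from $\pi/2$, since $B(\bar z,\kappa r)\subset K\subset B(\bar z,r)$. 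The same holds for $-e$, so $\partial K$ has $\HH$-measure at least $c r^{d-1}$ in each of the two regions $\{\pm(z-\bar z)\cdot e\ge\kappa r/2\}$.

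The main obstacle is that parts of $\partial K$ may lie on $\partial\Omega$, and these are excluded from the integral. The key observation is that $\partial K\cap\partial\Omega\subset\partial\Omega\cap B(\bar z,r)$. Since $\Omega$ is a fixed convex body and $r<\delta_*$ is small, $\partial\Omega\cap B(\bar z,r)$ is contained in a single supporting slab of $\Omega$. Concretely, take $p\in\partial\Omega$ nearest to $\bar z$, with outer normal $n$. Then $\Omega\subset\{(z-p)\cdot n\le0\}$ and $\dist(\bar z,\partial\Omega)\ge\kappa r$. Every point of $\partial\Omega\cap B(\bar z,r)$ satisfies $(z-\bar z)\cdot n\ge \kappa r$ up to a curvature error. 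I would avoid relying on curvature, since $\Omega$ is only convex, and argue instead via convexity.

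The argument is as follows. If $w\in\partial K\cap\partial\Omega$, a supporting hyperplane $H$ of $\Omega$ at $w$ also supports $K$. Because $B(\bar z,\kappa r)\subset K$, this hyperplane satisfies $\dist(\bar z,H)\ge\kappa r$. Hence $w$ lies in the half-space $\{(z-\bar z)\cdot n_w\ge\kappa r\}$, where $n_w$ is the outer normal of $H$. Consequently the radial direction $\omega$ of $w$ satisfies $\omega\cdot n_w\ge\kappa$. The hard part is to rule out that boundary contact covers both caps $\pm e$ at once.

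To handle this, I would use smallness of $r$. All these normals $n_w$, for $w\in\partial\Omega\cap B(\bar z,r)$, must be nearly the same if $\delta_*$ is small compared to the geometry of $\Omega$. Otherwise $\Omega$ would have two supporting hyperplanes at distance $\ge\kappa r$ from an interior point $\bar z$, with very different normals, both within distance $r$ of $\bar z$, forcing $\Omega$ to lie in a thin wedge near $\bar z$. I would quantify this with a cone condition: a convex body contains a fixed ball $B(o,\rho_\Omega)$, so every normal $n_w$ satisfies $n_w\cdot(w-o)\ge\rho_\Omega$. For $r<\delta_*\ll\rho_\Omega$, all $w$ lie within $2r$ of each other, so the normals satisfy $n_w\cdot\nu_0\ge 1-C r/\rho_\Omega$ for a fixed unit vector $\nu_0$, up to a cone of small aperture. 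I expect this step to need the most care.

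It follows that $\partial K\cap\partial\Omega$ consists of points whose radial directions lie in the cap $\{\omega\cdot\nu_0\ge\kappa/2\}$. The complementary cap $\{\omega\cdot\nu_0\le 0\}$ maps entirely into $\partial K\cap\intr\Omega$. Finally, at least one of $\pm e$ has a sub-cap $\{\omega\cdot(\pm e)\ge1/2,\ \omega\cdot\nu_0\le0\}$ of spherical measure $\ge c_d$. Applying the Jacobian bound from the second paragraph to this sub-cap yields $\int_{\partial K\cap\intr\Omega}((z-\bar z)\cdot e)^2\dd\HH\ge c(\kappa r)^2r^{d-1}$.
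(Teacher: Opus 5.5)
Your upper bound is fine: monotonicity of surface area under inclusion of convex bodies can stand in for the paper's Cauchy projection count. The radial parametrization with Jacobian at least $(\kappa r)^{d-1}$ is also fine.

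The gap is the step you flagged. You claim that the outer normals $n_w$ of $\Omega$ at points $w\in\partial\Omega\cap B(\bar z,r)$ are all nearly equal once $r<\delta_*$. For a general convex body this is false. Your interior-ball inequality $n_w\cdot(w-o)\ge\rho_\Omega$ does not give it: it only confines the normals to a cone whose aperture is bounded away from $\pi/2$, not to a cone of small aperture. At a corner of a polytope the normals differ by a fixed angle at every scale.

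Concretely, let $\Omega=[0,1]^2$, $\kappa\le1/4$, $\bar z=(\kappa r,\kappa r)$ and $K=\Omega\cap\overline B(\bar z,r/2)$. Then $B(\bar z,\kappa r)\subset K\subset B(\bar z,r)$ holds. The points $(0,\kappa r)$ and $(\kappa r,0)$ lie in $\partial K\cap\partial\Omega$, with normals $-e_1$ and $-e_2$. The rays from $\bar z$ that leave $K$ through $\intr\Omega$ are exactly those whose direction lies in an open arc of length $\pi/2+2\arcsin(2\kappa)<\pi$. Hence no closed hemisphere $\{\omega\cdot\nu_0\le0\}$, for any $\nu_0$, is mapped into $\partial K\cap\intr\Omega$. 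Your final step, which intersects $\{\omega\cdot(\pm e)\ge1/2\}$ with such a hemisphere, therefore has nothing to stand on. For smooth $\Omega$ you could rescue the alignment using a modulus of continuity of the Gauss map. But the lemma is stated for arbitrary convex bodies, with $\delta_*$ depending only on $\Omega$ and not on $\kappa$.

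The missing idea is to control directions issuing from $\bar z$ rather than normals of $\Omega$.
\begin{itemize}
\item Fix $B(x_0,2a)\subset\intr\Omega$ and take $\delta_*=a/4$.
\item If $|\bar z-x_0|<a$, then $K\subset\intr\Omega$ and every ray is free.
\item Otherwise put $v=(x_0-\bar z)/|x_0-\bar z|$. For $|\theta-v|<a/\diam\Omega$, the point $\bar z+|x_0-\bar z|\theta$ lies in $B(x_0,a)$. By convexity, $\bar z+t\theta\in\intr\Omega$ for $0<t\le a/2$.
\item Since the exit distance is at most $r<a/2$, the whole cap $\mathcal C$ of such $\theta$ is mapped into $\partial K\cap\intr\Omega$.
\end{itemize}
This cap has aperture depending only on $\Omega$, but it may be narrow; as the square shows, nothing like a hemisphere is available. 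So instead of your $\pm e$ sub-caps, use that $\int_{\mathcal C}(\theta\cdot e)^2\dd\sigma(\theta)$ is bounded below uniformly in $e$ and in the orientation of a cap of fixed aperture. This follows by continuity and compactness, since an open cap is never contained in $e^\perp$. Combined with your Jacobian bound, this yields the lower bound $c\,\kappa^{d+1}r^{d+1}$, which is the paper's argument.
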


\ifjournalversion\else
\begin{proof}
The case $d=1$ is straightforward, so we assume $d\ge2$. Choose $x_0\in\intr\Omega$ and $a>0$ such that $B(x_0,2a)\subset\intr\Omega$, and set $\delta_*:=a/4$.  We first construct a uniform cone of directions pointing into $\Omega$. For every $x\in\Omega$, there is a unit vector $v$ such that $x+t\theta\in\intr\Omega$ whenever $0<t\le a/2$ and $\theta\in\mathbb S^{d-1}$ satisfies $|\theta-v|<a/\diam\Omega$. Indeed, if $|x-x_0|<a$, then $B(x,a)\subset B(x_0,2a)$, and any $v$ works. Otherwise, take $v:=(x_0-x)/|x_0-x|$. For every such $\theta$, the point $y:=x+|x_0-x|\theta$ satisfies $|y-x_0|=|x_0-x||\theta-v|<a$, so $y\in\intr\Omega$. Moreover, $x+t\theta$ lies on the segment from $x$ to $y$, strictly away from $x$, since $0<t\le a/2<|x_0-x|$. Convexity therefore gives $x+t\theta\in\intr\Omega$.

Apply this construction at $x=\bar z$, and denote the resulting spherical cap by $\mathcal C:=\{\theta\in\mathbb S^{d-1}:|\theta-v|<a/\diam\Omega\}$. Its aperture depends only on $\Omega$. For $\theta\in\mathbb S^{d-1}$, let $R(\theta):=\max\{t\ge0:\bar z+t\theta\in K\}$ and $F(\theta):=\bar z+R(\theta)\theta$. Convexity and \eqref{m:display:003} imply that each ray meets $\partial K$ at the unique point $F(\theta)$, with $\kappa r\le R(\theta)\le r$. Since $r<\delta_*<a/2$, the cone construction gives $F(\mathcal C)\subset\partial K\cap\intr\Omega$.

We next justify the change of variables on this boundary portion. If $n$ is any outward unit supporting normal at $F(\theta)$, the inner-ball inclusion gives $R(\theta)\theta\cdot n\ge\kappa r$, and hence $\theta\cdot n\ge\kappa$. These inequalities also imply that $R$ is Lipschitz. Indeed, if $R(\phi)\ge R(\theta)$, the supporting-plane inequality at $F(\theta)$ gives $(R(\phi)-R(\theta))\theta\cdot n\le R(\phi)(\theta-\phi)\cdot n\le r|\theta-\phi|$. Interchanging $\theta$ and $\phi$ when necessary yields $|R(\phi)-R(\theta)|\le(r/\kappa)|\phi-\theta|$. In particular, $F$ is an injective Lipschitz parametrization of $\partial K$.

Let $n(z)$ denote the outward unit normal, defined at $\HH$-almost every point of $\partial K$. At $z=F(\theta)$, the derivative of the radial projection $z\mapsto(z-\bar z)/|z-\bar z|$ is $R(\theta)^{-1}(\Id-\theta\otimes\theta)$. Orthogonal projection from the tangent hyperplane of $\partial K$ onto $\theta^\perp$ has area factor $\theta\cdot n(F(\theta))$. Consequently, the surface Jacobian of $F$ is
\begin{equation}\label{m:display:004}
 J_F(\theta)=\frac{R(\theta)^{d-1}}{\theta\cdot n(F(\theta))}\ge R(\theta)^{d-1}\qquad\text{for almost every }\theta.
\end{equation}
The denominator belongs to $[\kappa,1]$ by the supporting-plane estimate above.

Write $\sigma$ for surface measure on $\mathbb S^{d-1}$. Since $(F(\theta)-\bar z)\cdot e=R(\theta)(\theta\cdot e)$, the area formula and \eqref{m:display:004} give
\begin{equation}\label{m:display:005}
\begin{aligned}
 \int_{\partial K\cap\intr\Omega}((z-\bar z)\cdot e)^2\dd\HH(z)&\ge\int_{\mathcal C}R(\theta)^2(\theta\cdot e)^2J_F(\theta)\dd\sigma(\theta)\\
 &\ge(\kappa r)^{d+1}\int_{\mathcal C}(\theta\cdot e)^2\dd\sigma(\theta)\ge c r^{d+1}.
\end{aligned}
\end{equation}
For the last inequality, rotate $\mathcal C$ to a fixed cap of the same aperture. The integral of $(\theta\cdot e)^2$ over this cap is a continuous, strictly positive function of $e\in\mathbb S^{d-1}$, since an open spherical cap is not contained in $e^\perp$. Compactness of $\mathbb S^{d-1}$ gives a positive minimum, independent of both $e$ and the orientation of $\mathcal C$. This proves the lower bound.

For the upper bound, \eqref{m:display:003} gives $|(z-\bar z)\cdot e|\le r$ on $\partial K$, so it remains to bound $\HH(\partial K)$. Denote by $\pi_j$ the orthogonal projection onto the hyperplane perpendicular to the $j$th coordinate vector. Cauchy's projection formula \cite[Section~4, equations~(10)--(11)]{HugSchneider2024} gives $\int_{\partial K}|n_j(z)|\dd\HH(z)=2|\pi_jK|$: almost every line parallel to that coordinate vector through the relative interior of $\pi_jK$ meets $\partial K$ in exactly two points. Since $\pi_jK$ lies in a $(d-1)$-dimensional ball of radius $r$, we have $|\pi_jK|\le\omega_{d-1}r^{d-1}$. Using $\sum_{j=1}^d|n_j|\ge1$, we obtain $\HH(\partial K)\le\sum_{j=1}^d\int_{\partial K}|n_j|\dd\HH\le2d\omega_{d-1}r^{d-1}$. Therefore $\int_{\partial K\cap\intr\Omega}((z-\bar z)\cdot e)^2\dd\HH(z)\le r^2\HH(\partial K)\le2d\omega_{d-1}r^{d+1}$, as required.
\end{proof}
\fi

\subsection{Reciprocal affine rescaling}
For local comparisons and rescalings, we extend the QOT functional \eqref{eq:problem} to marginals of equal (finite) positive mass. If their common mass is $m_0$, dividing the marginals, the coupling and the objective by $m_0$ reduces the problem to probability QOT with parameter $\eps/m_0$. In the original finite-mass variables, the density with respect to the product of the marginals still has row and column integrals equal to one, so the marginal equations remain \eqref{eqv:eq:marginal}. %

\begin{lemma}[Affine covariance and rescaling]\label{geom:lem:scaling}
Let $\mu,\nu$ be compactly supported finite measures of equal positive mass, and let $\pi\in\Pi(\mu,\nu)$ be a QOT optimizer at parameter $\eps>0$. Let $L\in\R^{d\times d}$ be invertible, $a,b\in\R^d$ and $R>0$, and define
\begin{equation}\label{m:display:006}
 F(\xi,\eta)=(a+RL\xi,b+R(L^{-1})^T\eta).
\end{equation}
Set $\widehat\pi:=R^{-d}(F^{-1})_\#\pi$, and let $\widehat\mu,\widehat\nu$ be its marginals. Then $\widehat\pi$ solves the QOT problem with marginals $\widehat\mu,\widehat\nu$ and regularization parameter
\begin{equation}\label{geom:eq:epsscale}
 \widehat\eps=\eps/R^{d+2};
\end{equation}
in particular, reciprocal affine changes ($R=1$) preserve the regularization parameter. If $\mu,\nu$ have densities $\rho_0,\rho_1$, the transformed marginal densities are
\begin{equation}\label{m:display:007}
 \widehat\rho_0(\xi)=|\det L|\rho_0(a+RL\xi),\qquad \widehat\rho_1(\eta)=|\det L|^{-1}\rho_1(b+R(L^{-1})^T\eta).
\end{equation}
\end{lemma}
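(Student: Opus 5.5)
The plan is a direct change of variables in the objective \eqref{eq:problem}, using that $F$ is a bijection of $\R^d\times\R^d$ and that pushforward by $F^{-1}$ is a bijection between $\Pi(\mu,\nu)$ and $R^{d}\,\Pi(\widehat\mu,\widehat\nu)$ (after the normalization by $R^{-d}$). Write $F=(F_1,F_2)$ with $F_1(\xi)=a+RL\xi$ and $F_2(\eta)=b+R(L^{-1})^T\eta$. Because $F$ is a product map, $\widehat\mu=R^{-d}(F_1^{-1})_\#\mu$ and $\widehat\nu=R^{-d}(F_2^{-1})_\#\nu$, and every $\widehat\gamma\in\Pi(\widehat\mu,\widehat\nu)$ is of the form $R^{-d}(F^{-1})_\#\gamma$ with $\gamma\in\Pi(\mu,\nu)$. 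So it suffices to show that the objective of $\gamma$ at parameter $\eps$ equals a positive multiple of the objective of $\widehat\gamma$ at parameter $\widehat\eps$, plus a constant independent of $\gamma$.

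\emph{Cost term.} For $(x,y)=F(\xi,\eta)$ we expand
\[
x\cdot y=a\cdot b+R\,a\cdot (L^{-1})^T\eta+R\,b\cdot L\xi+R^2\xi\cdot\eta,
\]
using $L\xi\cdot(L^{-1})^T\eta=\xi\cdot\eta$; this cancellation is exactly why the reciprocal linear parts are chosen. Since $\tfrac12|x-y|^2=\tfrac12|x|^2+\tfrac12|y|^2-x\cdot y$, the terms depending on only one variable integrate against the fixed marginals, and hence are constant over the coupling class. Therefore $\int\tfrac12|x-y|^2\dd\gamma=\text{const}-R^{2}R^{d}\int\xi\cdot\eta\dd\widehat\gamma$. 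Likewise $-\int\xi\cdot\eta\dd\widehat\gamma$ differs from $\int\tfrac12|\xi-\eta|^2\dd\widehat\gamma$ by a constant. So the cost term equals $R^{d+2}$ times the rescaled cost, up to a constant.

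\emph{Penalty term.} The density is invariant under the bijection: $\frac{\dd\widehat\gamma}{\dd(\widehat\mu\otimes\widehat\nu)}(\xi,\eta)=R^{d}\,\frac{\dd\gamma}{\dd(\mu\otimes\nu)}(F(\xi,\eta))$, since the numerator carries the factor $R^{-d}$ and the product reference carries $R^{-2d}$. Then
\[
\int h_\gamma^2\dd(\mu\otimes\nu)=R^{2d}\int (R^{-d}\widehat h)^2\dd(\widehat\mu\otimes\widehat\nu)=\int\widehat h^2\dd(\widehat\mu\otimes\widehat\nu).
\]
Hence the full objective is $R^{d+2}\bigl[\text{rescaled cost}+\tfrac{\eps R^{-(d+2)}}{2}\int\widehat h^2\bigr]+\text{const}$. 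Minimizers therefore correspond, which gives \eqref{geom:eq:epsscale}. The slight mismatch in mass normalization conventions is harmless, because the objective for finite measures is defined homogeneously as described before the lemma. I would double check this bookkeeping against that convention, since it is the only place an extra power of $R$ could slip in; this is the main, if minor, obstacle.

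\emph{Densities.} Formula \eqref{m:display:007} follows from the change of variables $x=a+RL\xi$ with Jacobian $R^d|\det L|$, combined with the prefactor $R^{-d}$; the same computation with the Jacobian $R^d|\det L|^{-1}$ gives the target density.
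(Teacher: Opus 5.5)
Your proof is correct and follows essentially the same route as the paper. Both arguments use the bijection of admissible couplings induced by $F$ and the invariance $\int\widehat h^2\dd(\widehat\mu\otimes\widehat\nu)=\int h^2\dd(\mu\otimes\nu)$; the identity $(L\xi)\cdot((L^{-1})^T\eta)=\xi\cdot\eta$ gives the factor $R^{d+2}$ on the bilinear cost term, and the Jacobians $R^d|\det L|^{\pm1}$ give the densities. The mass-normalization issue you flag does not arise, because the paper uses the same formula \eqref{eq:problem} for finite measures of equal mass, so your direct comparison of the two objectives needs no further bookkeeping.
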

\begin{proof}
We use the change-of-variables argument of \cite[Section~2.4]{GK}, keeping track of the mass normalization. The transformation gives a bijection between the original and transformed admissible couplings. If $h=\dd\pi/\dd(\mu\otimes\nu)$, the transformed density is $\widehat h=R^dh\circ F$, whereas $\widehat\mu\otimes\widehat\nu=R^{-2d}(F^{-1})_\#(\mu\otimes\nu)$. Consequently, $\int\widehat h^2\dd(\widehat\mu\otimes\widehat\nu)=\int h^2\dd(\mu\otimes\nu)$. 
Up to terms depending only on the marginals, the quadratic transport cost equals $-\int x\cdot y\dd\pi$. Since $(L\xi)\cdot((L^{-1})^T\eta)=\xi\cdot\eta$ and $\pi=R^dF_\#\widehat\pi$, this term becomes $-R^{d+2}\int\xi\cdot\eta\dd\widehat\pi$, again up to terms depending only on the marginals. Dividing the objective by $R^{d+2}$ proves \eqref{geom:eq:epsscale}. Finally, the Jacobians of the two coordinate maps are $R^d|\det L|$ and $R^d|\det L|^{-1}$, respectively, which gives \eqref{m:display:007}.
\end{proof}

\section{Boundary localization}\label{sec:boundary}
This section develops the boundary localization estimates that will be used to propagate local control of the regularized coupling down to the scale $\ell=\eps^{1/(d+2)}$. Again, we allow $\mu,\nu$ to have any common finite positive mass. Unless weaker assumptions are stated, the conditions of \cref{ass:geometrydata} are in force. The estimates below depend only on the bounds specified in the assumptions.

\subsection{Rectangular comparison}
\label{geom:sec:replacement}
In this subsection, the penalty is measured relative to the original product reference $\mu\otimes\nu$. More precisely, for submarginals $\alpha\le\mu$ and $\beta\le\nu$ of equal mass, we set
\begin{equation}\label{geom:eq:fixed-reference}
 \mathcal F_\eps(\gamma\mid\mu\otimes\nu)
 :=
 \int c(x,y)\dd\gamma(x,y)
 +\frac{\eps}{2}
 \int\left(\frac{\dd\gamma}{\dd(\mu\otimes\nu)}\right)^2
 \dd(\mu\otimes\nu),
 \qquad \gamma\in\Pi(\alpha,\beta),
\end{equation}
where $c(x,y)=|x-y|^2/2$, with value $+\infty$ if $\gamma\not\ll\mu\otimes\nu$ or its density is not square integrable. Thus restrictions of the plan have their own submarginals, while the reference measure in \eqref{geom:eq:fixed-reference} remains $\mu\otimes\nu$.

\begin{lemma}[Recovery by quantization]\label{geom:lem:recovery}
Let $\alpha\le\mu$ and $\beta\le\nu$ have equal finite mass $p$ and be supported in balls of radius $R$, possibly with different centers. Let $\gamma$ minimize quadratic transport cost between $\alpha$ and $\beta$. For every $0<r\le R$, there exists $\gamma_r\in\Pi(\alpha,\beta)$ such that
\begin{equation}\label{geom:eq:recovery}
\begin{aligned}
 0\le \int |x-y|^2\dd\gamma_r-\int |x-y|^2\dd\gamma
 &\le C_dp r^2,\\
 \int\left(\frac{\dd\gamma_r}{\dd(\mu\otimes\nu)}\right)^2
 \dd(\mu\otimes\nu)
 &\le C_d(R/r)^d.
\end{aligned}
\end{equation}
In particular, $\mathcal F_\eps(\gamma_r\mid\mu\otimes\nu) \le \int c\dd\gamma+C_d\bigl(pr^2+\eps(R/r)^d\bigr)$. The constant $C_d$ depends only on the dimension.
\end{lemma}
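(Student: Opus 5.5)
The plan is a quantization construction in the spirit of \cite[Lemma~3.13]{EN}. Let $B_\alpha=B(a,R)$ and $B_\beta=B(b,R)$ be the balls containing the supports. Partition $B_\alpha$ into at most $N\le C_d(R/r)^d$ Borel cells $Q_1,\dots,Q_N$ of diameter at most $r$, for instance intersections with a cubic grid of side $r/\sqrt d$. Partition $B_\beta$ likewise into cells $P_1,\dots,P_N$. Disintegrating $\gamma$ along these cells is not enough: the pieces $\gamma|_{Q_i\times P_j}$ involve $N^2$ blocks and would give a poor penalty. Instead, I would split $\gamma$ using only the source cells, $\gamma_i:=\gamma|_{Q_i\times\R^d}$, with masses $p_i=\alpha(Q_i)$ and marginals $\alpha_i=\alpha|_{Q_i}$ and $\beta_i:=(\pi_2)_\#\gamma_i$. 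Then define
\[
\gamma_r:=\sum_{i:p_i>0}\frac{\alpha_i\otimes\beta_i}{p_i}.
\]
Its first marginal is $\sum_i\alpha_i=\alpha$, and its second marginal is $\sum_i\beta_i=\beta$, so $\gamma_r\in\Pi(\alpha,\beta)$. The lower bound in \eqref{geom:eq:recovery} is then just the optimality of $\gamma$.

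\emph{Cost.} Since $\alpha_i$ lives on $Q_i$, which has diameter at most $r$, cyclical monotonicity should control the cost. For $x,x'\in Q_i$ and any $y$,
\[
|x-y|^2-|x'-y|^2=|x-x'|^2+2(x-x')\cdot(x'-y).
\]
This alone gives only an $O(rR)$ error, which is why cyclical monotonicity is needed. I would follow \cite{EN} and compute
\[
\int|x-y|^2\dd\frac{\alpha_i\otimes\beta_i}{p_i}-\int|x-y|^2\dd\gamma_i
=\frac{1}{p_i}\iint\bigl(|x-y'|^2-|x'-y'|^2\bigr)\dd\gamma_i(x',y')\dd\alpha_i(x).
\]
Symmetrizing in $(x,y),(x',y')\sim\gamma_i\otimes\gamma_i$ gives
\[
\frac1{2p_i}\iint\bigl(|x-y'|^2+|x'-y|^2-|x-y|^2-|x'-y'|^2\bigr)\dd\gamma_i\dd\gamma_i=\frac1{p_i}\iint(x-x')\cdot(y-y')\dd\gamma_i\dd\gamma_i.
\]
The integrand in the middle is nonnegative by monotonicity of the support of $\gamma$, which is why the sign is correct. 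The simple bound $|x-x'|\,|y-y'|\le r\cdot 2R$ still yields only $rR$. This is the main obstacle. My remedy is to refine the partition jointly: use cells $Q_i\times P_j$ but merge them along the monotone structure. Concretely, I would take the product cells $Q_i\times P_j$ and exploit that, for a cyclically monotone support, the number of target cells $P_j$ met by $Q_i$ is comparable to one on average; this is the counting argument of \cite[Lemma~3.13]{EN}. Then define $\gamma_r=\sum_{i,j}(\alpha_{ij}\otimes\beta_{ij})/p_{ij}$ with $\alpha_{ij},\beta_{ij}$ the marginals of $\gamma|_{Q_i\times P_j}$. The cost increase on each block is then at most $p_{ij}\,\diam(Q_i)\diam(P_j)\le p_{ij}r^2$ by the identity above. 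Summing gives $C_dpr^2$, since the marginals remain $\alpha$ and $\beta$.

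\emph{Penalty.} On each block the density is
\[
\frac{\dd\alpha_{ij}}{\dd\mu}\,\frac{\dd\beta_{ij}}{\dd\nu}\,\frac1{p_{ij}}\le\frac{\1_{Q_i}(x)\1_{P_j}(y)}{p_{ij}},
\]
and the blocks are disjoint. The integral of its square is therefore at most
\[
\sum_{i,j}\frac{\alpha_{ij}(Q_i)\,\beta_{ij}(P_j)}{p_{ij}^2}\cdot\frac{1}{1}=\#\{(i,j):p_{ij}>0\},
\]
using $\dd\alpha_{ij}/\dd\mu\le1$ and $\dd\beta_{ij}/\dd\nu\le1$ to bound the squares by first powers. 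So the penalty is bounded by the number of occupied blocks. The key remaining step is the \cite{EN} counting bound: for a monotone support in $B_\alpha\times B_\beta$, the number of occupied blocks is at most $C_d(R/r)^d$, as opposed to the trivial $(R/r)^{2d}$. I expect this to be the genuine difficulty, and I would prove it via the $1$-Lipschitz property of $(x,y)\mapsto x-y$ along the maximal monotone graph through the resolvent $(x+y)/2$. Cover by cells in the $(x+y)/2$ variable, of which there are $C_d(R/r)^d$; within each such cell, monotonicity forces $x$ and $y$ to have diameter at most $Cr$. I would use this rotated partition in place of $Q_i\times P_j$ from the outset. The final inequality for $\mathcal F_\eps$ follows by adding the two bounds.
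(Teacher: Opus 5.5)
Your proof is correct if you keep the product grid $Q_i\times P_j$ and use the $(x+y)$-cells only to \emph{count} occupied blocks. That is a genuinely different route from the paper's. Your closing sentence, however, would break your own penalty argument.

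\textbf{Comparison with the paper.} The paper partitions $\spt\gamma$ directly according to the cube of side $r$ containing $x+y$. The number of pieces is then trivially $C_d(R/r)^d$, and monotonicity gives each piece small $x$- and $y$-projections. The price is that the products $\alpha_j\otimes\beta_j$ of different pieces can overlap in $\R^d\times\R^d$, so the squared density has cross terms. The paper controls them with the weighted Cauchy--Schwarz inequality $\bigl(\sum_ja_jb_j/m_j\bigr)^2\le\bigl(\sum_ja_j\bigr)\sum_ja_jb_j^2/m_j^2$ together with $\sum_ja_j\le1$. Your product-grid version reverses where the work lies:
\begin{itemize}
\item Disjointness of the blocks $Q_i\times P_j$, together with $\dd\alpha_{ij}/\dd\mu\le1$ and $\dd\beta_{ij}/\dd\nu\le1$, makes the penalty exactly bounded by the number of occupied blocks. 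The cost bound per block is immediate.
\item The parametrization of the monotone set $\spt\gamma$ by $x+y$ is then needed only for the counting lemma.
\end{itemize}
To finish the count, add the step you left implicit. Cover $\overline B(a+b,2R)$ by $C_d(R/r)^d$ cubes of diameter $r$. The part of $\spt\gamma$ over each cube has diameter at most $r$ in $\R^{2d}$, so it meets at most $C_d$ grid blocks if the $Q_i,P_j$ have side $r/\sqrt d$. Both routes use the same monotonicity input, $|x-x'|^2+|y-y'|^2\le|(x+y)-(x'+y')|^2$; neither is harder.

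\textbf{The caveat.} You end by proposing to use the rotated partition ``in place of $Q_i\times P_j$ from the outset.'' With $(x+y)$-cells the sets $\spt\alpha_j\times\spt\beta_j$ are not disjoint, so your statement that ``the blocks are disjoint'' and the resulting bound by the number of blocks no longer apply. If you make that switch, you must replace the disjointness step by something else. One option is the paper's Cauchy--Schwarz step. Another is the observation that, by Brenier's theorem, the $a_j=\dd\alpha_j/\dd\mu$ have a.e.\ disjoint supports. Otherwise, keep the product grid as your construction.
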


\begin{proof}
If $p=0$, take $\gamma_r=0$. Assume henceforth that $p>0$. Optimality for the quadratic cost implies that the support of $\gamma$ is monotone: $(x-x')\cdot(y-y')\ge0$ for any $(x,y),(x',y')\in\spt\gamma$. Consequently,
\begin{equation}\label{m:display:008}
 |x-x'|^2+|y-y'|^2
 \le |(x+y)-(x'+y')|^2.
\end{equation}
Partition $\R^d$ into half-open cubes of side length $r$ and group the points $(x,y)$ of $\spt\gamma$ according to the cube containing $x+y$. Since the two marginals are supported in balls of radius $R$, their sums lie in a ball of radius $2R$. Thus at most $N\le C_d(R/r)^d$ of these cells have positive $\gamma$-mass.

Let $\gamma_j$ be the restriction of $\gamma$ to the $j$th such cell, let $m_j>0$ be its mass, and denote its marginals by $\alpha_j$ and $\beta_j$. Define
$ \gamma_r:=\sum_{j=1}^N
 \frac{\alpha_j\otimes\beta_j}{m_j}.$
Each summand has marginals $\alpha_j$ and $\beta_j$. Since $\sum_j\alpha_j=\alpha$ and $\sum_j\beta_j=\beta$, we have $\gamma_r\in\Pi(\alpha,\beta)$.

By \eqref{m:display:008}, both coordinate projections of each cell intersected with $\spt\gamma$ have diameter at most $\sqrt d\,r$. Writing $\bar x_j:=m_j^{-1}\int x\dd\alpha_j$ and $\bar y_j:=m_j^{-1}\int y\dd\beta_j$, it follows that $|x-\bar x_j|,|y-\bar y_j|\le\sqrt d\,r$ for $\gamma_j$-almost every $(x,y)$. The terms $|x|^2$ and $|y|^2$ have the same integrals under $\gamma_j$ and $\alpha_j\otimes\beta_j/m_j$. Expanding the remaining bilinear term therefore gives
\begin{equation}\label{m:display:010}
 \int |x-y|^2\dd\left(
 \frac{\alpha_j\otimes\beta_j}{m_j}-\gamma_j\right)
 =
 2\int (x-\bar x_j)\cdot(y-\bar y_j)\dd\gamma_j
 \le 2dm_jr^2.
\end{equation}
Summing over $j$ proves the upper bound in the first line of \eqref{geom:eq:recovery}, and the lower bound follows from the optimality of $\gamma$.

For the penalty estimate, we retain the original product reference in \eqref{geom:eq:fixed-reference}. Set $a_j:=\dd\alpha_j/\dd\mu$ and $b_j:=\dd\beta_j/\dd\nu$. Since $\sum_j\alpha_j=\alpha\le\mu$ and $\sum_j\beta_j=\beta\le\nu$, we have $\sum_j a_j\le1$ and $\sum_j b_j\le1$ almost everywhere. Moreover, $\int a_j\dd\mu=\int b_j\dd\nu=m_j$. The density of $\gamma_r$ is $\sum_j a_j(x)b_j(y)/m_j$, so the weighted Cauchy--Schwarz inequality yields
\begin{equation}\label{m:display:011}
 \left(\sum_j\frac{a_j(x)b_j(y)}{m_j}\right)^2
 \le
 \left(\sum_j a_j(x)\right)
 \left(\sum_j\frac{a_j(x)b_j(y)^2}{m_j^2}\right)
 \le \sum_j\frac{a_j(x)b_j(y)^2}{m_j^2}.
\end{equation}
Integrating with respect to $\mu\otimes\nu$ bounds the squared norm by $\sum_j m_j^{-1}\int b_j^2\dd\nu$. Since $0\le b_j\le1$, each term is at most $m_j^{-1}\int b_j\dd\nu=1$. The second bound in \eqref{geom:eq:recovery} follows from $N\le C_d(R/r)^d$. Finally, the estimate for $\mathcal F_\eps$ follows from \eqref{geom:eq:fixed-reference} and $c(x,y)=|x-y|^2/2$.
\end{proof}

\begin{proposition}[Rectangular comparison with fixed marginals]\label{geom:prop:rectangle}
Let $\pi$ be a QOT optimizer at parameter $\eps$, with density $h_\eps:=\dd\pi/\dd(\mu\otimes\nu)$, and suppose that the densities of $\mu$ and $\nu$ are bounded above by $M$. Let $D=K\times L$ be a Borel rectangle, where $K$ and $L$ are contained in balls of radius $R$, and set $\pi_D:=\pi|_D,$ $\alpha:=(\mathrm{pr}_x)_\#\pi_D$
and $ \beta:=(\mathrm{pr}_y)_\#\pi_D.$ 
Let $\gamma\in\Pi(\alpha,\beta)$ minimize quadratic  transport cost. If $R\ge\ell:=\eps^{1/(d+2)}$, then
\begin{equation}\label{geom:eq:rectangle}
 0\le
 \underbrace{\int c\dd\pi_D-\int c\dd\gamma}_{\ge0}
 +\underbrace{\frac{\eps}{2}
 \int_D h_\eps^2\dd(\mu\otimes\nu)}_{\ge0}
 \le C_{d,M}R^d\ell^2. \vspace{-.5em}
\end{equation}
In particular, the transport-cost gap and the penalty each satisfy the upper bound $C_{d,M}R^d\ell^2$.
\end{proposition}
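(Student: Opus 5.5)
The plan is a local competitor argument. Since $\pi$ is a global optimizer, replacing its restriction to the rectangle $D$ by another plan with the same submarginals $\alpha,\beta$ cannot lower the global objective. Concretely, let $\gamma_r$ be the quantized plan from \cref{geom:lem:recovery}, applied to $\alpha\le\mu$ and $\beta\le\nu$. Both are supported in balls of radius $R$ because $K,L$ are. Consider the competitor
\[
\tilde\pi:=\pi|_{D^c}+\gamma_r .
\]
Its marginals are $\mu$ and $\nu$, since $\gamma_r\in\Pi(\alpha,\beta)$ has the same marginals as $\pi_D$.

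The key structural point is that the penalty is measured against $\mu\otimes\nu$ and the regions are disjoint. Because $\gamma_r$ is concentrated on $\spt\alpha\times\spt\beta$, which need not lie in $D$, I would not argue via disjoint supports directly. Instead I would handle it as follows. Write the density of $\tilde\pi$ as $h_\eps\1_{D^c}+g_r$. Then
\[
\int(h_\eps\1_{D^c}+g_r)^2=\int_{D^c}h_\eps^2+2\int_{D^c}h_\eps g_r+\int g_r^2 .
\]
To kill the cross term, I would check that $\gamma_r$ is in fact supported in $D$. Indeed $\alpha$ is carried by $K$ and $\beta$ by $L$, so every product $\alpha_j\otimes\beta_j$ is carried by $K\times L=D$. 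Hence $g_r$ vanishes $\mu\otimes\nu$-a.e. off $D$, and the cross term is zero. This is exactly the ``no cross-term'' remark in the proof strategy.

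Optimality of $\pi$ then gives
\[
\int c\dd\pi_D+\frac\eps2\int_D h_\eps^2\dd(\mu\otimes\nu)\le\mathcal F_\eps(\gamma_r\mid\mu\otimes\nu)\le\int c\dd\gamma+C_d\bigl(pr^2+\eps(R/r)^d\bigr),
\]
where $p=\pi(D)$. Subtracting $\int c\dd\gamma$ yields the middle expression of \eqref{geom:eq:rectangle}. Both summands are nonnegative: the first because $\pi_D\in\Pi(\alpha,\beta)$ and $\gamma$ minimizes the transport cost, the second trivially.

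It remains to bound $p$ and choose $r$. Since $\alpha\le\mu$ is carried by $K\subset B(\cdot,R)$ and $\mu$ has density at most $M$, we get $p\le\mu(K)\le M\omega_dR^d$. Choose $r=\ell\le R$, which is allowed by the hypothesis $R\ge\ell$. Then
\[
pr^2\le CMR^d\ell^2,\qquad \eps(R/\ell)^d=\ell^{d+2}R^d\ell^{-d}=R^d\ell^2 ,
\]
so the total is at most $C_{d,M}R^d\ell^2$. The ``in particular'' statement follows because both terms are nonnegative.

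The only delicate step is the support bookkeeping behind the vanishing cross term. One must confirm that the quantization pieces $\alpha_j,\beta_j$ inherit the supports $K,L$ as measures, so that $g_r\,h_\eps\1_{D^c}=0$ almost everywhere. It is also worth checking that $\gamma_r\ll\mu\otimes\nu$ with square-integrable density, which \cref{geom:lem:recovery} already supplies. Everything else is immediate from that lemma.
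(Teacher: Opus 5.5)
Your proposal is correct and follows the paper's proof essentially step for step. You use the same competitor $\pi|_{D^c}+\gamma_\ell$ from \cref{geom:lem:recovery} with $r=\ell\le R$, the same observation that any coupling of $\alpha,\beta$ is carried by $K\times L$ (so the quadratic penalty relative to $\mu\otimes\nu$ has no cross-term), and the same bounds $p\le\mu(K)\le M\omega_dR^d$ and $\eps(R/\ell)^d=R^d\ell^2$. Arguing via the sets $K,L$ that carry $\alpha,\beta$, rather than via their closed supports, is also how the paper handles it.
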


\begin{proof}
The measures $\alpha\le\mu$ and $\beta\le\nu$ have common mass $p:=\pi(D)$. Since $K$ is contained in a ball of radius $R$ and the density of $\mu$ is bounded by $M$, we have $p\le\mu(K)\le M\omega_dR^d$. If $p=0$, the assertion is immediate, so assume $p>0$.

We first explain why comparisons on $D$ preserve the original product reference. Every coupling $\widetilde\gamma\in\Pi(\alpha,\beta)$ is concentrated on $K\times L$, since its marginals are concentrated on $K$ and $L$. Consequently, $\widetilde\pi:=\pi|_{D^c}+\widetilde\gamma$ has marginals $\mu-\alpha+\alpha=\mu$ and $\nu-\beta+\beta=\nu$. If $\widetilde\gamma$ has finite penalty relative to $\mu\otimes\nu$, its density vanishes outside $D$, whereas that of $\pi|_{D^c}$ vanishes on $D$. Their squared densities therefore have no cross-term. Using the functional \eqref{geom:eq:fixed-reference}, optimality of $\pi$ and cancellation of the contribution on $D^c$ give $\mathcal F_\eps(\pi_D\mid\mu\otimes\nu) \le\mathcal F_\eps(\widetilde\gamma\mid\mu\otimes\nu)$.

Apply \cref{geom:lem:recovery} with $r=\ell\le R$ and take $\widetilde\gamma=\gamma_\ell$. The preceding comparison and \eqref{geom:eq:recovery} yield
\begin{equation}\label{m:display:013}
 \int c\dd\pi_D
 +\frac{\eps}{2}\int_D h_\eps^2\dd(\mu\otimes\nu)
 \le
 \int c\dd\gamma
 +C_dp\ell^2+C_d\eps(R/\ell)^d.
\end{equation}
Since $p\le M\omega_dR^d$ and $\eps=\ell^{d+2}$, the two error terms are bounded by $C_{d,M}R^d\ell^2$. Finally, $\pi_D\in\Pi(\alpha,\beta)$, so optimality of $\gamma$ gives $\int c\dd\pi_D-\int c\dd\gamma\ge0$. The penalty is also nonnegative, proving \eqref{geom:eq:rectangle} and the separate bounds.
\end{proof}

\subsection{Boundary support localization}
\label{geom:sec:long}

The following estimate converts small local excess into a displacement bound for every support pair with an endpoint in a smaller neighborhood. At a boundary point, the common supporting half-space permits the construction of a comparison ball carrying a definite amount of nearby mass. For a transport plan $\pi$ and $R>0$, we set
\begin{equation}\label{geom:eq:excess}
 \cE(\pi,R)=R^{-d-2}\int_{\#B_R}|x-y|^2\dd\pi,\qquad \#B_R:=(B_R\times\R^d)\cup(\R^d\times B_R).
\end{equation}

\begin{lemma}[Local support bound at a convex boundary]\label{geom:lem:long}
Let $\pi_\eps$ be the QOT optimizer at parameter $\eps>0$, and recall $\ell=\eps^{1/(d+2)}$. Suppose that $X\cap B_1=B_1\cap\{(z',z_d):z_d\ge P(z')\}$ and $Y\cap B_1=B_1\cap\{(z',z_d):z_d\ge Q(z')\}$, where $P,Q\in C^2(B'_1)$, $B'_1:=\{z'\in\R^{d-1}:|z'|<1\}$, satisfy
\begin{equation}\label{m:display:014}
 P(0)=Q(0)=0,\qquad DP(0)=DQ(0)=0,\qquad
 \|P\|_{C^2(B'_1)}+\|Q\|_{C^2(B'_1)}\le\kappa.
\end{equation}
Assume $m\le\rho_0\le M$ on $X\cap B_1$ and $m\le\rho_1\le M$ on $Y\cap B_1$. There exist $C,\delta>0$, depending only on $d,m,M$, such that $\cE(\pi_\eps,1)+\kappa+\ell\le\delta$ implies
\begin{equation}\label{geom:eq:long}
 \sup_{(x,y)\in\spt\pi_\eps\cap\#B_{1/2}}|x-y|
 \le C\bigl(\cE(\pi_\eps,1)^{1/(d+2)}+\kappa+\ell\bigr).
\end{equation}
For an unregularized optimal plan $\pi_0$, the same estimate holds with $\pi_\eps$ replaced by $\pi_0$ and the $\ell$-term omitted, provided $\cE(\pi_0,1)+\kappa\le\delta$. In this case, the local graph descriptions, the local density bounds, and $X\cup Y\subset\{x_d\ge0\}$ suffice as assumptions.

If the graph and half-space assumptions are replaced by $B_1\subset X\cap Y$, the same estimate holds with the $\kappa$-term omitted, provided $\cE(\pi_\eps,1)+\ell\le\delta$. For unregularized optimal transport, the $\ell$-term is also omitted, and only the local density bounds on $B_1$ are required.
\end{lemma}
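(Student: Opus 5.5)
We argue by contradiction with a monotonicity comparison, in the spirit of the local $L^\infty$ estimate of Goldman--Otto and its QOT version in \cite{GK}. Set $\Lambda:=\cE(\pi_\eps,1)^{1/(d+2)}+\kappa+\ell$. Fix a support pair $(x,y)\in\spt\pi_\eps$ with, say, $x\in B_{1/2}$; the case $y\in B_{1/2}$ is symmetric. Suppose $|x-y|=:D\ge K\Lambda$ for a large constant $K$ to be chosen. The aim is to produce a set of pairs $(x',y')$ in the support, carrying mass at least $c r^d$ for a radius $r\asymp D$, each with displacement at least $cD$. That mass would contribute at least $c r^dD^2\gtrsim D^{d+2}$ to $\int_{\#B_1}|x-y|^2\dd\pi_\eps$. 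Since the whole integral is at most $\Lambda^{d+2}$, this contradicts $D\ge K\Lambda$ once $K$ is large. We may also assume $D\le 1/4$, by using $\delta$ small together with a preliminary crude bound from the same argument at a coarser scale.

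The geometric step is the following. Let $e:=(y-x)/D$ and choose the comparison point $\bar x:=x+\tfrac D4 e$, shifted slightly into the domain. The \emph{key use of the boundary assumption} enters here. Both supports lie above graphs with small $C^2$ norm $\kappa\le\delta$ and common tangent plane $\{z_d=0\}$ at the origin. Hence, for $r=cD$ and $D\gg\kappa$, there is a ball $B(\bar x',r)$ with $|\bar x'-\bar x|\le D/8$ such that $B(\bar x',r)\cap X$ has $\mu$-mass at least $c r^d$. For instance, if the tangential step exits $X$, push the center up by $O(r)$ in the $e_d$ direction; the graph description guarantees that $X\cap B(\bar x',r)$ contains a cone-like region of volume $\gtrsim r^d$. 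The same construction works on the target side, and in the interior case $B_1\subset X\cap Y$ it is trivial. Every $x'\in B(\bar x',r)\cap X$ has mass in its section, so there are pairs $(x',y')\in\spt\pi_\eps$.

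Next we use cyclical monotonicity, approximately in the regularized case, to show that these $y'$ are far from $x'$. For $\pi_0$, monotonicity $(x'-x)\cdot(y'-y)\ge0$ combined with $x'-x\approx \tfrac D4 e$ gives $(y'-y)\cdot e\gtrsim -r$. Thus $y'$ lies roughly beyond the hyperplane through $y$ orthogonal to $e$, and $|x'-y'|\ge cD$. For $\pi_\eps$, exact monotonicity fails. We replace it by the slack structure: $q_\eps(x,y)\ge0$ and $q_\eps(x',y')>0$, while $q_\eps\le C\ell^2$ on an averaged sense from the penalty bound of \cref{geom:prop:rectangle}. Summing the two slacks and the two cross slacks, with $q_\eps\le$ its maximum, gives $(x'-x)\cdot(y'-y)\ge -2\max q_\eps$. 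Concretely, we restrict to $y'$ with $q_\eps(x',y')\ge0$, which is automatic, and use $q_\eps(x,y')+q_\eps(x',y)\le 2\sup q$. The sup of the slack is controlled by the row marginal equation \eqref{eqv:eq:marginal}: a large slack on a concave section with density bounded below forces row integral $>\eps$ unless slack $\lesssim \ell^2$ times geometric factors. This yields an error of $O(\ell^2)$, hence $(y'-y)\cdot e\gtrsim -r-\ell^2/D$, which is harmless since $D\ge K\ell$. Integrating over $x'\in B(\bar x',r)\cap X$ against $\pi_\eps$ gives mass $\ge c r^d$ with squared displacement $\ge cD^2$, and all these pairs lie in $\#B_1$. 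This produces the contradiction described above and proves \eqref{geom:eq:long}.

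The main obstacle is the regularized substitute for monotonicity, namely the a priori bound $\sup q_\eps\lesssim\ell^2$. A priori the sections could be degenerate. I would first try to avoid it altogether. The alternative is to compare not single pairs but masses: run the above with $x$ replaced by a small ball of radius $\ell$ around $x$ carrying mass, and apply the quantization competitor of \cref{geom:lem:recovery} on the rectangle formed by the two groups of pairs. Swapping partners lowers the cost by $\gtrsim r^dD^2$ minus an error $C r^d\ell^2$ from \cref{geom:prop:rectangle}. This contradicts optimality of $\pi_\eps$ when $D\ge K\ell$.
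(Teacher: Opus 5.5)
\textbf{The regularized case has a genuine gap.} Your geometric choice of the comparison ball, and your treatment of the unregularized case, essentially match the paper's proof. The problem is the regularized step.

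Your main route needs a pointwise bound $q_\eps(x',y)+q_\eps(x,y')\le C\ell^2$ at the cross pairs, and nothing of the kind is available at this stage. The row equation and concavity give only $\max_Yq_\eps(x,\cdot)\lesssim\eps/|S_{\eps,x}|$. A lower bound $|S_{\eps,x}|\gtrsim\ell^d$, or equivalently slack slopes of order $\ell$, is obtained in the paper only from the diameter bounds \eqref{geom:eq:diameters}. Those bounds are themselves derived from this lemma through the iteration, so using them here would be circular. An averaged penalty bound from \cref{geom:prop:rectangle} does not control the slack at the two specific points $(x',y)$ and $(x,y')$ either.

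Your fallback fails for a related reason. Knowing $(x,y)\in\spt\pi_\eps$ gives only $q_\eps(x,y)\ge0$. So the $\pi_\eps$-mass of an $\ell$-neighborhood of $(x,y)$ can be arbitrarily small, and the gain from swapping partners need not beat the error $CR^d\ell^2$.

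\textbf{The missing idea} is to drop the pointwise claim that every pair near the comparison point is long, and instead bound the \emph{mass} of the short pairs by integrating the four-point identity. Set $r=|x-y|$ and $s=\min\{r,1/8\}$, and take a comparison ball $B(\zeta,s/64)\subset X$ placed as in your construction. Let $G$ be the set of $(x',y')\in\spt\pi_\eps$ with $x'\in B(\zeta,s/64)$ and $|y'-x'|\le s/64$.
\begin{itemize}
\item On $G$ the geometry gives $(x'-x)\cdot(y-y')\ge sr/32$.
\item The identity $q_\eps(x',y)+q_\eps(x,y')-q_\eps(x,y)-q_\eps(x',y')=(x'-x)\cdot(y-y')$, together with $q_\eps(x,y)\ge0$ and $q_\eps(x',y')\ge0$, gives $(x'-x)\cdot(y-y')\le(q_\eps(x',y))_++(q_\eps(x,y'))_+$.
\item Integrate this over $G$ against $\pi_\eps$. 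The marginals of $\pi_\eps|_G$ are dominated by $\mu$ and $\nu$. So the column equation at the fixed point $y$ and the row equation at the fixed point $x$ in \eqref{eqv:eq:marginal} bound the right-hand side by $2\eps$. Hence $\frac{rs}{32}\pi_\eps(G)\le2\eps$.
\item The comparison ball has $\mu$-mass $\gtrsim s^d$. By Chebyshev, the pairs starting in it but lying outside $G$ have $\pi_\eps$-mass at most $64^2s^{-2}\cE(\pi_\eps,1)$.
\end{itemize}
Combining these gives $s^{d+2}\lesssim\cE(\pi_\eps,1)+\eps$. This is exactly where the $\ell$-term comes from, and no a priori bound on the slack is needed. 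Working with $s=\min\{r,1/8\}$ also replaces your vague reduction to $D\le1/4$: the case $s=1/8$ contradicts the smallness of $\cE(\pi_\eps,1)+\ell$ directly.
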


\begin{proof}
Fix $(x,y)\in\spt\pi_\eps$ with $x\in B_{1/2}$, and set $r:=|x-y|$ and $s:=\min\{r,1/8\}$. Convexity and the tangent-plane assumptions give $X\cup Y\subset\{x_d\ge0\}$. It suffices to consider $r>16\kappa$. Taking $\delta\le1/128$ gives $s\ge16\kappa$. Set $\zeta:=x+\frac{s}{2r}(y-x)+\frac{s}{8}e_d$, where $e_d$ is the last coordinate vector. Since $s/(2r)\le1/2$ and $x_d,y_d\ge0$, $\zeta_d\ge s/8$. For $z\in B(\zeta,s/64)$, we have $z_d>7s/64\ge\kappa\ge P(z')$ and $|z|<1/2+41s/64<1$. Hence $B(\zeta,s/64)\subset X\cap B_1$.

Using $s\le r$ and $|y_d-x_d|\le r$, we obtain
\begin{equation}\label{m:display:016}
\begin{aligned}
 (\zeta-x)\cdot(y-\zeta)
 &=
 \frac{sr}{2}\left(1-\frac{s}{2r}\right)
 +\frac{s}{8}\left(1-\frac{s}{r}\right)(y_d-x_d)
 -\frac{s^2}{64}\ge \frac{7}{64}sr.
\end{aligned}
\end{equation}
For $x'\in B(\zeta,s/64)$ and $|y'-x'|\le s/64$, the bounds $|\zeta-x|\le5s/8$ and \eqref{m:display:016}  give
\begin{equation}\label{geom:eq:crossgeom}
\begin{aligned}
 \big|(x'-x)\cdot(y-y')-(\zeta-x)\cdot(y-\zeta)\big|
 &\le \frac{s}{64}
       \left(r+\frac{15s}{8}+\frac{s}{32}\right)
 \le \frac{sr}{16},\\
 (x'-x)\cdot(y-y')&\ge\frac{sr}{32}.
\end{aligned}
\end{equation}

Let $G:=\{(x',y')\in\spt\pi_\eps:x'\in B(\zeta,s/64),\ |y'-x'|\le s/64\}$. Since $B(\zeta,s/64)\subset X\cap B_1$,
\[
 \mu(B(\zeta,s/64))=\int_{B(\zeta,s/64)}\rho_0\dd x\ge m\omega_d(s/64)^d.
\]
Since $\mu$ is the first marginal of $\pi_\eps$,
\[
\begin{aligned}
 \mu(B(\zeta,s/64))-\pi_\eps(G)
 &=\pi_\eps\{(x',y'):x'\in B(\zeta,s/64),\ |x'-y'|>s/64\}\\
 &\le \frac{64^2}{s^2}
       \int_{B(\zeta,s/64)\times\R^d}|x'-y'|^2\dd\pi_\eps(x',y')\le \frac{64^2}{s^2}\cE(\pi_\eps,1),
\end{aligned}
\]
where the last inequality follows from $B(\zeta,s/64)\times\R^d\subset\#B_1$. Hence
\begin{equation}
    \label{eq:mass-estimates-local-support}
    m\omega_d(s/64)^d
\le\pi_\eps(G)+64^2s^{-2}\cE(\pi_\eps,1).
\end{equation}

Continuity of $q_\eps$ and $\dd\pi_\eps/\dd(\mu\otimes\nu)=(q_\eps)_+/\eps$ give $q_\eps\ge0$ on $\spt\pi_\eps$. Since $q_\eps(x,y)=x\cdot y-u_\eps(x)-v_\eps(y)$, cancellation of the potential terms gives
\begin{equation}\label{m:display:017}
\begin{aligned}
 &q_\eps(x',y)+q_\eps(x,y')
 -q_\eps(x,y)-q_\eps(x',y')\\
 &\qquad=x'\cdot y+x\cdot y'-x\cdot y-x'\cdot y'
 =(x'-x)\cdot(y-y').
\end{aligned}
\end{equation}
The marginals of $\pi_\eps|_G$ are dominated by $\mu,\nu$. Thus \eqref{geom:eq:crossgeom}, \eqref{m:display:017}, and \eqref{eqv:eq:marginal} give
\begin{equation}\label{geom:eq:longcontradiction}
\begin{aligned}
 \frac{rs}{32}\pi_\eps(G)
 &\le
 \int_G\bigl((q_\eps(x',y))_+
             +(q_\eps(x,y'))_+\bigr)\dd\pi_\eps(x',y')\\
 &\le
 \int_X(q_\eps(x',y))_+\dd\mu(x')
 +\int_Y(q_\eps(x,y'))_+\dd\nu(y')
 =2\eps.
\end{aligned}
\end{equation}
By \eqref{geom:eq:longcontradiction}, $\pi_\eps(G)\le64\eps/(rs)$. Substituting this into \eqref{eq:mass-estimates-local-support} and multiplying by $s^2$ gives
\[
 \frac{m\omega_d}{64^d}s^{d+2}
 \le 64\eps\frac{s}{r}+64^2\cE(\pi_\eps,1)
 \le 64^2\bigl(\cE(\pi_\eps,1)+\eps\bigr),
\]
where the last inequality uses $s\le r$. Thus $s^{d+2}\le C\bigl(\cE(\pi_\eps,1)+\eps\bigr)$. If $r\ge1/8$, this gives $8^{-(d+2)}\le C(\delta+\delta^{d+2})$, which is excluded by taking $\delta$ sufficiently small. Hence $s=r$ and $r\le C\bigl(\cE(\pi_\eps,1)^{1/(d+2)}+\ell\bigr)$. Together with the case $r\le16\kappa$, this proves \eqref{geom:eq:long} for $x\in B_{1/2}$. Interchanging the marginals gives the estimate on $\#B_{1/2}$.

For unregularized optimal transport, define $G$ using $\pi_0$ in place of $\pi_\eps$. Monotonicity gives $(x'-x)\cdot(y-y')\le0$ for every $(x',y')\in\spt\pi_0$. By \eqref{geom:eq:crossgeom}, $G=\varnothing$. The mass estimate therefore gives $s^{d+2}\le C\cE(\pi_0,1)$, and the preceding argument applies with the $\ell$-term omitted. Only the local graph descriptions, the local lower density bounds, and the global half-space condition were used.

For the interior versions, omit the vertical shift and use $\zeta:=x+\frac{s}{2r}(y-x)$. Then $B(\zeta,s/64)\subset B_1\subset X$ and $(\zeta-x)\cdot(y-\zeta)\ge sr/4$. The same comparison and mass estimates apply without the $\kappa$-term, both for regularized and unregularized optimal transport.
\end{proof}

\subsection{Local regularity of the unregularized comparison maps}
\label{geom:sec:classical}

We next consider an unregularized optimal plan whose marginals agree with $\mu,\nu$ near the origin. We prove a local regularity estimate controlled by the square root of its excess and by the variation of the boundary graphs and densities.

\begin{lemma}[Boundary estimates for the comparison maps]
\label{geom:lem:classical}
Suppose that $X,Y\subset\{x_d\ge0\}$, that $X\cap B_1$ and $Y\cap B_1$ are convex, and that $X\cap B_1=B_1\cap\{(z',z_d):z_d\ge P(z')\}$, $Y\cap B_1=B_1\cap\{(z',z_d):z_d\ge Q(z')\}$, where $P,Q\in C^{3,\alpha}(B'_1)$ satisfy
\begin{equation}\label{m:display:018}
 P(0)=Q(0)=0,\qquad DP(0)=DQ(0)=0.
\end{equation}
Assume $m\le\rho_0\le M$ on $X\cap B_1$, $m\le\rho_1\le M$ on $Y\cap B_1$, $\rho_0(0)=\rho_1(0)$, and
\begin{equation}\label{geom:eq:smallclassicaldata}
 \|P\|_{C^{3,\alpha}(B'_1)}+\|Q\|_{C^{3,\alpha}(B'_1)}
 +\|\rho_0-\rho_0(0)\|_{C^{1,\alpha}(X\cap B_1)}
 +\|\rho_1-\rho_1(0)\|_{C^{1,\alpha}(Y\cap B_1)}
 \le\kappa.
\end{equation}
Let $\alpha_0\le\mu$ and $\alpha_1\le\nu$ have equal mass, with $\alpha_0|_{B_{1/2}}=\mu|_{B_{1/2}}$, $\alpha_1|_{B_{1/2}}=\nu|_{B_{1/2}}$, and $\spt\alpha_0\cup\spt\alpha_1\subset\overline B_{3/4}$. Let $\gamma\in\Pi(\alpha_0,\alpha_1)$ minimize quadratic transport, with a.e.-defined maps $\mathcal T=\nabla\phi$ and $\mathcal T^{-1}=\nabla\psi$. Then
\begin{equation}\label{m:display:019}
 \cE(\gamma,1)=\int|x-y|^2\dd\gamma
 =\int|\mathcal T(x)-x|^2\dd\alpha_0(x).
\end{equation}

There exist $r_c,\delta>0$, $\beta\in(0,\alpha)$, and $C<\infty$, depending only on $d,m,M,\alpha$, such that $\cE(\gamma,1)+\kappa^2\le\delta$ implies that the maps extend continuously to the boundary near the origin and
\begin{equation}\label{geom:eq:linearclassical}
 \|\mathcal T-\id\|_{C^{1,\beta}(X\cap B_{4r_c})}
 +\|\mathcal T^{-1}-\id\|_{C^{1,\beta}(Y\cap B_{4r_c})}
 \le C\bigl(\cE(\gamma,1)^{1/2}+\kappa\bigr).
\end{equation}
They are mutually inverse diffeomorphisms on corresponding neighborhoods and map the respective boundary portions onto each other. In particular, $b:=\mathcal T(0)\in\partial Y$ and $H:=D\mathcal T(0)$ satisfy
\begin{equation}\label{geom:eq:jet}
 |b|+\|H-\Id\|
 \le C\bigl(\cE(\gamma,1)^{1/2}+\kappa\bigr),
 \qquad H=H^T\succ0,\qquad
 \det H=\frac{\rho_0(0)}{\rho_1(b)}.
\end{equation}
Choosing $\delta$ smaller so that $|b|<r_c$, we also have
\begin{equation}\label{geom:eq:taylorclassical}
\begin{aligned}
 |\mathcal T(x)-b-Hx|
 &\le C\bigl(\cE(\gamma,1)^{1/2}+\kappa\bigr)
          |x|^{1+\beta},
 &&x\in X\cap B_{r_c},\\
 |\mathcal T^{-1}(y)-H^{-1}(y-b)|
 &\le C\bigl(\cE(\gamma,1)^{1/2}+\kappa\bigr)
          |y-b|^{1+\beta},
 &&y\in Y\cap B(b,r_c).
\end{aligned}
\end{equation}

The interior version holds with $B_1\subset X\cap Y$, without the graph or half-space assumptions and with the graph terms omitted from \eqref{geom:eq:smallclassicaldata}. In that case $b\in\intr Y$. The condition $\rho_0(0)=\rho_1(0)$ may be omitted if $\kappa$ is replaced throughout by $\kappa+|\rho_0(0)-\rho_1(0)|$.
\end{lemma}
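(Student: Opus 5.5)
The identity \eqref{m:display:019} needs only Brenier's theorem and the normalization of $\cE$ at $R=1$. Since $\spt\alpha_0\cup\spt\alpha_1\subset\overline B_{3/4}$, every pair in $\spt\gamma$ lies in $\#B_1$. The optimal plan is $\gamma=(\id,\mathcal T)_\#\alpha_0$ with $\mathcal T=\nabla\phi$ for a convex $\phi$ and $\mathcal T^{-1}=\nabla\psi$ with $\psi=\phi^*$. Both equalities in \eqref{m:display:019} follow directly.

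For the estimates I would follow the strategy from the proof outline: first local support control, then the Chen--Figalli $\eps$-regularity, then linearization of the second boundary value problem. First I would apply the unregularized case of \cref{geom:lem:long} to $\gamma$. Its hypotheses are the local graph descriptions, the density bounds (which hold on $B_{1/2}$, where $\alpha_i$ coincide with $\mu,\nu$, after rescaling $B_{1/2}$ to unit size) and the half-space condition. The lemma gives $|x-y|\le C(\cE^{1/(d+2)}+\kappa)$ for every support pair with an endpoint in $B_{1/4}$. Consequently, on a smaller ball $B_{\rho}$ the plan $\gamma$ transports exactly $\mu|_{X\cap B_\rho}$ into a region where $\alpha_1=\nu$, and conversely. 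Locally, then, $\phi$ is a Brenier solution between the true densities $\rho_0\1_X$ and $\rho_1\1_Y$, so it solves the second boundary value problem $\det D^2\phi=\rho_0/\rho_1(\nabla\phi)$ with $\nabla\phi(X\cap B_\rho)\subset Y$. Both domains are nearly the half-space $\{x_d\ge0\}$ and the densities are nearly constant.

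Next I would invoke the boundary $\eps$-regularity of Chen and Figalli \cite{CF}. Its hypotheses are that the potential is $L^\infty$-close to $|x|^2/2$ (up to an affine term), that the domains are $C^{1,\alpha}$-close to half-spaces, and that the densities are close to constants. The $L^\infty$ closeness should follow from the smallness of $\cE$ together with the pointwise displacement bound of the previous step, after integrating $\nabla\phi-x$ along segments. The theorem gives $\phi\in C^{2,\beta}$ up to the boundary on $B_{8r_c}$, with $\|D^2\phi-\Id\|$ small, and likewise for $\psi$.

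With these a priori bounds, I would write $\phi=|x|^2/2+w$. The function $w$ solves a linearized uniformly elliptic equation $\operatorname{tr}(A\,D^2w)=f$, where $A$ is close to $\Id$, $f$ is controlled by $\kappa$ plus quadratic terms in $D^2w$, and the boundary condition $Q$-graph$(\nabla\phi)=$ boundary is close to the Neumann condition $\partial_dw=g$ with $g=O(\kappa)$. The oblique Schauder estimates \cite[Lemma~6.29, Theorem~6.30]{GT} on the half-ball then give $\|w-\text{affine}\|_{C^{2,\beta}(B_{4r_c})}\le C(\|w\|_{L^2}+\kappa)$. Applying them to tangential difference quotients, as in \cref{lem:classicalfinite}, provides the extra derivative needed for the $C^{1,\beta}$ norm of $\mathcal T-\id$ in \eqref{geom:eq:linearclassical}. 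The term $\|\nabla w\|_{L^2}$ equals $\cE^{1/2}$ by \eqref{m:display:019}, which produces the square-root dependence. The quadratic error terms are absorbed because smallness is guaranteed by choosing $\delta$ small. The same argument applied to $\psi$ gives the estimate for $\mathcal T^{-1}$. Since the maps are mutually inverse gradients of Legendre duals and are $C^1$-close to $\id$, they are local diffeomorphisms with $\mathcal T(\partial X)\subset\partial Y$, which follows from the boundary condition.

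Finally, \eqref{geom:eq:jet} follows by evaluating at $0$. We have $H=D^2\phi(0)$, which is symmetric and positive definite, and $\det H=\rho_0(0)/\rho_1(b)$ is the Monge--Amp\`ere equation at $0$. The point $b$ lies on $\partial Y$ because of the boundary correspondence. The expansions \eqref{geom:eq:taylorclassical} are Taylor's theorem with the $C^{1,\beta}$ bound on $D\mathcal T-H$, and the second one uses the inverse map on $B(b,r_c)\subset B_{4r_c}$. The interior version follows the same route with interior $\eps$-regularity (Figalli--Kim / De~Philippis--Figalli) and interior Schauder estimates. Dropping $\rho_0(0)=\rho_1(0)$ only adds $|\rho_0(0)-\rho_1(0)|$ to the linearization data. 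I expect the main obstacle to be the linearization step: obtaining the linear dependence on $\cE^{1/2}+\kappa$, rather than mere smallness, for the nonlinear oblique boundary condition up to $C^{1,\beta}$ of $\mathcal T$.
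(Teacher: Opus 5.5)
Your outline follows the paper's route: \cref{geom:lem:long}, integration to $|\phi-|x|^2/2|\le e_*|x|$, localization, Chen--Figalli, linearization, \cref{geom:lem:localoblique}, and Poincar\'e. The two steps that carry the weight are underdeveloped, however. The step you flag as the main obstacle needs no smallness of $D^2W$ and no absorption in the paper. Write $\phi=|x|^2/2+W$. Then exactly $\log\det(\Id+D^2W)=a^{ij}W_{ij}$ with $a=\int_0^1(\Id+tD^2W)^{-1}\dd t$. So $W$ solves a \emph{linear} equation whose coefficients are uniformly elliptic and $C^\beta$ by the absolute Chen--Figalli bounds alone. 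Its right-hand side $\log\rho_0(x)-\log\rho_1(x+\nabla W)$ is $O(\kappa)$ in $C^\beta$ because $\rho_0(0)=\rho_1(0)$. Boundary correspondence comes from \cite[Remark~4.4]{CF} and must be in hand before linearizing. By the fundamental theorem of calculus it becomes $\mathfrak b\cdot\nabla W=Q(x')-P(x')$ with $\mathfrak b=(-\int_0^1DQ(x'+t\nabla'W)\dd t,1)$, which is uniformly oblique and $C^{1,\beta}$. Linear dependence on $\cE^{1/2}+\kappa$ is then automatic. Your frozen-coefficient version with a quadratic remainder in $f$ is not absorbed simply by comparing a smaller and a larger domain. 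The local Schauder constant grows as the gap shrinks, while the remainder lives on the larger domain. You would need weighted norms or, equivalently, to fold the remainder into the coefficients as above. Two smaller corrections. Subtract the mean of $W$, not an affine function, since the norm in \eqref{geom:eq:linearclassical} includes $\sup|\nabla W|$. And drop the tangential difference quotients: $C^{2,\beta}$ control of $W$ already is $C^{1,\beta}$ control of $\mathcal T-\id$.

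The localization is where most of the paper's proof goes. Saying `locally $\phi$ is a Brenier solution between the true densities' does not yet place you in the setting of \cite{CF}. That theorem requires a transport between two open domains obeying graph inclusions near $0$, a potential that is $c$-convex relative to the target domain, and control of its $c$-subdifferential. A target such as $\mathcal T(X\cap B_\rho)$ is defined only up to null sets and is not obviously open or connected. The paper takes the convex set $C_0=\{x\in\intr X\cap B_{4s}:\phi(x)<h\}$ and defines $C_1$ pointwise via the maximizers of $x\cdot y-\phi(x)$ over $X\cap\overline B_{4s}$. It then verifies that $C_1$ is open and connected, that $\gamma|_{C_0\times\R^d}$ has marginals $\rho_0\1_{C_0}\dd x$ and $\rho_1\1_{C_1}\dd y$, that $\phi$ is a supremum over $C_1$, and the inclusions \eqref{geom:eq:classicalimage}. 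Only then does it rescale and apply \cite{CF}. The case $d=1$ is treated separately by monotone rearrangement.
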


\begin{proof}
By construction of $\gamma$,  \eqref{m:display:019} holds. We first prove the boundary assertion. Choose lower semicontinuous conjugate convex transport potentials $\phi,\psi$. The unregularized part of \cref{geom:lem:long}, applied to $\gamma$ at radius $1/2$, gives
\begin{equation}\label{geom:eq:classicalrough}
 \|\mathcal T-\id\|_{L^\infty(X\cap B_{1/4})}+\|\mathcal T^{-1}-\id\|_{L^\infty(Y\cap B_{1/4})}
 \le C\bigl(\cE(\gamma,1)^{1/(d+2)}+\kappa\bigr).
\end{equation}
Fix $s=1/64$ and let $e_*$ denote the right-hand side of \eqref{geom:eq:classicalrough}. Decrease $\delta$ so that $e_*\le s/10$. By positivity of the local densities, the potentials are finite on the interiors of the corresponding convex caps, where their differences from $|\cdot|^2/2$ have weak gradients bounded by $e_*$. These differences therefore have $e_*$-Lipschitz continuous extensions to the caps. Normalize $\phi(0)=0$, adjusting $\psi$ by the opposite constant, and write $W:=\phi-|\cdot|^2/2$. Then
\begin{equation}\label{geom:eq:sectionlocalization-value}
 \Big|\phi(x)-\frac{|x|^2}{2}\Big|\le e_*|x|,\qquad x\in X\cap\overline B_{4s}.
\end{equation}

Set $D:=X\cap\overline B_{4s}$, $U:=\intr Y\cap B_{2s}$, and $h:=s^2/2$. Define
\[
 \Psi(y):=\max_{x\in D}\{x\cdot y-\phi(x)\},\qquad
 \mathcal M(y):=\argmax_{x\in D}\{x\cdot y-\phi(x)\}.
\]
The maximizing sets are nonempty and compact, and $\Psi$ is finite, convex, and Lipschitz on $\R^d$. For almost every $y\in U$, \eqref{geom:eq:classicalrough} places $\mathcal T^{-1}(y)$ in $D$, and dual equality makes it a maximizer. Every element of $\mathcal M(y)$ belongs to $\partial\Psi(y)$. Hence, at almost every point of differentiability of $\Psi$ in $U$,
$\mathcal M(y)=\{\nabla\Psi(y)\}=\{\mathcal T^{-1}(y)\}.$ 
These statements hold Lebesgue-almost everywhere because $\alpha_1=\rho_1\dd y$ on $U$ and $\rho_1\ge m$ there. Thus $|\nabla\Psi(y)-y|\le e_*$ almost everywhere on $U$, and $\Psi-|\cdot|^2/2$ is $e_*$-Lipschitz on this convex set. For $y\in U$, $x\in\mathcal M(y)$, and a unit vector $v$, the subgradient inequality at $y\pm tv\in U$ gives $|(x-y)\cdot v|\le e_*+t/2$. Letting $t\downarrow0$ yields
\begin{equation}\label{geom:eq:sectionlocalization-maximizers}
 |x-y|\le e_*,\qquad y\in U,\quad x\in\mathcal M(y).
\end{equation}

We restrict the source by a sublevel set and define its target pointwise:
\begin{equation}\label{m:display:020}
\begin{aligned}
 C_0&:=\{x\in\intr X\cap B_{4s}:\phi(x)<h\},\quad
 C_1:=\{y\in U:\max_{x\in\mathcal M(y)}\phi(x)<h\}.
\end{aligned}
\end{equation}
The set $C_0$ is open and convex. The function $y\mapsto\max_{x\in\mathcal M(y)}\phi(x)$ is upper semicontinuous: if $y_j\to y$, maximizers realizing a limsup have a subsequence converging in $D$; its limit belongs to $\mathcal M(y)$, and continuity of $\phi$ gives the assertion. Thus $C_1$ is open. By \eqref{geom:eq:sectionlocalization-value} and $e_*\le s/10$, we have $\phi(x)<h$ on $D\cap B_{4s/5}$ and $\phi(x)>h$ on $D\setminus B_{6s/5}$. Together with \eqref{geom:eq:sectionlocalization-maximizers}, this gives
\begin{equation}\label{geom:eq:classicalimage}
\begin{aligned}
 \intr X\cap B_{4s/5}&\subset C_0\subset\intr X\cap B_{6s/5},\quad 
 \intr Y\cap B_{7s/10}\subset C_1\subset\intr Y\cap B_{13s/10}.
\end{aligned}
\end{equation}

The set $C_1$ is also connected. Indeed, fix $y\in C_1$ and $0<t<1$. Convexity of $Y\cap B_1$ and $0\in\partial Y$ give $ty\in U$. For $x\in\mathcal M(y)$ and $z\in\mathcal M(ty)$, the two maximizing inequalities imply
\[
 y\cdot(z-x)\le\phi(z)-\phi(x)\le t\,y\cdot(z-x).
\]
Hence $y\cdot(z-x)\le0$ and $\phi(z)\le\phi(x)<h$. Since this holds for every $z\in\mathcal M(ty)$, we have $ty\in C_1$. Every point of $C_1$ can therefore be joined within $C_1$ to the convex inner cap $\intr Y\cap B_{7s/10}$, proving path connectedness.

For almost every $y\in U$, the singleton identity above gives
\[
 y\in C_1\quad\Longleftrightarrow\quad\phi(\mathcal T^{-1}(y))<h\quad\Longleftrightarrow\quad\mathcal T^{-1}(y)\in C_0.
\]
Here $\mathcal T^{-1}(y)\in\intr X$ almost everywhere, since the inverse pushes $\alpha_1$ to $\alpha_0$ and $\alpha_0(\partial X\cap B_{4s})=0$. The forward bound in \eqref{geom:eq:classicalrough} and \eqref{geom:eq:classicalimage} also give $\gamma(C_0\times U^c)=0$. Consequently, $\gamma|_{C_0\times\R^d}$ has marginals $\rho_0\mathbf1_{C_0}\dd x$ and $\rho_1\mathbf1_{C_1}\dd y$. The restriction $\gamma|_{C_0\times\R^d}$ is optimal
between its marginals, since its support is contained
in the cyclically monotone set $\spt\gamma$. Moreover, $\phi(x)=\sup_{y\in C_1}\{x\cdot y-\Psi(y)\},$ for  $ x\in C_0.$ 
The right-hand side is at most $\phi$ by definition of~$\Psi$. Equality holds at almost every source point by dual equality and the restricted marginal identity, and then everywhere by continuity; the supremum is Lipschitz because $C_1$ is bounded. Thus $\phi$ is a $c$-convex transport potential for the restricted problem on the two domains $C_0,C_1$, with $c(x,y)=-x\cdot y$. Finally, \eqref{geom:eq:sectionlocalization-value} and \eqref{geom:eq:classicalimage} give
\begin{equation}\label{geom:eq:potentialcloseclassical}
 \|\phi-|\cdot|^2/2\|_{L^\infty(C_0)}\le Cs e_*\le Cs\bigl(\cE(\gamma,1)^{1/(d+2)}+\kappa\bigr).
\end{equation}

For dimension $d\ge2$, we apply \cite[Theorems~2.1--2.2]{CF} after the change of variables $(x,y)=s(\xi,\eta)$. Some preparations are needed to fit the setting in \cite{CF}. Set $\widehat C_i:=s^{-1}C_i$ and
\[
 \phi_s(\xi):=s^{-2}\phi(s\xi),\qquad
 f_s(\xi):=\frac{\rho_0(s\xi)}{\rho_0(0)},\qquad
 g_s(\eta):=\frac{\rho_1(s\eta)}{\rho_0(0)}.
\]
The restricted marginal identity becomes
\[
 (\nabla\phi_s)_\#(f_s\mathbf1_{\widehat C_0}\dd\xi)=g_s\mathbf1_{\widehat C_1}\dd\eta.
\]
Multiply the rescaled functions $s^{-1}P(s\cdot)$ and $s^{-1}Q(s\cdot)$ by a fixed smooth cutoff equal to one on $B'_2$ and compactly supported in $B'_3$, and extend them by zero. Denote the resulting global functions by $P_s,Q_s$. They satisfy $P_s(0)=Q_s(0)=0$, $DP_s(0)=DQ_s(0)=0$, and $\|P_s\|_{C^2}+\|Q_s\|_{C^2}\le C\kappa$. The inclusions \eqref{geom:eq:classicalimage} therefore give the graph inclusions in \cite[equation~(2.6)]{CF} for $\widehat C_0,\widehat C_1$.

The identity $\rho_0(0)=\rho_1(0)\ge m$ and \eqref{geom:eq:smallclassicaldata} give
\[
 \|f_s-1\|_{L^\infty(\widehat C_0)}+\|g_s-1\|_{L^\infty(\widehat C_1)}\le C\kappa,
\]
with uniformly bounded $C^\alpha$ norms on the corresponding closed graph caps in $B_{1/2}$. The rescaled cost is $c_s(\xi,\eta)=-\xi\cdot\eta$, so the smoothness, twist, and nondegeneracy assumptions hold with zero cost perturbation. The supremum representation above gives $c_s$-convexity of $\phi_s$. Since $C_0$ is open, the supporting-plane inequality and the Lipschitz bound for $W$ give $|p-x|\le e_*$ for every $x\in C_0$ and every slope $p$ supporting $\phi$ there. Hence $\partial_{c_s}\phi_s(\widehat C_0)\subset B_{13/10}\subset B_2$. Finally,
\[
 \|\phi_s-|\cdot|^2/2\|_{L^\infty(\widehat C_0)}\le Ce_*/s.
\]
Since $s$ is fixed, all the required perturbation norms are sufficiently small after decreasing $\delta$. Thus \cite[Theorem~2.2 and Step~3 of its proof]{CF} gives a uniform local $C^{2,\beta}$ bound for $\phi$ up to the boundary, for some $\beta\in(0,\alpha)$, with radius and bound depending only on $d,m,M,\alpha$. Repeating the localization with the marginals interchanged, and subtracting $\psi(0)$ when normalizing the inverse potential, gives the same bound for $\psi$. By \cite[Remark~4.4]{CF}, their gradients send the respective boundary portions into each other near zero.

For $d=1$, the same estimates hold for any fixed $\beta\in(0,\alpha)$ by the increasing rearrangement. The cumulative distribution functions, positivity of the densities, and \eqref{geom:eq:classicalrough} give
\[
 \mathcal T'=\frac{\rho_0}{\rho_1\circ\mathcal T},\qquad
 (\mathcal T^{-1})'=\frac{\rho_1}{\rho_0\circ\mathcal T^{-1}}
\]
on the intervals under consideration. Both supports start at zero, so $\mathcal T(0)=\mathcal T^{-1}(0)=0$, giving boundary correspondence.

In either case, choose $r_c>0$ so that the local $C^{2,\beta}$
estimates, modulo additive constants, hold on
$X\cap B_{32r_c}$ and $Y\cap B_{32r_c}$, and the boundary
correspondence holds on
$\partial X\cap B_{32r_c}$ and $\partial Y\cap B_{32r_c}$.
Require also $32r_c<1/4$, and decrease $\delta$ so that
$e_*<r_c$. The almost-everywhere inverse identities then extend by continuity on the smaller neighborhoods. The transport equations give
\[
 \det D^2\phi(x)=\frac{\rho_0(x)}{\rho_1(\nabla\phi(x))},\qquad
 \det D^2\psi(y)=\frac{\rho_1(y)}{\rho_0(\nabla\psi(y))};
\]
see \cite[equation~(2.3)]{CF}. These identities extend to the boundary by continuity. The Hessians are nonnegative, and their uniform upper bounds and determinant lower bound $m/M$ give uniform positive lower bounds for their eigenvalues. Consequently,
\begin{equation}\label{eq:replacementabsolute}
\begin{gathered}
 C^{-1}\Id\preceq D^2\phi,D^2\psi\preceq C\Id,\qquad
 \|D^2\phi\|_{C^\beta(X\cap B_{16r_c})}+\|D^2\psi\|_{C^\beta(Y\cap B_{16r_c})}\le C,
\end{gathered}
\end{equation}
where each matrix inequality is understood on its respective cap. Thus the maps are mutually inverse diffeomorphisms on corresponding neighborhoods, up to the boundary.

We now estimate the dependence on the excess. In $\intr X\cap B_{8r_c}$, linearization of $\log\det D^2\phi$ gives
\begin{equation}\label{geom:eq:linearizedMA}
\begin{aligned}
 a^{ij}W_{ij}&=F,\\
 a(x)&:=\int_0^1(\Id+tD^2W(x))^{-1}\dd t,\\
 F(x)&:=\log\rho_0(x)-\log\rho_1(x+\nabla W(x)).
\end{aligned}
\end{equation}
By \eqref{eq:replacementabsolute}, $C^{-1}\Id\preceq a\preceq C\Id$ and $\|a\|_{C^\beta}\le C$. Since $\rho_0(0)=\rho_1(0)$, \eqref{geom:eq:smallclassicaldata} also gives
\begin{equation}\label{m:display:023}
 \|F\|_{C^\beta(X\cap B_{8r_c})}\le C\kappa.
\end{equation}

On $\partial X\cap B_{8r_c}$, boundary correspondence means $x_d+W_d=Q(x'+\nabla'W)$. Since $x_d=P(x')$, the fundamental theorem of calculus yields
\begin{equation}\label{geom:eq:oblique}
 \mathfrak b(x)\cdot\nabla W(x)=Q(x')-P(x'), \qquad
 \mathfrak b(x):=\left(-\int_0^1DQ(x'+t\nabla'W(x))\dd t,\ 1\right).
\end{equation}
The data and absolute estimates give $\|\mathfrak b-e_d\|_{C^{1,\beta}}\le C\kappa$ and $\|Q-P\|_{C^{1,\beta}}\le C\kappa$. In particular,
$ \mathfrak b\cdot\frac{(-DP,1)}{\sqrt{1+|DP|^2}}\ge\frac12$ 
for $\delta$ sufficiently small. For $d=1$, the tangential terms are absent and the boundary condition is simply $W'(0)=0$.

Let $\overline W$ be the mean of $W$ on $X\cap B_{8r_c}$. The small graph norm gives the uniform volume bounds required by \cref{geom:lem:localoblique} on these convex caps. Applying that lemma to \eqref{geom:eq:linearizedMA} and \eqref{geom:eq:oblique},
\begin{equation}\label{geom:eq:localschauder}
 \|W-\overline W\|_{C^{2,\beta}(X\cap B_{4r_c})}
 \le C\left(\|W-\overline W\|_{L^2(X\cap B_{8r_c})}+\kappa\right).
\end{equation}
Poincar\'e's inequality on $X\cap B_{8r_c}$, the lower density bound, and $\alpha_0|_{B_{1/2}}=\mu|_{B_{1/2}}$ give
\begin{equation}\label{m:display:024}
\begin{aligned}
 \|W-\overline W\|_{L^2(X\cap B_{8r_c})}
 &\le C\|\nabla W\|_{L^2(X\cap B_{8r_c})}\le C\left(\int|\mathcal T(x)-x|^2\dd\alpha_0(x)\right)^{1/2}
 =C\cE(\gamma,1)^{1/2}.
\end{aligned}
\end{equation}
This proves the forward estimate in \eqref{geom:eq:linearclassical}, and  interchanging the marginals proves the inverse.

Set $b:=\mathcal T(0)$ and $H:=D\mathcal T(0)$. Boundary correspondence gives $b\in\partial Y$. The bounds and positivity in \eqref{geom:eq:jet} follow from \eqref{geom:eq:linearclassical} and \eqref{eq:replacementabsolute}, while the transport equation at zero gives the determinant identity. Decreasing $\delta$ ensures $|b|<r_c$. Since $D\mathcal T^{-1}(b)=H^{-1}$, integration of the derivatives along $[0,x]$ and $[b,y]$ gives \eqref{geom:eq:taylorclassical}. These segments lie in the respective convex caps, and $B(b,r_c)\subset B_{2r_c}$.

For the interior assertion, use the interior version of \cref{geom:lem:long} and retain the original potential on $B_{3s}$. Take $C_0:=\overline B_s$ and $C_1:=\partial\phi(C_0)$, with subdifferentials computed for the original potential (not its restriction to $C_0$). The supporting-plane argument and the rough gradient estimate give $|p-x|\le e_*\le s/10$ for every $x\in\overline B_s$ and $p\in\partial\phi(x)$. Thus $C_1$ is compact and $C_1\subset\overline B_{11s/10}$. The inverse estimate gives $B_{9s/10}\subset C_1$ up to null sets, and hence everywhere by closedness.

Almost-everywhere differentiability of the conjugate and the inverse identities imply that $C_1$ agrees, up to $\alpha_1$-null sets, with $\{y:\mathcal T^{-1}(y)\in B_s\}$; possible preimages on $\partial B_s$ have zero $\alpha_0$-mass. The restricted marginals are therefore $\rho_0\mathbf1_{B_s}\dd x$ and $\rho_1\mathbf1_{C_1}\dd y$. After the same rescaling and common density normalization, both closed sets contain $B_{1/3}$ and are contained in $B_3$, the density errors are bounded by $C\kappa$, and the cost perturbation vanishes. Moreover, the original potential satisfies
$ \|s^{-2}\phi(s\cdot)-|\cdot|^2/2\|_{L^\infty(B_3)}\le3e_*/s.$
For $d\ge2$, the closed-set localization and regularity theorems \cite[Section~3 and Theorems~4.3, 5.3]{DPF} give uniform absolute interior estimates for $\phi$ and, by symmetry, for $\psi$. For $d=1$, simply use the scalar transport identities above. The preceding linearization and the interior version of \cref{geom:lem:localoblique} then yield \eqref{geom:eq:linearclassical}. The inverse estimate, the conclusions for $b=\mathcal T(0)$ and $H=D\mathcal T(0)$ in \eqref{geom:eq:jet}, and the Taylor estimates follow as above, with $b\in\intr Y$.

Finally, without $\rho_0(0)=\rho_1(0)$, set $\Delta:=|\rho_0(0)-\rho_1(0)|$. The normalized density errors and the bound in \eqref{m:display:023} increase by at most $C\Delta$. Every step remains valid with $\kappa$ replaced by $\kappa+\Delta$, proving the final assertion.
\end{proof}

We record the local oblique estimate used in the preceding proof.

\begin{lemma}[Local Schauder estimate for an oblique boundary problem]\label{geom:lem:localoblique}
Fix $0<r<R$ and $\beta\in(0,1)$. Let $\Omega$ be a convex domain meeting $B_r$, with $C^{2,\beta}$ graph boundary in $B_R$, and set $D_t:=\Omega\cap B_t$. Assume that, for some $c_0,r_*>0$, $|D_t\cap B_\rho(z)|\ge c_0\rho^d$ whenever $r\le t\le R$, $z\in\overline D_t$, and $0<\rho\le r_*$. Let $w\in C^{2,\beta}(\overline D_R)$ satisfy
\begin{equation}\label{m:display:109}
 a^{ij}w_{ij}=F\quad\text{in }D_R,
 \qquad B\cdot Dw=g\quad\text{on }\partial\Omega\cap B_R,
\end{equation}
where $a\in C^\beta(\overline D_R)$ is uniformly elliptic, $B\in C^{1,\beta}(\partial\Omega\cap B_R)$, and $B\cdot n_{\rm in}\ge b_0>0$, with $n_{\rm in}$ the inward unit normal. Then
\begin{equation}\label{geom:eq:localLtwoappendix}
 \|w\|_{C^{2,\beta}(\overline D_r)}
 \le C\left(
 \|w\|_{L^2(D_R)}
 +\|F\|_{C^\beta(\overline D_R)}
 +\|g\|_{C^{1,\beta}(\partial\Omega\cap B_R)}
 \right).
\end{equation}
The constant depends only on $d,\beta,r,R,c_0,r_*$, the boundary chart bounds, the coefficient norms, and the ellipticity and obliqueness constants. In particular, for $\bar w:=|D_R|^{-1}\int_{D_R}w(x)\dd x$,
\begin{equation}\label{m:display:110}
 \|w-\bar w\|_{C^{2,\beta}(\overline D_r)}
 \le C\left(
 \|Dw\|_{L^2(D_R)}
 +\|F\|_{C^\beta(\overline D_R)}
 +\|g\|_{C^{1,\beta}(\partial\Omega\cap B_R)}
 \right).
\end{equation}
For interior balls, the same estimates hold without the terms involving $g$.
\end{lemma}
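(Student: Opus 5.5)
This estimate is a standard local oblique Schauder bound, and I would prove it in two stages. First comes a sup-norm-based local $C^{2,\beta}$ estimate. Then the sup norm is replaced by the $L^2$ norm through interpolation and an absorption (covering) argument; the volume condition on $D_t$ exists precisely for this step.

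\textbf{Stage 1: sup-norm estimate.} Near a boundary point $z_0\in\partial\Omega\cap B_{t}$, I would flatten the boundary using the $C^{2,\beta}$ graph chart. The transformed operator stays uniformly elliptic with $C^\beta$ coefficients, and the transformed boundary vector stays oblique with $C^{1,\beta}$ coefficients, with constants controlled by the chart bounds. The local boundary estimate for oblique problems \cite[Lemma~6.29]{GT} (a local form of \cite[Theorem~6.30]{GT}) then gives, for $t<t'\le R$,
\[
 \|w\|_{C^{2,\beta}(\overline D_t)}\le C(t'-t)^{-k}\bigl(\|w\|_{L^\infty(D_{t'})}+\|F\|_{C^\beta}+\|g\|_{C^{1,\beta}}\bigr)
\]
with a fixed power $k$. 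This follows by covering $\overline D_t$ with finitely many boundary half-balls and interior balls of radius comparable to $t'-t$, and using the interior Schauder estimate at points away from $\partial\Omega$. The scaling of each estimate in the radius accounts for the factor $(t'-t)^{-k}$.

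\textbf{Stage 2: replacing the sup norm by the $L^2$ norm.} Let $z\in \overline D_{t'}$ be a point where $|w|$ attains its supremum, and let $\rho\le r_*$. By the volume hypothesis, $|D_{t'}\cap B_\rho(z)|\ge c_0\rho^d$. On this set, $|w|\ge \sup|w|-\rho\|\nabla w\|_\infty$. Averaging therefore gives
\[
 \sup|w|\le \rho\|\nabla w\|_{L^\infty(D_{t'})}+ (c_0\rho^d)^{-1/2}\|w\|_{L^2(D_R)}.
\]
Combining this with the interpolation inequality $\|\nabla w\|_\infty\le \eta\|w\|_{C^{2,\beta}}+C_\eta\|w\|_\infty$ on the uniformly Lipschitz caps, I expect to obtain
\[
 \|w\|_{C^{2,\beta}(D_t)}\le \tfrac12\|w\|_{C^{2,\beta}(D_{t'})}+C(t'-t)^{-k'}\bigl(\|w\|_{L^2(D_R)}+\|F\|+\|g\|\bigr)
\]
for all $r\le t<t'\le R$. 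The standard iteration lemma (absorption of the $\tfrac12$ term over a geometric sequence of radii between $r$ and $R$) then yields \eqref{geom:eq:localLtwoappendix}.

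\textbf{Main obstacle.} The delicate point is keeping the interpolation constants uniform. These constants depend on the geometry of the caps $D_t$, which can be thin where $B_t$ cuts $\Omega$ near tangency. The volume hypothesis, valid for all $t\in[r,R]$, is what supplies this uniformity, and I would use only that hypothesis together with the chart bounds. If the interpolation step turns out to be awkward, I would instead derive the bound from the uniform sup estimate by a compactness (contradiction) argument.

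\textbf{The mean-subtracted form \eqref{m:display:110}.} The equation and boundary condition involve only derivatives of $w$, so $w-\bar w$ solves the same problem with the same $F$ and $g$. Applying \eqref{geom:eq:localLtwoappendix} to $w-\bar w$ and then the Poincar\'e inequality on the convex set $D_R$ gives \eqref{m:display:110}; the Poincar\'e constant is controlled by the diameter and the volume lower bound. For interior balls, the same argument applies with only the interior Schauder estimates, so the terms involving $g$ drop out.
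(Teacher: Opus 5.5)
Your proposal is correct and follows essentially the paper's route: a local oblique Schauder bound in terms of $\|w\|_{L^\infty(D_{t'})}$ with polynomial blow-up $(t'-t)^{-k}$, the averaging bound $\sup|w|\le\rho\|\nabla w\|_\infty+C\rho^{-d/2}\|w\|_{L^2(D_R)}$ from convexity and the volume hypothesis, absorption over a sequence of radii, and Poincar\'e on the convex set $D_R$ for the mean-subtracted form. The interpolation step you flag as the main obstacle is unnecessary. Since $\|\nabla w\|_{L^\infty(D_{t'})}\le\|w\|_{C^{2,\beta}(\overline D_{t'})}$, choosing $\rho\le r_*$ proportional to $\eta(t'-t)^k$ already produces the small absorbable coefficient, and this is exactly what the paper does.
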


\begin{proof}
Set $K:=\|F\|_{C^\beta(\overline D_R)}+\|g\|_{C^{1,\beta}(\partial\Omega\cap B_R)}$ and $A(t):=\|w\|_{C^{2,\beta}(\overline D_t)}$. The local oblique Schauder estimate \cite[Theorem~6.30 and Section~6.7]{GT} gives, for $r\le t<s<R$ and some fixed $p>0$,
\[
 A(t)\le C(s-t)^{-p}\bigl(\|w\|_{L^\infty(D_s)}+K\bigr).
\]
Since $\dist(\overline D_t,\partial B_s)\ge s-t>0$,
the local estimates use only the boundary condition
on $\partial\Omega\cap B_s$;
no boundary condition on $\partial B_s$ is required. Averaging over $D_s\cap B_\tau(z)$ and using convexity and the volume lower bound gives, for $0<\tau\le r_*$,
\[
 \|w\|_{L^\infty(D_s)}\le \tau A(s)+C\tau^{-d/2}\|w\|_{L^2(D_R)}.
\]
For any sufficiently small $\eta>0$, choose $\tau$ proportional to $\eta(s-t)^p$. Substitution yields, for some $q>0$,
\[
 A(t)\le\eta A(s)+C_\eta(s-t)^{-q}\bigl(\|w\|_{L^2(D_R)}+K\bigr).
\]
Iterate over radii increasing from $r$ to $R':=(r+R)/2$, with successive gaps proportional to $2^{-j}$. Choosing $\eta<2^{-q}$ makes the resulting series converge; the terminal term tends to zero because $A(R')<\infty$. This proves \eqref{geom:eq:localLtwoappendix}.

The function $w-\bar w$ satisfies the same equation and boundary condition. Since $D_R$ is convex, Poincar\'e's inequality gives $\|w-\bar w\|_{L^2(D_R)}\le CR\|Dw\|_{L^2(D_R)}$. Applying \eqref{geom:eq:localLtwoappendix} to $w-\bar w$ proves \eqref{m:display:110}. The interior proof is identical, using the interior Schauder estimate \cite[Section~6.1]{GT} and omitting the boundary terms.
\end{proof}

\subsection{Excess improvement}
\label{geom:sec:improvement}

Assume the hypotheses on $X,Y,\rho_0,\rho_1$ in \cref{geom:lem:classical}, including $\rho_0(0)=\rho_1(0)$ and \eqref{geom:eq:smallclassicaldata}, and assume in the boundary case that $X,Y$ are globally convex. Let $\pi$ be a QOT optimizer whose marginals have equal finite positive mass and densities bounded above by $M$ on their entire supports. Write $E:=\cE(\pi,1)$ and recall $\ell=\eps^{1/(d+2)}$.

\begin{proposition}[Boundary excess improvement]
\label{geom:prop:improvement}
There exist $\beta\in(0,\alpha)$ and $C,\theta_0>0$, depending only on $d,m,M,\alpha$, with the following property. For each $\theta\in(0,\theta_0]$, if $E+\kappa^2+\ell^2$ is sufficiently small, there exist $b\in\partial Y$, $H=H^T\succ0$, and an orthogonal matrix $O$ such that
\begin{equation}\label{geom:eq:changesmall}
 |b|+\|H-\Id\|\le C(\sqrt E+\kappa).
\end{equation}
The change of coordinates
\begin{equation}\label{geom:eq:jettransform}
 x=H^{-1/2}O\widehat x,\qquad
 y=b+H^{1/2}O\widehat y
\end{equation}
places both transformed supports in $\{z_d\ge0\}$, with common boundary point $0$ and tangent plane $\{z_d=0\}$. The transformed densities satisfy $\widehat\rho_0(0)=\widehat\rho_1(0)$. Writing $\widehat\pi$ for the pushforward of $\pi$ under the inverse coordinate map, we have
\begin{equation}\label{geom:eq:onestep}
 \cE(\widehat\pi,\theta)
 \le C\theta^{2\beta}(E+\kappa^2)
       +C\theta^{-d-2}\ell^2.
\end{equation}
The smallness threshold may depend on $\theta$, whereas $C$ is independent of $\theta$.

Under the interior hypotheses of \cref{geom:lem:classical}, the same estimates and density normalization hold with $b\in\intr Y$ and $O=\Id$. %
\end{proposition}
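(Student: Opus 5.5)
The plan is the standard Campanato-type one-step improvement, with the unregularized comparison plan of \cref{geom:prop:rectangle} as the intermediary. First, localize. By \cref{geom:lem:long} applied to $\pi$ (hypotheses hold once $E+\kappa+\ell\le\delta$), every support pair with one endpoint in $B_{1/2}$ has displacement at most $e:=C(E^{1/(d+2)}+\kappa+\ell)<1/4$. Take the rectangle $D:=(X\cap B_{3/4})\times(Y\cap B_{3/4})$, and let $\alpha_0,\alpha_1$ be the marginals of $\pi|_D$. Since no mass with an endpoint in $B_{1/2}$ escapes $D$, we have $\alpha_0|_{B_{1/2}}=\mu|_{B_{1/2}}$ and $\alpha_1|_{B_{1/2}}=\nu|_{B_{1/2}}$. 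Let $\gamma\in\Pi(\alpha_0,\alpha_1)$ be quadratic-optimal. Then \cref{geom:prop:rectangle} with $R=3/4$ gives $\int c\dd\pi_D-\int c\dd\gamma\le C\ell^2$. Also $\cE(\gamma,1)\le 2\int|x-y|^2\dd\pi_D+C\ell^2\le CE+C\ell^2$, so \cref{geom:lem:classical} applies to $\gamma$. It produces $\mathcal T$, $b=\mathcal T(0)\in\partial Y$ and $H=D\mathcal T(0)$ satisfying \eqref{geom:eq:jet}, which gives \eqref{geom:eq:changesmall} after absorbing $\ell\lesssim$ the smallness (or keeping $\ell$ in the bound, then noting $\ell^2$ small).

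Second, set up the coordinates. Put $x=H^{-1/2}O\widehat x$, $y=b+H^{1/2}O\widehat y$. These are reciprocal affine maps, since $(H^{-1/2}O)^{-T}=H^{1/2}O$. By \cref{geom:lem:scaling} (with $R=1$), $\widehat\pi$ is still a QOT optimizer at the same $\eps$. The transformed densities at $0$ are $\det H^{-1/2}\rho_0(0)$ and $\det H^{1/2}\rho_1(b)$. These are equal exactly by $\det H=\rho_0(0)/\rho_1(b)$ from \eqref{geom:eq:jet}. To choose $O$, note that $D\mathcal T(0)$ maps $T_0\partial X$ onto $T_b\partial Y$ by boundary correspondence, and $H$ is symmetric positive. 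Hence $H^{-1/2}$ sends the inner normal of $X$ at $0$ and $H^{1/2}$ sends that of $Y$ at $b$ to a common direction, up to positive factors. Indeed, $Hn_X(0)\parallel n_Y(b)$ means $H^{1/2}n_X\parallel H^{-1/2}n_Y$, as in the proof of \cref{lem:classicalfinite}. Take $O$ rotating $e_d$ to this common normal. Global convexity then places both transformed supports in $\{z_d\ge0\}$ with common tangent plane. In the interior case take $O=\Id$. In the new variables the linearized comparison map is $\widehat{\mathcal T}=H^{-1/2}O^{T}\ldots$, i.e. $\widehat{\mathcal T}(\widehat x)=O^TH^{-1/2}(\mathcal T(H^{-1/2}O\widehat x)-b)$, whose value at $0$ is $0$ and whose derivative at $0$ is $\Id$. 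So \eqref{geom:eq:taylorclassical} gives $|\widehat{\mathcal T}(\widehat x)-\widehat x|\le C(\sqrt E+\kappa+\ell)|\widehat x|^{1+\beta}$ on $B_{c r_c}$, and similarly for the inverse.

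Third, estimate the excess. Let $\widehat\gamma$ be the transformed $\gamma$. For $\theta\le\theta_0\ll r_c$, the mass of $\widehat\gamma$ on $\#B_\theta$ lies in pairs near the graph. So
\[
\int_{\#B_\theta}|\widehat x-\widehat y|^2\dd\widehat\gamma\le C\theta^d\cdot\theta^{2+2\beta}(E+\kappa^2+\ell^2),
\]
using the Taylor bounds on both sides (the target side via $\widehat{\mathcal T}^{-1}$). To pass to $\widehat\pi$, I compare $|x-y|^2$ integrated against $\pi_D$ and $\gamma$ restricted to pairs meeting $B_\theta$. Here I use the quadratic-detachment trick: $\int|y-\mathcal T(x)|^2\dd\pi_D\le C\bigl(\int c\dd\pi_D-\int c\dd\gamma\bigr)\le C\ell^2$, by uniform convexity of the potential $\phi$ on the relevant caps, via $\phi(x)+\psi(y)-x\cdot y\ge c|y-\mathcal T(x)|^2$ locally. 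Pairs of $\pi$ meeting $B_\theta$ stay in the region where \eqref{eq:replacementabsolute} holds, by the first localization step. Then $|\widehat x-\widehat y|^2\le2|\widehat{\mathcal T}(\widehat x)-\widehat x|^2+2|\widehat y-\widehat{\mathcal T}(\widehat x)|^2$, integrated over $\#B_\theta$ under $\widehat\pi$, gives $C\theta^{d+2+2\beta}(E+\kappa^2+\ell^2)+C\ell^2$. The part with $\widehat y\in B_\theta$ but $\widehat x\notin B_\theta$ is handled by the support bound, which keeps $\widehat x\in B_{2\theta}$ once $e\ll\theta$ (smallness depending on $\theta$). Dividing by $\theta^{d+2}$ yields \eqref{geom:eq:onestep}, with the $\theta^{2\beta}\ell^2$ absorbed into $\theta^{-d-2}\ell^2$.

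The main obstacle is the detachment inequality $\int|y-\mathcal T(x)|^2\dd\pi_D\lesssim$ cost gap near the boundary. The potential $\phi$ is only known to be uniformly convex on the cap $X\cap B_{16r_c}$, not globally. I would first try the global identity $\int c\dd\pi_D-\int c\dd\gamma=\int(\phi(x)+\psi(y)-x\cdot y)\dd\pi_D$ with nonnegative integrand. I would then bound the integrand below by $c|y-\nabla\phi(x)|^2$ only for $x$ in the cap, using the local Hessian upper bound of $\psi$ (strong smoothness of $\psi$ gives strong convexity of $\phi$ in the needed Bregman sense along segments inside the convex cap $Y\cap B_{16r_c}$). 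Pairs outside the cap are irrelevant for $\#B_\theta$.
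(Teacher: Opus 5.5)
Your proposal follows the paper's proof in all essentials. It localizes to $B_{3/4}\times B_{3/4}$ via \cref{geom:lem:long} and uses the rectangular comparison to get $\int_D G\dd\pi\le C\ell^2$ for $G(x,y)=\phi(x)+\psi(y)-x\cdot y$. It takes the jet $(b,H)$ from \cref{geom:lem:classical} and chooses $O$ from the boundary normals. It then integrates $|\widehat x-\widehat y|^2\le C|y-\mathcal T(x)|^2+C(E+\kappa^2)\theta^{2+2\beta}$ over a set of $\widehat\pi$-mass $O(\theta^d)$.

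The one variation is legitimate. For pairs with $\widehat y\in B_\theta$, you use the support bound to put $\widehat x\in B_{2\theta}$ and reuse the forward expansion; this needs a $\theta$-dependent threshold, which the statement permits. The paper instead treats this case symmetrically with $\mathcal T^{-1}$ and $G\ge c|x-\mathcal T^{-1}(y)|^2$. Your route needs only the forward estimates. The estimate for $\widehat\gamma$ on $\#B_\theta$ is never used and can be dropped.

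Three details need correcting.

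First, $\ell$ cannot be absorbed into $C(\sqrt E+\kappa)$, since it may dominate both. As written, you therefore do not obtain \eqref{geom:eq:changesmall}. The detour is unnecessary: $D\subset\#B_1$ and optimality of $\gamma$ give $\cE(\gamma,1)=\int|x-y|^2\dd\gamma\le\int_D|x-y|^2\dd\pi\le E$. Then \eqref{geom:eq:jet} yields \eqref{geom:eq:changesmall} exactly, and the Taylor remainders carry $E+\kappa^2$.

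Second, the relation you wrote, $Hn_X(0)\parallel n_Y(b)$, is the reverse of what boundary correspondence gives. From $H(T_0\partial X)=T_b\partial Y$ and $H=H^T$ one gets $Hn_Y(b)=\lambda n_X(0)$. Equivalently $H^{-1/2}n_X\parallel H^{1/2}n_Y$, which is exactly what your preceding sentence correctly requires for the transformed normals to align. You also need $\lambda>0$. This holds, for example, because $\mathcal T$ maps interior points to interior points, or because $H\approx\Id$ and both normals are close to $e_d$.

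Third, the detachment inequality is not an obstacle, but the mechanism you name is the wrong one. Write $G(x,y)=\psi(y)-\psi(\mathcal T(x))-x\cdot(y-\mathcal T(x))$. The bound $G\ge c|y-\mathcal T(x)|^2$ then needs a lower Hessian bound for $\psi$ on $[\mathcal T(x),y]$. An upper bound on $D^2\psi$ (strong convexity of $\phi$) by itself gives $G\ge c|x-\mathcal T^{-1}(y)|^2$ instead. In any case, \eqref{eq:replacementabsolute} provides two-sided Hessian bounds for both potentials on the caps. Your localization places $\mathcal T(x)$ and $y$ in the convex cap $Y\cap B_{2r_c}$, so the inequality follows at once from Taylor's formula, exactly as in the paper.
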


\begin{proof}
By \cref{geom:lem:long}, if $E+\kappa^2+\ell^2$ is sufficiently small, every pair in $\spt\pi$ with one endpoint in $B_{1/2}$ has the other in $B_{3/4}$. Set $D:=B_{3/4}\times B_{3/4}$ and  $ \pi_D:=\pi|_D,$ 
and let $\alpha_0,\alpha_1$ be its marginals. Then $\alpha_0|_{B_{1/2}}=\mu|_{B_{1/2}}$ and $\alpha_1|_{B_{1/2}}=\nu|_{B_{1/2}}$. Let $\gamma\in\Pi(\alpha_0,\alpha_1)$ minimize quadratic transport. Since $\gamma$ is supported in $\overline B_{3/4}\times\overline B_{3/4}$,
\begin{equation}\label{m:display:027}
 \cE(\gamma,1)
 =\int|x-y|^2\dd\gamma
 \le\int_D|x-y|^2\dd\pi
 \le E.
\end{equation}
Thus \cref{geom:lem:classical} applies to $\gamma$. Write $\mathcal T=\nabla\phi$, $\mathcal T^{-1}=\nabla\psi$, where $\phi,\psi$ are conjugate convex potentials, and set $b:=\mathcal T(0)$ and $H:=D\mathcal T(0)$. Equation~\eqref{geom:eq:jet} gives \eqref{geom:eq:changesmall}.

Define
\begin{equation}\label{m:display:028}
 G(x,y):=\phi(x)+\psi(y)-x\cdot y\ge0.
\end{equation}
Since $G=0$ $\gamma$-almost everywhere and $\pi_D,\gamma$ have the same marginals, \cref{geom:prop:rectangle} gives
\begin{equation}\label{geom:eq:gapintegral}
 \int_D G\dd\pi
 =
 \int\tfrac12|x-y|^2\dd\pi_D
 -\int\tfrac12|x-y|^2\dd\gamma
 \le C\ell^2.
\end{equation}

We next choose $O$. Let $n_0,n_1(b)$ denote the inward unit normals to $\partial X$ at $0$ and $\partial Y$ at $b$. The boundary identity for $\mathcal T$ and the symmetry of $H$ imply
\begin{equation}\label{m:display:030}
 H(T_0\partial X)=T_b\partial Y,\qquad
 Hn_1(b)=\lambda n_0,\quad \lambda>0.
\end{equation}
The sign follows because $\mathcal T$ maps the interior into the interior. Choose an orthogonal matrix $O$ satisfying
\begin{equation}\label{m:display:031}
 Oe_d=\frac{H^{-1/2}n_0}{|H^{-1/2}n_0|}.
\end{equation}
Under \eqref{geom:eq:jettransform}, the inward normals are proportional to $O^TH^{-1/2}n_0$ and $O^TH^{1/2}n_1(b)$, respectively. Both are positive multiples of $e_d$ by \eqref{m:display:030}--\eqref{m:display:031}. Convexity therefore places both transformed supports in $\{z_d\ge0\}$.

By \cref{geom:lem:scaling}, the transformed plan $\widehat\pi$ is a QOT optimizer at the same parameter $\eps$. The Jacobians and \eqref{geom:eq:jet} give
\begin{equation}\label{m:display:032}
 \widehat\rho_0(0)
 =(\det H)^{-1/2}\rho_0(0)
 =(\det H)^{1/2}\rho_1(b)
 =\widehat\rho_1(0).
\end{equation}

We now localize the pairs in $\spt\widehat\pi\cap\#B_\theta$. By \eqref{geom:eq:changesmall}, we may assume $\|H^{1/2}\|,\|H^{-1/2}\|\le2$. For their original coordinates, $\min\{|x|,|y|\}\le |b|+2\theta$. Choose $\theta_0$ sufficiently small compared with $r_c$, and then decrease the smallness threshold. The estimate in \cref{geom:lem:long} yields
\[
 \max\{|x|,|y|\}
 \le |b|+2\theta
      +C\bigl(E^{1/(d+2)}+\kappa+\ell\bigr)
 \le r_c.
\]
Moreover, \eqref{geom:eq:linearclassical} gives $|\mathcal T(x)|,|\mathcal T^{-1}(y)|\le2r_c$. In particular, these pairs belong to $D$.

The segments $[\mathcal T(x),y]$ and $[\mathcal T^{-1}(y),x]$ lie in $Y\cap B_{2r_c}$ and $X\cap B_{2r_c}$, respectively. Using \eqref{eq:replacementabsolute} and the inverse identities, we obtain
\begin{equation}\label{m:display:029}
\begin{aligned}
 G(x,y)
 &=\psi(y)-\psi(\mathcal T(x))
       -x\cdot(y-\mathcal T(x))
 \ge c|y-\mathcal T(x)|^2,\\
 G(x,y)
 &=\phi(x)-\phi(\mathcal T^{-1}(y))
       -y\cdot(x-\mathcal T^{-1}(y))
 \ge c|x-\mathcal T^{-1}(y)|^2.
\end{aligned}
\end{equation}

If $\widehat x\in B_\theta$, then $|x|\le2\theta$ and $\widehat y-\widehat x =O^TH^{-1/2}(y-b-Hx)$. The forward estimate in \eqref{geom:eq:taylorclassical} therefore gives
\begin{equation}\label{m:display:033}
 |\widehat y-\widehat x|^2
 \le C|y-\mathcal T(x)|^2
      +C(E+\kappa^2)\theta^{2+2\beta}.
\end{equation}
If $\widehat y\in B_\theta$, then $|y-b|\le2\theta$ and $\widehat y-\widehat x =-O^TH^{1/2}(x-H^{-1}(y-b))$. The inverse estimate gives
\begin{equation}\label{m:display:034}
 |\widehat y-\widehat x|^2
 \le C|x-\mathcal T^{-1}(y)|^2
      +C(E+\kappa^2)\theta^{2+2\beta}.
\end{equation}
The transformed marginal densities are bounded above by $CM$, so each of $B_\theta\times\R^d$ and $\R^d\times B_\theta$ has $\widehat\pi$-mass at most $C\theta^d$. Integrating \eqref{m:display:033}--\eqref{m:display:034} and using \eqref{m:display:029} and \eqref{geom:eq:gapintegral}, we conclude
\[
 \int_{\#B_\theta}
       |\widehat x-\widehat y|^2\dd\widehat\pi
 \le C\ell^2
      +C(E+\kappa^2)\theta^{d+2+2\beta}.
\]
Division by $\theta^{d+2}$ proves \eqref{geom:eq:onestep}.

For the interior assertion, use the interior versions of \cref{geom:lem:long,geom:lem:classical} and take $O=\Id$. The density identity and the excess estimate follow from the same calculations.
\end{proof}

\section{Geometry of the sections}\label{sec:uniformgeometry}
This section establishes uniform control of the size and geometry of the support sections by iterating the estimates from the previous section down to scale $\ell$. Throughout the section, \cref{ass:geometrydata} holds, constants depend only on its data, and $\ell$ is defined by \eqref{eq:length}.
\subsection{A stopped excess iteration}
\label{geom:sec:iteration}

We prove the section-diameter bound \eqref{geom:eq:diameters} by iterating \cref{geom:prop:improvement} down to a fixed multiple of $\ell$. We first treat boundary points, then use the boundary excess estimates to initialize interior iterations at scales comparable with the distance to the boundary.

\subsubsection{Initialization from the classical Brenier map}
By \cref{lem:classicalfinite}, the Brenier map $T$ and its inverse are $C^{1,\alpha_*}$ on $X$ and $Y$, respectively, for some $\alpha_*\in(0,1)$, with uniformly bounded and uniformly positive definite derivatives. The global recovery construction in \cref{geom:lem:recovery} gives
\begin{equation}\label{geom:eq:globalenergy}
\QOT_\eps-\OT\le C\ell^2,\qquad \ell=\eps^{1/(d+2)}.
\end{equation}
Uniform convexity of the classical potentials yields
\begin{equation}\label{geom:eq:globalLtwo}
\int\bigl(|y-T(x)|^2+|x-T^{-1}(y)|^2\bigr)\dd\pi_\eps\le C\ell^2.
\end{equation}
Indeed, classical dual equality and the marginal constraints give
\begin{equation}\label{eq:classicalgap-exact}
\int\bigl(u_0(x)+v_0(y)-x\cdot y\bigr)\dd\pi_\eps=\frac12\int|x-y|^2\dd\pi_\eps-\OT.
\end{equation}
Uniform convexity of $v_0$ bounds the integrand on the left below by a constant times $|y-T(x)|^2$ and uniform convexity of $u_0$ gives the corresponding bound for $|x-T^{-1}(y)|^2$. The nonnegative regularization penalty and \eqref{geom:eq:globalenergy} then imply \eqref{geom:eq:globalLtwo}.

Fix $p\in\partial X$, and set $q=T(p)$ and $A=DT(p)$. Let $n_p$ be the inward unit normal at $p$, and choose an orthogonal matrix $O$ with $Oe_d=A^{-1/2}n_p/|A^{-1/2}n_p|$. The reciprocal affine change
\begin{equation}\label{m:display:035}
x=p+A^{-1/2}O\xi,\qquad y=q+A^{1/2}O\eta
\end{equation}
places both supports in $\{z_d\ge0\}$ with tangent plane $\{z_d=0\}$ at the origin, as in the proof of \cref{geom:prop:improvement}. The transformed classical map has derivative $\Id$ there, and the transport Jacobian identity gives equality of the transformed density values at the origin. Set $L_{\rm init}=A^{-1/2}O$; this matrix and its inverse are uniformly bounded in $p$.

Let $\pi^{\rm init}$ be the pushforward of $\pi_\eps$ under the inverse coordinate change \eqref{m:display:035}. The map
\[
T^{\rm init}(\xi) := O^TA^{-1/2} \bigl(T(p+A^{-1/2}O\xi)-T(p)\bigr)
\]
is the Brenier map from $\mu^{\rm init}$ to $\nu^{\rm init}$, where
\[
\mu^{\rm init} :=\bigl[x\mapsto O^TA^{1/2}(x-p)\bigr]_\#\mu, \qquad \nu^{\rm init} :=\bigl[y\mapsto O^TA^{-1/2}(y-T(p))\bigr]_\#\nu.
\]
Since $A=DT(p)$, we have $T^{\rm init}(0)=0$ and $DT^{\rm init}(0)=O^TA^{-1/2}AA^{-1/2}O=\Id$. Its inverse also fixes $0$ and has derivative $\Id$ there. The uniform $C^{1,\alpha_*}$ bounds therefore give, on the respective transformed supports,
\begin{equation}\label{estimates-T-init}
|T^{\rm init}(\xi)-\xi| \le C|\xi|^{1+\alpha_*}, \qquad |(T^{\rm init})^{-1}(\eta)-\eta| \le C|\eta|^{1+\alpha_*}.
\end{equation}
These estimates hold up to the boundary: the segments from $0$ to $\xi$ and $\eta$ remain in the corresponding convex supports.

For corresponding pairs $(x,y)$ and $(\xi,\eta)$, we have $\eta-T^{\rm init}(\xi) =O^TA^{-1/2}(y-T(x))$ and $\xi-(T^{\rm init})^{-1}(\eta) =O^TA^{1/2}(x-T^{-1}(y))$. The uniform bounds on $A^{1/2},A^{-1/2}$ and \eqref{geom:eq:globalLtwo} thus imply
\begin{equation}\label{geom:eq:globalLtwo-2}
\int\Bigl( |\eta-T^{\rm init}(\xi)|^2 +|\xi-(T^{\rm init})^{-1}(\eta)|^2 \Bigr)\dd\pi^{\rm init} \le C\ell^2.
\end{equation}
Fix a small radius $R_0>0$, independent of $\eps$. If $|\xi|<R_0$, the first estimate in \eqref{estimates-T-init} gives
\[
|\xi-\eta|^2 \le 2|\eta-T^{\rm init}(\xi)|^2 +CR_0^{2+2\alpha_*}.
\]
If $|\eta|<R_0$, the second estimate in \eqref{estimates-T-init} gives
\[
|\xi-\eta|^2 \le 2|\xi-(T^{\rm init})^{-1}(\eta)|^2 +CR_0^{2+2\alpha_*}.
\]
The densities of $\mu^{\rm init}$ and $\nu^{\rm init}$ are uniformly bounded above, so $\pi^{\rm init}(B_{R_0}\times\R^d)\le CR_0^d$ and $\pi^{\rm init}(\R^d\times B_{R_0})\le CR_0^d$. Summing the integrals over the two parts of $\#B_{R_0}$ gives
\[
\begin{aligned}
\int_{\#B_{R_0}}|\xi-\eta|^2\dd\pi^{\rm init} &\le 2\int\Bigl( |\eta-T^{\rm init}(\xi)|^2 +|\xi-(T^{\rm init})^{-1}(\eta)|^2 \Bigr)\dd\pi^{\rm init}\\
&\quad+ CR_0^{2+2\alpha_*} \Bigl( \pi^{\rm init}(B_{R_0}\times\R^d) +\pi^{\rm init}(\R^d\times B_{R_0}) \Bigr)\\
&\le C\ell^2+CR_0^{d+2+2\alpha_*},
\end{aligned}
\]
where we also used \eqref{geom:eq:globalLtwo-2}. Dividing by $R_0^{d+2}$, as prescribed by \eqref{geom:eq:excess}, proves
\begin{equation}\label{geom:eq:initial}
E_0:=\cE(\pi^{\rm init},R_0) \le CR_0^{2\alpha_*} +C\ell^2R_0^{-d-2}.
\end{equation}

Here and below, $K\ge1$ is a fixed bound for the relevant boundary chart and density norms. The constants $C,K$ depend only on the data and are independent of $p,R_0,\eps$. After rescaling by $R_0$, the graph and density bounds in \eqref{geom:eq:smallclassicaldata} are at most $KR_0$. Thus, given $\delta>0$, choose $R_0>0$ so that $CR_0^{2\alpha_*}+KR_0\le\delta/2$, and then $\eps_0>0$ so that $C\eps_0^{2/(d+2)}R_0^{-d-2}\le\delta/2$. By \eqref{geom:eq:initial}, $E_0+KR_0\le\delta$ for every $0<\eps\le\eps_0$, uniformly in $p$.

\subsubsection{A finite Campanato iteration}

Use common constants $\beta,C$ in the interior and boundary versions of \cref{geom:prop:improvement}, with density bounds $m\sqrt{m/M}$ and $M\sqrt{M/m}$. Fix $0<\gamma<\min\{\beta,1\}$ and $0<\theta<\min\{1/8,\theta_0\}$ such that
\begin{equation}\label{m:display:036}
C\theta^{2\beta}\le\theta^{2\gamma}=:a<1.
\end{equation}
Constants below may depend on this fixed $\theta$. Let $\delta_\theta$ be the threshold in \cref{geom:prop:improvement}, and
\begin{equation}\label{m:display:037}
R_n:=\theta^nR_0.
\end{equation}
Fix a large $\Lambda$, to be chosen below. For $\Lambda\ell\le R_0$, let $N$ be the largest index such that $R_N\ge\Lambda\ell$. Then
\begin{equation}\label{geom:eq:stopradius}
\Lambda\ell\le R_N<\Lambda\ell/\theta.
\end{equation}

Starting from $L_0:=L_{\rm init}$ and $q_0:=T(p)$, we construct $L_n,q_n$ inductively. Given these quantities at step $n$, use the coordinates
\begin{equation}\label{geom:eq:iterationcoordinates}
x=p+R_nL_n\xi,\qquad y=q_n+R_n(L_n^{-1})^T\eta,
\end{equation}
and define
\[
\pi^{(n)} :=R_n^{-d} \left[ (x,y)\mapsto \left( R_n^{-1}L_n^{-1}(x-p), R_n^{-1}L_n^T(y-q_n) \right) \right]_\#\pi_\eps, \qquad E_n:=\cE(\pi^{(n)},1).
\]
This agrees with the preceding definition of $E_0$. The marginal supports are
\begin{equation}\label{m:display:038}
X_n:=R_n^{-1}L_n^{-1}(X-p),\qquad Y_n:=R_n^{-1}L_n^T(Y-q_n),
\end{equation}
and their densities are
\[
\rho_{0,n}(\xi) =|\det L_n|\rho_0(p+R_nL_n\xi),\qquad \rho_{1,n}(\eta) =|\det L_n|^{-1}\rho_1(q_n+R_n(L_n^{-1})^T\eta).
\]
By \cref{geom:lem:scaling}, $\pi^{(n)}$ is a QOT optimizer with parameter $\eps/R_n^{d+2}$ and regularization length $\ell/R_n$.

The construction preserves $q_n\in\partial Y$, the common tangent plane $\{z_d=0\}$ at $0$, containment of both supports in $\{z_d\ge0\}$, and $\rho_{0,n}(0)=\rho_{1,n}(0)$. These properties hold at $n=0$ by the initialization. Density matching gives
\[
|\det L_n|^2 =\frac{\rho_1(q_n)}{\rho_0(p)} \in[m/M,M/m].
\]
Thus the marginal densities remain between the fixed bounds specified above.

Assume temporarily that $\|L_{\rm init}^{-1}L_n\|\le2$ and $\|L_n^{-1}L_{\rm init}\|\le2$. Let $D_0\ge1$ be a uniform bound for $\|L_n\|,\|L_n^{-1}\|$ under this assumption. Before dilation by $R_n$, the supports $L_n^{-1}(X-p)$ and $L_n^T(Y-q_n)$ have boundary graphs $\widetilde P_n,\widetilde Q_n$ on a fixed neighborhood of $0$. The original chart bounds, the bounds on the matrices and their inverses, and the implicit function theorem give
\[
\widetilde P_n(0)=\widetilde Q_n(0)=0,\qquad D\widetilde P_n(0)=D\widetilde Q_n(0)=0,\qquad \|\widetilde P_n\|_{C^{3,\alpha}} +\|\widetilde Q_n\|_{C^{3,\alpha}}\le CK.
\]
For $R_0$ sufficiently small, the boundary graphs of $X_n,Y_n$ on $|\xi'|<1$ are $P_n(\xi')=R_n^{-1}\widetilde P_n(R_n\xi')$ and $Q_n(\xi')=R_n^{-1}\widetilde Q_n(R_n\xi')$. Hence
\[
D^kP_n(\xi') =R_n^{k-1}D^k\widetilde P_n(R_n\xi') \quad(k=2,3),\qquad [D^3P_n]_\alpha \le R_n^{2+\alpha}[D^3\widetilde P_n]_\alpha.
\]
Since $P_n(0)=DP_n(0)=0$, Taylor's formula gives $\|P_n\|_\infty+\|DP_n\|_\infty\le CKR_n$. The same estimates hold for $Q_n$, so $\|P_n\|_{C^{3,\alpha}}+\|Q_n\|_{C^{3,\alpha}} \le CKR_n$.

The density formulas and the chain rule give
\[
\begin{aligned}
\|\rho_{0,n}-\rho_{0,n}(0)\|_{L^\infty(X_n\cap B_1)}+\|D\rho_{0,n}\|_{L^\infty(X_n\cap B_1)}&\le CKR_n,\\
[D\rho_{0,n}]_{\alpha;X_n\cap B_1}&\le CKR_n^{1+\alpha},
\end{aligned}
\]
with the analogous estimates for $\rho_{1,n}$ on $Y_n\cap B_1$. Therefore the quantity in \eqref{geom:eq:smallclassicaldata} for the marginals of $\pi^{(n)}$ satisfies
\begin{equation}\label{geom:eq:kappan}
\begin{aligned}
\kappa_n &:= \|P_n\|_{C^{3,\alpha}}+\|Q_n\|_{C^{3,\alpha}}\\
&\quad+ \|\rho_{0,n}-\rho_{0,n}(0)\|_{C^{1,\alpha}(X_n\cap B_1)} +\|\rho_{1,n}-\rho_{1,n}(0)\|_{C^{1,\alpha}(Y_n\cap B_1)} \le CKR_n.
\end{aligned}
\end{equation}

Suppose $n<N$ and $E_n+\kappa_n^2+(\ell/R_n)^2\le\delta_\theta$. Apply \cref{geom:prop:improvement} to $\pi^{(n)}$, and let $H_n,b_n,O_n$ be the resulting matrices and translation. They satisfy
\begin{equation}\label{geom:eq:increments}
\|H_n-\Id\|+|b_n| \le C(\sqrt{E_n}+KR_n),\qquad L_{n+1}:=L_nH_n^{-1/2}O_n.
\end{equation}
Define also
\begin{equation}\label{m:display:040}
q_{n+1}:=q_n+R_n(L_n^{-1})^Tb_n.
\end{equation}
The coordinate change supplied by the proposition, followed by dilation by $\theta$, is
\[
\xi=\theta H_n^{-1/2}O_n\xi_{n+1},\qquad \eta=b_n+\theta H_n^{1/2}O_n\eta_{n+1}.
\]
Substitution into \eqref{geom:eq:iterationcoordinates} gives the same coordinate representation at level $n+1$, with $R_{n+1}=\theta R_n$. The plan $\pi^{(n+1)}$ is consequently obtained from $\pi^{(n)}$ by the inverse of this coordinate change and multiplication by $\theta^{-d}$. The proposition preserves the supporting planes, half-spaces, and density matching.

Let $\widehat\pi^{(n)}$ denote the plan after the change using $H_n,b_n,O_n$, before dilation. The next plan is
\[
\pi^{(n+1)} =\theta^{-d} \bigl[(\xi,\eta)\mapsto(\xi/\theta,\eta/\theta)\bigr]_\# \widehat\pi^{(n)}.
\]
Consequently, \eqref{geom:eq:excess} gives
\[
E_{n+1}=\cE(\pi^{(n+1)},1)=\theta^{-d-2}\int_{\#B_\theta}|\xi-\eta|^2\dd\widehat\pi^{(n)}
=\cE(\widehat\pi^{(n)},\theta).
\]
By \eqref{geom:eq:onestep},
\[
E_{n+1} =\cE(\widehat\pi^{(n)},\theta) \le C\theta^{2\beta}(E_n+\kappa_n^2) +C\theta^{-d-2}(\ell/R_n)^2.
\]
Using \eqref{m:display:036} and \eqref{geom:eq:kappan}, we obtain
\begin{equation}\label{geom:eq:recurrence}
E_{n+1}\le aE_n+CK^2R_n^2+C(\ell/R_n)^2.
\end{equation}

We now verify that this construction reaches $N$. For every constructed level $n\le N$, iteration of \eqref{geom:eq:recurrence} gives
\[
E_n\le a^nE_0+ CK^2R_0^2\sum_{j=0}^{n-1}a^{n-1-j}\theta^{2j} +C(\ell/R_0)^2 \sum_{j=0}^{n-1}a^{n-1-j}\theta^{-2j}.
\]
Since $\theta^2/a<1$ and $a\theta^2<1$,
\begin{equation}\label{geom:eq:iterationbound}
E_n\le C\theta^{2\gamma n}(E_0+K^2R_0^2) +C(\ell/R_n)^2.
\end{equation}
Taking square roots in \eqref{geom:eq:iterationbound} gives
\[
\sqrt{E_j} \le C\theta^{\gamma j}(\sqrt{E_0}+KR_0) +C\ell/R_j.
\]
Since $R_j=\theta^jR_0$, we have
\[
\sum_{j=0}^n\theta^{\gamma j} \le\frac1{1-\theta^\gamma},\qquad \sum_{j=0}^n KR_j\le\frac{KR_0}{1-\theta},
\]
and
\[
\sum_{j=0}^n\frac{\ell}{R_j} =\frac{\ell}{R_n}\sum_{j=0}^n\theta^{n-j} \le\frac{\ell}{(1-\theta)R_n} \le\frac1{(1-\theta)\Lambda},
\]
where the last inequality uses $n\le N$. Adding these estimates yields
\begin{equation}\label{geom:eq:summability}
\sum_{j=0}^n(\sqrt{E_j}+KR_j) \le C\bigl(\sqrt{E_0}+KR_0+\Lambda^{-1}\bigr), \qquad n\le N,
\end{equation}
with $C$ independent of $n,N$ and $\eps$. Moreover, \eqref{geom:eq:increments} and orthogonality of $O_j$ imply
\[
\max\bigl\{ \|L_{\rm init}^{-1}L_n\|, \|L_n^{-1}L_{\rm init}\| \bigr\} \le \exp\!\left( C\sum_{j<n}(\sqrt{E_j}+KR_j)\right).
\]
Choose $\Lambda$ large, then $R_0$ small, and finally $\eps$ small using \eqref{geom:eq:initial}, so that
\[
C(\sqrt{E_0}+KR_0+\Lambda^{-1})<\log2, \qquad C(E_0+K^2R_0^2+\Lambda^{-2})<\delta_\theta,
\]
with $C$ large enough for the preceding estimates. The first inequality ensures the two relative matrix norms remain below $2$. Thus \eqref{geom:eq:kappan} remains valid, and the second inequality gives $E_n+\kappa_n^2+(\ell/R_n)^2<\delta_\theta$. Induction therefore constructs every level $0\le n\le N$.

The target centers satisfy
\begin{equation}\label{m:display:041}
|q_n-T(p)| \le C\sum_{j<n}R_j(\sqrt{E_j}+KR_j) \le CR_0\sum_{j<n}(\sqrt{E_j}+KR_j).
\end{equation}
In particular, they remain in a fixed neighborhood of $T(p)$.

Let $\delta_{\rm long}>0$ be the threshold in \cref{geom:lem:long}. At the last step $n=N$, where $\Lambda\ell\le R_N<\Lambda\ell/\theta$, \eqref{geom:eq:iterationbound}, \eqref{geom:eq:kappan}, and \eqref{geom:eq:stopradius} give
\[
E_N\le C(E_0+K^2R_0^2+\Lambda^{-2}),\qquad \kappa_N\le CKR_0,\qquad \frac{\ell}{R_N}\le\Lambda^{-1}.
\]
Choose $\Lambda$ large, then $R_0$ small, and finally $\eps$ small using \eqref{geom:eq:initial}, so that $E_N+\kappa_N+\ell/R_N\le\delta_{\rm long}$. The supports of the marginals of $\pi^{(N)}$ are convex, lie in $\{z_d\ge0\}$, and have the required graph representations and density bounds. Thus \cref{geom:lem:long} applies and gives
\[
\sup_{(\xi,\eta)\in\spt\pi^{(N)}\cap\#B_{1/2}} |\xi-\eta| \le C\left(E_N^{1/(d+2)} +\kappa_N+\frac{\ell}{R_N}\right) \le C.
\]

Fix $c_b>0$ such that $D_0c_b<1/2$, and take $x\in X$ with $|x-p|\le c_bR_N$. In the level-$N$ coordinates,
\[
\xi:=R_N^{-1}L_N^{-1}(x-p),\qquad |\xi|\le D_0c_b<1/2.
\]
For any $y,y'\in S_{\eps,x}$, define $\eta:=R_N^{-1}L_N^T(y-q_N)$ and $\eta':=R_N^{-1}L_N^T(y'-q_N)$. By \eqref{eqv:eq:support} and the definition of $\pi^{(N)}$, both $(\xi,\eta)$ and $(\xi,\eta')$ belong to $\spt\pi^{(N)}\cap\#B_{1/2}$. Consequently,
\[
|y-y'| =R_N|(L_N^{-1})^T(\eta-\eta')|
\le D_0R_N\bigl(|\eta-\xi|+|\eta'-\xi|\bigr) \le CR_N \le C\frac{\Lambda}{\theta}\ell.
\]
Taking the supremum over $y,y'\in S_{\eps,x}$ proves $\diam S_{\eps,x}\le C\ell$, with the fixed factor $\Lambda/\theta$ absorbed into $C$.

All constants are uniform in $p\in\partial X$. If $\dist(x,\partial X)\le c_b\Lambda\ell$, choose $p\in\partial X$ attaining this distance. Then $|x-p|\le c_b\Lambda\ell\le c_bR_N$, so
\begin{equation}\label{geom:eq:boundarystrip}
\diam S_{\eps,x}\le C\ell \qquad\text{if }\dist(x,\partial X) \le c_b\Lambda\ell.
\end{equation}
Applying the same argument with the marginals interchanged gives the  bound for $K_{\eps,y}$ near~$\partial Y$.

\subsubsection{Interior localization from boundary estimates}
The passage from boundary estimates to an interior iteration follows the argument of Miura--Otto for unregularized transport in \cite[Section~6.3, proof of Theorem~1.1]{MO}. We adapt it to QOT, stopping at a fixed multiple of $\ell$, as in the regularized interior iteration of \cite[Theorem~15]{GK}. Recall $D_0,c_b$ from the proof of \eqref{geom:eq:boundarystrip}, with $D_0c_b<1/2$. Choose $M_0$ sufficiently large and then $r_*>0$ sufficiently small that
\begin{equation}\label{geom:eq:coverconstants}
M_0c_b>2/\theta,\qquad D_0/M_0<1/16,\qquad 0<r_*<\frac{\theta}{64D_0M_0}.
\end{equation}
These constants are independent of $\Lambda$, which may be enlarged below. Take $x\in X$ with
\begin{equation}\label{m:display:042}
t:=\dist(x,\partial X),\qquad c_b\Lambda\ell<t<R_0/M_0.
\end{equation}
Choose $p\in\partial X$ such that $|x-p|=t$. Then $x-p=tn_p$, where $n_p$ is the inward unit normal. In the boundary iteration based at $p$, let $n$ be the largest index with $R_n\ge M_0t$. Thus
\begin{equation}\label{geom:eq:coverlevel}
M_0t\le R_n<M_0t/\theta.
\end{equation}
This index exists because $M_0t<R_0$ and $R_k\to0$. Moreover, $M_0t>M_0c_b\Lambda\ell>2\Lambda\ell/\theta>R_N$, so $n<N$.

The coordinate of $x$ at this level is
\begin{equation}\label{m:display:043}
\xi:=R_n^{-1}L_n^{-1}(x-p).
\end{equation}
Since the inward normal to $X_n$ at $0$ is $e_d$, we have $e_d=\frac{L_n^Tn_p}{|L_n^Tn_p|}.$ 
Consequently, \eqref{geom:eq:coverlevel} gives
\begin{equation}\label{geom:eq:normaldistance}
|\xi|\le\frac{D_0}{M_0},\qquad \xi_d =\frac{L_n^Tn_p\cdot L_n^{-1}(x-p)} {R_n|L_n^Tn_p|} =\frac{t}{R_n|L_n^Tn_p|} \ge\frac{\theta}{D_0M_0}.
\end{equation}

The graph representations of $X_n,Y_n$ are $X_n\cap B_1=\{z\in B_1:z_d\ge P_n(z')\}$ and $Y_n\cap B_1=\{z\in B_1:z_d\ge Q_n(z')\}$. By \eqref{geom:eq:kappan}, both graphs vanish to first order at $0$ and have second derivatives bounded by $CKR_n$. For $z\in B(\xi,4r_*)$, we have $|z|<D_0/M_0+4r_*<1$, and Taylor's formula gives
\[
\max\{P_n(z'),Q_n(z')\} \le CKR_0(D_0/M_0+4r_*)^2 \le\frac{\theta}{2D_0M_0} <\xi_d-4r_*<z_d,
\]
after decreasing $R_0$. Together with \eqref{geom:eq:coverlevel}, this yields
\begin{equation}\label{geom:eq:interiorstartballs}
B(\xi,4r_*)\subset\intr X_n\cap\intr Y_n,\qquad r_*M_0t\le r_*R_n<r_*M_0t/\theta.
\end{equation}
In particular, $r_*R_n\asymp t$, with constants independent of $x$ and $\eps$.

Since $\rho_{0,n}(0)=\rho_{1,n}(0)$, $|\rho_{0,n}(\xi)-\rho_{1,n}(\xi)|\le C\kappa_n$. Define the positive scalar matrix
\begin{equation}\label{m:display:045}
A_*:= \left(\frac{\rho_{0,n}(\xi)} {\rho_{1,n}(\xi)}\right)^{1/d}\Id.
\end{equation}
The density bounds imply $\|A_*-\Id\|\le C\kappa_n$. For $R_0$ sufficiently small, $\|A_*^{1/2}\|,\|A_*^{-1/2}\|<4/3$.

For a pair $(x_n,y_n)$ in the level-$n$ coordinates, use
\begin{equation}\label{m:display:046}
x_n=\xi+r_*A_*^{-1/2}\zeta,\qquad y_n=\xi+r_*A_*^{1/2}\eta,
\end{equation}
and define
\[
\widetilde\pi :=r_*^{-d} \left[ (x_n,y_n)\mapsto \left( r_*^{-1}A_*^{1/2}(x_n-\xi), r_*^{-1}A_*^{-1/2}(y_n-\xi) \right) \right]_\#\pi^{(n)}.
\]
Set $\widetilde E_0:=\cE(\widetilde\pi,1)$. Substitution into \eqref{geom:eq:iterationcoordinates} gives
\[
x'=x+r_*R_n(L_nA_*^{-1/2})\zeta,\qquad y'=q_n+R_n(L_n^{-1})^T\xi +r_*R_n(L_nA_*^{-1/2})^{-T}\eta,
\]
where we used $p+R_nL_n\xi=x$ and $(L_nA_*^{-1/2})^{-T}=(L_n^{-1})^TA_*^{1/2}$. Thus the initial radius, source matrix, and target center for the interior iteration are $\widetilde R_0:=r_*R_n$, $\widetilde L_0:=L_nA_*^{-1/2}$, and $\widetilde q_0:=q_n+R_n(L_n^{-1})^T\xi$. The source center is $x$, and the total mass multiplier relative to $\pi_\eps$ is $(r_*R_n)^{-d}$.

The marginal supports of $\widetilde\pi$ are $\widetilde X=r_*^{-1}A_*^{1/2}(X_n-\xi)$ and $\widetilde Y=r_*^{-1}A_*^{-1/2}(Y_n-\xi)$. By \eqref{geom:eq:interiorstartballs},
\[
\xi+r_*A_*^{-1/2}B_3\subset B(\xi,4r_*)\subset X_n, \qquad \xi+r_*A_*^{1/2}B_3\subset B(\xi,4r_*)\subset Y_n.
\]
Hence $B_3\subset\widetilde X\cap\widetilde Y$. By \cref{geom:lem:scaling}, their densities are
\[
\widetilde\rho_0(\zeta) =(\det A_*)^{-1/2} \rho_{0,n}(\xi+r_*A_*^{-1/2}\zeta),\qquad
\widetilde\rho_1(\eta) =(\det A_*)^{1/2} \rho_{1,n}(\xi+r_*A_*^{1/2}\eta).
\]
The identity $\det A_*=\rho_{0,n}(\xi)/\rho_{1,n}(\xi)$ gives $\widetilde\rho_0(0)=\widetilde\rho_1(0)$. Let $\widetilde\kappa_0$ denote the density quantity in \eqref{geom:eq:smallclassicaldata}, with the graph terms omitted.

To estimate $\widetilde E_0$, note that \eqref{m:display:046} gives both identities
\[
\begin{aligned}
\eta-\zeta &=r_*^{-1}A_*^{-1/2}(y_n-x_n) +r_*^{-1}(A_*^{-1/2}-A_*^{1/2})(x_n-\xi),\\
\eta-\zeta &=r_*^{-1}A_*^{1/2}(y_n-x_n) +r_*^{-1}(A_*^{-1/2}-A_*^{1/2})(y_n-\xi).
\end{aligned}
\]
Use the first when $\zeta\in B_1$, so that $|x_n-\xi|\le(4/3)r_*$, and the second when $\eta\in B_1$, so that $|y_n-\xi|\le(4/3)r_*$. In either case, $|\eta-\zeta|^2\le Cr_*^{-2}|y_n-x_n|^2+C\kappa_n^2$.

The inverse image of $\#B_1$ under this change of coordinates is
\[
\mathcal D:= \bigl((\xi+r_*A_*^{-1/2}B_1)\times\R^d\bigr) \cup \bigl(\R^d\times(\xi+r_*A_*^{1/2}B_1)\bigr).
\]
Since $|\xi|+(4/3)r_*<1$, we have $\mathcal D\subset\#B_1$. The marginal density bounds give
\[
\begin{aligned}
\pi^{(n)}(\mathcal D) &\le \int_{\xi+r_*A_*^{-1/2}B_1}\rho_{0,n}(x_n)\dd x_n +\int_{\xi+r_*A_*^{1/2}B_1}\rho_{1,n}(y_n)\dd y_n\\
&\le C\bigl( |r_*A_*^{-1/2}B_1|+|r_*A_*^{1/2}B_1| \bigr) \le Cr_*^d.
\end{aligned}
\]
Therefore
\[
\begin{aligned}
\widetilde E_0 &=r_*^{-d}\int_{\mathcal D} |\eta-\zeta|^2\dd\pi^{(n)}\\
&\le Cr_*^{-d-2} \int_{\mathcal D}|y_n-x_n|^2\dd\pi^{(n)} +Cr_*^{-d}\kappa_n^2\pi^{(n)}(\mathcal D)\\
&\le Cr_*^{-d-2}E_n+C\kappa_n^2.
\end{aligned}
\]
For $i=0,1$, the density formulas and the chain rule give
\[
\|D\widetilde\rho_i\|_{L^\infty(B_1)} \le CKr_*R_n,\qquad [D\widetilde\rho_i]_{\alpha;B_1} \le CK(r_*R_n)^{1+\alpha}.
\]
Moreover, $\|\widetilde\rho_i-\widetilde\rho_i(0)\|_{L^\infty(B_1)} \le\|D\widetilde\rho_i\|_{L^\infty(B_1)}$. Since $r_*R_n\le1$, we conclude that
\begin{equation}\label{geom:eq:restartestimates}
\widetilde E_0 \le Cr_*^{-d-2}E_n+C\kappa_n^2,\qquad \widetilde\kappa_0\le CKr_*R_n.
\end{equation}
The new parameter is $\eps/(r_*R_n)^{d+2}$, and its regularization length is $\ell/(r_*R_n)$.

Let $\delta_{\rm long}$ be the threshold in \cref{geom:lem:long}. Choose $\Lambda_{\rm int}$ sufficiently large, and then choose $\Lambda$ large enough for the boundary iteration and such that
\begin{equation}\label{geom:eq:stopcompatibility}
r_*M_0c_b\Lambda>4\Lambda_{\rm int}.
\end{equation}
Since $R_n>M_0c_b\Lambda\ell$, equations \eqref{geom:eq:kappan}, \eqref{geom:eq:iterationbound}, and \eqref{geom:eq:restartestimates} imply
\[
\widetilde E_0 \le Cr_*^{-d-2} (E_0+K^2R_0^2+\Lambda^{-2}),\qquad \frac{\ell}{r_*R_n} <\frac1{r_*M_0c_b\Lambda}.
\]
By enlarging $\Lambda$ and then decreasing $R_0$ and $\eps_0$, using \eqref{geom:eq:initial}, we may therefore require
\[
C\bigl(\sqrt{\widetilde E_0} +Kr_*R_n+\Lambda_{\rm int}^{-1}\bigr) <\frac14,\qquad
C\bigl(\widetilde E_0 +K^2(r_*R_n)^2+\Lambda_{\rm int}^{-2}\bigr) +\Lambda_{\rm int}^{-1} <\min\{\delta_\theta,\delta_{\rm long}\}.
\]
Here $C$ is fixed sufficiently large for all the iteration estimates below. These requirements hold uniformly in $x,p,n$ and $0<\eps<\eps_0$.

Keep $n$ fixed and index the interior iteration by $j$, with radii $\widetilde R_j:=\theta^jr_*R_n$. Let $J$ be the largest index with $\widetilde R_J\ge\Lambda_{\rm int}\ell$. It exists because \eqref{geom:eq:stopcompatibility} gives $r_*R_n>4\Lambda_{\rm int}\ell$, and satisfies $\Lambda_{\rm int}\ell\le\widetilde R_J <\Lambda_{\rm int}\ell/\theta$.

Starting from $\widetilde\pi^{(0)}:=\widetilde\pi$, we use coordinates $x'=x+\widetilde R_j\widetilde L_j\zeta_j$ and $y'=\widetilde q_j+ \widetilde R_j\widetilde L_j^{-T}\eta_j$. The plan $\widetilde\pi^{(j)}$ is the corresponding pushforward of $\pi_\eps$, multiplied by $\widetilde R_j^{-d}$, and $\widetilde E_j:=\cE(\widetilde\pi^{(j)},1)$.

Proceed inductively under the bounds $\|\widetilde L_0^{-1}\widetilde L_j\|, \|\widetilde L_j^{-1}\widetilde L_0\|\le2$ and the inclusion of $B_3$ in both current supports. Both conditions hold at $j=0$. The density rescaling gives $\widetilde\kappa_j\le CK\widetilde R_j$. Density matching gives $|\det\widetilde L_j|^2 =\rho_1(\widetilde q_j)/\rho_0(x)$, so the marginal density bounds remain uniform.

Whenever $\widetilde E_j+\widetilde\kappa_j^2 +(\ell/\widetilde R_j)^2\le\delta_\theta$, apply the interior version of \cref{geom:prop:improvement}. It supplies $H_j=H_j^T\succ0$ and $b_j$, with $O_j=\Id$, and $|b_j|+\|H_j-\Id\| \le C(\widetilde E_j^{1/2}+\widetilde\kappa_j)$. Define $\widetilde L_{j+1}:=\widetilde L_jH_j^{-1/2}$ and $\widetilde q_{j+1}:=\widetilde q_j+ \widetilde R_j\widetilde L_j^{-T}b_j$. The consecutive coordinates satisfy $\zeta_j=\theta H_j^{-1/2}\zeta_{j+1}$ and $\eta_j=b_j+\theta H_j^{1/2}\eta_{j+1}$. In particular, the source center remains $x$.

As in \eqref{geom:eq:recurrence}--\eqref{geom:eq:summability}, every constructed level satisfies
\[
\begin{aligned}
\widetilde E_{j+1} &\le a\widetilde E_j +CK^2\widetilde R_j^2 +C(\ell/\widetilde R_j)^2,\\
\widetilde E_j &\le C\theta^{2\gamma j} \bigl(\widetilde E_0+K^2(r_*R_n)^2\bigr) +C(\ell/\widetilde R_j)^2,\\
\sum_{k=0}^j (\sqrt{\widetilde E_k}+K\widetilde R_k) &\le C\bigl(\sqrt{\widetilde E_0} +Kr_*R_n+\Lambda_{\rm int}^{-1}\bigr).
\end{aligned}
\]
The constants are independent of $n,j,J,\eps$. Our choices imply $\widetilde E_j+\widetilde\kappa_j^2 +(\ell/\widetilde R_j)^2<\delta_\theta$. The matrix-product estimate used in the boundary iteration gives
\[
\max\{ \|\widetilde L_0^{-1}\widetilde L_j\|, \|\widetilde L_j^{-1}\widetilde L_0\| \} \le \exp\left( C\sum_{k<j} (\sqrt{\widetilde E_k}+K\widetilde R_k) \right) <2.
\]
Moreover, $|b_j|+\|H_j-\Id\|<1/4$. Thus every eigenvalue of $H_j$ belongs to $(3/4,5/4)$, and $\|H_j^{1/2}\|,\|H_j^{-1/2}\|<2$. Consequently,
\begin{equation}\label{m:display:047}
\theta H_j^{-1/2}B_3\subset B_{6\theta}\subset B_3, \qquad b_j+\theta H_j^{1/2}B_3 \subset B_{1/4+6\theta}\subset B_3.
\end{equation}
Both transformed supports at level $j+1$ therefore contain $B_3$. This closes the induction and constructs every level $0\le j\le J$.

At $j=J$, the parameter is $\eps/\widetilde R_J^{d+2}$, and the preceding bounds give
\[
\widetilde E_J+\frac{\ell}{\widetilde R_J} \le C\bigl(\widetilde E_0+K^2(r_*R_n)^2 +\Lambda_{\rm int}^{-2}\bigr) +\Lambda_{\rm int}^{-1} <\delta_{\rm long}.
\]
Both marginal supports contain $B_3$, so the interior version of \cref{geom:lem:long} yields
\[
\sup_{(\zeta,\eta)\in \spt\widetilde\pi^{(J)}\cap\#B_{1/2}} |\zeta-\eta| \le C\left( \widetilde E_J^{1/(d+2)} +\frac{\ell}{\widetilde R_J} \right) \le C.
\]
For $y\in S_{\eps,x}$, \eqref{eqv:eq:support} and the coordinate change give $(0,\eta)\in\spt\widetilde\pi^{(J)}\cap\#B_{1/2}$, since $x$ has source coordinate $0$. Hence $|\eta|\le C$. For any $y,y'\in S_{\eps,x}$, with target coordinates $\eta,\eta'$, it follows that
\[
|y-y'| =\widetilde R_J |(\widetilde L_J^{-1})^T(\eta-\eta')| \le C\widetilde R_J \le \frac{C\Lambda_{\rm int}}{\theta}\ell.
\]
Thus $\diam S_{\eps,x}\le C\ell$, since $\Lambda_{\rm int}$ and $\theta$ are fixed.

It remains to consider $\dist(x,\partial X)\ge R_0/M_0$. Choose a fixed smooth domain $D\Subset\intr X$ containing $\{x\in X:\dist(x,\partial X)\ge R_0/M_0\}$. By \cref{ass:geometrydata,lem:classicalfinite}, the hypotheses of \cite[Theorem~3.1]{GGK} hold. Apply that theorem with entropy exponent $p=2$ and regularization parameter $\eps/2$; its objective then agrees with \eqref{eq:problem} up to an additive constant. Its diameter estimate passes to our closed sections by taking closures, giving $\diam S_{\eps,x}\le C\ell$ uniformly for $x\in D$, after decreasing $\eps_0$. Together with the preceding interior iteration and \eqref{geom:eq:boundarystrip}, this covers every $x\in X$. Interchanging the marginals gives
\begin{equation}\label{geom:eq:diameters}
\sup_{x\in X}\diam S_{\eps,x} +\sup_{y\in Y}\diam K_{\eps,y} \le C\ell,\qquad 0<\eps<\eps_0.
\end{equation}

As for the bookkeeping, the constants are chosen in the following order: the comparison neighborhoods, matrix and density bounds, the exponents and $\theta$, then $c_b,M_0,r_*$, $\Lambda_{\rm int}$, $\Lambda$, $R_0$, and finally $\eps_0$. %
\subsection{Section geometry and uniform curvature}
\label{geom:sec:geometry}

We deduce the section geometry and Hessian bounds from \eqref{geom:eq:diameters}.

\begin{lemma}[Section geometry from diameter bounds]
\label{geom:lem:diametertoballs}
Let $X,Y$ be convex bodies, with continuous marginal densities satisfying $0<m\le\rho_i\le M<\infty$ on their respective supports. If \eqref{geom:eq:diameters} holds, then \eqref{eq:balls} and \eqref{eq:scales} hold uniformly in the base points, for all sufficiently small $\eps$.
\end{lemma}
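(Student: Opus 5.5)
The plan is to derive everything from \eqref{geom:eq:diameters} together with the marginal equations \eqref{eqv:eq:marginal}, the barycenter formulas \eqref{eqv:eq:centers}, and the centroid lemma (\cref{eqv:lem:centroid}). I treat $S_{\eps,x}$; the case of $K_{\eps,y}$ follows by interchanging the marginals.

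\emph{Step 1: slack gradient on a section.} For fixed $x$, the function $y\mapsto q_\eps(x,y)$ is concave with gradient $x-\nabla v_\eps(y)=x-\zeta_{\eps,y}$, extended continuously to the boundary. If $y\in S_{\eps,x}$, then $x\in K_{\eps,y}$. By \eqref{eqv:eq:centers}, $\zeta_{\eps,y}$ is a barycenter of the convex body $K_{\eps,y}$, so it also lies in $K_{\eps,y}$. Hence $|x-\zeta_{\eps,y}|\le\diam K_{\eps,y}\le C\ell$. Since $S_{\eps,x}$ is convex, $q_\eps(x,\cdot)$ is $C\ell$-Lipschitz on $S_{\eps,x}$.

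\emph{Step 2: height upper bound.} I need a zero of $q_\eps(x,\cdot)$ lying in $S_{\eps,x}$.
\begin{itemize}
\item For small $\eps$, $\diam S_{\eps,x}\le C\ell$ forces $S_{\eps,x}\neq Y$, because $Y$ has fixed positive volume.
\item Choose a positive-slack point $y_1\in S_{\eps,x}$ and a point $y_2\in\intr Y\setminus S_{\eps,x}$. By continuity, the segment $[y_1,y_2]$ contains a point $y_0$ with $q_\eps(x,y_0)=0$.
\item Convexity of $S_{\eps,x}$ gives $y_0\in S_{\eps,x}$.
\end{itemize}
Let $y^*$ be a maximizer of $q_\eps(x,\cdot)$ over $Y$; it lies in $S_{\eps,x}$. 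The segment $[y^*,y_0]$ stays in $S_{\eps,x}$, so Step~1 gives $\max_Y q_\eps(x,\cdot)\le C\ell\cdot|y^*-y_0|\le C\ell^2$. I expect this step to be the only delicate point: the zero-slack point must be produced inside the section, including for boundary base points and for sections that touch $\partial Y$.

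\emph{Step 3: volumes and the lower height bound.}
\begin{itemize}
\item Volume lower bound: the marginal equation gives $\ell^{d+2}=\eps=\int_{S_{\eps,x}}q_\eps(x,y)\dd\nu(y)\le C\ell^2 M|S_{\eps,x}|$, so $|S_{\eps,x}|\ge c\ell^d$.
\item Volume upper bound: $S_{\eps,x}$ has diameter at most $C\ell$, so $|S_{\eps,x}|\le C\ell^d$.
\item Height lower bound: $\eps\le\max_Y q_\eps(x,\cdot)\,M|S_{\eps,x}|\le C\ell^d\max_Y q_\eps(x,\cdot)$, which yields $\max_Y q_\eps(x,\cdot)\ge c\ell^2$.
\end{itemize}
These three bounds give \eqref{eq:scales}.

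\emph{Step 4: the balls.} By \eqref{eqv:eq:centers}, $z_{\eps,x}=\nabla u_\eps(x)$ is the $\rho_1$-centroid of the convex body $S_{\eps,x}$.
\begin{itemize}
\item Inner ball: \cref{eqv:lem:centroid} applies with $r=\ell$, $c_0=c$ and $C_0=C$, giving $B(\nabla u_\eps(x),c_1\ell)\subset S_{\eps,x}$. This is a full Euclidean ball, as required.
\item Outer ball: since $z_{\eps,x}\in S_{\eps,x}$, we get $S_{\eps,x}\subset B(z_{\eps,x},2\diam S_{\eps,x})\subset B(\nabla u_\eps(x),C\ell)$.
\end{itemize}
All constants depend only on $d$, $m$, $M$ and the constant in \eqref{geom:eq:diameters}, so they are uniform in the base point. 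Boundary base points are covered by the continuity of $\nabla u_\eps$ and of the identities \eqref{eqv:eq:centers}.
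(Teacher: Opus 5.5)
Your proposal is correct and follows essentially the same route as the paper: the slack-gradient bound $|x-\zeta_{\eps,y}|\le\diam K_{\eps,y}$ on each section, a zero of the slack inside the section to bound the height, the marginal equation to get the two volume bounds and the height lower bound, and \cref{eqv:lem:centroid} for the inner ball. One small slip in Step~2: the membership $y_0\in S_{\eps,x}$ follows from $y_0\in Y$ (by convexity of $Y$) together with $q_\eps(x,y_0)=0$, not from convexity of $S_{\eps,x}$, since the endpoint $y_2$ of that segment lies outside $S_{\eps,x}$.
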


\begin{proof}
Under the present hypotheses, \cite[Lemma~2.2]{GdBN} gives
$u_\eps\in C^1(X)$ and $v_\eps\in C^1(Y)$, together with
the barycenter identities \eqref{eqv:eq:centers}.
Fix $x\in X$. By \eqref{eqv:eq:centers}, $z_{\eps,x}\in S_{\eps,x}$ and $\zeta_{\eps,y}\in K_{\eps,y}$. For $y\in S_{\eps,x}$, we also have $x\in K_{\eps,y}$. Hence \eqref{geom:eq:diameters} gives
\begin{equation}\label{m:display:048}
|\nabla_yq_\eps(x,y)| =|x-\zeta_{\eps,y}| \le\diam K_{\eps,y} \le C\ell, \qquad y\in S_{\eps,x}.
\end{equation}

Set $H_x:=\max_Yq_\eps(x,\cdot)$ and choose $y_+\in Y$ attaining this maximum. The marginal identity \eqref{eqv:eq:marginal} gives $H_x>0$. For $\eps$ sufficiently small, $\diam S_{\eps,x}<\diam Y$, so $S_{\eps,x}\ne Y$. Continuity along a segment from $y_+$ to a point of $Y\setminus S_{\eps,x}$ gives $y_0\in S_{\eps,x}$ with $q_\eps(x,y_0)=0$. The segment $[y_0,y_+]$ lies in $S_{\eps,x}$ by convexity. Thus \eqref{m:display:048} yields
\begin{equation}\label{m:display:049}
\begin{aligned}
H_x &=\int_0^1 \nabla_yq_\eps(x,y_0+s(y_+-y_0)) \cdot(y_+-y_0)\dd s\le C\ell\,\diam S_{\eps,x} \le C\ell^2.
\end{aligned}
\end{equation}
Moreover, \eqref{eqv:eq:marginal} and \eqref{geom:eq:diameters} imply
\begin{equation}\label{m:display:050}
\ell^{d+2}=\eps =\int_{S_{\eps,x}}q_\eps(x,y)\rho_1(y)\dd y \le MH_x|S_{\eps,x}|,\qquad |S_{\eps,x}|\le C\ell^d.
\end{equation}
Using the volume upper bound gives $H_x\ge c\ell^2$, while using \eqref{m:display:049} gives $|S_{\eps,x}|\ge c\ell^d$. Interchanging the marginals proves \eqref{eq:scales}.

By \eqref{eqv:eq:centers}, $z_{\eps,x}$ is the centroid of $S_{\eps,x}$ with respect to the density $\rho_1$. The bounds $|S_{\eps,x}|\ge c\ell^d$ and $\diam S_{\eps,x}\le C\ell$, together with \cref{eqv:lem:centroid}, give $B(z_{\eps,x},c\ell)\subset S_{\eps,x}$. Since $z_{\eps,x}\in S_{\eps,x}$, the diameter bound also gives $S_{\eps,x}\subset B(z_{\eps,x},C\ell)$, after enlarging $C$. The same argument applies to $K_{\eps,y}$, proving \eqref{eq:balls}.
\end{proof}

\begin{proof}[Completion of the proof of
\cref{thm:geometry}] The estimates \eqref{eq:balls} and \eqref{eq:scales} follow from \cref{geom:lem:diametertoballs}. It remains to prove the Hessian bounds \eqref{eq:hessians}. Indeed, let $x\in\intr X$ and $y\in\partial S_{\eps,x}\cap\intr Y$. Continuity gives $q_\eps(x,y)=0$, so $x\in K_{\eps,y}$. If $x\in\intr K_{\eps,y}$, the function $q_\eps(\cdot,y)$ has a local minimum at $x$, so $\nabla_xq_\eps(x,y)=0$. Concavity would then give $q_\eps(\cdot,y)\le0$ on $X$, contradicting the second identity in \eqref{eqv:eq:marginal}. Thus $x\in\partial K_{\eps,y}$, and \eqref{eq:balls} gives
\begin{equation}\label{eq:freedenominator}
c\ell\le|x-\zeta_{\eps,y}|\le C\ell, \qquad y\in\partial S_{\eps,x}\cap\intr Y.
\end{equation}
By \eqref{eq:scales} and the density bounds, $\nu(S_{\eps,x})\asymp\ell^d$. Apply \cref{eqv:lem:cone} to $S_{\eps,x}\subset Y$, using \eqref{eq:balls} with outer radius $C\ell$. For $\eps$ sufficiently small, this gives
\[
\int_{\partial S_{\eps,x}\cap\intr Y} ((y-z_{\eps,x})\cdot e)^2\dd\HH(y) \asymp\ell^{d+1}, \qquad |e|=1.
\]
Consequently, \eqref{eqv:eq:hessian}, \eqref{eq:freedenominator}, and the density bounds yield, for almost every $x\in X$,
\[
e^TD^2u_\eps(x)e \asymp \ell^{-d-1} \int_{\partial S_{\eps,x}\cap\intr Y} ((y-z_{\eps,x})\cdot e)^2\dd\HH(y) \asymp1, \qquad |e|=1.
\]
All constants are independent of $x,e,\eps$. Interchanging the marginals proves \eqref{eq:hessians}.
\end{proof}

\subsection{Conditional covariance and section centers}
\label{sec:conditional-regularity}

We apply \cref{thm:geometry} to the conditional laws \eqref{eq:canonical-conditional-laws}. The covariance estimate gives the transport-cost lower bound. We abbreviate $S_x=S_{\eps,x}$, $K_y=K_{\eps,y}$, $z_x=z_{\eps,x}$, and $\zeta_y=\zeta_{\eps,y}$. Define the conditional means and covariance matrices by
\begin{equation}\label{eq:conditional-covariance-definition}
\begin{gathered}
m_\eps(x):=\int_Y y\,\pi_\eps^x(\dd y),\qquad \widehat m_\eps(y):=\int_X x\,\widehat\pi_\eps^y(\dd x),\\
\Sigma_\eps(x):=\int_Y(y-m_\eps(x))\otimes(y-m_\eps(x))\,\pi_\eps^x(\dd y),\\
\widehat\Sigma_\eps(y):=\int_X(x-\widehat m_\eps(y))\otimes(x-\widehat m_\eps(y))\,\widehat\pi_\eps^y(\dd x).
\end{gathered}
\end{equation}
Moreover, define the transport-cost excess and the regularization penalty by
\begin{equation}\label{eq:costpenalty}
 C_\eps=\frac12\int|x-y|^2\dd\pi_\eps-\OT,\qquad
 P_\eps=\frac\eps2\int h_\eps^2\dd(\mu\otimes\nu).
\end{equation}

\begin{proposition}[Conditional covariance and excess transport cost]
\label{prop:conditional-covariance}
Under \cref{ass:geometrydata}, there are $c,C,\eps_0>0$ such that, for $0<\eps<\eps_0$,
\begin{equation}\label{eq:conditional-covariance-bounds}
\begin{aligned}
c\ell^2\Id\preceq\Sigma_\eps(x)\preceq C\ell^2\Id, &\qquad x\in X,\\
c\ell^2\Id\preceq\widehat\Sigma_\eps(y)\preceq C\ell^2\Id, &\qquad y\in Y.
\end{aligned}
\end{equation}
The transport-cost excess $C_\eps$ defined in \eqref{eq:costpenalty} satisfies
\begin{equation}\label{eq:cost-excess-sharp}
c\ell^2\le C_\eps\le C\ell^2.
\end{equation}
\end{proposition}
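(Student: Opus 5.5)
The covariance bounds will come directly from the section geometry in \cref{thm:geometry}, and the cost excess will then follow by combining them with a Taylor expansion of the classical dual gap.

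\emph{Covariance bounds.} Fix $x\in X$. The conditional law $\pi_\eps^x$ has density $q_\eps(x,\cdot)_+\rho_1/\eps$ and is supported in $S_{\eps,x}\subset B(z_{\eps,x},C\ell)$. Hence $|y-m_\eps(x)|\le 2C\ell$ on its support, which gives $\Sigma_\eps(x)\preceq C\ell^2\Id$.

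For the lower bound, fix a unit vector $e$. By \eqref{eq:balls}, $B(z_{\eps,x},c\ell)\subset S_{\eps,x}$. We need a region carrying mass on which the conditional density is of order $\ell^{-d}$, so that $\pi_\eps^x$ cannot concentrate near any hyperplane. By \eqref{eq:scales}, $q_\eps(x,\cdot)$ is concave with maximum $H_x\asymp\ell^2$ and vanishes somewhere on $\partial S_{\eps,x}$. From \eqref{m:display:048}, its gradient is at most $C\ell$ on the section.

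Let $y_+$ be the maximizer. Then $q_\eps(x,\cdot)\ge H_x/2$ on $B(y_+,c'\ell)\cap Y$ for a small $c'$. Convexity of $Y$ and the fact that $S_{\eps,x}$ contains a full ball of radius $c\ell$ give $|B(y_+,c'\ell)\cap S_{\eps,x}|\ge c\ell^d$: take the convex hull of $y_+$ with the inner ball and scale toward $y_+$.

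On this set the conditional density is at least $c\ell^2\cdot m/\eps=c\ell^{-d}$. The variance of $y\cdot e$ under $\pi_\eps^x$ is at least
\[
\inf_a\int_{A}((y-a)\cdot e)^2\,c\ell^{-d}\dd y\ge c\ell^2,
\]
where $A$ denotes this set. The last inequality holds because $A$ has volume $\asymp\ell^d$ and diameter $\lesssim\ell$, so its width in every direction is $\gtrsim\ell$, as in the proof of \cref{eqv:lem:centroid}. The same argument applies to $\widehat\Sigma_\eps(y)$.

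\emph{Cost excess.} The upper bound $C_\eps\le C\ell^2$ is immediate from \eqref{geom:eq:globalenergy}, since $P_\eps\ge0$. For the lower bound, use \eqref{eq:classicalgap-exact}:
\[
C_\eps=\int\bigl(u_0(x)+v_0(y)-x\cdot y\bigr)\dd\pi_\eps .
\]
By \cref{lem:classicalfinite}, uniform convexity of $v_0$ bounds the integrand below by $\frac{1}{2\Lambda}|y-T(x)|^2$. Here $T(x)=\nabla v_0^{*}$-type identities give $v_0(y)-v_0(T(x))-x\cdot(y-T(x))\ge\frac1{2\Lambda}|y-T(x)|^2$, using $\nabla v_0(T(x))=x$ and the segment $[T(x),y]\subset Y$. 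Integrating in $y$ first against $\pi_\eps^x$ gives
\[
\int|y-T(x)|^2\pi_\eps^x(\dd y)\ge\tr\Sigma_\eps(x)\ge c\ell^2,
\]
since the second moment about any point dominates the variance. Integrating over $\mu$ yields $C_\eps\ge c\ell^2$.

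\emph{Main obstacle.} The delicate step is the lower covariance bound at boundary base points. There $S_{\eps,x}$ may meet $\partial Y$, and the maximizer $y_+$ could sit on $\partial Y$. I would handle this by working with the convex hull of $y_+$ and the full inner ball $B(z_{\eps,x},c\ell)$, which lies inside $Y$. The lower bound on $q_\eps$ near $y_+$ along this hull should then follow from the Lipschitz bound $C\ell$ together with $H_x\ge c\ell^2$.
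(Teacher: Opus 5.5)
Your proof is correct and follows the paper's argument. The lower covariance bound comes from a ball of radius $\asymp\ell$ inside $S_{\eps,x}$ on which the conditional density is $\gtrsim\ell^{-d}$, the upper bound from the diameter, and $C_\eps\ge c\ell^2$ from uniform convexity of $v_0$ together with the fact that the second moment about $T(x)$ dominates $\tr\Sigma_\eps(x)$.

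The only difference is that the paper centers the ball at the barycenter $z_{\eps,x}$ rather than at the maximizer $y_+$. There, concavity, \eqref{eqv:eq:centers} and the row marginal equation give $q_\eps(x,z_{\eps,x})\ge\eps/\nu(S_{\eps,x})\ge c\ell^2$, and \eqref{eq:balls} supplies the full inner ball directly. This makes your convex-hull construction unnecessary. It also makes your width-to-variance step unnecessary; if you keep your route, that step is cleaner done with the explicit ball moment identity applied to your scaled inner ball.
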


\begin{proof}
Fix $x\in X$. By \eqref{eqv:eq:centers}, concavity, and \eqref{eqv:eq:marginal},
\begin{equation}\label{eq:uniform-central-slack}
q_\eps(x,z_x)\ge\frac{1}{\nu(S_x)}\int_{S_x}q_\eps(x,y)\dd\nu(y)=\frac{\eps}{\nu(S_x)}\ge c\ell^2,
\end{equation}
where the last inequality uses $\nu(S_x)\le M|S_x|\le C\ell^d$ from \eqref{eq:scales} and $\eps=\ell^{d+2}$. By \eqref{m:display:048}, $|\nabla_yq_\eps(x,y)|\le C\ell$ on $S_x$. Choose $b>0$ smaller than the constant~$c$ in \eqref{eq:balls}. For $y\in B(z_x,b\ell)$, integration along $[z_x,y]\subset S_x$ gives $q_\eps(x,y)\ge q_\eps(x,z_x)-C\ell|y-z_x|$. Using \eqref{eq:uniform-central-slack} and decreasing $b$ if necessary, we obtain
\begin{equation}\label{eq:positive-conditional-core}
\begin{gathered}
B(z_x,b\ell)\subset S_x,\qquad q_\eps(x,y)\ge c\ell^2,\\
\frac{\dd\pi_\eps^x}{\dd y}(y)=\frac{(q_\eps(x,y))_+\rho_1(y)}{\eps}\ge c\ell^{-d}\qquad(y\in B(z_x,b\ell)).
\end{gathered}
\end{equation}
For every unit vector $e$ and $a\in\R^d$, symmetry of the ball gives
\begin{equation}\label{eq:covariance-ball-moment}
\begin{aligned}
\int_{B(z_x,b\ell)}((y-a)\cdot e)^2\dd y &=\frac{\omega_d}{d+2}(b\ell)^{d+2} +\omega_d(b\ell)^d((z_x-a)\cdot e)^2\ge\frac{\omega_d}{d+2}(b\ell)^{d+2},
\end{aligned}
\end{equation}
where $\omega_d=|B_1|$. Taking $a=m_\eps(x)$ and using \eqref{eq:positive-conditional-core} gives $e^T\Sigma_\eps(x)e\ge c\ell^2$. Since $m_\eps(x)\in S_x$, $e^T\Sigma_\eps(x)e \le\int_Y|y-m_\eps(x)|^2\pi_\eps^x(\dd y) \le(\diam S_x)^2\le C\ell^2$. Interchanging the marginals proves \eqref{eq:conditional-covariance-bounds}.

Set $G_0(x,y):=u_0(x)+v_0(y)-x\cdot y$. The classical contact identity gives $G_0(x,y)=v_0(y)-v_0(T(x))-x\cdot(y-T(x))$. Taylor's formula, $\nabla v_0(T(x))=x$, and \cref{lem:classicalfinite} then yield
\begin{equation}\label{eq:classical-gap-two-sided}
c|y-T(x)|^2\le G_0(x,y)\le C|y-T(x)|^2, \qquad C_\eps=\int G_0\dd\pi_\eps,
\end{equation}
where the last identity is \eqref{eq:classicalgap-exact}. Since $\int_Y(y-m_\eps(x))\pi_\eps^x(\dd y)=0$,
\begin{equation}\label{eq:conditional-variance-decomposition}
\int|y-T(x)|^2\dd\pi_\eps =\int_X\tr\Sigma_\eps(x)\dd\mu(x) +\int_X|m_\eps(x)-T(x)|^2\dd\mu(x).
\end{equation}
Thus $C_\eps\ge c\int_X\tr\Sigma_\eps(x)\dd\mu(x) \ge c\ell^2$. The upper bound follows from \eqref{geom:eq:globalenergy} and the nonnegativity of the regularization penalty.
\end{proof}

\paragraph{Sections centered at the conditional means.}
The ball inclusions also hold with the conditional means as centers:
\begin{align}\label{eq:conditional-mean-section-balls}
B(m_\eps(x),c\ell) &\subset S_x\subset B(m_\eps(x),C\ell), &&x\in X,
\end{align}
and analogously for $\widehat m_\eps(y)$. Indeed, fix $x\in X$ and a unit vector $e$, and set $a_e:=\min_{y\in S_x}e\cdot y$. For $y\in S_x$, $0\le e\cdot y-a_e \le\operatorname{width}_e(S_x)\le C\ell$. Consequently,
\begin{equation}\label{eq:conditional-mean-width}
c\ell^2\le e^T\Sigma_\eps(x)e \le\int_{S_x}(e\cdot y-a_e)^2\,\pi_\eps^x(\dd y)
\le\operatorname{width}_e(S_x) \bigl(e\cdot m_\eps(x)-a_e\bigr).
\end{equation}
The first inequality uses \eqref{eq:conditional-covariance-bounds}, and the second uses that the mean minimizes the second moment. Since $\operatorname{width}_e(S_x)\le C\ell$, we obtain $e\cdot m_\eps(x)-a_e\ge c\ell$ for every unit $e$. The supporting-halfspace representation of $S_x$ gives $B(m_\eps(x),c\ell)\subset S_x$. The outer inclusion follows from $m_\eps(x)\in S_x$ and $\diam S_x\le C\ell$. Interchanging the marginals proves \eqref{eq:conditional-mean-section-balls}.

Since $S_x\subset Y$ and $K_y\subset X$, these inclusions
give $\dist(m_\eps(x),\partial Y)\ge c\ell$ for $x\in X$
and $\dist(\widehat m_\eps(y),\partial X)\ge c\ell$
for $y\in Y$.

\section{Localization around the Brenier graph}\label{sec:localization}
The diameter estimates in \cref{thm:geometry} do not yet locate the sections relative to the Brenier graph. We now obtain this localization by controlling the conditional mean and the potential error. Throughout this section, we impose \cref{ass:geometrydata}, use the normalization \eqref{eq:gauge}, and take $\eps$ small enough for the validity of \cref{thm:geometry} and the energy bound \eqref{geom:eq:globalenergy}. We write $\ell=\eps^{1/(d+2)}$ and abbreviate the potential errors by
\begin{equation}\label{eq:potentialerrors}
r_\eps=u_\eps-u_0,\qquad s_\eps=v_\eps-v_0.
\end{equation}

\subsection{A potential for the conditional mean}
\label{bmo:sec:correction}

We construct a potential for the conditional mean $m_\eps$ defined in \eqref{eq:conditional-covariance-definition}. Set
\begin{equation}\label{bmo:eq:correctiondef}
 a_\eps(x):=\frac1{2\eps}
       \int_Y(q_\eps(x,y)_+)^2\dd\nu(y),
 \qquad
 \Theta_\eps:=u_\eps+a_\eps.
\end{equation}
The analogous target correction is defined by interchanging the marginals. We will use the barycenter identity \eqref{eqv:eq:centers} in the form
\begin{equation}\label{bmo:eq:restrictedmean}
 z_{\eps,x}
 =\frac{\int_{S_{\eps,x}}y\dd\nu(y)}
        {\nu(S_{\eps,x})},
 \qquad
 \int_{S_{\eps,x}}(y-z_{\eps,x})\dd\nu(y)=0.
\end{equation}

\begin{lemma}[Conditional-mean potential]
\label{bmo:lem:conditional}
Under \cref{ass:geometrydata}, for all sufficiently small~$\eps$,
\begin{equation}\label{bmo:eq:conditional}
 \nabla\Theta_\eps=m_\eps,\qquad
 c\ell^2\le a_\eps\le C\ell^2,\qquad
 \|\nabla a_\eps\|_\infty\le C\ell,\qquad
 c\Id\preceq D^2\Theta_\eps\preceq C\Id.
\end{equation}
The first three assertions hold everywhere on $X$ and the Hessian estimates hold a.e.\ on $X$. The constants are independent of $\eps$. The same assertions hold with the marginals interchanged.
\end{lemma}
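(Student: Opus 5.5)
The plan is to differentiate $a_\eps$ directly under the integral sign, using only that $u_\eps\in C^{1,1}$ and that $(t_+)^2$ is $C^1$ with derivative $2t_+$. Since $\nabla_x q_\eps(x,y)=y-\nabla u_\eps(x)$, dominated convergence gives, everywhere on $X$ (with boundary points handled by continuous extension of the interior gradients),
\[
 \nabla a_\eps(x)=\frac1\eps\int_Y q_\eps(x,y)_+\,(y-\nabla u_\eps(x))\dd\nu(y)
 =m_\eps(x)-\nabla u_\eps(x),
\]
where the second equality uses the row marginal equation \eqref{eqv:eq:marginal}: the weights $q_\eps(x,\cdot)_+/\eps$ integrate to one against $\nu$, and by \eqref{eq:canonical-conditional-laws} they are exactly the density of $\pi_\eps^x$. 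This gives $\nabla\Theta_\eps=m_\eps$. The bound $\|\nabla a_\eps\|_\infty\le C\ell$ then follows because $\nabla u_\eps(x)=z_{\eps,x}$ and $m_\eps(x)$ both lie in $S_{\eps,x}$, whose diameter is at most $C\ell$ by \eqref{eq:balls}.

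For the two-sided bound on $a_\eps$, I will write $a_\eps(x)=\frac12\int_Y q_\eps(x,y)_+\dd\pi^x_\eps(y)$.
\begin{itemize}
\item The upper bound follows from $q_\eps\le\max_Y q_\eps(x,\cdot)\le C\ell^2$ in \eqref{eq:scales}.
\item For the lower bound, I restrict to the core ball $B(z_x,b\ell)$ from \eqref{eq:positive-conditional-core}. There $q_\eps\ge c\ell^2$ and the density of $\pi^x_\eps$ is at least $c\ell^{-d}$, so the $\pi^x_\eps$-mass of the ball is at least $c$. Hence $a_\eps\ge c\ell^2$.
\end{itemize}

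The Hessian bounds are the main step. I will differentiate $m_\eps(x)=\eps^{-1}\int_Y y\,q_\eps(x,y)_+\dd\nu(y)$ once more, at points of twice-differentiability of $u_\eps$. Since $y\mapsto q_\eps(x,y)_+$ is Lipschitz in $x$ and the zero set $\{q_\eps(x,\cdot)=0\}$ is $\nu$-null, differentiation under the integral gives
\[
 D m_\eps(x)=\frac1\eps\int_{S_{\eps,x}}y\otimes(y-\nabla u_\eps(x))\dd\nu(y).
\]
Because $\int_{S_{\eps,x}}(y-z_{\eps,x})\dd\nu=0$ by \eqref{bmo:eq:restrictedmean}, this equals
\[
 \eps^{-1}\int_{S_{\eps,x}}(y-z_{\eps,x})\otimes(y-z_{\eps,x})\dd\nu(y),
\]
which is a symmetric matrix. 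The justification I intend uses the Lipschitz dependence of $q_\eps(\cdot,y)$ and dominated convergence; no boundary terms arise, because the integrand vanishes continuously on the free boundary. The density bounds and \eqref{eq:balls} then sandwich this matrix between multiples of $\eps^{-1}\ell^{d+2}\Id=\Id$, with the inner ball supplying the lower bound and the diameter bound the upper bound. This gives $c\Id\preceq D^2\Theta_\eps\preceq C\Id$ a.e., without even needing \eqref{eq:hessians}.

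The expected obstacle is exactly this a.e. differentiation at the kink of the positive part. I would handle it by noting that $x\mapsto m_\eps(x)$ is Lipschitz, since the integrand is Lipschitz in $x$ uniformly in $y$, so that Rademacher's theorem applies. The difference quotients converge pointwise off the $\nu$-null zero set, and are dominated by $C|y|$ times the Lipschitz constant of $\nabla u_\eps$ for fixed $\eps$. The interchanged statements follow by symmetry.
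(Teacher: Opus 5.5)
Your proposal is correct and is essentially the paper's proof: the same differentiation of $a_\eps$ giving $\nabla a_\eps=m_\eps-z_{\eps,x}$, the same diameter argument for $\|\nabla a_\eps\|_\infty$, the same height bound for $a_\eps\le C\ell^2$, and the same covariance formula for $Dm_\eps$ sandwiched via \eqref{eq:balls}. The one deviation is the lower bound $a_\eps\ge c\ell^2$, which you obtain from the core ball \eqref{eq:positive-conditional-core} while the paper uses Cauchy--Schwarz, $a_\eps(x)\ge\eps/(2\nu(S_{\eps,x}))$. A small correction: the difference quotients of $y\,q_\eps(x,y)_+$ are dominated by the bound on $\nabla u_\eps$, not by the Lipschitz constant of $\nabla u_\eps$, so the differentiation works at every interior point and Rademacher's theorem is not needed.
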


\begin{proof}
Since $t\mapsto(t_+)^2$ is continuously differentiable, differentiation under the integral and \eqref{eqv:eq:marginal} give
\begin{equation}\label{m:display:052}
 \begin{aligned}
 \nabla a_\eps(x)
 &=\frac1\eps\int_Y
       q_\eps(x,y)_+(y-z_{\eps,x})\dd\nu(y)=m_\eps(x)-z_{\eps,x}.
 \end{aligned}
\end{equation}
Thus $\nabla\Theta_\eps=m_\eps$. The height estimate \eqref{m:display:049} gives
\[
 a_\eps(x)\le
 (2\eps)^{-1}\max_Yq_\eps(x,\cdot)
       \int_Yq_\eps(x,y)_+\dd\nu(y)
 \le C\ell^2.
\]
For the lower bound, Cauchy--Schwarz yields
\begin{equation}\label{m:display:053}
 a_\eps(x)
 =\frac\eps2\int_Yh_\eps(x,y)^2\dd\nu(y)
 \ge\frac{\eps}{2\nu(S_{\eps,x})}
       \left(\int_Yh_\eps(x,y)\dd\nu(y)\right)^2
 =\frac{\eps}{2\nu(S_{\eps,x})}
 \ge c\ell^2.
\end{equation}
Both $m_\eps(x)$ and $z_{\eps,x}$ belong to $S_{\eps,x}$, so \eqref{m:display:052} and \eqref{geom:eq:diameters} give $|\nabla a_\eps(x)|\le C\ell$.

For fixed $x$, concavity and the positive row integral imply $\{y\in Y:q_\eps(x,y)=0\}\subset\partial S_{\eps,x}$. This set has zero Lebesgue measure and hence zero $\nu$-measure. Since $\nabla_xq_\eps(x,y)=y-z_{\eps,x}$ is bounded, dominated convergence justifies differentiating $m_\eps(x)=\eps^{-1}\int_Yyq_\eps(x,y)_+\dd\nu(y)$ at every interior base point. Using \eqref{bmo:eq:restrictedmean}, we obtain
\begin{equation}\label{bmo:eq:covariance}
 \begin{aligned}
 Dm_\eps(x)
 &=\frac1\eps\int_{S_{\eps,x}}
       y\otimes(y-z_{\eps,x})\dd\nu(y)=\frac1\eps\int_{S_{\eps,x}}
       (y-z_{\eps,x})\otimes(y-z_{\eps,x})\dd\nu(y).
 \end{aligned}
\end{equation}
The ball inclusions \eqref{eq:balls} and the density bounds imply, for every unit vector $e$,
\[
 c\ell^{d+2}
 \le\int_{S_{\eps,x}}
       ((y-z_{\eps,x})\cdot e)^2\dd\nu(y)
 \le C\ell^{d+2}.
\]
Since $\eps=\ell^{d+2}$ and $D^2\Theta_\eps=Dm_\eps$, this proves the Hessian estimates. The gradient identities extend to the closed supports by continuity. In particular, $m_\eps$ is uniformly Lipschitz and strongly monotone on $X$.
\end{proof}

Recall the transport-cost excess and penalty:
\begin{equation}\label{m:display:054}
 C_\eps=\frac12\int|x-y|^2\dd\pi_\eps-\OT,
 \qquad
 P_\eps=\frac\eps2\int h_\eps^2\dd(\mu\otimes\nu).
\end{equation}
By \eqref{bmo:eq:correctiondef}, $P_\eps=\int_Xa_\eps\dd\mu$, so \cref{bmo:lem:conditional} and \eqref{geom:eq:globalenergy} give $P_\eps\asymp\ell^2$ and $\QOT_\eps-\OT=C_\eps+P_\eps\asymp\ell^2$.

The identity $h_\eps=(q_\eps)_+/\eps$ gives $\int q_\eps\dd\pi_\eps =\eps\int h_\eps^2\dd(\mu\otimes\nu)=2P_\eps$. Together with \eqref{eq:classical-gap-two-sided},
\[
 \int_Xr_\eps\dd\mu+\int_Ys_\eps\dd\nu
 =-\int(q_\eps+G_0)\dd\pi_\eps
 =-2P_\eps-C_\eps.
\]
Under the normalization \eqref{eq:gauge}, the two integrals agree, so
\begin{equation}\label{bmo:eq:means}
 \int_Xr_\eps\dd\mu
 =\int_Ys_\eps\dd\nu
 =-P_\eps-\tfrac12C_\eps.
\end{equation}
Recall \eqref{bmo:eq:correctiondef} and define
\begin{equation}\label{bmo:eq:wdef}
 \begin{aligned}
 w&:=\Theta_\eps-u_0=r_\eps+a_\eps,
 &\bar w&:=\int_Xw\dd\mu=-\tfrac12C_\eps,\\
 W&:=w-\bar w,
 &b&:=\nabla w=m_\eps-T.
 \end{aligned}
\end{equation}
We suppress the dependence of $w,\bar w,W,b$ on $\eps$. In particular, $\int_XW\dd\mu=0$ and $\nabla W=b$.

For a convex body $D$, set $D_{x,r}:=D\cap B(x,r)$. For $f\in L^1(D)$, define
\begin{equation}\label{m:display:051}
 \begin{aligned}
 \langle f\rangle_{D_{x,r}}
 &:=\frac1{|D_{x,r}|}\int_{D_{x,r}}f(z)\dd z,\\
 [f]_{\BMO(D)}
 &:=\sup_{\substack{x\in D\\0<r\le2\diam D}}
       \frac1{|D_{x,r}|}
       \int_{D_{x,r}}
       |f(z)-\langle f\rangle_{D_{x,r}}|\dd z.
 \end{aligned}
\end{equation}
This seminorm is unchanged by adding constants. Averages against a density bounded above and away from zero define an equivalent seminorm.

Conditional Jensen's inequality gives $|b(x)|^2 \le\int_Y|y-T(x)|^2\pi_\eps^x(\dd y)$. Integrating and using \eqref{geom:eq:globalLtwo}, we obtain
\begin{equation}\label{bmo:eq:averaged}
 \|b\|_{L^2(\mu)}^2
 \le\int|y-T(x)|^2\dd\pi_\eps
 \le C\ell^2.
\end{equation}
By \cref{bmo:lem:conditional,lem:classicalfinite}, $b$ is uniformly Lipschitz. Convexity and the density lower bound give $\mu(X\cap B(x,r))\ge cr^d$ uniformly in $x\in X$ and sufficiently small $r>0$. Averaging over this set and applying Cauchy--Schwarz yields
\[
 \begin{aligned}
 |b(x)|
 &\le Cr+
 \frac1{\mu(X\cap B(x,r))}
       \int_{X\cap B(x,r)}|b(z)|\dd\mu(z)\\
 &\le Cr+Cr^{-d/2}\|b\|_{L^2(\mu)}
 \le C(r+\ell r^{-d/2}).
 \end{aligned}
\]
Choose $r=\ell^{2/(d+2)}$. Since $\int_XW\dd\mu=0$ and $\nabla W=b$, $\|W\|_\infty\le\diam X\,\|b\|_\infty$. Also, $[W]_{\BMO(X)}\le2\|W\|_\infty$. Consequently,
\begin{equation}\label{bmo:eq:rough}
 \|b\|_\infty\le C\ell^{2/(d+2)},\qquad
 \|W\|_\infty\le C\ell^{2/(d+2)},\qquad
 [W]_{\BMO(X)}\to0,
\end{equation}
where all norms are over $X$ and the convergence is as $\eps\downarrow0$.

\subsection{Coarse affine approximation from the stopped iteration}
\label{bmo:sec:coarseinput}

We use the stopped iteration to approximate $\nabla W=m_\eps-T$ by constants on relative balls.

Recall the iteration of Section~\ref{geom:sec:iteration}. At $p\in\partial X$, initialize with $L_0=DT(p)^{-1/2}O$ and $q_0=T(p)$. At radius $R_n=\theta^nR_0$, the coordinates are
\begin{equation}\label{bmo:eq:itercoords}
 x=p+R_nL_n\xi,\qquad
 y=q_n+R_n(L_n^{-1})^T\zeta.
\end{equation}
The plan $\pi^{(n)}$ is the pushforward of $\pi_\eps$ under the inverse coordinate map, multiplied by $R_n^{-d}$. Its excess is
\begin{equation}\label{m:display:055}
 E_n=\cE(\pi^{(n)},1)
 =\int_{\#B_1}|\xi-\zeta|^2
       \dd\pi^{(n)}(\xi,\zeta).
\end{equation}
For the largest index $N$ with $R_N\ge\Lambda\ell$,
\begin{align}\label{bmo:eq:iterest}
 E_0
 &\le CR_0^2+C\ell^2R_0^{-d-2},
 \nonumber\\
 E_{n+1}
 &\le\theta^{2\gamma}E_n+CK^2R_n^2
       +C(\ell/R_n)^2
       &&(n<N),
 \nonumber\\
 E_n
 &\le C\theta^{2\gamma n}(E_0+K^2R_0^2)
       +C(\ell/R_n)^2
       &&(n\le N),
 \nonumber\\
 \sum_{n=0}^N(\sqrt{E_n}+KR_n)
 &\le C\bigl(\sqrt{E_0}+KR_0+\Lambda^{-1}\bigr).
\end{align}
The first estimate uses the Lipschitz bound for $DT$ in \cref{lem:classicalfinite}. The matrices satisfy
\begin{equation}\label{bmo:eq:matrixupdate}
 L_{n+1}=L_nH_n^{-1/2}O_n,\qquad
 \|H_n-\Id\|\le C(\sqrt{E_n}+KR_n),
\end{equation}
where $O_n$ is orthogonal.

For the interior restart, \eqref{geom:eq:restartestimates} gives
\begin{equation}\label{bmo:eq:restart}
 \widetilde E_0
 \le Cr_*^{-d-2}E_n+C(KR_n)^2,\qquad
 \widetilde R_0=r_*R_n,
\end{equation}
and $\|A_*-\Id\|\le CKR_n$. The restarted iteration satisfies the same recurrence and summability estimates, with stopping factor $\Lambda_{\rm int}$. The constants in these estimates and in the uniform matrix bounds
are independent of the stopping factors and the initial radius,
provided the stated smallness and compatibility conditions hold.
In the following proof we use new instances of the stopped
iterations, choosing the stopping factors, initial radius and
regularization threshold for each prescribed accuracy $\eta$. Recall that $X_{x,r}=X\cap B(x,r)$.

\begin{proposition}[Coarse-scale approximation of the conditional-mean error]
\label{bmo:prop:coarsefromiteration}
\label{bmo:ass:coarse}
Under \cref{ass:geometrydata}, the stopped iteration and \cref{lem:classicalfinite} have the following consequence. For every $\eta>0$, there exist $L_\eta\ge1$ and $\eps_\eta>0$ such that, for $0<\eps<\eps_\eta$ and $R=L_\eta\ell$,
\begin{equation}\label{bmo:eq:coarse}
 \sup_{x\in X}\inf_{a\in\R^d}
 \left(
 \frac1{|X_{x,8R}|}
 \int_{X_{x,8R}}|m_\eps(z)-T(z)-a|^2\dd z
 \right)^{1/2}
 \le\eta R.
\end{equation}
The analogous estimate holds for $\widehat m_\eps-S$, with the same choices of $L_\eta$ and $\eps_\eta$.
\end{proposition}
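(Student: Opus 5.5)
The plan is to read \eqref{bmo:eq:coarse} off the final level of the stopped iterations of Section~\ref{geom:sec:iteration}, rerun with stopping factors chosen in terms of $\eta$. At a stopping level with radius $R_*$, matrix $L_*$ and target center $q_*$, the excess says the plan is close to the affine map
\[
A_*(z):=q_*+(L_*L_*^T)^{-1}(z-p_*),
\]
where $p_*$ is the source center. Conditional Jensen converts this into $L^2$ closeness of $m_\eps$ to $A_*$. Since $T$ is $C^{1,1}$ by \cref{lem:classicalfinite}, $m_\eps-T$ is then close to a constant, provided the linear part $(L_*L_*^T)^{-1}$ stays close to $DT$. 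That drift is exactly what the summability estimate \eqref{bmo:eq:iterest} controls.

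\emph{Step 1 (excess to conditional mean).} Let $\pi^{(*)}$ be the stopped plan, with excess $E_*$ and density bound $D_0$ for $L_*,L_*^{-1}$. Let $B\subset p_*+R_*L_*B_{1/2}$ be a physical ball. Undoing the coordinates \eqref{bmo:eq:itercoords} gives
\[
\int_{B\times Y}|y-A_*(x)|^2\dd\pi_\eps\le C R_*^{d+2}E_* .
\]
Conditional Jensen, $|m_\eps(x)-A_*(x)|^2\le\int|y-A_*(x)|^2\pi_\eps^x(\dd y)$, together with the lower density bound, then gives
\[
\frac1{|B\cap X|}\int_{B\cap X}|m_\eps-A_*|^2\dd z\le C E_*R_*^2,
\]
provided $|B\cap X|\ge cR_*^d$. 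This holds for relative balls of radius comparable to $R_*$ in convex $X$.

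\emph{Step 2 (linear drift).} Write $T(z)=T(p_*)+DT(p_*)(z-p_*)+O(|z-p_*|^2)$. Then on $B$,
\[
|m_\eps-T-a|\le|m_\eps-A_*|+\|(L_*L_*^T)^{-1}-DT(p_*)\|\,CR_*+CR_*^2
\]
for a suitable constant $a$.

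At a boundary point, $(L_0L_0^T)^{-1}=DT(p)$ by the initialization. Using \eqref{bmo:eq:matrixupdate} and orthogonality of the $O_n$,
\[
\|(L_NL_N^T)^{-1}-DT(p)\|\le C\sum_n(\sqrt{E_n}+KR_n)\le C(\sqrt{E_0}+KR_0+\Lambda^{-1}).
\]
For the interior restart at $x$, the new matrix is $\widetilde L_0=L_nA_*^{-1/2}$ with $\|A_*-\Id\|\le CKR_n$. Moreover $|x-p|\le R_0/M_0$, so $\|DT(x)-DT(p)\|\le CR_0$. The restarted sums add $C(\sqrt{\widetilde E_0}+Kr_*R_n+\Lambda_{\rm int}^{-1})$, which can be made small by \eqref{bmo:eq:restart}. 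In each case $E_*\le C(E_0+K^2R_0^2+\Lambda^{-2})$, or its restarted analogue, so both $E_*$ and the drift fall below $\eta'$ after enlarging the stopping factors, shrinking $R_0$, and then shrinking $\eps$. Combining Steps 1 and 2 gives an error of at most $C(\sqrt{\eta'}+\eta'+R_*)R_*$.

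\emph{Step 3 (choice of $R$ and covering).} Fix $\Lambda_{\rm int}$ large and $\Lambda$ satisfying \eqref{geom:eq:stopcompatibility}. Set $R:=c_*\Lambda_{\rm int}\ell$ with $c_*$ small, so that $8R\le R_*/(4D_0)$ at every stopping level, since $R_*\ge\Lambda_{\rm int}\ell$. Set $L_\eta=c_*\Lambda_{\rm int}$. Every $R_*$ is at most a fixed multiple of $R$ because $R_*<\Lambda\ell/\theta$ and $\Lambda/\Lambda_{\rm int}$ is fixed, so the bound $C\sqrt{\eta'}R_*$ becomes $\le\eta R$ once $\eta'$ is chosen small. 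There are three cases.
\begin{itemize}
\item If $\dist(x,\partial X)\le c_b\Lambda\ell$, use the boundary iteration at the nearest $p$. Then $X_{x,8R}\subset p+R_NL_NB_{1/2}$, because $c_b\Lambda\ell+8R\le R_N/(2D_0)$.
\item If $c_b\Lambda\ell<\dist(x,\partial X)<R_0/M_0$, use the restarted interior iteration centered at $x$, so $p_*=x$.
\item Otherwise, run an interior iteration initialized at $x$ at the fixed radius $R_0/(2M_0)$ with $L_0=DT(x)^{-1/2}$. Initialize it exactly as in \eqref{geom:eq:initial}, using the interior versions of the lemmas.
\end{itemize}
The statement for $\widehat m_\eps-S$ follows by interchanging the marginals.

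I expect the main obstacle to be the bookkeeping in Step 3. A single $R=L_\eta\ell$ must serve all base points, while the stopping radii range over $[\Lambda_{\rm int}\ell,\Lambda\ell/\theta)$. At the same time, the drift and excess must be made small uniformly, with constants independent of the stopping factors, in the order $\Lambda_{\rm int}$, $\Lambda$, $R_0$, $\eps$. I would first verify that the constants in \eqref{bmo:eq:iterest}--\eqref{bmo:eq:restart} are indeed independent of these choices, as asserted before the proposition.
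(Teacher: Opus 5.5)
Your proposal is correct and is essentially the paper's argument. You use conditional Jensen against the fitted affine map $\mathcal A_n(z)=q_n+(L_nL_n^T)^{-1}(z-p)$, control the drift of $(L_nL_n^T)^{-1}$ from $DT$ through the summability estimates and the identity $B_{n+1}-B_n=(L_n^{-1})^T(H_n-\Id)L_n^{-1}$, and cover $X$ with the same three cases, including a fresh interior initialization with $DT(x)^{-1/2}$ deep inside.

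The only difference is bookkeeping. The paper takes $L_\eta$ large relative to both stopping factors and reads off an intermediate level with radius in $[HR,HR/\theta)$ or $[hR,hR/\theta)$, using that the excess is uniformly small at every level up to the stop; this avoids your need to fix the ratio $\Lambda/\Lambda_{\rm int}$. In your deep-interior case, the initial radius must be a small multiple $\sigma R_0$, depending on $\|DT^{\pm1/2}\|$ and $\Lip(S)$, rather than $R_0/(2M_0)$, so that $B_3$ lies in both normalized supports.
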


\begin{proof}
Fix $\eta>0$. Use the uniform matrix bounds established after \eqref{geom:eq:summability} and in the interior iteration of Section~\ref{geom:sec:iteration}. Choose $c_0\in(0,1]$ so that
$B(x_c,c_0\rho)\subset x_c+\rho\mathsf L B_1$
for every source matrix $\mathsf L$ in the boundary and
restarted interior iterations, and so that
\[
 2c_0\sup_{x\in X}\|DT(x)^{1/2}\|\le1.
\]
The latter condition also gives the required inclusion for
the interior iterations initialized below by
$\widetilde L_0=DT(x)^{-1/2}$, under the inductive bound
$\|\widetilde L_j^{-1}\widetilde L_0\|\le2$. Fix $h>8/c_0$, choose $K_0$ with
\begin{equation}\label{m:display:060}
 r_*M_0K_0>2h/\theta,
\end{equation}
and fix the scalar $H>(K_0+8)/c_0$. These constants depend only on the data and the fixed iteration parameters.

Let $\delta>0$ be sufficiently small, to be fixed at the end. Choose $\Lambda_{\rm int},\Lambda$ sufficiently large, subject to \eqref{geom:eq:stopcompatibility}, and then $R_0$ sufficiently small that $R_0+KR_0+\Lambda_{\rm int}^{-1}+\Lambda^{-1} \le c\delta$. Choose $L_\eta\ge1$ with $hL_\eta>\Lambda_{\rm int}$, $HL_\eta>\Lambda$, and $K_0L_\eta>c_b\Lambda$. Finally, decrease $\eps_\eta$ so that, for $\eps<\eps_\eta$ and $R=L_\eta\ell$, $\ell R_0^{-(d+2)/2}\le c\delta$, $HR/\theta<R_0$, and $K_0R<R_0/M_0$. By \eqref{bmo:eq:iterest},
\[
 \sup_{n\le N}\sqrt{E_n}\le C\delta,\qquad
 \sum_{n=0}^N(\sqrt{E_n}+KR_n)\le C\delta,
\]
uniformly in the boundary point $p$. Define
\begin{equation}\label{m:display:058}
 \mathcal A_n(z)=q_n+B_n(z-p),\qquad
 B_n=(L_n^{-1})^{T}L_n^{-1}.
\end{equation}
Orthogonality of $O_n$ and \eqref{bmo:eq:matrixupdate} give
\begin{equation}\label{m:display:056}
 B_{n+1}-B_n
 =(L_n^{-1})^T(H_n-\Id)L_n^{-1}.
\end{equation}
Since $L_0=DT(p)^{-1/2}O$, we have $B_0=DT(p)$. The uniform matrix bounds therefore imply
\begin{equation}
\label{bmo:eq:fittedderivative}
 \|B_n-DT(p)\|
 \le C\sum_{j<n}(\sqrt{E_j}+KR_j)
 \le C\delta.
\end{equation}

We record an estimate valid at any boundary or interior level. If $(z,y)$ has coordinates $(\xi,\zeta)$ in \eqref{bmo:eq:itercoords}, then $y-\mathcal A_n(z)=R_n(L_n^{-1})^T(\zeta-\xi)$. Conditional Jensen's inequality and \eqref{m:display:055} yield
\begin{equation}\label{m:display:059}
 \int_{X\cap(p+R_nL_nB_1)}
       |m_\eps-\mathcal A_n|^2\dd\mu
 \le
 \int_{(X\cap(p+R_nL_nB_1))\times Y} |y-\mathcal A_n(z)|^2\dd\pi_\eps(z,y)
 \le CR_n^{d+2}E_n.
\end{equation}
For $z\in X\cap(p+R_nL_nB_1)$, $|z-p|\le CR_n$. Convexity of $X$ and the Lipschitz bound for $DT$ in \cref{lem:classicalfinite} give
\begin{equation}\label{bound:An}
\begin{aligned}
 &|\mathcal A_n(z)-T(z)-(q_n-T(p))|\\
 &\quad\le
 \|B_n-DT(p)\|\,|z-p|
 +\tfrac12\Lip(DT)|z-p|^2
\le C\bigl(\|B_n-DT(p)\|+R_n\bigr)R_n.
\end{aligned}
\end{equation}

Let $x_c\in X$ and suppose $X_{x_c,r}\subset p+R_nL_nB_1$. For sufficiently small $r$, the fixed convex body $X$ satisfies $|X_{x_c,r}|\ge cr^d$, uniformly in $x_c$. Together with $\rho_0\ge m$ and \eqref{m:display:059}, this gives
\begin{equation}\label{m:display:061}
\begin{aligned}
 \left(
 \frac1{|X_{x_c,r}|}
 \int_{X_{x_c,r}}|m_\eps-\mathcal A_n|^2\dd z
 \right)^{1/2}
 &\le Cr^{-d/2}
 \left(
 \int_{X\cap(p+R_nL_nB_1)}
       |m_\eps-\mathcal A_n|^2\dd\mu
 \right)^{1/2} \\
 &\le C(R_n/r)^{d/2}R_n\sqrt{E_n}.
\end{aligned}
\end{equation}
Choosing $a=q_n-T(p)$, the triangle inequality and \eqref{bound:An} imply
\begin{equation}\label{bmo:eq:meanflat}
\begin{aligned}
 &\inf_{a\in\R^d}
 \left(
 \frac1{|X_{x_c,r}|}
 \int_{X_{x_c,r}}|m_\eps-T-a|^2\dd z
 \right)^{1/2}\\
 &\qquad\le
 C\left(
 (R_n/r)^{d/2}\sqrt{E_n}
 +\|B_n-DT(p)\|+R_n
 \right)R_n.
\end{aligned}
\end{equation}
This estimate uses only the coordinate representation, the excess, and the classical regularity. It applies to an interior level with its corresponding source center, radius, and matrices. In particular, its right-hand side is at most $C\delta R_n$ whenever $r\asymp R_n$ and $\sqrt{E_n}+\|B_n-DT(p)\|+R_n\le C\delta$.

Fix $x\in X$ and set $t=\dist(x,\partial X)$. Suppose first that $t\le K_0R$. Choose $p\in\partial X$ with $|x-p|=t$ and let $n=\max\{k\ge0:R_k\ge HR\}$ in the boundary iteration based at $p$. This index is well-defined because $HR<R_0$ and $R_k\to0$. Maximality gives $HR\le R_n<HR/\theta$. Also $HR>\Lambda\ell$, so $n\le N$. For $z\in B(x,8R)$, $|z-p|<(K_0+8)R<c_0R_n$, whence
\[
 B(x,8R)\subset B(p,c_0R_n)
 \subset p+R_nL_nB_1.
\]
Apply \eqref{bmo:eq:meanflat} with $x_c=x$ and $r=8R$. Since $R_n/R\in[H,H/\theta)$ and \eqref{bmo:eq:fittedderivative} holds, the infimum in \eqref{bmo:eq:coarse} is at most $C\delta R_n\le C\delta R$.

For $t>K_0R$, we initialize an interior iteration centered at $x$. We specify its initial radius~$\widetilde R_0$, source matrix $\widetilde L_0$, and target center $\widetilde q_0$ in the following two cases.

If $K_0R<t<R_0/M_0$, then $t>K_0L_\eta\ell>c_b\Lambda\ell$. Choose a nearest boundary point $p$ and the level $n$ supplied by \eqref{geom:eq:coverlevel}. Thus $M_0t\le R_n<M_0t/\theta$. By \eqref{geom:eq:coverconstants} and \eqref{geom:eq:stopradius}, $R_n\ge M_0t>2\Lambda\ell/\theta>R_N$, so $n<N$. Use the interior restart constructed in Section~\ref{geom:sec:iteration}, with
\[
 \widetilde R_0=r_*R_n,\qquad
 \widetilde L_0=L_nA_*^{-1/2},\qquad
 \widetilde q_0=\mathcal A_n(x).
\]
The last identity follows from $q_n+R_n(L_n^{-1})^T\xi =q_n+(L_n^{-1})^TL_n^{-1}(x-p)$, where $\xi$ is given by \eqref{m:display:043}. The construction following \eqref{geom:eq:interiorstartballs} gives $B_3$ in both normalized supports and equality of their density values at zero.

By \eqref{bmo:eq:restart}, $\widetilde E_0^{1/2}\le C\delta$. Moreover,
\[
\begin{aligned}
 \|(\widetilde L_0^{-1})^T\widetilde L_0^{-1}-DT(x)\|
&\le
 \|(L_n^{-1})^T(A_*-\Id)L_n^{-1}\|
 +\|B_n-DT(p)\|+\|DT(p)-DT(x)\|\\
 &\le C(KR_n+\delta+t)\le C\delta.
\end{aligned}
\]
Equation~\eqref{m:display:060} also gives $\widetilde R_0>r_*M_0K_0R>2hR/\theta$.

If $t\ge R_0/M_0$, boundary correspondence gives $S(y)\in\partial X$ for $y\in\partial Y$. Hence $t\le|x-S(y)|\le\Lip(S)|T(x)-y|$, and
\[
 \dist(T(x),\partial Y)
 \ge\frac{t}{\Lip(S)}
 \ge\frac{R_0}{M_0\Lip(S)}.
\]
Choose a fixed $\sigma\in(0,1)$ such that $3\sigma\|DT(x)^{-1/2}\|<M_0^{-1}$ and $3\sigma\Lip(S)\|DT(x)^{1/2}\|<M_0^{-1}$ uniformly in $x$. Take $\widetilde R_0=\sigma R_0$, $\widetilde L_0=DT(x)^{-1/2}$, and $\widetilde q_0=T(x)$. Then
\[
 x+\widetilde R_0\widetilde L_0B_3\subset\intr X,
 \qquad
 T(x)+\widetilde R_0(\widetilde L_0^{-1})^TB_3
 \subset\intr Y.
\]
By \cref{geom:lem:scaling} and $\det DT(x)=\rho_0(x)/\rho_1(T(x))$, the two transformed density values at zero are equal:
\[
 |\det\widetilde L_0|\rho_0(x)
 =|\det\widetilde L_0|^{-1}\rho_1(T(x))
 =\sqrt{\rho_0(x)\rho_1(T(x))}.
\]
Moreover, $(\widetilde L_0^{-1})^T\widetilde L_0^{-1}=DT(x)$.

The initialization argument proving \eqref{geom:eq:initial} applies at $x$, with $O=\Id$ and radius $\widetilde R_0$. It uses the Taylor estimates for the normalized classical map and its inverse, together with \eqref{geom:eq:globalLtwo}. By \cref{lem:classicalfinite}, both derivatives are Lipschitz, so that argument gives
\[
 \widetilde E_0
 \le C\widetilde R_0^2
      +C\ell^2\widetilde R_0^{-d-2}
 =C(\sigma R_0)^2
      +C\ell^2(\sigma R_0)^{-d-2}.
\]
Decrease $\eps_\eta$ so that $\ell(\sigma R_0)^{-(d+2)/2}\le c\delta$ and $hR/\theta<\sigma R_0$. Then $\widetilde E_0^{1/2}\le C\delta$ and $\widetilde R_0>hR/\theta$. The density calculation in \eqref{geom:eq:kappan}, with the graph terms omitted, gives $\widetilde\kappa_0\le CK\widetilde R_0$.

Both initializations therefore satisfy
\[
 \widetilde E_0^{1/2}
 +K\widetilde R_0
 +\|(\widetilde L_0^{-1})^T\widetilde L_0^{-1}-DT(x)\|
 \le C\delta,
 \qquad
 hR/\theta<\widetilde R_0\le R_0.
\]
Their normalized supports contain $B_3$, and their density values agree at zero. Apply the interior construction of Section~\ref{geom:sec:iteration}. The recurrence and summability estimates \eqref{geom:eq:recurrence}--\eqref{geom:eq:summability}, together with the preservation of $B_3$ in \eqref{m:display:047}, construct every level up to the largest $J$ with $\widetilde R_J\ge\Lambda_{\rm int}\ell$, provided $\delta$ is sufficiently small. More precisely, the bounds $C\delta^2<\delta_\theta$ and $C\delta<1/4$ ensure, respectively, the applicability of \cref{geom:prop:improvement} and the matrix and ball inclusions used in that induction.

Retain the interior notation $\widetilde R_j=\theta^j\widetilde R_0$, $\widetilde L_j$, $\widetilde q_j$, and $\widetilde E_j$ from that construction. Since $\Lambda_{\rm int}^{-1}\le c\delta$,
\[
 \sup_{j\le J}\widetilde E_j^{1/2}\le C\delta,
 \qquad
 \sum_{j=0}^J
 \bigl(\widetilde E_j^{1/2}+K\widetilde R_j\bigr)
 \le C\delta.
\]
Set $\widetilde B_j=(\widetilde L_j^{-1})^T \widetilde L_j^{-1}$. Applying the update identity \eqref{m:display:056} to the interior matrices gives
\[
 \|\widetilde B_j-DT(x)\|
 \le \|\widetilde B_0-DT(x)\|
      +C\sum_{i<j}
       \bigl(\sqrt{\widetilde E_i}
             +K\widetilde R_i\bigr)
 \le C\delta.
\]

Choose $j=\max\{i\ge0:\widetilde R_i\ge hR\}$. Then $hR\le\widetilde R_j<hR/\theta$. Since $hR>\Lambda_{\rm int}\ell$, we have $j\le J$. By $c_0h>8$,
\[
 B(x,8R)\subset B(x,c_0\widetilde R_j)
 \subset x+\widetilde R_j\widetilde L_jB_1.
\]
Apply \eqref{bmo:eq:meanflat} to this interior level, with source center $x$, radius $\widetilde R_j$, and relative ball $X_{x,8R}$. The bounds just proved and $\widetilde R_j/R\in[h,h/\theta)$ show that the infimum in \eqref{bmo:eq:coarse} is at most $C\delta\widetilde R_j\le C\delta R$.

Choose $\delta$ so that $C\delta\le\eta$ and take the supremum over $x\in X$. All constants are uniform in $x$ and $\eps$. The same choices can be made for the interchanged marginals, proving the estimate on $Y$ as well.
\end{proof}

\subsection{A fixed elliptic operator for the potential error}
\label{bmo:sec:duality}

Recall $A(x)=\rho_0(x)(DT(x))^{-1}$ and the notation in \eqref{bmo:eq:wdef}. By \cref{lem:classicalfinite}, $A\in C^{1,\beta}(X)$ is symmetric and satisfies $c\Id\preceq A\preceq C\Id$, independently of $\eps$. For a smooth test function $f$ on $X$, set $\bar f=\int_Xf\dd\mu$ and let $\psi_f$ solve
\begin{equation}\label{bmo:eq:neumann}
\begin{gathered}
 -\operatorname{div}(A\nabla\psi_f)
 =\rho_0(f-\bar f)
 \quad\text{in }\intr X,\\
 A\nabla\psi_f\cdot n_X=0
 \quad\text{on }\partial X,
 \qquad
 \int_X\psi_f\dd\mu=0,
\end{gathered}
\end{equation}
where $n_X$ is the outward unit normal. The compatibility condition $\int_X\rho_0(f-\bar f)\dd x=0$ holds due to $\mu(X)=1$. Neumann solvability and the Schauder estimates give a unique $\psi_f\in C^{2,\beta}(X)$ with this normalization; see \cite[Theorem~B.5]{GMS} and \cite[Section~6.7]{GT}. For $d=1$, these assertions and the $W^{2,2}$ estimate used below follow by integrating $A(x)\psi_f'(x)=-\int_{x_-}^x\rho_0(t)(f(t)-\bar f)\dd t$, where $X=[x_-,x_+]$. The identities $T_\#\mu=\nu$ and $(\mathrm{pr}_y)_\#\pi_\eps=\nu$ imply
\begin{equation}\label{m:display:064}
 \int_{X\times Y}
 \bigl((\psi_f\circ S)(y)-\psi_f(x)\bigr)
 \dd\pi_\eps(x,y)=0.
\end{equation}
For $t\in[0,1]$, set $Y_t(x,y)=(1-t)T(x)+ty\in Y$. For every Borel set $B\subset Y$, define
\begin{equation}\label{bmo:eq:tensordef}
 \cR_\eps(B)
 :=
 \int_{X\times Y}\int_0^1
 (1-t)\mathbf1_B(Y_t(x,y))
 (y-T(x))(y-T(x))^T
 \dd t\dd\pi_\eps(x,y).
\end{equation}
This defines a finite positive semidefinite matrix-valued measure, with entries denoted by $\cR_{\eps,ij}$.

Since $S=T^{-1}$ and $DT$ is symmetric, $\nabla(\psi_f\circ S)(T(x))=(DT(x))^{-1}\nabla\psi_f(x)$. By \eqref{bmo:eq:wdef}, the conditional mean of $y-T(x)$ is $b(x)=\nabla W(x)$. Disintegration therefore gives
\begin{equation}\label{m:display:065}
\begin{aligned}
 &\int_{X\times Y}
 \nabla(\psi_f\circ S)(T(x))
 \cdot(y-T(x))\dd\pi_\eps(x,y)\\
 &\quad=
 \int_X
 \bigl((DT)^{-1}\nabla\psi_f\bigr)
 \cdot\nabla W\dd\mu\\
 &\quad=
 \int_X A\nabla\psi_f\cdot\nabla W\dd x
 =\int_XW(f-\bar f)\dd\mu
 =\int_XWf\dd\mu.
\end{aligned}
\end{equation}
The penultimate equality is the weak formulation of \eqref{bmo:eq:neumann}, tested with $W\in H^1(X)$. The last equality follows from $\int_XW\dd\mu=0$. Taylor's formula along $[T(x),y]\subset Y$, together with \eqref{m:display:064} and \eqref{m:display:065}, therefore yields
\begin{equation}\label{bmo:eq:dualityexact}
 \int_XWf\dd\mu
 =-\sum_{i,j=1}^d
 \int_Y\partial_{ij}(\psi_f\circ S)(y)
       \cR_{\eps,ij}(\dd y).
\end{equation}

\begin{lemma}[Uniform bounds for interpolation densities]
\label{bmo:lem:density}
For every $t\in[0,1]$, $(Y_t)_\#\pi_\eps=g_t\dd y$ with $0\le g_t\le C$. Moreover, $\cR_\eps$ has a positive semidefinite Lebesgue density, denoted by the same symbol, and
\begin{equation}\label{bmo:eq:tensorbound}
 \|\cR_\eps\|_{L^\infty(Y)}
 \le C\bigl(\ell^2+\|b\|_{L^\infty(X)}^2\bigr).
\end{equation}
The constants are uniform in $t$ and sufficiently small $\eps$.
\end{lemma}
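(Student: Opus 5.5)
The plan is to prove the density bound for $(Y_t)_\#\pi_\eps$ by duality against a nonnegative test function $\varphi\in C_c(\R^d)$, splitting the cases $t$ small and $t$ bounded below. By disintegration,
\[
 \int\varphi\dd(Y_t)_\#\pi_\eps=\int_X\int_Y\varphi\bigl((1-t)T(x)+ty\bigr)h_\eps(x,y)\rho_1(y)\dd y\dd\mu(x).
\]
Since $S_{\eps,x}$ has diameter at most $C\ell$ and $|S_{\eps,x}|\asymp\ell^d$, the integral $\int h_\eps^2\dd\nu\,\ell^{d}\cdot\eps^{2}\ldots$ is not the right route. Instead, I first record the pointwise bound $h_\eps\le C\ell^{-d}$, which follows from $\max_y q_\eps(x,y)\le C\ell^2$ in \eqref{eq:scales} and $\eps=\ell^{d+2}$.

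For fixed $x$, substitute $w=(1-t)T(x)+ty$. The inner integral becomes $t^{-d}\int\varphi(w)h_\eps(x,\cdot)\rho_1\dd w$ over the image set $(1-t)T(x)+tS_{\eps,x}$. Writing $y=m_\eps(x)+O(\ell)$ via \eqref{eq:conditional-mean-section-balls}, that set lies in $B(\Phi_t(x),Ct\ell)$, where $\Phi_t:=(1-t)T+tm_\eps=\nabla((1-t)u_0+t\Theta_\eps)$.

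By \cref{lem:classicalfinite,bmo:lem:conditional}, $\Phi_t$ is the gradient of a uniformly convex function with $c\Id\preceq D\Phi_t\preceq C\Id$, uniformly in $t$ and $\eps$. Hence $\Phi_t$ is injective with $|\Phi_t(x)-\Phi_t(x')|\ge c|x-x'|$. Exchanging the order of integration, the density at $w$ is bounded by
\[
 Mt^{-d}\,C\ell^{-d}\,\mu\bigl(\{x:|\Phi_t(x)-w|\le Ct\ell\}\bigr)\le Ct^{-d}\ell^{-d}(t\ell)^d=C.
\]
Here the set of contributing $x$ has diameter $O(t\ell)$ by strong monotonicity, so its $\mu$-measure is $O((t\ell)^d)$. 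This works for $t>0$. At $t=0$, $(Y_0)_\#\pi_\eps=T_\#\mu=\nu$ is bounded directly.

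Integrating over $t$ with weight $(1-t)$ gives $\cR_\eps\ll$ Lebesgue, with density at $w$ equal to $\int_0^1(1-t)$ times a conditional second moment of $y-T(x)$ weighted by the $t$-density. Positive semidefiniteness is inherited. For the bound, note that on $\spt\pi_\eps$ we have $|y-T(x)|\le|y-m_\eps(x)|+|b(x)|\le C\ell+\|b\|_\infty$, since $\diam S_{\eps,x}\le C\ell$. Therefore each $t$-slice contributes at most $g_t(w)(C\ell+\|b\|_\infty)^2$, which gives \eqref{bmo:eq:tensorbound}.

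The main obstacle is making the covering step rigorous near $\partial Y$ and uniform in $t$. The point to verify is that the preimage set is controlled by the lower Lipschitz bound of $\Phi_t$ on the convex body $X$. This uses $(\Phi_t(x)-\Phi_t(x'))\cdot(x-x')\ge c|x-x'|^2$, obtained by integrating $D\Phi_t$ along segments in $X$, which is valid because both Hessians are bounded below a.e. and the gradients are Lipschitz.
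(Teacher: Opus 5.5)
Your proposal is correct and is essentially the paper's proof: the change of variables $w=(1-t)T(x)+ty$ with Jacobian $t^{-d}$, the pointwise bound $h_\eps\le C\ell^{-d}$ from \eqref{eq:scales}, the uniformly strongly monotone interpolant $(1-t)T+tm_\eps$ to confine the contributing source points to a set of volume $O((t\ell)^d)$, the direct treatment of $t=0$, and the on-support bound $|y-T(x)|\le C\ell+\|b\|_\infty$ for the tensor all match. The stray sentence about $\int h_\eps^2$ should simply be deleted. Your concern about $\partial Y$ does not arise, because the integrand vanishes outside $Y$ and only convexity of $X$ is used in the covering step.
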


\begin{proof}
Set $F_t=(1-t)T+tm_\eps$. By \cref{lem:classicalfinite,bmo:lem:conditional}, $T$ and $m_\eps$ are uniformly strongly monotone. Their convex combination therefore satisfies
\begin{equation}\label{m:display:066}
\begin{gathered}
 (F_t(x)-F_t(x'))\cdot(x-x')
 \ge c|x-x'|^2,\\
 |F_t(x)-F_t(x')|\ge c|x-x'|
 \qquad(x,x'\in X).
\end{gathered}
\end{equation}
Since $m_\eps(x)\in S_{\eps,x}$, \eqref{geom:eq:diameters} implies $|y-m_\eps(x)|\le C\ell$ on $\spt\pi_\eps$. Consequently, $|Y_t(x,y)-F_t(x)|\le Ct\ell$ there.

Fix $t>0$. For each $x\in X$, the change of variables
$\eta=(1-t)T(x)+ty$ has inverse
$y=t^{-1}(\eta-(1-t)T(x))$, so $\dd y=t^{-d}\dd\eta$. Since $\dd\pi_\eps(x,y) =h_\eps(x,y)\rho_0(x)\rho_1(y)\dd x\dd y$, the measure $(Y_t)_\#\pi_\eps$ has density
\begin{equation}\label{m:display:067}
 g_t(\eta)
 =
 t^{-d}\int_X
 h_\eps\!\left(
 x,\frac{\eta-(1-t)T(x)}t
 \right)\rho_0(x)
 \rho_1\!\left(
 \frac{\eta-(1-t)T(x)}t
 \right)\dd x,
\end{equation}
where the integrand is zero when its target argument lies outside $Y$. Fix $\eta\in Y$. If the integrand in \eqref{m:display:067} is nonzero, then $y=t^{-1}(\eta-(1-t)T(x))\in S_{\eps,x}$ and the preceding bound gives $|\eta-F_t(x)|\le Ct\ell$. Thus only source points in $E_{t,\eta}:=\{x\in X:|F_t(x)-\eta|\le Ct\ell\}$ contribute to \eqref{m:display:067}. For $x,x'\in E_{t,\eta}$, \eqref{m:display:066} and the triangle inequality give
\[
 c|x-x'|
 \le |F_t(x)-F_t(x')|
 \le |F_t(x)-\eta|+|F_t(x')-\eta|
 \le Ct\ell.
\]
If $E_{t,\eta}$ is nonempty, choosing any $x'\in E_{t,\eta}$ therefore gives $E_{t,\eta}\subset B(x',Ct\ell)$. Consequently, $|E_{t,\eta}|\le C(t\ell)^d$; the same bound holds if $E_{t,\eta}$ is empty.

By \eqref{eq:scales} and $\eps=\ell^{d+2}$, $h_\eps=(q_\eps)_+/\eps\le C\ell^{-d}$. Using also $\rho_0,\rho_1\le M$ in \eqref{m:display:067}, we conclude
\[
 0\le g_t(\eta)
 \le Ct^{-d}\ell^{-d}|E_{t,\eta}|
 \le Ct^{-d}\ell^{-d}(t\ell)^d
 \le C.
\]
The constant is independent of $t\in(0,1]$, $\eta$, and sufficiently small $\eps$. For $t=0$, $(Y_0)_\#\pi_\eps=T_\#\mu=\nu$, so $g_0=\rho_1\le M$. On $\spt\pi_\eps$,
\begin{equation}\label{m:display:068}
 |y-T(x)|^2
 \le 2|m_\eps(x)-T(x)|^2
      +2|y-m_\eps(x)|^2
 \le 2\|b\|_\infty^2+C\ell^2.
\end{equation}
Hence, for every Borel set $B\subset Y$,
\[
\begin{aligned}
 \sum_{i=1}^d\cR_{\eps,ii}(B)
 &\le
 C(\ell^2+\|b\|_\infty^2)
 \int_0^1(1-t)\int_Bg_t(y)\dd y\dd t\le C(\ell^2+\|b\|_\infty^2)|B|.
\end{aligned}
\]
Positivity of $\cR_\eps$ now gives absolute continuity and \eqref{bmo:eq:tensorbound}.
\end{proof}

\begin{lemma}[Elliptic tensor-to-BMO estimate]
\label{bmo:lem:ellipticbmo}
For every $R\in L^\infty(Y;\R^{d\times d})$, there is a unique $\mathcal U R\in L^2(\mu)$ satisfying
\begin{equation}\label{m:display:069}
 \int_X(\mathcal U R)f\dd\mu
 =-\sum_{i,j=1}^d
 \int_Y\partial_{ij}(\psi_f\circ S)(y)
       R_{ij}(y)\dd y
\end{equation}
for every smooth $f$, where $\psi_f$ solves \eqref{bmo:eq:neumann}. Moreover, $\int_X\mathcal U R\dd\mu=0$ and
\begin{equation}\label{bmo:eq:ellipticbmo}
 [\mathcal U R]_{\BMO(X)}
 \le C\|R\|_{L^\infty(Y)},
\end{equation}
where the constant depends only on the data.
\end{lemma}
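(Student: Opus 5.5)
Write $g:=\mathcal U R$ and, for a test function $f$, $G(f):=-\sum_{i,j}\int_Y\partial_{ij}(\psi_f\circ S)\,R_{ij}\dd y$. The plan is to obtain existence and uniqueness from an $L^2$ Riesz representation, and then to get the BMO bound by duality with $H^1$ atoms, or equivalently by a localized oscillation estimate on relative balls.

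\emph{Existence, uniqueness and mean zero.} By the global $W^{2,2}$ estimate for the Neumann problem \eqref{bmo:eq:neumann} on the convex $C^{3,\alpha}$ domain $X$ with $C^{1,\beta}$ coefficients, we have $\|\psi_f\|_{W^{2,2}(X)}\le C\|f-\bar f\|_{L^2(\mu)}$. Since $S$ is a $C^{2,\beta}$ diffeomorphism, the chain rule gives $\|D^2(\psi_f\circ S)\|_{L^2(Y)}\le C\|\psi_f\|_{W^{2,2}}\le C\|f\|_{L^2(\mu)}$. Hence $|G(f)|\le C\|R\|_{L^2}\|f\|_{L^2(\mu)}$, and $G$ extends to a bounded functional on $L^2(\mu)$. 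Riesz representation then gives a unique $g\in L^2(\mu)$. For $f\equiv1$ we get $\psi_f=0$, so $\int_X g\dd\mu=0$.

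\emph{BMO estimate by localization.} Fix a relative ball $X_{x_0,r}$ and split $R=R\mathbf 1_{Y_{T(x_0),Cr}}+R\mathbf 1_{\text{rest}}=:R_1+R_2$, with $C$ chosen so that $S(Y_{T(x_0),Cr})\supset X_{x_0,2r}$. By linearity $g=\mathcal U R_1+\mathcal U R_2$.

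For the local part, the $L^2$ bound above gives $\|\mathcal U R_1\|_{L^2(\mu)}\le C\|R\|_\infty r^{d/2}$. Since $|X_{x_0,r}|\ge cr^d$, the average of $|\mathcal U R_1|$ over $X_{x_0,r}$ is at most $C\|R\|_\infty$.

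For the far part, test with $f$ supported in $X_{x_0,r}$ with $\int f\dd\mu=0$ and $\|f\|_\infty\le r^{-d}$, i.e.\ an $H^1$ atom. Then $\psi_f(x)=\int N(x,z)f(z)\rho_0(z)\dd z$, where $N$ is the Neumann Green function of $-\operatorname{div}(A\nabla\cdot)$. Because $f$ has mean zero, away from $X_{x_0,2r}$ we have
\[
|D^2\psi_f(x)|\le C\,r\,|x-x_0|^{-d-1}\|f\|_{L^1},
\]
and similarly for $D^2(\psi_f\circ S)$, using bounds on $D^3\psi_f$ transported through $S$. Integrating against $R_2$ then yields $|G_{R_2}(f)|\le C\|R\|_\infty$, uniformly over atoms. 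This is the standard Calder\'on--Zygmund $H^1\to L^1$ / $L^\infty\to\BMO$ duality mechanism. Combining it with the local part, in the form
\[
\frac{1}{|X_{x_0,r}|}\int_{X_{x_0,r}}|g-c_{x_0,r}|\le\sup_{\text{atoms}}|\langle g,f\rangle|+C\|R\|_\infty,
\]
gives \eqref{bmo:eq:ellipticbmo}. This uses the duality $(H^1)^*=\BMO$ on the space of homogeneous type $(X,|\cdot|,\mu)$, whose doubling holds because $X$ is convex with densities bounded above and below.

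\emph{Main obstacle.} The hard step is the pointwise off-diagonal derivative bound $|D^2_x\nabla_zN(x,z)|\le C|x-z|^{-d-1}$, required up to the boundary. I would derive it from the $C^{2,\beta}$ boundary Schauder estimates of \cref{geom:lem:localoblique}, with the oblique condition being the conormal one, applied to $N(\cdot,z)$ on relative balls of radius $\asymp|x-z|$ after rescaling. This needs the coefficients $A\in C^{1,\beta}$ from \cref{lem:classicalfinite}, which control the $\nabla_z$ derivative via the adjoint problem.

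If this kernel route becomes delicate, the fallback is Campanato: show $\|g-\langle g\rangle\|_{L^2(X_{x_0,r})}\le C\|R\|_\infty r^{d/2}$ by representing $\psi_f$ for $f$ supported in the ball and using the local Schauder estimates on annuli $X_{x_0,2^{k+1}r}\setminus X_{x_0,2^k r}$, then summing the resulting geometric series.
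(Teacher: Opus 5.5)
Your proposal is correct and is essentially the paper's argument. It uses the same Riesz representation via the global $W^{2,2}$ Neumann estimate and the same near--far splitting, with the near part controlled by the $L^2$ bound. For the far part it uses the same mixed Green-function bound (one derivative in the pole variable, two in the evaluation variable, of size $|x-z|^{-d-1}$). The only difference is presentation: you apply this bound to mean-zero atoms on the ball. The paper instead writes an explicit kernel $K(x,z)$ for $\mathcal U R$ off $S(\supp R)$ and uses $|\nabla_xK(x,z)|\le C|x-z|^{-d-1}$ directly. The two are equivalent by Fubini.

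Three small corrections:
\begin{itemize}
\item On a single ball, the elementary $L^1$--$L^\infty$ duality already identifies the mean oscillation with a supremum over such atoms, so the $(H^1)^*=\BMO$ theorem is not needed. Your displayed inequality should also have $\mathcal U R_2$, not $g$, inside the supremum.
\item $D^2(\psi_f\circ S)$ involves $D^2\psi_f$ and $\nabla\psi_f$ (the latter through $D^2S$), not $D^3\psi_f$. The gradient term needs no cancellation, since $|x-x_0|^{1-d}$ is integrable on $X$.
\item For the mixed bound, the interior/boundary estimate must be applied to $\partial_{z_k}N(\cdot,z)$ (via difference quotients), with input $|\nabla_zN(x,z)|\le C|x-z|^{1-d}$, not to $N(\cdot,z)$ itself.
\end{itemize}
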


\begin{proof}
For $d\ge2$, apply \cite[Lemma~A.1, equation~(78)]{IF} with $p=2$ and zero conormal data; for $d=1$, use the integrated formula above. The normalization $\int_X\psi_f\dd\mu=0$, the density bounds, and the chain rule give
\[
 \|\psi_f\|_{W^{2,2}(X)}
 +\|D^2(\psi_f\circ S)\|_{L^2(Y)}
 \le C\|f\|_{L^2(\mu)},
\]
where we used \cref{lem:classicalfinite} for the composition with $S$. By the Riesz representation theorem, \eqref{m:display:069} defines a unique $\mathcal U R\in L^2(\mu)$ with $\|\mathcal U R\|_{L^2(\mu)} \le C\|R\|_{L^2(Y)}$. Taking $f=1$ gives $\psi_f=0$ and $\int_X\mathcal U R\dd\mu=0$.

Let $G(z,x)$ be the symmetric Neumann Green function, with the sign convention and normalization
\begin{equation}\label{m:display:070}
\begin{gathered}
 -\operatorname{div}_z(A(z)\nabla_zG(z,x))
 =\delta_x-|X|^{-1},
 \qquad \int_XG(z,x)\dd z=0,\\
 A(z)\nabla_zG(z,x)\cdot n_X(z)=0
 \quad\text{on }\partial X.
\end{gathered}
\end{equation}
For $d\ge2$, existence and symmetry follow from \cite[Propositions~B.6 and~B.8]{GMS}. For $d=1$, write $X=[x_-,x_+]$ and define $G$ by
\[
 A(z)\partial_zG(z,x)=\frac{z-x_-}{x_+-x_-}-\1_{\{z>x\}},\qquad \int_{x_-}^{x_+}G(z,x)\dd z=0,\qquad x_-<x<x_+.
\]
This kernel satisfies \eqref{m:display:070}; integration by parts gives symmetry. Off the diagonal, $\partial_zG,\partial_z^2G$ are bounded and $\partial_x\partial_z^jG=0$ for $j=1,2$, so the Green derivative bounds used below hold also in this case.

Set $g_\mu(z)=\int_XG(z,x)\dd\mu(x)$. Integrating \eqref{m:display:070} against $\mu$ gives
\[
\begin{gathered}
 -\operatorname{div}(A\nabla g_\mu)
 =\rho_0-|X|^{-1}
 \quad\text{in }\intr X,\\
 A\nabla g_\mu\cdot n_X=0
 \quad\text{on }\partial X,
 \qquad \int_Xg_\mu\dd x=0.
\end{gathered}
\]
The identities $\rho_0=\rho_1(T)\det DT$ and $\operatorname{div}\bigl(\det DT\,(DT)^{-1}\bigr)=0$ give $\operatorname{div}A=\rho_0(\nabla\log\rho_1)\circ T$. Thus its equation is equivalently
\[
 \sum_{i,j=1}^d
 ((DT)^{-1})_{ij}\partial_{ij}g_\mu
 +(\nabla\log\rho_1)(T)\cdot\nabla g_\mu
 =\frac{1}{|X|\rho_0}-1.
\]
By boundary correspondence, $(DT)^{-1}n_X$ is a positive multiple of $n_Y\circ T$, where $n_Y$ is the outward unit normal to $Y$. The boundary condition therefore becomes $(n_Y\circ T)\cdot\nabla g_\mu=0$. The compatibility condition is $\int_X(|X|^{-1}-\rho_0)\dd x=0$. For $d\ge2$, \cite[Theorem~5.6]{GS}, with exponent $\beta$, and uniqueness modulo constants give $g_\mu\in C^{2,\beta}(X)$; for $d=1$, this follows by direct integration.

The Green representation yields
\begin{equation}\label{m:display:071}
 \psi_f(z)=\int_XG(z,x)f(x)\dd\mu(x)-\bar f\,g_\mu(z)+c_f,
\end{equation}
where $c_f$ enforces $\int_X\psi_f\dd\mu=0$. Take $f$ supported away from $S(\supp R)$. Then
$x\in\supp f$ and $S(y)$ with $y\in\supp R$ remain a positive
distance apart, so the off-diagonal Green derivative bounds
justify Fubini's theorem and differentiation under the integral.
Testing \eqref{m:display:069} therefore gives, for
$x\notin S(\supp R)$,
\begin{equation}\label{m:display:072}
 (\mathcal U R)(x)
 =-\sum_{i,j=1}^d\int_Y
 \partial_{y_i y_j}
 \bigl[G(S(y),x)-g_\mu(S(y))\bigr]
 R_{ij}(y)\dd y,
\end{equation}
where $\supp R$ denotes the essential support. This formula defines a continuous representative off $S(\supp R)$, and the contribution of $g_\mu$ is constant in $x$.

With $y=T(z)$, the nonconstant term becomes $\sum_{i,j=1}^d\int_XK_{ij}(x,z)R_{ij}(T(z))\dd z$, with kernel
\[
\begin{aligned}
 K(x,z)=-\det DT(z)\Bigl(
 &(DS(T(z)))^T D_z^2G(z,x)DS(T(z))+\sum_{i=1}^d
 \partial_{z_i}G(z,x)D^2S_i(T(z))
 \Bigr).
\end{aligned}
\]
We claim that
\begin{equation}\label{m:display:073}
 |K(x,z)|\le C|x-z|^{-d},
 \qquad
 |\nabla_xK(x,z)|\le C|x-z|^{-d-1}.
\end{equation}
The first bound follows from \cref{lem:classicalfinite} and $|D_z^jG(z,x)|\le C|x-z|^{2-d-j}$, $j=1,2$; see \cite[Appendix~C, equation~(67)]{GMS}.

For the second, fix distinct interior points $x,z$ and put $r=|x-z|/8$. By symmetry and \eqref{m:display:070}, sufficiently small difference quotients of order $j=1,2$ in $z$, viewed as functions of the second variable, solve the homogeneous equation in $X\cap B(x,2r)$ with homogeneous conormal data on $\partial X\cap B(x,2r)$. Indeed, the shifted poles remain outside this neighborhood and the constant right-hand side cancels. The preceding derivative bounds control their suprema by $Cr^{2-d-j}$. For $r$ below a fixed boundary-chart radius, dilation by $r$
leaves the coefficient and boundary-chart norms uniformly bounded.
The local gradient estimate therefore contributes a factor
$r^{-1}$, with a constant independent of $r$ and of the
difference-quotient increment. The local interior and boundary gradient estimates \cite[Sections~6.1 and~6.7]{GT}, followed by passage to the difference-quotient limit, yield
\begin{equation}\label{eq:greenmixed}
 |\nabla_xD_z^jG(z,x)|
 \le Cr^{1-d-j},
 \qquad j=1,2.
\end{equation}
The estimates extend to boundary points by limits; pairs separated by a fixed positive distance are controlled by local regularity. All coefficients in the formula for $K$ depend only on $z$, so \eqref{eq:greenmixed} proves \eqref{m:display:073}. We conclude by the standard near--far decomposition, as in \cite[Section~C.2.1, proof of Theorem~B.2]{GMS}. For a sufficiently small relative ball $X_{x_0,r}$, set $R_1(y)=R(y)\mathbf1_{B(x_0,4r)}(S(y))$ and $R_2=R-R_1$. The $L^2$ estimate, the Jacobian bounds, and $|X_{x_0,r}|\ge cr^d$ give
\[
 \frac1{|X_{x_0,r}|}\int_{X_{x_0,r}}|\mathcal U R_1|\dd x\le Cr^{-d/2}\|R_1\|_{L^2(Y)}\le C\|R\|_\infty.
\]
For $x\in X_{x_0,r}$, convexity and \eqref{m:display:073} give $|K(x,z)-K(x_0,z)|\le Cr|z-x_0|^{-d-1}$ when $|z-x_0|\ge4r$. The constant contribution of $g_\mu$ cancels, so
\[
 |(\mathcal U R_2)(x)-(\mathcal U R_2)(x_0)|\le Cr\|R\|_\infty\int_{X\setminus B(x_0,4r)}|z-x_0|^{-d-1}\dd z\le C\|R\|_\infty.
\]
These estimates bound the mean oscillation on $X_{x_0,r}$. For radii bounded below, the global $L^2$ estimate suffices. Taking the supremum proves \eqref{bmo:eq:ellipticbmo}.
\end{proof}

Equations \eqref{bmo:eq:dualityexact} and \eqref{m:display:069} identify $W=\mathcal U\cR_\eps$ in $L^2(\mu)$. By \cref{bmo:lem:density,bmo:lem:ellipticbmo} and $b=\nabla W$,
\begin{equation}\label{bmo:eq:nonlinear}
 [W]_{\BMO(X)}
 \le C\bigl(\ell^2+\|\nabla W\|_\infty^2\bigr),
\end{equation}
which will be used in the next subsection.

\subsection{BMO absorption and sharp graph localization}
\label{bmo:sec:bmogoal}
\label{bmo:sec:absorption}

Recall $\ell=\eps^{1/(d+2)}$, $r_\eps=u_\eps-u_0$, $s_\eps=v_\eps-v_0$, and $W=r_\eps+a_\eps-\bar w$ from \eqref{bmo:eq:wdef}. The goal of this subsection is the following theorem, based on \eqref{bmo:eq:nonlinear} and the coarse approximation in \cref{bmo:prop:coarsefromiteration}.

\begin{theorem}[Sharp BMO, gradient and support estimates]\label{bmo:thm:main}
Under \cref{ass:geometrydata}, for all sufficiently small $\eps$,
\begin{equation}\label{m:display:074}
\begin{aligned}
 [r_\eps]_{\BMO(X)}&\asymp\ell^2,
 & [s_\eps]_{\BMO(Y)}&\asymp\ell^2,\\
 \|\nabla u_\eps-T\|_{L^\infty(X)}&\asymp\ell,
 & \|\nabla v_\eps-S\|_{L^\infty(Y)}&\asymp\ell.
\end{aligned}
\end{equation}
The section estimates \eqref{eq:sharpfiber} hold for every $x\in X$ and $y\in Y$, and \eqref{eq:hausdorff} holds.
\end{theorem}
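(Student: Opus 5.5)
The core is the BMO absorption outlined in the proof strategy. I will split $W=V+E$ using \cref{bmo:prop:coarsefromiteration} and then interpolate $\nabla V$ rather than $\nabla W$.

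\emph{Step 1: coarse splitting.} Fix $\eta_0>0$ and take $\eta$ small in terms of $\eta_0$, with $R=L_\eta\ell$. Choose a finite-overlap cover of $X$ by relative balls $X_{x_k,R}$ and a subordinate partition of unity $\{\chi_k\}$ with $|D^j\chi_k|\le CR^{-j}$. On each $X_{x_k,8R}$, let $a_k$ be the optimal constant from \eqref{bmo:eq:coarse}, and let $c_k$ be the average of $W-a_k\cdot z$. Set $V=\sum_k\chi_k(c_k+a_k\cdot z)$ and $E=W-V$.

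The Lipschitz bound on $\nabla W=m_\eps-T$ (from \cref{bmo:lem:conditional,lem:classicalfinite}) and \eqref{bmo:eq:coarse} give two things. First, Poincar\'e/Campanato on relative balls gives $|W-c_k-a_k\cdot z|\le C\eta R^2$ on $X_{x_k,4R}$; here the Lipschitz bound upgrades $L^2$ to $L^\infty$ at scale $R$ with a constant depending on $L_\eta$. Second, overlapping balls satisfy $|a_k-a_{k'}|\le C\eta R$ and $|c_k-c_{k'}+(a_k-a_{k'})\cdot z|\le C\eta R^2$.

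Hence $\|D^2V\|_\infty\le C\eta\le\eta_0$, while $\|E\|_\infty\le C\eta R^2\le C_{\eta_0}\ell^2$ and $\|\nabla E\|_\infty\le C_{\eta_0}\ell$. For $\nabla E$, interpolate $E$ between its sup bound and the Lipschitz bound of $\nabla E$.

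\emph{Step 2: interpolation and absorption.} For $V$ with $\|D^2V\|_\infty\le\eta_0$, I will prove the interpolation inequality
\[
\|\nabla V\|_\infty^2\le C\eta_0[V]_{\BMO(X)}+C[V]_{\BMO(X)}^2 .
\]
The argument is as follows. On a relative ball of radius $\rho$, a function with Hessian at most $\eta_0$ differs from an affine function by at most $\eta_0\rho^2$. The mean oscillation of that affine function is comparable to $\rho|\nabla V|$, and the convexity of $X$ gives the volume lower bound needed here. This yields $\rho|\nabla V(x)|\le C([V]_{\BMO}+\eta_0\rho^2)$ for all $\rho\le\diam X$. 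Optimizing with $\rho^2=[V]_{\BMO}/\eta_0$ gives the first term, and the cap $\rho\le\diam X$ produces the second.

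Combine this with $[V]_{\BMO}\le[W]_{\BMO}+C\|E\|_\infty$ and with \eqref{bmo:eq:nonlinear}:
\[
[W]_{\BMO}\le C\ell^2+2\|\nabla V\|_\infty^2+2\|\nabla E\|_\infty^2\le C_0\eta_0[W]_{\BMO}+C_0[W]_{\BMO}^2+C_{\eta_0}\ell^2 .
\]
Choose $\eta_0$ with $C_0\eta_0\le1/4$. By \eqref{bmo:eq:rough}, for $\eps$ small we also have $C_0[W]_{\BMO}\le1/4$. Absorbing gives $[W]_{\BMO}\le C\ell^2$, and the same bound then holds for $\|\nabla W\|_\infty^2=\|b\|_\infty^2$.

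\emph{Step 3: transfer to $r_\eps$, gradients, sections, and lower bounds.} Since $a_\eps$ is $O(\ell^2)$ by \cref{bmo:lem:conditional}, $[r_\eps]_{\BMO}\le C\ell^2$. Also $\|\nabla u_\eps-T\|_\infty\le\|b\|_\infty+\|\nabla a_\eps\|_\infty\le C\ell$. For $y\in S_{\eps,x}$,
\[
|y-T(x)|\le\diam S_{\eps,x}+|\nabla u_\eps(x)-T(x)|\le C\ell
\]
by \eqref{eq:balls}. The interchanged marginals are handled symmetrically, which gives the upper bounds in \eqref{eq:sharpfiber} and the directed Hausdorff bound from $\spt\pi_\eps$ to the graph. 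For the reverse direction, note that $(x,\nabla u_\eps(x))\in\spt\pi_\eps$ lies within $C\ell$ of $(x,T(x))$.

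Lower bounds:
\begin{itemize}
\item Gradients: at $x\in\partial X$, \eqref{eq:centerboundary} forces $|\nabla u_\eps(x)-T(x)|\ge c\ell$.
\item Sections: $S_{\eps,x}$ contains a ball of radius $c\ell$, so $\sup_{S_{\eps,x}}|y-T(x)|\ge c\ell$.
\item Hausdorff distance: at a boundary point $x$, $(x,T(x))$ has distance at least $c\ell$ from the support, because $q_\eps(x',T(x))\ge0$ with $|x'-x|$ small would put $T(x)\in\partial Y$ into $S_{\eps,x'}$. That is impossible within $c\ell$, since the inner ball $B(\nabla u_\eps(x'),c\ell)$ lies in $Y$ and $S_{\eps,x'}$ is close to it. I expect some care here, possibly using the ball about $\nabla v_\eps(T(x))$ in $X$ instead.
\item BMO: a lower bound $[r_\eps]_{\BMO}\ge c\ell^2$ follows because $\nabla r_\eps$ has size $c\ell$ near $\partial X$ while $D^2r_\eps$ is bounded. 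On a relative ball of radius $\rho=\kappa\ell$ at a boundary point, $r_\eps$ is affine with slope $\gtrsim\ell$ up to $C\rho^2$ errors, so with $\kappa$ small the oscillation is at least $c\ell^2$.
\end{itemize}

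The main obstacle is Step 1: turning the $L^2$ coarse estimate into a patched $V$ with genuinely small Hessian, uniformly up to the boundary of the relative balls. I would first try the consistency estimate for neighboring constants $a_k$ through the common overlap $X_{x_k,R}\cap X_{x_{k'},R}$, whose volume is at least $cR^d$ by convexity.
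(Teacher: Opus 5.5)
The analytic core of your proposal matches the paper's proof. Your patched decomposition $W=V+E$ is \cref{bmo:lem:smoothing}, your interpolation inequality for $V$ is \cref{bmo:lem:bmo-interpolation}, and your absorption step is the same. Two of your variations are harmless:
\begin{itemize}
\item You get $\|\nabla u_\eps-T\|_\infty\le C\ell$ from $\nabla r_\eps=b-\nabla a_\eps$, rather than by interpolating $r_\eps$ with the Hessian bounds \eqref{eq:hessians}.
\item You prove $[r_\eps]_{\BMO(X)}\ge c\ell^2$ by a direct oscillation count at a boundary point, rather than by inverting the interpolation inequality.
\end{itemize}
Two small corrections to Step 1:
\begin{itemize}
\item The sup bound on $W-c_k-a_k\cdot z$ is really $C(K+1)R^2$, not $C\eta R^2$, because the Lipschitz constant $K$ of $\nabla W$ does not shrink with $\eta$. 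This still gives $\|E\|_\infty\le C(K+1)L_\eta^2\ell^2=C_{\eta_0}\ell^2$, so nothing breaks.
\item Compare neighbouring affine pieces on a relative ball $X_{x,R}$ centred at a common point of the two supports, which lies inside both enlarged balls. The intersection $X_{x_k,R}\cap X_{x_{k'},R}$ can be thin.
\end{itemize}

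The genuine gap is the Hausdorff lower bound. You try to show $\dist((x,T(x)),\spt\pi_\eps)\ge c\ell$ for $x\in\partial X$, which bounds the directed distance from $\operatorname{graph}T$ to the support. That claim is false. By the companion result in \cref{rk:BrenierGraphInclusion}, $q_\eps(x,T(x))\ge c\ell^2$ for every $x\in X$, so $\operatorname{graph}T\subset\spt\pi_\eps$ for small $\eps$ and this directed distance is zero.

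Your reasoning also fails on its own terms. The inner-ball inclusion only gives $\dist(\nabla u_\eps(x'),\partial Y)\ge c\ell$. Since $S_{\eps,x'}\subset B(\nabla u_\eps(x'),C\ell)$ with $C>c$, nothing prevents the section from reaching $\partial Y$, and at $x'=x$ it does contain $T(x)\in\partial Y$. Switching to the ball around $\nabla v_\eps(T(x))$ fails for the same reason.

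The lower bound must come from the other directed distance, using what you already have. For any $x\in X$, pick $y\in S_{\eps,x}$ with $|y-T(x)|\ge c\ell$, which the inner ball of radius $c\ell$ provides. Then $|y-T(x)|\le|y-T(z)|+\Lip(T)|x-z|$ for every $z\in X$, and Cauchy--Schwarz gives
\[
\dist\bigl((x,y),\operatorname{graph}T\bigr)\ge\frac{|y-T(x)|}{\sqrt{1+\Lip(T)^2}}\ge c\ell.
\]
This is how the paper concludes the lower bound in \eqref{eq:hausdorff}.
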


The following is a bounded-domain version of the BMO Gagliardo--Nirenberg inequality; see \cite[Theorem~1.3]{Dao2023} for a whole-space version. We include a proof valid up to the boundary of $D$.
\begin{lemma}[BMO interpolation]
\label{bmo:lem:bmo-interpolation}
Let $D\subset\R^d$ be a convex body and let $f\in C^{1,1}(D)$. Then
\begin{equation}\label{bmo:eq:bmo-interpolation}
 \|\nabla f\|_\infty^2
 \le C[f]_{\BMO(D)}
 \bigl(\|D^2f\|_\infty+[f]_{\BMO(D)}\bigr),
\end{equation}
where all norms are taken on $D$ and $C$ depends only on $D$.
\end{lemma}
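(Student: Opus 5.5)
The plan is to prove the BMO interpolation inequality \eqref{bmo:eq:bmo-interpolation} pointwise. Fix $x\in D$ and a radius $r$, compare $\nabla f(x)$ with the best affine fit of $f$ on the relative ball $D_{x,r}$, and then optimize over $r$. Write $\Lambda:=\|D^2f\|_\infty$ and $\beta:=[f]_{\BMO(D)}$. The constant will use only two facts about the convex body $D$: a uniform volume bound $|D_{x,r}|\ge c r^d$ for $0<r\le 2\diam D$, and the fact that $D_{x,r}$ is convex and contains a ball of radius comparable to $r$ (with constants depending only on $D$).

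\textbf{Step 1: an oscillation estimate.} For $x\in D$ and $r\le 2\diam D$, Taylor's formula along segments (which stay in $D$ by convexity) gives
\[
|f(z)-f(x)-\nabla f(x)\cdot(z-x)|\le \tfrac12\Lambda r^2,\qquad z\in D_{x,r}.
\]
Set $g(z):=\nabla f(x)\cdot(z-x)$. Then $f-g$ differs from a constant by at most $\Lambda r^2/2$ on $D_{x,r}$. Hence the mean oscillation of $g$ on $D_{x,r}$ is at most $\beta+\Lambda r^2$.

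\textbf{Step 2: a lower bound for the linear oscillation.} I claim that $D_{x,r}$ contains a ball of radius $c_1 r$; this is the step I expect to be the only delicate point. For $r$ below a fixed threshold, this follows from a uniform interior cone condition for convex bodies, in the same spirit as the cone construction in the proof of \cref{eqv:lem:cone}. For larger $r$ it follows from compactness. On such a ball, the mean oscillation of a linear function $e\mapsto v\cdot(z-x)$ is at least $c|v|\,r$. The reason is that mean oscillation over the ball of a linear function equals $c_d|v|\rho$ for a ball of radius $\rho$. Passing to the larger set $D_{x,r}$ loses only the volume ratio, since for any constant $k$,
\[
\int_{D_{x,r}}|g-k|\ge\int_{\text{ball}}|g-k|,
\]
and the infimum over $k$ of the average controls the mean oscillation up to a factor $2$. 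Therefore
\[
c|\nabla f(x)|\,r\le \beta+\Lambda r^2 .
\]

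\textbf{Step 3: optimizing over $r$.} If $\beta\le\Lambda (2\diam D)^2$, choose $r=(\beta/\Lambda)^{1/2}$. This gives $|\nabla f(x)|\le C(\beta\Lambda)^{1/2}$. Otherwise choose $r=2\diam D$, which gives $|\nabla f(x)|\le C\beta$. Squaring and combining the two cases yields
\[
|\nabla f(x)|^2\le C\beta(\Lambda+\beta).
\]
Taking the supremum over $x\in D$ proves \eqref{bmo:eq:bmo-interpolation}, with $C$ depending only on $D$ through its diameter and the inner-ball constant $c_1$.
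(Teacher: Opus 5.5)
Your proposal is correct and follows essentially the same route as the paper. Both compare $f$ with its affine Taylor part on $D_{x,r}$, use an inner ball of radius comparable to $r$ inside $D_{x,r}$ to bound $r|\nabla f(x)|$ by the mean oscillation plus $\|D^2f\|_\infty r^2$, and then take $r=\min\{\cdot,\sqrt{[f]_{\BMO(D)}/\|D^2f\|_\infty}\}$; the paper obtains the inner ball by shrinking a fixed interior ball of $D$ homothetically toward $x$, which is your cone condition. One thing to add is the degenerate case $[f]_{\BMO(D)}=0$, where your choice $r=0$ is not admissible: there you divide $c|\nabla f(x)|r\le\Lambda r^2$ by $r$ and let $r\downarrow0$ to get $\nabla f(x)=0$, as the paper does.
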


\begin{proof}
Fix $x\in D$ and $0<r\le\diam D$. Apply the map $z\mapsto x+\frac{r}{2\diam D}(z-x)$ to a fixed interior ball of $D$. By convexity, its image is contained in $D$, and every image point is at distance at most $r/2$ from $x$. Thus $D_{x,r}$ contains a ball $B(z_0,cr)$, where $c>0$ depends only on $D$.

We first establish two estimates for affine polynomials, the second of which will also be used in \cref{bmo:lem:smoothing}:
\begin{equation}\label{bmo:eq:affine-estimates}
\begin{aligned}
 r|\nabla P|
 &\le C\inf_{a\in\R}
 \frac1{|D_{x,r}|}\int_{D_{x,r}}|P-a|\dd z,\\
 |P(x)|+r|\nabla P|
 &\le Cr^{-d/2}\|P\|_{L^2(D_{x,r})}.
\end{aligned}
\end{equation}
For the first, symmetry of the contained ball and $|u+v|+|u-v|\ge2|v|$ give, for every $a\in\R$,
\[
\begin{aligned}
 \int_{D_{x,r}}|P-a|\dd z
 &\ge
 \frac12\int_{B(0,cr)}
 \bigl(
 |P(z_0)-a+\nabla P\cdot z|
 +|P(z_0)-a-\nabla P\cdot z|
 \bigr)\dd z\\
 &\ge \int_{B(0,cr)}|\nabla P\cdot z|\dd z
 \ge cr^{d+1}|\nabla P|.
\end{aligned}
\]
Divide by $|D_{x,r}|\le Cr^d$ and take the infimum over $a$. For the second, symmetry gives
\[
\begin{aligned}
 \int_{D_{x,r}}|P|^2\dd z
 &\ge
 |B(0,cr)|\,|P(z_0)|^2
 +\int_{B(0,cr)}|\nabla P\cdot z|^2\dd z\\
 &\ge cr^d|P(z_0)|^2
      +cr^{d+2}|\nabla P|^2.
\end{aligned}
\]
Since $|x-z_0|\le r$, $|P(x)|\le|P(z_0)|+r|\nabla P|$, which proves \eqref{bmo:eq:affine-estimates}.

Now take $P(z)=f(x)+\nabla f(x)\cdot(z-x)$. Taylor's formula along $[x,z]\subset D$ gives
\[
 |f(z)-P(z)|
 \le \tfrac12\|D^2f\|_\infty|z-x|^2
 \le \tfrac12\|D^2f\|_\infty r^2
 \qquad(z\in D_{x,r}).
\]
Apply the first estimate in \eqref{bmo:eq:affine-estimates} with $a=\langle f\rangle_{D_{x,r}}$. Using the definition of the BMO seminorm,
\begin{equation}\label{m:display:075}
\begin{aligned}
 r|\nabla f(x)|
 &\le
 \frac{C}{|D_{x,r}|}
 \int_{D_{x,r}}
 |P-\langle f\rangle_{D_{x,r}}|\dd z\le C\bigl(
 [f]_{\BMO(D)}+\|D^2f\|_\infty r^2
 \bigr).
\end{aligned}
\end{equation}

Assume first that $[f]_{\BMO(D)}>0$ and $\|D^2f\|_\infty>0$, and choose
\[
 r=\min\left\{
 \frac{\diam D}{2},
 \sqrt{\frac{[f]_{\BMO(D)}}{\|D^2f\|_\infty}}
 \right\}.
\]
If the second term is the minimum, \eqref{m:display:075} gives $|\nabla f(x)|^2 \le C[f]_{\BMO(D)}\|D^2f\|_\infty$. Otherwise, $\|D^2f\|_\infty \le4[f]_{\BMO(D)}/(\diam D)^2$, and \eqref{m:display:075} gives $|\nabla f(x)|^2\le C[f]_{\BMO(D)}^2$. Taking the supremum over $x\in D$ proves \eqref{bmo:eq:bmo-interpolation}.

If $\|D^2f\|_\infty=0$, take $r=\diam D/2$ in \eqref{m:display:075}. If $[f]_{\BMO(D)}=0$, divide \eqref{m:display:075} by $r$ and let $r\downarrow0$ to obtain $\nabla f=0$. The argument applies to every $x\in D$, including boundary points.
\end{proof}

\begin{lemma}[Smooth approximation with small Hessian]
\label{bmo:lem:smoothing}
Let $W=W_\eps\in C^{1,1}(X)$ satisfy $\|D^2W\|_\infty\le K$ uniformly in $\eps$. Suppose that, for every $\eta>0$, \eqref{bmo:eq:coarse} holds with $m_\eps-T$ replaced by $\nabla W$. For every $\eta_0>0$ and all sufficiently small $\eps$, there exist $V$, smooth on a neighborhood of $X$, and $E\in C^{1,1}(X)$ such that
\begin{equation}\label{bmo:eq:decomposition}
 W=V+E,\qquad
 \|D^2V\|_\infty\le\eta_0,\qquad
 \|E\|_\infty\le C_{\eta_0}\ell^2,\qquad
 \|\nabla E\|_\infty\le C_{\eta_0}\ell,
\end{equation}
where all norms are taken on $X$. The constants and the threshold for $\eps$ depend only on $\eta_0$, $K$, the domain, and the constants and thresholds in \eqref{bmo:eq:coarse}.
\end{lemma}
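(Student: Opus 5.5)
The plan is to build $V$ by patching local affine-gradient approximations, that is local quadratic approximations of $W$ with constant gradient fits, at the scale $R=L_\eta\ell$ using a partition of unity, and then to let $E:=W-V$.

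Fix $\eta>0$, to be chosen in terms of $\eta_0$, and let $R=L_\eta\ell$. Choose a maximal $R$-separated net $\{x_k\}\subset X$ and a smooth partition of unity $\{\chi_k\}$ on a neighborhood of $X$ subordinate to $\{B(x_k,2R)\}$, with bounded overlap and $|D^j\chi_k|\le C_jR^{-j}$. For each $k$, let $a_k$ be the constant attaining (nearly) the infimum in \eqref{bmo:eq:coarse} on $X_{x_k,8R}$ with $\nabla W$ in place of $m_\eps-T$. Then choose $c_k$ as the $X_{x_k,8R}$-average of $W-a_k\cdot(z-x_k)$, and set $P_k(z)=c_k+a_k\cdot(z-x_k)$ and $V:=\sum_k\chi_kP_k$.

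The first step is a local $L^\infty$ bound for $W-P_k$. By Poincar\'e on the convex relative ball, $\|W-P_k\|_{L^2\text{-avg}(X_{x_k,8R})}\le CR\cdot\eta R$. The function $W-P_k$ has Hessian bounded by $K$ on a set of size $R$. An $L^2$-to-$L^\infty$ interpolation then gives $\|W-P_k\|_{L^\infty(X_{x_k,4R})}\le C(\eta+\eta^{\theta}K^{1-\theta})R^2$ and $\|\nabla W-a_k\|_{L^\infty(X_{x_k,4R})}\le C(\eta^{1/2}K^{1/2}+\eta)R$. This is the same averaging over contained balls as in \eqref{bmo:eq:affine-estimates}, applied at the point of maximum with radius $\rho=\sqrt{\eta/K}\,R$. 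For overlapping indices $k,k'$, the same bound gives $\|P_k-P_{k'}\|_{L^\infty(X_{x_k,4R})}\le\omega(\eta)R^2$ and $|a_k-a_{k'}|\le\omega(\eta)R$, with $\omega(\eta)\to0$ as $\eta\to0$ for fixed $K$. Applying \eqref{bmo:eq:affine-estimates} to the affine difference $P_k-P_{k'}$ converts the $L^\infty$ closeness on the relative ball into closeness of the gradients.

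The second step bounds the Hessian of $V$. On $B(x_k,2R)$ write $V=P_k+\sum_{k'}\chi_{k'}(P_{k'}-P_k)$. Since $D^2P_k=0$,
\[
D^2V=\sum_{k'}\bigl(D^2\chi_{k'}(P_{k'}-P_k)+2\nabla\chi_{k'}\otimes(a_{k'}-a_k)\bigr),
\]
which is bounded by $C(R^{-2}\omega(\eta)R^2+R^{-1}\omega(\eta)R)\le C\omega(\eta)$. Choosing $\eta$ with $C\omega(\eta)\le\eta_0$ gives $\|D^2V\|_\infty\le\eta_0$.

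The third step bounds $E=W-V=\sum_k\chi_k(W-P_k)$. We get $\|E\|_\infty\le\max_k\|W-P_k\|_{L^\infty}\le CR^2=CL_\eta^2\ell^2$. For the gradient,
\[
\nabla E=\sum_k\bigl(\chi_k(\nabla W-a_k)+\nabla\chi_k(W-P_k)\bigr),
\]
which is at most $CR+CR^{-1}R^2\le CL_\eta\ell$. The regularity $E\in C^{1,1}$ is clear. All constants depend on $\eta_0$ only through $L_\eta$ and $\eta$, and the threshold for $\eps$ is $\eps_\eta$.

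The main obstacle is the first step near $\partial X$. There the relative balls are not full balls, so the uniform interior-cone and contained-ball property of convex bodies from the proof of \cref{bmo:lem:bmo-interpolation} must be invoked to pass from $L^2$ averages to pointwise bounds uniformly. The radius must also be kept below the fixed scale where $|X_{x,r}|\ge cr^d$, which holds once $R$ is small, that is, for $\eps$ small.
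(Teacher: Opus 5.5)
Your proposal is correct and is essentially the paper's proof: a partition of unity at scale $R=L_\eta\ell$, affine fits whose gradients come from \eqref{bmo:eq:coarse}, $D^2V$ controlled through pairwise closeness of overlapping fits (using $\sum_k\nabla\chi_k=\sum_kD^2\chi_k=0$), and $E$ controlled by Taylor's formula together with \eqref{bmo:eq:affine-estimates}.

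There is one slip in your first step. With $\rho=\sqrt{\eta/K}\,R$ the $L^2$ term scales like $\eta^{1-d/4}K^{d/4}R^2$, so your stated exponents hold only in low dimension. A dimension-dependent choice such as $\rho=(\eta/K)^{2/(d+4)}R$, capped at $R$, still gives $\omega(\eta)\to0$, and that is all you use. The paper avoids this interpolation entirely. It applies \eqref{bmo:eq:affine-estimates} directly to the affine difference $P_k-P_{k'}$ on $X_{x,R}\subset X_{x_k,4R}\cap X_{x_{k'},4R}$. This yields $|P_k(x)-P_{k'}(x)|+R|\nabla P_k-\nabla P_{k'}|\le C\eta R^2$, which is linear in $\eta$ and independent of $K$.
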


\begin{proof}
Fix $0<\eta\le1$, to be chosen below, and set $R=L_\eta\ell$ as in \eqref{bmo:eq:coarse}. Take $\eps<\eps_\eta$ sufficiently small that \eqref{bmo:eq:affine-estimates} applies at radii up to $8R$.

Choose a maximal $R$-separated family $\{x_j\}\subset X$. Then the balls $B(x_j,R)$ cover $X$, and the balls $B(x_j,8R)$ have uniformly bounded overlap. Let $\{\chi_j\}$ be a nonnegative smooth partition of unity on a neighborhood of $X$, with $\supp\chi_j\subset B(x_j,2R)$ and $\|D^k\chi_j\|_\infty\le CR^{-k}$ for $k=0,1,2$. Set $D_j=X_{x_j,4R}$.

Choose $P_j$ affine with $\nabla P_j =|X_{x_j,8R}|^{-1} \int_{X_{x_j,8R}}\nabla W(z)\dd z$ and $\int_{D_j}(W-P_j)\dd z=0$. As the average minimizes the squared error, \eqref{bmo:eq:coarse} and Poincar\'e's inequality on the convex set $D_j$ give
\begin{equation}\label{m:display:076}
\begin{aligned}
 \|\nabla W-\nabla P_j\|_{L^2(D_j)}
 &\le C\eta R^{d/2+1},\\
 \|W-P_j\|_{L^2(D_j)}
 &\le CR\|\nabla W-\nabla P_j\|_{L^2(D_j)}
 \le C\eta R^{d/2+2}.
\end{aligned}
\end{equation}

If $x\in X\cap\supp\chi_j\cap\supp\chi_k$, then $X_{x,R}\subset D_j\cap D_k$. Thus \eqref{m:display:076} implies $\|P_j-P_k\|_{L^2(X_{x,R})} \le C\eta R^{d/2+2}$. By \eqref{bmo:eq:affine-estimates},
\begin{equation}\label{m:display:077}
 |P_j(x)-P_k(x)|
 +R|\nabla P_j-\nabla P_k|
 \le C\eta R^2.
\end{equation}

Define $V=\sum_j\chi_jP_j$. For $x\in X$, choose $k$ with $\chi_k(x)>0$. Since $\sum_jD\chi_j=\sum_jD^2\chi_j=0$, \eqref{m:display:077} and bounded overlap yield
\[
\begin{aligned}
 |D^2V(x)|
 &\le C\sum_{j:\,x\in\supp\chi_j}
 \left(
 R^{-2}|P_j(x)-P_k(x)|
 +R^{-1}|\nabla P_j-\nabla P_k|
 \right)
 \le C\eta.
\end{aligned}
\]

To estimate $E=W-V$, fix $x\in X\cap\supp\chi_j$. Apply \eqref{bmo:eq:affine-estimates} on $X_{x,R}\subset D_j$ to $z\mapsto P_j(z)-W(x)-\nabla W(x)\cdot(z-x)$. Taylor's formula and \eqref{m:display:076} give
\[
 |W(x)-P_j(x)|
 +R|\nabla W(x)-\nabla P_j|
 \le CR^{-d/2}\|P_j-W\|_{L^2(X_{x,R})}
       +CKR^2\le C(K+\eta)R^2.
\]
On $X\cap\supp\chi_j$, the preceding estimates give $|W-P_j|\le C(K+1)R^2$ and $|\nabla W-\nabla P_j|\le C(K+1)R$. Since $\chi_j\ge0$ and $\sum_j\chi_j=1$, $E=\sum_j\chi_j(W-P_j)$ satisfies $\|E\|_\infty\le C(K+1)R^2$. Moreover,
\[
 \nabla E
 =\sum_j\nabla\chi_j(W-P_j)
  +\sum_j\chi_j(\nabla W-\nabla P_j).
\]
The supports have bounded overlap and $|\nabla\chi_j|\le CR^{-1}$. Thus $\|\nabla E\|_\infty \le CR^{-1}(K+1)R^2+C(K+1)R \le C(K+1)R$.

Choose $\eta$ so that $C\eta\le\eta_0$. Then $L_\eta$ is fixed, and substituting $R=L_\eta\ell$ proves \eqref{bmo:eq:decomposition}.
\end{proof}

\begin{proof}[Proof of \cref{bmo:thm:main}]
By \cref{bmo:lem:conditional,lem:classicalfinite}, $\|D^2W\|_\infty\le C$ uniformly in $\eps$. Moreover, \eqref{bmo:eq:rough} gives $[W]_{\BMO(X)}\to0$, and \cref{bmo:prop:coarsefromiteration} gives \eqref{bmo:eq:coarse}.

Fix $0<\eta_0\le1$, to be chosen below, and apply \cref{bmo:lem:smoothing}. Since $W=V+E$, we have
\begin{equation}\label{m:display:078}
 [V]_{\BMO(X)}
 \le [W]_{\BMO(X)}+2\|E\|_\infty
 \le [W]_{\BMO(X)}+C_{\eta_0}\ell^2.
\end{equation}
By \cref{bmo:lem:bmo-interpolation}, for $\ell\le1$ and $\eps$ small enough that $[W]_{\BMO(X)}\le1$,
\begin{equation}\label{m:display:079}
 \|\nabla V\|_\infty^2
 \le C[V]_{\BMO(X)}
          \bigl(\eta_0+[V]_{\BMO(X)}\bigr)
 \le C\eta_0[W]_{\BMO(X)}
       +C[W]_{\BMO(X)}^2+C_{\eta_0}\ell^2,
\end{equation}
where $C$ is independent of $\eta_0$. Using $\|\nabla W\|_\infty^2 \le2\|\nabla V\|_\infty^2+2\|\nabla E\|_\infty^2$ in \eqref{bmo:eq:nonlinear}, together with \eqref{bmo:eq:decomposition}, gives
\begin{equation}\label{bmo:eq:absorb}
 [W]_{\BMO(X)}
 \le C\eta_0[W]_{\BMO(X)}
      +C[W]_{\BMO(X)}^2+C_{\eta_0}\ell^2.
\end{equation}
Choose $\eta_0$ so that $C\eta_0\le1/4$. Then take $\eps$ sufficiently small that \cref{bmo:lem:smoothing} applies and $C[W]_{\BMO(X)}\le1/4$. Thus \eqref{bmo:eq:absorb} yields $\frac12[W]_{\BMO(X)}\le C_{\eta_0}\ell^2$. With $\eta_0$ fixed, this gives $[W]_{\BMO(X)}\le C\ell^2$.

Since $r_\eps=W+\bar w-a_\eps$, invariance under addition of constants and \cref{bmo:lem:conditional} imply
\begin{equation}\label{m:display:080}
 [r_\eps]_{\BMO(X)}
 \le [W]_{\BMO(X)}+2\|a_\eps\|_\infty
 \le C\ell^2.
\end{equation}
Interchanging the marginals gives $[s_\eps]_{\BMO(Y)}\le C\ell^2$.

By \eqref{eq:hessians} and \cref{lem:classicalfinite}, $\|D^2r_\eps\|_\infty+\|D^2s_\eps\|_\infty\le C$. Hence \cref{bmo:lem:bmo-interpolation} gives
\begin{equation}\label{m:display:081}
 \|\nabla r_\eps\|_{L^\infty(X)}
 +\|\nabla s_\eps\|_{L^\infty(Y)}
 \le C\ell.
\end{equation}
For $x\in\partial X$, the inner inclusion in \eqref{eq:balls} gives $\dist(\nabla u_\eps(x),\partial Y)\ge c\ell$. Since $T(x)\in\partial Y$, it follows that $|\nabla r_\eps(x)|\ge c\ell$. The same argument gives $|\nabla s_\eps(y)|\ge c\ell$ for $y\in\partial Y$.

Applying \cref{bmo:lem:bmo-interpolation} once more, and using \eqref{m:display:080}, we obtain $c\ell^2 \le C[r_\eps]_{\BMO(X)} (1+[r_\eps]_{\BMO(X)}) \le C[r_\eps]_{\BMO(X)}$ for sufficiently small $\eps$. The corresponding estimate for $s_\eps$ follows by interchanging the marginals. This proves \eqref{m:display:074}.

For $x\in X$ and $y\in S_{\eps,x}$, \eqref{eq:balls} and \eqref{m:display:081} yield
\begin{equation}\label{m:display:083}
 |y-T(x)|
 \le |y-z_{\eps,x}|+|\nabla r_\eps(x)|
 \le C\ell.
\end{equation}
The inner inclusion in \eqref{eq:balls} also gives $\diam S_{\eps,x}\ge2c\ell$. Consequently, $\sup_{y\in S_{\eps,x}}|y-T(x)| \ge\frac12\diam S_{\eps,x}\ge c\ell$. This proves the source estimate in \eqref{eq:sharpfiber}. Interchanging the marginals proves the target estimate.

By \eqref{eqv:eq:support} and \eqref{m:display:083}, every $(x,y)\in\spt\pi_\eps$ is within $C\ell$ of $(x,T(x))$. Conversely, $(x,z_{\eps,x})\in\spt\pi_\eps$ and $|z_{\eps,x}-T(x)|\le C\ell$. These two estimates give the Hausdorff upper bound.

For any $z\in X$, $|y-T(x)|\le|y-T(z)|+\Lip(T)|x-z|$. Cauchy--Schwarz therefore gives
\begin{equation}\label{m:display:084}
\begin{aligned}
 \dist((x,y),\operatorname{graph}T)
 &=\inf_{z\in X}
   \bigl(|x-z|^2+|y-T(z)|^2\bigr)^{1/2}\ge \frac{|y-T(x)|}{\sqrt{1+\Lip(T)^2}}.
\end{aligned}
\end{equation}
Fix $x\in X$ and choose $y\in S_{\eps,x}$ with $|y-T(x)|\ge c\ell$, as provided by \eqref{eq:sharpfiber}. Then $(x,y)\in\spt\pi_\eps$, and \eqref{m:display:084} proves the Hausdorff lower bound in \eqref{eq:hausdorff}.
\end{proof}

\section{Convergence of the potentials}
\label{end:sec:criterion}

We now derive the sharp potential estimates from support localization. The argument uses the classical potentials and the marginal equations, but does not use uniform Hessian bounds for the regularized potentials. We first formulate this implication under weaker hypotheses, then derive a reversible equation for the potential error and prove the nonlocal estimate needed to control it.

\subsection{A support-localization criterion for sharp potential convergence}
\label{end:sec:setting}

Let $X,Y\subset\R^d$ be convex bodies, and let
$\mu=\rho_0\1_X\dd x$ and $\nu=\rho_1\1_Y\dd y$
be probability measures, where the measurable densities satisfy
$0<m\le\rho_i\le M<\infty$ a.e.\ on their respective
supports. Let $u_\eps\in C^1(X)$ and $v_\eps\in C^1(Y)$ be the convex QOT potentials, satisfying \eqref{eq:slack} and \eqref{eqv:eq:marginal}. We recall $\ell=\eps^{1/(d+2)}$ and use the symmetric normalization \eqref{eq:gauge} unless stated otherwise.

\begin{assumption}[Classical map and support localization]
\label{end:ass:inputs}
Let $u_0\in C^{2,1}(X)$, and suppose that $T=\nabla u_0$ is a bijection from $X$ onto $Y$ with $T_\#\mu=\nu$. Let $v_0$ be the convex conjugate of $u_0$, restricted to $Y$. For some $0<\lambda\le\Lambda<\infty$ and $L_0<\infty$, assume
\begin{equation}\label{end:eq:classical}
\lambda\Id\preceq D^2u_0\preceq\Lambda\Id,\qquad \Lip(D^2u_0)\le L_0.
\end{equation}
Moreover, there are $R_0,\eps_0>0$ such that, for $0<\eps<\eps_0$,
\begin{equation}\label{end:eq:tube}
|y-T(x)|\le R_0\ell\qquad((x,y)\in X\times Y,\ q_\eps(x,y)\ge0).
\end{equation}
\end{assumption}

By \eqref{end:eq:classical}, $T$ and $S=T^{-1}=\nabla v_0$ are bi-Lipschitz. The support identity \eqref{eqv:eq:support} also holds under the present hypotheses. Indeed, the row marginal equation gives a point of positive slack for every $x\in X$, and concavity in $y$ makes every zero-slack point a limit of positive-slack points in the same row. Continuity of $q_\eps$ and positivity of the densities then give $\{q_\eps\ge0\}=\overline{\{q_\eps>0\}}=\spt\pi_\eps$, including at boundary points.

\begin{theorem}[Sharp potential convergence]
\label{end:thm:main}
Under the hypotheses stated at the beginning of this subsection, \cref{end:ass:inputs}, and normalization~\eqref{eq:gauge}, there are $c,C,\eps_1>0$ such that, for all $0<\eps<\eps_1$ and $1\le p\le\infty$,
\begin{align}\label{end:eq:main}
c\ell^2\le\|u_\eps-u_0\|_{L^p(\mu)}\le C\ell^2,\qquad
c\ell^2\le\|v_\eps-v_0\|_{L^p(\nu)}\le C\ell^2.
\end{align}
The constants depend only on the dimension, the domains, the density bounds, and the constants in \cref{end:ass:inputs}. Under the point normalization $u_\eps(x_*)=u_0(x_*)$ at any fixed $x_*\in X$, the upper bounds remain valid, after enlarging $C$ if necessary.
\end{theorem}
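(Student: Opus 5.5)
We follow the route sketched in the proof strategy of the introduction. Write $r=u_\eps-u_0$, $s=v_\eps-v_0$, and set
\[
e(x):=r(x)+s(T(x))=-q_\eps(x,T(x)),\qquad \eta(x):=\tfrac12\bigl(r(x)-s(T(x))\bigr),
\]
so that $r=\eta+e/2$ and $s\circ T=e/2-\eta$.

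\textbf{Step 1: bound $e$ and fix the mean of $\eta$.} The first step is $\|e\|_\infty\le C\ell^2$. By the tube assumption \eqref{end:eq:tube}, each row section $S_{\eps,x}$ lies in $B(T(x),R_0\ell)$.
\begin{itemize}
\item The marginal equation gives $\eps\le\nu(S_{\eps,x})\max_y q_\eps(x,\cdot)$, while the concave function $q_\eps(x,\cdot)$ has height $O(\ell^2)$. For the height, argue as in \eqref{m:display:049}: \eqref{end:eq:tube} for columns gives $|\nabla_y q_\eps|=|x-\nabla v_\eps(y)|\le C\ell$ on the section. This uses that $\nabla v_\eps(y)$ is a barycenter of $K_{\eps,y}\subset B(S(y),C\ell)$, together with bi-Lipschitz $T$.
\item The bound $|\nabla_y q_\eps|\le C\ell$ and $q_\eps\ge0$ on $S_{\eps,x}$ give $q_\eps(x,T(x))\ge -C\ell^2$. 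To see this, integrate from a point of $S_{\eps,x}$ within $C\ell$ of $T(x)$, using concavity along the segment, whose gradient bound on the convex hull near $T(x)$ comes from the section geometry.
\item The upper bound $q_\eps(x,T(x))\le\max q_\eps(x,\cdot)\le C\ell^2$ is immediate.
\end{itemize}
The normalization \eqref{eq:gauge} and $T_\#\mu=\nu$ then give $\int\eta\,\dd\mu=\tfrac12(\int r\,\dd\mu-\int s\,\dd\nu)=0$.

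\textbf{Step 2: derive the reversible equation for $\eta$.} Set $k(x,z)=h_\eps(x,T(z))$; both its rows and columns integrate to one against $\mu$. Expanding
\[
q_\eps(x,T(z))=x\cdot T(z)-u_0(x)-v_0(T(z))-r(x)-s(T(z))
\]
and using $v_0(T(z))=z\cdot T(z)-u_0(z)$ yields $q_\eps(x,T(z))=-D(x,z)-r(x)-s(T(z))$, with $D$ the Bregman divergence. Antisymmetrizing gives
\[
q_\eps(x,T(z))-q_\eps(z,T(x))=-2A(x,z)-2\eta(x)+2\eta(z),
\]
because the $e$-terms cancel. Next write $(a)_+-(b)_+=\kappa(a,b)(a-b)$ with divided difference $\kappa\in[0,1]$, which is symmetric in $(a,b)$. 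Integrating $k(x,z)-k(z,x)$ in $z$ gives zero. The result is
\[
\int J_\eps(x,z)\bigl(\eta(z)-\eta(x)-A(x,z)\bigr)\dd\mu(z)=0,\qquad J_\eps=\kappa/\eps\ \text{symmetric}.
\]
Here $J_\eps$ vanishes unless $|x-z|\le C\ell$, and $|A|\le C\ell^3$ there by the Lipschitz Hessian, since the quadratic parts of $D(x,z)$ and $D(z,x)$ cancel to third order.

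\textbf{Step 3: nonlocal $L^\infty$ estimate.} This is the main obstacle. We need $J_\eps\lesssim\ell^{-d}$, which follows from $h_\eps\le C\ell^{-d}$ via the height bound and $\kappa\le$ indicator of positivity. We also need a lower bound $c\ell^{-d}$ on a two-step kernel near the diagonal. I would obtain it from the region where both $q_\eps(x,T(\cdot))$ and $q_\eps(\cdot,T(x))$ exceed $c\ell^2$: a ball of radius $c\ell$ around points where the slack is $\asymp\ell^2$, since the gradient is $O(\ell)$. Two such rows with $|x-x'|\le c\ell$ overlap in volume $\gtrsim\ell^d$. A Moser/energy argument for the reversible operator, with antisymmetric forcing of size $\ell^3$ at interaction range $\ell$, then gives $\operatorname{osc}\eta\le C\ell^3/\ell=C\ell^2$. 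The energy part tests with $\eta$ and uses antisymmetry to bound the forcing by $\ell^3\cdot$(Dirichlet form)$^{1/2}$. The sup part is a finite Moser iteration up to $p\sim\log(1/\ell)$. This is the content of \cref{end:thm:nonlocal}, which I would prove separately.

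\textbf{Step 4: conclude.} Zero mean gives $\|\eta\|_\infty\le C\ell^2$, hence $\|r\|_\infty,\|s\|_\infty\le C\ell^2$, which is the upper bound in \eqref{end:eq:main}. For a point normalization, $r$ and $s$ shift by a constant bounded by the oscillation, so the upper bounds survive. For the lower bound, \eqref{bmo:eq:means} gives $\int r\,\dd\mu=-P_\eps-\tfrac12C_\eps\le -c\ell^2$ via $P_\eps=\int a_\eps\,\dd\mu\ge c\ell^2$. That lower bound on $a_\eps$ uses Cauchy–Schwarz and $\nu(S_{\eps,x})\le C\ell^d$, which follows from the tube assumption. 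Hence $\|r\|_{L^1(\mu)}\ge c\ell^2$, which controls every $L^p$ norm; the same holds for $s$.
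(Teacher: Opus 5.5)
Your proposal is correct in outline and follows the paper's proof essentially step for step. It uses the same ingredients: the splitting into $e$ and $\eta$, the antisymmetrized Bregman defect $|A|\le C|x-z|^3$, divided-difference symmetric weights, a two-step minorization from overlapping rows, and an energy estimate plus finite Moser iteration stopped at $p\sim\log(1/\ell)$. It also uses the same mean identity $\int_X r\,\dd\mu=-P_\eps-\tfrac12C_\eps\le-c\ell^2$ for the lower bounds.

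Your sketch leaves two points implicit, and these are exactly what \cref{end:lem:conductance,end:lem:two} supply. First, the lower bound $J_\eps\ge c(k+k^T)$ is needed both for the degree normalization and for the minorization; it follows from the near-diagonal two-sided bound $|q_\eps(x,T(z))|\le C\ell^2$. Second, the one-step equation must be transferred to the two-step kernel. The paper does this by applying $I+\mathsf P_\eps$ and rewriting $f_\eps+\mathsf P_\eps f_\eps$ as the path average $B_\eps$, which remains antisymmetric and $O(\ell^3)$; treating it instead as a scalar $O(\ell^3)$ source would lose a factor of $\ell$.
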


Under \cref{ass:geometrydata}, \cref{lem:classicalfinite,bmo:thm:main} verify \cref{end:ass:inputs}. Thus \cref{end:thm:main} yields the potential estimates in \cref{thm:sharp}. The proof of \cref{end:thm:main} is completed in Section~\ref{end:sec:completion}.

\begin{corollary}[Equivalence of sharp support and potential bounds]
\label{end:cor:characterization}
Assume  the hypotheses stated at the beginning of this subsection and the first part of \cref{end:ass:inputs}, up to \eqref{end:eq:classical}. Under either \eqref{eq:gauge} or the point normalization $u_\eps(x_*)=u_0(x_*)$ at a fixed $x_*\in X$, the following estimates are equivalent as $\eps\downarrow0$:
\begin{align}\label{eq:potential-tube-equivalence}
\|u_\eps-u_0\|_{L^\infty(X)}+\|v_\eps-v_0\|_{L^\infty(Y)}&=O(\ell^2),\nonumber\\
\sup_{(x,y)\in\spt\pi_\eps}|y-T(x)|&=O(\ell).
\end{align}
\end{corollary}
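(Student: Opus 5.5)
The support bound implies the potential bound by \cref{end:thm:main}, which gives uniform $O(\ell^2)$ bounds under \eqref{eq:gauge} and under the point normalization. So the real work is the converse. Assume $\|r_\eps\|_\infty+\|s_\eps\|_\infty\le C\ell^2$, where $r_\eps=u_\eps-u_0$ and $s_\eps=v_\eps-v_0$. We want $|y-T(x)|\le C\ell$ whenever $q_\eps(x,y)\ge0$. The normalization is irrelevant here, because shifting $u_\eps$ by a constant $c$ and $v_\eps$ by $-c$ changes neither the slack nor the support. The potential bound is assumed for a normalized pair, and the sum $r_\eps(x)+s_\eps(y)$ is normalization-independent.

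For the converse I would use the classical Bregman gap $G_0(x,y)=u_0(x)+v_0(y)-x\cdot y$. By \eqref{end:eq:classical} it satisfies $G_0(x,y)\ge c|y-T(x)|^2$, and the Taylor argument giving \eqref{eq:classical-gap-two-sided} uses only convexity of $Y$ and the Hessian bounds for $v_0$, which follow from those of $u_0$. The slack decomposes as
\[
 q_\eps(x,y)=-G_0(x,y)-r_\eps(x)-s_\eps(y).
\]
Hence on $\spt\pi_\eps=\{q_\eps\ge0\}$ we get
\[
 c|y-T(x)|^2\le G_0(x,y)\le -r_\eps(x)-s_\eps(y)\le\|r_\eps\|_\infty+\|s_\eps\|_\infty\le C\ell^2,
\]
which gives $|y-T(x)|\le C\ell$.

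The argument involves no delicate step. The only things to check are that \eqref{eqv:eq:support} identifies the support with $\{q_\eps\ge0\}$ under the present hypotheses, as noted before \cref{end:thm:main}, and that the $L^\infty$ norms are taken over all of $X$ and $Y$, boundary included. Both potentials are continuous, so this holds. The only nontrivial ingredient is \cref{end:thm:main} in the forward direction, including its point-normalization clause.
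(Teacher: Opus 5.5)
Your proposal is correct and matches the paper's proof: the forward direction is \cref{end:thm:main}, including its point-normalization clause. For the converse, the classical gap $u_0(x)+v_0(y)-x\cdot y=-q_\eps(x,y)-r_\eps(x)-s_\eps(y)$ is at most $C\ell^2$ on $\{q_\eps\ge0\}$ and at least $|y-T(x)|^2/(2\Lambda)$ by Taylor's formula for $v_0$ along $[T(x),y]\subset Y$, exactly as the paper argues.
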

\begin{proof}
The support estimate implies the potential estimate by \cref{end:thm:main}. Conversely, \eqref{eq:slack} gives
\begin{equation}\label{eq:uniform-error-detachment}
u_0(x)+v_0(y)-x\cdot y=-q_\eps(x,y)-(u_\eps-u_0)(x)-(v_\eps-v_0)(y).
\end{equation}
Since $q_\eps\ge0$ on $\spt\pi_\eps$, the potential estimate bounds the left-hand side by $C\ell^2$ there. By conjugacy and \eqref{end:eq:classical}, $D^2v_0\succeq\Lambda^{-1}\Id$ and $\nabla v_0(T(x))=x$. Taylor's formula along $[T(x),y]\subset Y$ therefore yields
\[
\frac{|y-T(x)|^2}{2\Lambda}\le v_0(y)-v_0(T(x))-x\cdot(y-T(x))=u_0(x)+v_0(y)-x\cdot y\le C\ell^2,
\]
which proves the support estimate.
\end{proof}

\subsection{Support geometry and reversible kernels}
\label{end:sec:geometry}

We work under the hypotheses of \cref{end:thm:main}. Write $q=q_\eps$, $h=h_\eps$, $r=u_\eps-u_0$, and $s=v_\eps-v_0$, and abbreviate the sections by $S_x=S_{\eps,x}$ and $K_y=K_{\eps,y}$. All constants are independent of sufficiently small $\eps$; $C_R$ may also depend on a fixed $R>0$.

\subsubsection{Relative volume and barycenter identities}

For each $D=X,Y$, there are $c_D,r_D>0$ such that
\begin{equation}\label{end:eq:volume}
|D\cap B(x,t)|\ge c_Dt^d\qquad(x\in D,\ 0<t<r_D).
\end{equation}
To see this, fix $B(a,r_*)\subset D$. For $0<t<\diam D$, set $\tau=t/(2\diam D)$. By convexity, $(1-\tau)x+\tau B(a,r_*)\subset D\cap B(x,t)$, and this ball has volume $\omega_d(r_*t/(2\diam D))^d$. The density bounds give the corresponding estimates for $\mu$ and $\nu$.

The barycenter identities \eqref{eqv:eq:centers} hold under the present hypotheses: each section has positive mass, and concavity places its zero-slack set in its boundary, which has zero marginal measure. Differentiation under the integral is therefore justified at interior base points. The identities extend to the closed supports by continuity. By \eqref{end:eq:tube}, $S_x\subset\overline B(T(x),R_0\ell)$; likewise, $x\in K_y$ implies $|x-S(y)|\le\Lip(S)|T(x)-y|\le C\ell$. Averaging over the sections gives
\begin{equation}\label{end:eq:gradient}
|\nabla u_\eps(x)-T(x)|\le C\ell,\qquad |\nabla v_\eps(y)-S(y)|\le C\ell\qquad(x\in X,\ y\in Y).
\end{equation}
Consequently, for every fixed $R>0$,
\begin{equation}\label{end:eq:slopes}
|\nabla_xq(x,y)|+|\nabla_yq(x,y)|\le C_R\ell\qquad\text{if }|y-T(x)|\le R\ell.
\end{equation}
Indeed, the two terms are bounded by $|y-T(x)|+C\ell$ and $\Lip(S)|T(x)-y|+C\ell$, respectively.

\subsubsection{Height and overlap of nearby rows}

Fix $x\in X$ and choose $y_x^*\in Y$ with $q(x,y_x^*)=H_x:=\max_Yq(x,\cdot)$. The row marginal equation gives $H_x>0$. Since $\nu(S_x)\le C\ell^d$, it also gives $\eps\le H_x\nu(S_x)\le CH_x\ell^d$, and hence $H_x\ge c\ell^2$.

For $y\in Y\cap B(y_x^*,\ell)$, the segment $[y_x^*,y]$ lies in $Y\cap B(T(x),(R_0+1)\ell)$. Integrating \eqref{end:eq:slopes} along this segment yields $q(x,y)\ge H_x-C\ell^2$. Thus \eqref{end:eq:volume} and the row marginal equation imply
\begin{equation}\label{m:display:086}
\eps\ge\nu(Y\cap B(y_x^*,\ell))(H_x-C\ell^2)_+\ge c\ell^d(H_x-C\ell^2)_+.
\end{equation}
Using $\eps=\ell^{d+2}$, we obtain
\begin{equation}\label{end:eq:height}
c\ell^2\le H_x\le C\ell^2,\qquad 0\le h(x,y)\le C\ell^{-d}.
\end{equation}

The segment $[y_x^*,T(x)]$ lies in $Y$ and has length at most $R_0\ell$. Equations~\eqref{end:eq:slopes} and \eqref{end:eq:height} therefore give
\begin{equation}\label{end:eq:diagonal}
|q(x,T(x))|\le C\ell^2.
\end{equation}
Similarly, integrating \eqref{end:eq:slopes} along $[T(x),T(z)]\subset Y$ gives, for every fixed $R>0$,
\begin{equation}\label{end:eq:nearvalues}
|q(x,T(z))|\le C_R\ell^2\qquad(x,z\in X,\ |x-z|\le R\ell).
\end{equation}

Define
\begin{equation}\label{end:eq:k}
k(x,z)=h(x,T(z)),\qquad \sigma_\eps=(\id,S)_\#\pi_\eps=k(x,z)\mu(\dd x)\mu(\dd z).
\end{equation}
Since $T_\#\mu=\nu$, both marginals of $\sigma_\eps$ equal $\mu$. More precisely, \eqref{eqv:eq:marginal} gives $\int_Xk(x,z)\dd\mu(z)=\int_Xk(z,x)\dd\mu(z)=1$ for every $x\in X$.

\begin{lemma}[Overlap of nearby rows]
\label{end:lem:overlap}
There are $a,c>0$ such that, for sufficiently small $\eps$,
\begin{equation}\label{end:eq:overlap}
\int_Xk(x,z)k(x',z)\dd\mu(z)\ge c\ell^{-d}\qquad(x,x'\in X,\ |x-x'|\le a\ell).
\end{equation}
\end{lemma}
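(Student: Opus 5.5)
The overlap bound will come from finding a common region of volume $\asymp\ell^d$ on which both $k(x,\cdot)$ and $k(x',\cdot)$ are at least $c\ell^{-d}$. Since $k(x,z)=h(x,T(z))=q(x,T(z))_+/\eps$, this reduces to exhibiting a relative ball of radius $\asymp\ell$ in $Y$ on which both slacks $q(x,\cdot)$ and $q(x',\cdot)$ are at least $c\ell^2$, and then pulling it back by $S=T^{-1}$.

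The first step is a positive core for a single row. Take the maximizer $y_x^*$ with $q(x,y_x^*)=H_x\ge c_1\ell^2$ from \eqref{end:eq:height}. For $y\in Y\cap B(y_x^*,b\ell)$, the segment $[y_x^*,y]$ stays within $(R_0+1)\ell$ of $T(x)$, so integrating \eqref{end:eq:slopes} gives
\[
q(x,y)\ge H_x-C b\ell^2\ge \tfrac{c_1}{2}\ell^2
\]
once $b$ is small; this is the same argument as \eqref{m:display:086}.

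The second step moves the row base point. For $|x-x'|\le a\ell$ and $y$ in that ball, I will control $q(x',y)-q(x,y)$ by integrating $\nabla_x q(\cdot,y)$ along $[x,x']\subset X$. Each intermediate point $x''$ satisfies
\[
|y-T(x'')|\le |y-T(x)|+\Lip(T)a\ell\le (R_0+1+\Lambda a)\ell,
\]
so \eqref{end:eq:slopes} with a fixed $R$ bounds the gradient by $C_R\ell$, giving $|q(x',y)-q(x,y)|\le C_R a\ell^2$. Choosing $a$ small, independently of $\eps$, yields $q(x',y)\ge \tfrac{c_1}{4}\ell^2$ on $Y\cap B(y_x^*,b\ell)$ as well. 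The constant $R$ is fixed before $a$ is chosen, so there is no circularity.

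The third step is the integral. On $Z:=S(Y\cap B(y_x^*,b\ell))$ both kernels are at least $c\ell^2/\eps=c\ell^{-d}$, so
\[
\int_X k(x,z)k(x',z)\,\dd\mu(z)\ge c\ell^{-2d}\mu(Z).
\]
Since $S$ is bi-Lipschitz with Jacobian bounded below, \eqref{end:eq:volume} gives $|Y\cap B(y_x^*,b\ell)|\ge c(b\ell)^d$ for small $\eps$, hence $\mu(Z)\ge c\ell^d$, and the product is $\ge c\ell^{-d}$.

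The only delicate point is the boundary. The relative ball may be a thin-looking cap near $\partial Y$, but \eqref{end:eq:volume} handles this uniformly by convexity. All segments used lie in $X$ or $Y$ by convexity, so the argument needs no interior assumption.
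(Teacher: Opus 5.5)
Your proposal is correct and follows the paper's proof essentially step for step: a positive core $Y\cap B(y_x^*,b\ell)$ for the row at $x$, transfer to the row at $x'$ by integrating \eqref{end:eq:slopes} along $[x,x']$ with $a$ chosen small, and a volume lower bound for $S(Y\cap B(y_x^*,b\ell))$. The only differences are minor: the paper states the a priori restriction $a\le1$ explicitly, which you need for your $R$ to be fixed, and it bounds $\mu(E_x)$ directly by $\nu(Y\cap B(y_x^*,b\ell))$ using $T_\#\mu=\nu$ rather than via the Jacobian of $S$.
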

\begin{proof}
By \eqref{end:eq:height} and \eqref{end:eq:slopes}, there are fixed $b\in(0,1]$ and $c_1>0$ such that
\begin{equation}\label{m:display:087}
q(x,y)\ge c_1\ell^2\qquad(y\in Y\cap B(y_x^*,b\ell)).
\end{equation}
Indeed, $q(x,y)\ge H_x-Cb\ell^2$ on this relative ball, so it suffices to choose $b$ small enough. Set $E_x=S(Y\cap B(y_x^*,b\ell))$. By $T_\#\mu=\nu$ and \eqref{end:eq:volume}, $\mu(E_x)\ge c\ell^d$, while \eqref{m:display:087} gives $k(x,z)\ge c_1\ell^{-d}$ on $E_x$. 
Suppose $|x-x'|\le a\ell$, where $a\le1$. For $z\in E_x$ and $\xi\in[x,x']\subset X$,
\[
|T(z)-T(\xi)|\le|T(z)-y_x^*|+|y_x^*-T(x)|+|T(x)-T(\xi)|\le(b+R_0+\Lip(T))\ell.
\]
Integrating \eqref{end:eq:slopes} along $[x,x']$ therefore gives
\begin{equation}\label{m:display:088}
q(x',T(z))\ge q(x,T(z))-Ca\ell^2\ge(c_1-Ca)\ell^2.
\end{equation}
Choose $a$ so that $Ca\le c_1/2$. Both $k(x,z)$ and $k(x',z)$ are then bounded below by $c\ell^{-d}$ on $E_x$. Hence $\int_Xk(x,z)k(x',z)\dd\mu(z)\ge c\ell^{-2d}\mu(E_x)\ge c\ell^{-d}$.
\end{proof}

\subsubsection{A reversible equation with antisymmetric forcing}
\label{end:sec:transposition}

Decompose the potential errors as
\begin{equation}\label{end:eq:errors}
e(x)=r(x)+s(T(x)),\qquad \eta(x)=\tfrac12\bigl(r(x)-s(T(x))\bigr).
\end{equation}
Classical dual equality, \eqref{end:eq:diagonal}, and \eqref{eq:gauge} give
\begin{equation}\label{end:eq:emean}
e(x)=-q(x,T(x)),\qquad \|e\|_\infty\le C\ell^2,\qquad \int_X\eta\dd\mu=0.
\end{equation}
For the last identity, use $\int_Xs\circ T\dd\mu=\int_Ys\dd\nu=\int_Xr\dd\mu$. Since $r=e/2+\eta$ and $s\circ T=e/2-\eta$, it remains to bound $\eta$.

Set
\begin{equation}\label{end:eq:bregman}
\begin{aligned}
D(x,z)&=u_0(x)-u_0(z)-T(z)\cdot(x-z),\\
A(x,z)&=\tfrac12\bigl(D(x,z)-D(z,x)\bigr).
\end{aligned}
\end{equation}
Then $A(z,x)=-A(x,z)$ and
\begin{equation}\label{end:eq:cubic}
|A(x,z)|\le C|x-z|^3.
\end{equation}
Indeed, writing $\xi=x-z$, Taylor's formula along $[z,x]\subset X$ gives
\begin{equation}\label{m:display:089}
D(x,z)-D(z,x)=\int_0^1(1-2t)\xi^TD^2u_0(z+t\xi)\xi\dd t.
\end{equation}
Since $\int_0^1(1-2t)\dd t=0$, we may subtract $D^2u_0(z)$ inside the integral. The Lipschitz bound in \eqref{end:eq:classical} then yields \eqref{end:eq:cubic}.

Write $Q(x,z)=q(x,T(z))$. Using $v_0(T(z))=z\cdot T(z)-u_0(z)$ and \eqref{end:eq:errors}, we obtain
\begin{align}\label{end:eq:transpose}
Q(x,z)&=\eta(z)-\eta(x)-\tfrac12(e(x)+e(z))-D(x,z),\nonumber\\
Q(x,z)-Q(z,x)&=2\bigl(\eta(z)-\eta(x)-A(x,z)\bigr).
\end{align}
Define $J_\eps(x,z)=\ell^{-d}\omega_\eps(x,z)$, where
\begin{equation}\label{end:eq:divdiff}
\begin{gathered}
\omega_\eps(x,z)=\begin{cases}
\displaystyle\frac{Q(x,z)_+-Q(z,x)_+}{Q(x,z)-Q(z,x)},&Q(x,z)\ne Q(z,x),\\[5pt]
1,&Q(x,z)=Q(z,x)>0,\\
0,&Q(x,z)=Q(z,x)\le0.
\end{cases}
\end{gathered}
\end{equation}
These kernels are measurable and symmetric, with $0\le\omega_\eps\le1$. Since $k=Q_+/\eps$ and $\eps=\ell^{d+2}$, \eqref{end:eq:transpose} implies
\begin{equation}\label{end:eq:densitydifference}
k(x,z)-k(z,x)=2\ell^{-2}J_\eps(x,z)\bigl(\eta(z)-\eta(x)-A(x,z)\bigr).
\end{equation}
The row and column integrals of $k$ coincide, so integration in $z$ gives
\begin{equation}\label{end:eq:jequation}
\int_XJ_\eps(x,z)\bigl(\eta(z)-\eta(x)-A(x,z)\bigr)\dd\mu(z)=0\qquad(x\in X).
\end{equation}

\begin{lemma}[Conductance bounds]
\label{end:lem:conductance}
There are $c,C,R>0$ such that, for sufficiently small $\eps$ and all $x,z\in X$,
\begin{equation}\label{end:eq:jbound}
c\bigl(k(x,z)+k(z,x)\bigr)\le J_\eps(x,z)\le C\ell^{-d}\1_{\{|x-z|\le R\ell\}}.
\end{equation}
Consequently, $d_\eps(x):=\int_XJ_\eps(x,z)\dd\mu(z)$ satisfies $c\le d_\eps(x)\le C$ on $X$.
\end{lemma}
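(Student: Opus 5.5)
The upper bound is elementary: $0\le\omega_\eps\le1$ holds because $t\mapsto t_+$ is $1$-Lipschitz and nondecreasing, so the divided difference in \eqref{end:eq:divdiff} lies in $[0,1]$. To get the support restriction, note that $\omega_\eps(x,z)>0$ forces $\max\{Q(x,z),Q(z,x)\}>0$. If $Q(x,z)=q(x,T(z))\ge0$, then \eqref{end:eq:tube} gives $|T(z)-T(x)|\le R_0\ell$, hence $|x-z|\le\Lip(S)R_0\ell$. The case $Q(z,x)\ge0$ is symmetric. This gives $J_\eps\le\ell^{-d}\1_{\{|x-z|\le R\ell\}}$ with $R=\Lip(S)R_0$.

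For the lower bound, the plan is to show that $\omega_\eps(x,z)\ge c\ell^{-2}(Q(x,z)_++Q(z,x)_+)$. This gives $J_\eps\ge c\ell^{-d-2}(Q(x,z)_++Q(z,x)_+)=c(k(x,z)+k(z,x))$, using $k=Q_+/\eps$ and $\eps=\ell^{d+2}$. Suppose without loss of generality that $Q(x,z)\ge Q(z,x)$ and $Q(x,z)>0$; otherwise both sides vanish.
\begin{itemize}
\item If $Q(z,x)\ge0$, then $\omega_\eps=1$. Since $Q_+\le C\ell^2$ by \eqref{end:eq:height}, the bound $c\ell^{-2}(Q(x,z)_++Q(z,x)_+)\le 2cC\le1$ holds for small $c$.
\item If $Q(z,x)<0$, then $\omega_\eps=Q(x,z)/(Q(x,z)-Q(z,x))$. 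The denominator needs an upper bound $C\ell^2$. Here $|x-z|\le R\ell$ by the previous paragraph, so \eqref{end:eq:nearvalues} gives $|Q(z,x)|=|q(z,T(x))|\le C_R\ell^2$. Together with $Q(x,z)\le C\ell^2$, this yields $\omega_\eps\ge Q(x,z)/(C\ell^2)$, which is the desired bound.
\end{itemize}
The main point is this use of \eqref{end:eq:nearvalues}, which needs the proximity $|x-z|\le R\ell$ derived from the positivity of one slack.

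For $d_\eps$, the upper bound follows from $J_\eps\le C\ell^{-d}\1_{\{|x-z|\le R\ell\}}$ and $\mu(B(x,R\ell))\le CM\ell^d$. The lower bound follows from the row identity $\int_X k(x,z)\dd\mu(z)=1$: integrating the lower bound for $J_\eps$ in $z$ gives $d_\eps(x)\ge c\int_X k(x,z)\dd\mu(z)=c$.
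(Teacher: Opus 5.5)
Your proof is correct and follows essentially the same route as the paper. The upper bound comes from $0\le\omega_\eps\le1$ and the tube estimate \eqref{end:eq:tube}, the lower bound from the divided-difference case analysis with \eqref{end:eq:nearvalues} bounding the denominator by $C\ell^2$, and the bounds on $d_\eps$ from integrating against $\mu$ (you use the height bound \eqref{end:eq:height} when both slacks are nonnegative and only the row identity, which are harmless variations).
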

\begin{proof}
If $J_\eps(x,z)>0$, at least one of $Q(x,z),Q(z,x)$ is positive. By \eqref{end:eq:tube}, $|T(x)-T(z)|\le R_0\ell$, and hence $|x-z|\le\Lip(S)R_0\ell$. Together with $\omega_\eps\le1$, this proves the upper bound in \eqref{end:eq:jbound}.

For such pairs, \eqref{end:eq:nearvalues} places both slack values in $[-C\ell^2,C\ell^2]$. Their divided difference satisfies
\begin{equation}\label{m:display:090}
\omega_\eps(x,z)\ge\frac{Q(x,z)_++Q(z,x)_+}{2C\ell^2}.
\end{equation}
If both values are positive, the divided difference is one; if exactly one is positive, the absolute value of its denominator is at most $2C\ell^2$. If neither is positive, both sides vanish. Multiplying by $\ell^{-d}$ and using $Q_+=\ell^{d+2}k$ gives the lower bound in \eqref{end:eq:jbound}. Finally, integration yields $d_\eps(x)\ge c\int_X(k(x,z)+k(z,x))\dd\mu(z)=2c$ and $d_\eps(x)\le C\ell^{-d}\mu(B(x,R\ell))\le C$.
\end{proof}

Set $Z_\eps=\int_Xd_\eps\dd\mu$ and define
\begin{equation}\label{end:eq:markov}
\vartheta_\eps(\dd x)=Z_\eps^{-1}d_\eps(x)\mu(\dd x),\qquad p_\eps(x,z)=\frac{Z_\eps J_\eps(x,z)}{d_\eps(x)d_\eps(z)}.
\end{equation}
The measure $\vartheta_\eps$ is a probability with Lebesgue density bounded above and away from zero uniformly in $\eps$. Moreover, $p_\eps(x,z)=p_\eps(z,x)$ and $\int_Xp_\eps(x,z)\dd\vartheta_\eps(z)=1$. Thus $p_\eps$ is a reversible Markov transition density with invariant measure $\vartheta_\eps$.

For bounded measurable $v$, let $(\mathsf P_\eps v)(x)=\int_Xp_\eps(x,z)v(z)\dd\vartheta_\eps(z)$. Since $p_\eps(x,z)\vartheta_\eps(\dd z)=d_\eps(x)^{-1}J_\eps(x,z)\mu(\dd z)$, equation~\eqref{end:eq:jequation} becomes
\begin{equation}\label{end:eq:pequation}
(\mathsf P_\eps-I)\eta=f_\eps,\qquad f_\eps(x)=\int_Xp_\eps(x,z)A(x,z)\dd\vartheta_\eps(z).
\end{equation}
By \cref{end:lem:conductance}, $p_\eps\ge c(k+k^T)$ and $p_\eps\le C\ell^{-d}\1_{\{|x-z|\le R\ell\}}$, where $k^T(x,z)=k(z,x)$.

\subsubsection{Two-step ellipticity and cubic forcing}
\label{end:sec:twostep}

The transition density of $\mathsf P_\eps^2$ is
\begin{equation}\label{end:eq:twokernel}
K_\eps(x,w)=\int_Xp_\eps(x,z)p_\eps(z,w)\dd\vartheta_\eps(z).
\end{equation}
It is again symmetric and Markov: $K_\eps(x,w)=K_\eps(w,x)$ and $\int_XK_\eps(x,w)\dd\vartheta_\eps(w)=1$.

\begin{lemma}[Two-step minorization]
\label{end:lem:two}
There are $0<a\le b$ and $c,C>0$ such that, for sufficiently small $\eps$ and all $x,w\in X$,
\begin{equation}\label{end:eq:elliptickernel}
c\ell^{-d}\1_{\{|x-w|\le a\ell\}}\le K_\eps(x,w)\le C\ell^{-d}\1_{\{|x-w|\le b\ell\}}.
\end{equation}
Moreover, there is a measurable antisymmetric function $B_\eps$ with $|B_\eps|\le C\ell^3$ such that
\begin{equation}\label{end:eq:kequation}
\int_XK_\eps(x,w)\bigl(\eta(w)-\eta(x)-B_\eps(x,w)\bigr)\dd\vartheta_\eps(w)=0\qquad(x\in X).
\end{equation}
\end{lemma}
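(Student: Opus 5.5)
The plan has two parts: the kernel bounds \eqref{end:eq:elliptickernel} and the two-step equation \eqref{end:eq:kequation}.

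\emph{Kernel bounds.} The upper bound follows from \cref{end:lem:conductance} and \eqref{end:eq:markov}. Indeed, $p_\eps\le C\ell^{-d}\1_{\{|x-z|\le R\ell\}}$, and $\vartheta_\eps(B(x,R\ell))\le C\ell^d$. So $K_\eps(x,w)\le C\ell^{-d}$, and it vanishes unless $|x-w|\le 2R\ell$. Take $b=2R$. For the lower bound, use $p_\eps\ge c(k+k^T)\ge ck$ together with the symmetry $p_\eps(z,w)=p_\eps(w,z)\ge ck(w,z)$. Since $\vartheta_\eps\ge c\mu$,
\[
K_\eps(x,w)\ge c\int_X k(x,z)k(w,z)\dd\mu(z)\ge c\ell^{-d}\qquad(|x-w|\le a\ell),
\]
by \cref{end:lem:overlap}. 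This is the place where the two-step structure is needed: a single step gives no such minorization, because $k(x,\cdot)$ is centered near $x$ only to within $O(\ell)$ and could vanish at $w=x$.

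\emph{Two-step equation.} Write \eqref{end:eq:pequation} as $\int p_\eps(x,z)(\eta(z)-\eta(x))\dd\vartheta_\eps(z)=f_\eps(x)$. Applying $\mathsf P_\eps$ to $(\mathsf P_\eps-I)\eta=f_\eps$ gives $(\mathsf P_\eps^2-I)\eta=(\mathsf P_\eps+I)f_\eps$. I then want to write the right-hand side as $\int K_\eps(x,w)B_\eps(x,w)\dd\vartheta_\eps(w)$ with $B_\eps$ antisymmetric and $|B_\eps|\le C\ell^3$. The natural two-step forcing along a path $x\to z\to w$ is $A(x,z)+A(z,w)$. Its conditional average given the endpoints $(x,w)$ is
\[
B_\eps(x,w):=K_\eps(x,w)^{-1}\int_X p_\eps(x,z)p_\eps(z,w)\bigl(A(x,z)+A(z,w)\bigr)\dd\vartheta_\eps(z),
\]
set to $0$ where $K_\eps(x,w)=0$. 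Reversing the path ($x\leftrightarrow w$, and the same $z$) together with the symmetry of $p_\eps$ and the antisymmetry of $A$ shows that $B_\eps(w,x)=-B_\eps(x,w)$. On the support, $|x-z|,|z-w|\le R\ell$, so \eqref{end:eq:cubic} gives $|B_\eps|\le 2CR^3\ell^3$, since $B_\eps$ is an average.

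It remains to verify \eqref{end:eq:kequation}. Integrating out $w$ and then $z$ gives
\[
\int K_\eps(x,w)B_\eps(x,w)\dd\vartheta_\eps(w)=f_\eps(x)+\int p_\eps(x,z)f_\eps(z)\dd\vartheta_\eps(z)=(I+\mathsf P_\eps)f_\eps(x),
\]
using $\int p_\eps(z,w)\dd\vartheta_\eps(w)=1$ and the definition of $f_\eps$. Also $\int K_\eps(x,w)(\eta(w)-\eta(x))\dd\vartheta_\eps(w)=(\mathsf P_\eps^2-I)\eta(x)$. This yields \eqref{end:eq:kequation}.

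The remaining details are routine. Measurability of $B_\eps$ follows from Fubini, and boundedness of $\eta$ (continuous on the compact set $X$) justifies applying $\mathsf P_\eps$. The only real content is the overlap minorization, which is already supplied by \cref{end:lem:overlap}.
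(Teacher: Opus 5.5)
Your proposal is correct and follows essentially the same route as the paper. The lower bound comes from $p_\eps\ge ck$, $\vartheta_\eps\ge c\mu$ and \cref{end:lem:overlap}, and the upper bound uses $b=2R$. As in the paper, $B_\eps$ is the endpoint-conditional average of $A(x,z)+A(z,w)$, with $\int K_\eps B_\eps\,\dd\vartheta_\eps=(I+\mathsf P_\eps)f_\eps$. One wording fix: the identity $(\mathsf P_\eps^2-I)\eta=(\mathsf P_\eps+I)f_\eps$ comes from applying $I+\mathsf P_\eps$, not $\mathsf P_\eps$ alone, to $(\mathsf P_\eps-I)\eta=f_\eps$.
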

\begin{proof}
By symmetry and the preceding bounds, $p_\eps(z,w)=p_\eps(w,z)\ge ck(w,z)$, $p_\eps(x,z)\ge ck(x,z)$, and $\vartheta_\eps\ge c\mu$. Hence \cref{end:lem:overlap} gives
\begin{equation}\label{m:display:091}
K_\eps(x,w)\ge c\int_Xk(x,z)k(w,z)\dd\mu(z)\ge c\ell^{-d}\qquad(|x-w|\le a\ell).
\end{equation}
For the upper bound, $K_\eps(x,w)\le C\ell^{-d}\int_Xp_\eps(x,z)\dd\vartheta_\eps(z)=C\ell^{-d}$. A nonzero integrand requires $|x-z|\le R\ell$ and $|z-w|\le R\ell$, so $K_\eps(x,w)=0$ when $|x-w|>2R\ell$. Taking $b=2R$ and decreasing $a$ if necessary proves \eqref{end:eq:elliptickernel}.

When $K_\eps(x,w)>0$, set
\begin{equation}\label{end:eq:pathforce}
B_\eps(x,w)
=\frac{1}{K_\eps(x,w)}
\int_X p_\eps(x,z)p_\eps(z,w)
\bigl(A(x,z)+A(z,w)\bigr)\dd\vartheta_\eps(z),
\end{equation}
and set $B_\eps(x,w)=0$ otherwise. Symmetry of $p_\eps,K_\eps$ and antisymmetry of $A$ imply $B_\eps(w,x)=-B_\eps(x,w)$. On the support of the integrand, \eqref{end:eq:cubic} gives $|A(x,z)+A(z,w)|\le C\ell^3$. Since the weights in \eqref{end:eq:pathforce} have total mass $K_\eps(x,w)$, we obtain $|B_\eps(x,w)|\le C\ell^3$.

Fubini's theorem and the Markov property give
\[
\int_XK_\eps(x,w)B_\eps(x,w)\dd\vartheta_\eps(w)=\int_Xp_\eps(x,z)\bigl(A(x,z)+f_\eps(z)\bigr)\dd\vartheta_\eps(z)
=f_\eps(x)+(\mathsf P_\eps f_\eps)(x).
\]
On the other hand, applying $I+\mathsf P_\eps$ to \eqref{end:eq:pequation} yields $(\mathsf P_\eps^2-I)\eta=f_\eps+\mathsf P_\eps f_\eps$. These identities prove \eqref{end:eq:kequation}.
\end{proof}

\subsection{A uniform nonlocal Neumann estimate}
\label{end:sec:nonlocal}

We prove the analytic estimate needed for \eqref{end:eq:kequation}, uniformly in the interaction length. The reference measure may vary with that length and need not have a continuous density. The term ``Neumann'' refers to the confinement of transitions to the domain and conservation of constants; no exterior boundary data are prescribed.

\begin{theorem}[Uniform nonlocal $L^\infty$ estimate]\label{end:thm:nonlocal}
Let $D\subset\R^d$ be a convex body, and let $\vartheta$ be a probability measure on $D$ with Lebesgue density between two fixed positive constants. Fix $0<a\le b$ and $c,C>0$. There are $\delta_0,C_D>0$, depending only on $D$, the density bounds, and $a,b,c,C$, with the following property. For $0<\delta<\delta_0$, let $K_\delta:D\times D\to[0,\infty)$ be a measurable symmetric Markov density relative to $\vartheta$, that is, $K_\delta(x,y)=K_\delta(y,x)\ge0$ and $\int_DK_\delta(x,y)\dd\vartheta(y)=1$. Assume
\begin{equation}\label{end:eq:kernelhyp}
c\delta^{-d}\1_{\{|x-y|\le a\delta\}}\le K_\delta(x,y)\le C\delta^{-d}\1_{\{|x-y|\le b\delta\}} \qquad \mbox{a.e.}
\end{equation}
Let $F:D\times D\to\R$ be measurable and antisymmetric, with $|F(x,y)|\le A\delta$ a.e.\ on $\{K_\delta>0\}$ for some $A\ge0$. If $g\in L^2(\vartheta)$ solves
\begin{equation}\label{end:eq:generalequation}
\int_DK_\delta(x,y)\bigl(g(y)-g(x)-F(x,y)\bigr)\dd\vartheta(y)=0\qquad\text{for }\vartheta\text{-a.e. }x,
\end{equation}
then
\begin{equation}\label{end:eq:nonlocalbound}
\left\|g-\int_Dg\dd\vartheta\right\|_{L^\infty(\vartheta)}\le C_D A.
\end{equation}
The constants are independent of $\delta,g,F,A$ and of the choice of $\vartheta$ within the stated density bounds; in particular, $\vartheta$ may depend on $\delta$.
\end{theorem}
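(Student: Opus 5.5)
We would first reduce to $A=1$ by replacing $g$ by $g/A$ and $F$ by $F/A$; the case $A=0$ then gives $g$ constant. Subtracting the mean, we may also assume $\int_Dg\dd\vartheta=0$. The basic tool is the weak formulation. Multiply \eqref{end:eq:generalequation} by $\phi(g(x))$ for a monotone Lipschitz $\phi$ and integrate against $\vartheta$. Symmetry of $K_\delta$ and antisymmetry of $F$ then give
\[
\tfrac12\iint K_\delta\bigl(g(y)-g(x)\bigr)\bigl(\phi(g(y))-\phi(g(x))\bigr)
=\tfrac12\iint K_\delta F(x,y)\bigl(\phi(g(y))-\phi(g(x))\bigr),
\]
with both integrals taken against $\vartheta\otimes\vartheta$. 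This is exactly where antisymmetry is used: the forcing appears only paired with a difference, so it acts like a divergence of a field of size $\delta$, not like a scalar source of size $\delta\cdot\delta^{-1}$. With $\phi=\id$, Cauchy--Schwarz and $\int K_\delta(x,\cdot)\dd\vartheta=1$ give the energy bound $\mathcal E(g):=\iint K_\delta(g(y)-g(x))^2\le C\delta^2$.

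Next we would prove a nonlocal Poincaré inequality $\|g\|_{L^2(\vartheta)}^2\le C\delta^{-2}\mathcal E(g)$ for mean-zero $g$, uniformly in $\delta$. This should use only the lower bound in \eqref{end:eq:kernelhyp} and convexity of $D$. The plan is to compare with the averaging operator $M_\delta g(x)=\langle g\rangle_{D\cap B(x,a\delta/2)}$. The quantity $\|g-M_\delta g\|_2^2$ is controlled by $C\mathcal E$, since the kernel is at least $c\delta^{-d}$ on such balls and \eqref{end:eq:volume} holds for $D$. Moreover, $M_\delta g$ satisfies $|\nabla M_\delta g|\lesssim\delta^{-1}\times$ local oscillation, so $\|\nabla M_\delta g\|_2^2\le C\delta^{-2}\mathcal E$. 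The classical Poincaré inequality on the convex body $D$ then closes the argument. Combined with the energy bound, this yields $\|g\|_{L^2}\le C$.

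For the passage to $L^\infty$ we would run a finite Moser iteration. Testing with $\phi(g)=|g|^{p-2}g$, the same computation and the elementary inequality $(s-t)(|s|^{p-2}s-|t|^{p-2}t)\gtrsim p^{-1}\bigl(|s|^{p/2}-|t|^{p/2}\bigr)^2$ give
\[
\mathcal E\bigl(|g|^{p/2}\bigr)\le Cp^2\delta^2\int|g|^{p-2}\dd\vartheta
\]
after Young's inequality. Applying the $M_\delta$-comparison and the classical Sobolev embedding to $M_\delta(|g|^{p/2})$ improves integrability from $L^p$ to $L^{\chi p}$ with $\chi=d/(d-2)>1$, or any $\chi>1$ if $d\le2$. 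The error term $\|\,|g|^{p/2}-M_\delta|g|^{p/2}\|$ is handled using the equation once more, since $g$ is itself close to its kernel average up to $O(\delta)$; this step costs a factor of the form $(1+C\delta)$ per step. We stop after $O(\log\log(1/\delta))$ steps, at $p\asymp\log(1/\delta)$. There the bounds $\|g\|_{L^{p}}\le C$ remain uniform, because the products $\prod(Cp_k)^{1/p_k}$ converge and the accumulated $(1+C\delta)$ factors stay bounded.

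Finally, we would use the equation pointwise: $g(x)=\int K_\delta(x,y)g(y)\dd\vartheta(y)-\int K_\delta(x,y)F(x,y)\dd\vartheta(y)$. By Hölder and the kernel upper bound,
\[
|g(x)|\le\|K_\delta(x,\cdot)\|_{L^{p'}}\|g\|_{L^p}+\delta\le C\delta^{-d/p}\|g\|_{L^p}+\delta\le C,
\]
since $\delta^{-d/p}$ is bounded for $p\asymp\log(1/\delta)$. The main obstacle is the Moser step. Only the averaged function $M_\delta|g|^{p/2}$ enjoys a Sobolev gain, so the difference between $|g|^{p/2}$ and its average must be controlled with constants uniform in $p$ up to $\log(1/\delta)$. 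We would try first to bound this difference by $\delta^2p^2\times$ lower-order norms through the energy identity, which is where keeping the antisymmetric form of $F$ should be essential.
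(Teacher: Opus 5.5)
Your outline matches the paper's architecture: an energy bound that uses antisymmetry, a nonlocal Poincar\'e inequality through a local average, a Moser iteration stopped at $p\asymp\log(1/\delta)$, and a last step using the kernel upper bound. However, the one step that actually gains integrability is left open, and the fallback you propose for it cannot work. The energy identity controls $V-M_\delta V$, with $V=|g|^{p/2}$, only in $L^2$. A bound $\iint K_\delta(\Delta V)^2\le Cp^2\delta^2(\cdots)$ is compatible with $V$ carrying a spike of height $\lambda$ on a set of measure $\epsilon\ll\delta^d$ with $\lambda^2\epsilon=\delta^2$. Such a spike has $L^2$ norm $\delta$ and energy contribution of order $\delta^2$, yet its $L^{2\chi}$ norm $\lambda^{1-1/\chi}\delta^{1/\chi}$ is unbounded as $\lambda\to\infty$. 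So no estimate derived from the energy alone, whatever use you make of antisymmetry, can bound $\|V-M_\delta V\|_{2\chi}$. Pointwise information from the equation is indispensable, and it must be multiplicative, so that it survives raising to the power $p/2$ at a cost of only $1+\delta$ per step.

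The missing mechanism is a subsolution inequality for the kernel itself. Set $U=1+g_+$, so that $KU\ge1$. The equation $g=Kg-f_F$ with $|f_F|\le\delta$ gives $U\le KU+\delta\le(1+\delta)KU$. Jensen for the Markov kernel then gives $U^{p/2}\le(1+\delta)^{p/2}K(U^{p/2})$ pointwise.

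The Sobolev gain is therefore needed for $K(U^{p/2})$, not for an average at radius $a\delta/2$. Such an average cannot dominate $K$, whose range extends to $b\delta$. You should instead smooth at a scale $\ge b\delta$ with a smooth cutoff, so that $Kv\le C\mathcal S_\delta v$ for $v\ge0$, and bound $\|\nabla\mathcal S_\delta v\|_2$ by the energy. Since the energy only sees the lower kernel bound at distance $a\delta$, this requires comparing differences at distance $2b\delta$ with those at distance $a\delta$, by chaining along segments in the convex set $D$.

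Antisymmetry plays no role in this step, where only $|f_F|\le\delta$ is used. It enters solely through the weighted energy bound you already derived, so your remark that it should be essential for the error term is misplaced. Finally, testing with $|g|^{p-2}g$ presupposes $g\in L^\infty$ at fixed $\delta$. You should record this via $\|Kg\|_\infty\le C\delta^{-d/2}\|g\|_2$ and the equation.
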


\subsubsection{Smoothing and an averaged Sobolev inequality}

Reduce $\delta_0$ so that $\delta_0\le1$ and
$2b\delta_0<r_D$, where $r_D$ is the radius in
\eqref{end:eq:volume} applied to $D$. Fix $0<\delta<\delta_0$ and write $K=K_\delta$, also using $K$ for the operator $(Kv)(x)=\int_DK(x,y)v(y)\dd\vartheta(y)$. Throughout this subsection, $\|\cdot\|_p$ denotes the $L^p(\vartheta)$ norm. Define
\begin{equation}\label{end:eq:energy}
\cE_\delta(v)=\frac1{2\delta^2}\iint_{D\times D}K(x,y)(v(y)-v(x))^2\dd\vartheta(x)\dd\vartheta(y).
\end{equation}
Symmetry and the Markov property give $\cE_\delta(v)\le2\delta^{-2}\|v\|_2^2$, so the energy is finite for every $v\in L^2(\vartheta)$ at fixed $\delta$. Choose a smooth cutoff $0\le\chi\le1$ equal to one on $\overline B_b$ and supported in $B_{2b}$, and set
\begin{equation}\label{end:eq:smoothing}
\mathcal S_\delta v(x)=\frac{\int_D\chi((x-y)/\delta)v(y)\dd\vartheta(y)}{\int_D\chi((x-y)/\delta)\dd\vartheta(y)}.
\end{equation}
By the density bounds and the relative-volume estimate \eqref{end:eq:volume}, applied to $D$, the denominator is comparable with $\delta^d$, uniformly for $x\in D$.

\begin{lemma}[Smoothing, Poincar\'e and averaged Sobolev estimates]\label{end:lem:smooth}
For every $v\in L^2(\vartheta)$,
\begin{align}\label{end:eq:smoothingbounds}
\|\mathcal S_\delta v\|_2&\le C\|v\|_2,\nonumber\\
\|v-\mathcal S_\delta v\|_2&\le C\delta\cE_\delta(v)^{1/2},\qquad \|\nabla\mathcal S_\delta v\|_2\le C\cE_\delta(v)^{1/2}.
\end{align}
Moreover,
\begin{equation}\label{end:eq:poincare}
\left\|v-\int_Dv\dd\vartheta\right\|_2\le C\cE_\delta(v)^{1/2}.
\end{equation}
With $\kappa=1+1/d>1$, every nonnegative $v\in L^2(\vartheta)$ satisfies
\begin{equation}\label{end:eq:averagedsobolev}
\|Kv\|_{2\kappa}\le C\bigl(\|v\|_2+\cE_\delta(v)^{1/2}\bigr).
\end{equation}
All constants are uniform in $\delta$, and in $\vartheta$ within the density bounds fixed in \cref{end:thm:nonlocal}.
\end{lemma}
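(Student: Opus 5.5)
We prove the four estimates in the order listed, using only the kernel bounds \eqref{end:eq:kernelhyp}, the density bounds for $\vartheta$, and convexity of $D$ through the relative-volume estimate \eqref{end:eq:volume}. Write $\phi_x(y)=\chi((x-y)/\delta)$ and $N(x)=\int_D\phi_x\dd\vartheta\asymp\delta^d$.

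\emph{$L^2$ bound for the smoothing.} Since $\mathcal S_\delta v(x)$ is an average of $v$ against the probability weight $\phi_x\,\dd\vartheta/N(x)$, Jensen's inequality gives $|\mathcal S_\delta v(x)|^2\le N(x)^{-1}\int\phi_x v^2\dd\vartheta$. Integrating in $x$ and using $\int_D\phi_x(y)N(x)^{-1}\dd\vartheta(x)\le C$ (the weight is $\le C\delta^{-d}$ and is supported on $|x-y|\le2b\delta$) proves the first bound.

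\emph{Comparison of $v$ with $\mathcal S_\delta v$.} Write
\[
v(x)-\mathcal S_\delta v(x)=N(x)^{-1}\int\phi_x(y)\bigl(v(x)-v(y)\bigr)\dd\vartheta(y),
\]
so $\|v-\mathcal S_\delta v\|_2^2\le C\delta^{-d}\iint_{|x-y|\le2b\delta}(v(x)-v(y))^2\dd\vartheta\dd\vartheta$. The main technical step is to dominate this $2b\delta$-range difference energy by the $K$-energy, whose kernel only sees the range $a\delta$ from below. I would use a chaining argument. For $|x-y|\le2b\delta$, convexity of $D$ lets us take $n$ points $x=z_0,\dots,z_n=y$ on the segment $[x,y]$ with $n\le 4b/a$ fixed and consecutive gaps at most $a\delta/4$. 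Average over intermediate points $w_i\in D\cap B(z_i,a\delta/8)$; these balls have volume $\asymp\delta^d$ by \eqref{end:eq:volume}, and consecutive $w_i$ satisfy $|w_i-w_{i+1}|\le a\delta$. Then apply Cauchy--Schwarz along the chain, using $(\sum_{i}\alpha_i)^2\le n\sum_i\alpha_i^2$, and the lower bound $K\ge c\delta^{-d}$ on $|w-w'|\le a\delta$. This gives
\[
\delta^{-d}\iint_{|x-y|\le2b\delta}(v(x)-v(y))^2\le C\iint K(v(x)-v(y))^2=C\delta^2\cE_\delta(v).
\]

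\emph{Gradient of the smoothing.} For the gradient, $\nabla\mathcal S_\delta v(x)=N(x)^{-1}\int\nabla_x\phi_x(y)\bigl(v(y)-\mathcal S_\delta v(x)\bigr)\dd\vartheta(y)$, because $\nabla N=\int\nabla\phi_x$ produces exactly the subtracted term. Since $|\nabla_x\phi_x|\le C\delta^{-1}$, we get $|\nabla\mathcal S_\delta v(x)|\le C\delta^{-1-d}\int_{|y-x|\le2b\delta}|v(y)-v(x)|\dd\vartheta+C\delta^{-1}|v(x)-\mathcal S_\delta v(x)|$. Squaring and integrating, the same chaining bound and the previous estimate give $\|\nabla\mathcal S_\delta v\|_2\le C\cE_\delta(v)^{1/2}$.

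\emph{Poincar\'e inequality.} Let $\bar v=\int_D\mathcal S_\delta v\dd\vartheta$. The classical Poincar\'e inequality on the convex body $D$, with weight $\vartheta$ comparable to Lebesgue measure, gives $\|\mathcal S_\delta v-\bar v\|_2\le C\|\nabla\mathcal S_\delta v\|_2$. Adding $\|v-\mathcal S_\delta v\|_2\le C\delta\cE_\delta^{1/2}$ with $\delta\le1$, and using that the $\vartheta$-mean minimizes the $L^2$ distance to constants, proves \eqref{end:eq:poincare}.

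\emph{Averaged Sobolev inequality.} Write $Kv=K(v-\mathcal S_\delta v)+K\mathcal S_\delta v$. The operator $K$ maps $L^2\to L^\infty$ with norm $\le C\delta^{-d/2}$ (Cauchy--Schwarz and $K\le C\delta^{-d}\1_{\{|x-y|\le b\delta\}}$), and $L^2\to L^2$ with norm $1$ by symmetry and the Markov property. Interpolating, $\|K(v-\mathcal S_\delta v)\|_{2\kappa}\le C\delta^{-d(1-1/\kappa)/2}\|v-\mathcal S_\delta v\|_2\le C\delta^{-1/2}\delta\cE_\delta^{1/2}\le C\cE_\delta^{1/2}$, since $d(1-1/\kappa)/2=1/(2(1+1/d))\cdot$ is at most $1/2$. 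For the other piece, $K$ is bounded on $L^{2\kappa}$ (it is Markov and symmetric), so $\|K\mathcal S_\delta v\|_{2\kappa}\le\|\mathcal S_\delta v\|_{2\kappa}$. The Sobolev embedding $H^1(D)\hookrightarrow L^{2\kappa}(D)$ on the convex Lipschitz domain holds since $2\kappa=2+2/d\le 2d/(d-2)$ (any exponent when $d\le2$), which gives $\|\mathcal S_\delta v\|_{2\kappa}\le C(\|v\|_2+\cE_\delta^{1/2})$. Nonnegativity of $v$ is not needed here.

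The main obstacle is the chaining step that converts the $2b\delta$-range difference energy into the $K$-energy near $\partial D$ uniformly in $\delta$. Convexity and \eqref{end:eq:volume} handle it: the segment $[x,y]$ stays in $D$, and each relative ball used in the chain has volume comparable to $\delta^d$.
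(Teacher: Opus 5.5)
Your proposal is correct. The $L^2$ bound, the approximation estimate, the gradient estimate and the Poincar\'e step follow the paper's proof: Jensen for $\mathcal S_\delta$, subtracting a constant inside the differentiated normalized kernel, and Poincar\'e applied to $\mathcal S_\delta v$.

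The chaining differs slightly. The paper does not average over intermediate points. It takes $z_j=(1-j/n)x+(j/n)y$ with $n\ge 2b/a$. The map $(x,y)\mapsto(z_j,z_{j+1})$ is a linear bijection of $\R^{2d}$ with Jacobian $n^{-d}$, and it sends $\{|x-y|\le2b\delta\}$ into $\{|z-z'|\le a\delta\}$. Integrating $|v(z_{j+1})-v(z_j)|^2$ directly therefore gives the factor $n^{d+2}$. Your averaged chain also works. However, to bound the $(x,y)$-measure of the preimages of $B(w_i,a\delta/8)\times B(w_{i+1},a\delta/8)$ you need this same change of variables, so the averaging adds a layer without buying anything here. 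There is also a small slip: gaps of at most $a\delta/4$ over a distance $2b\delta$ require $n\ge 8b/a$ steps, not $n\le 4b/a$.

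The genuinely different step is \eqref{end:eq:averagedsobolev}.
\begin{itemize}
\item \emph{Paper's route.} It uses the pointwise domination $Kv\le C\mathcal S_\delta v$ for $v\ge0$. This follows from $K\le C\delta^{-d}\1_{\{|x-y|\le b\delta\}}$, from $\chi=1$ on $\overline B_b$, and from the denominator in \eqref{end:eq:smoothing} being at most $C\delta^d$. The Sobolev embedding for $\mathcal S_\delta v$ then finishes. This is why the statement assumes nonnegativity, which costs nothing: the Moser iteration applies the inequality only to $V=U^{p/2}\ge1$.
\item \emph{Your route.} You split $Kv=K(v-\mathcal S_\delta v)+K\mathcal S_\delta v$. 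You then combine the $L^2\to L^\infty$ bound $C\delta^{-d/2}$, H\"older interpolation, $L^{2\kappa}$-contractivity of the symmetric Markov operator, and the gain $\delta\,\cE_\delta(v)^{1/2}$ from the approximation estimate. This is valid because the interpolation loss $\delta^{-d/(2(d+1))}$ is beaten by the factor $\delta\le1$.
\end{itemize}
Your version gives the inequality for signed $v$ as well, at the price of exponent bookkeeping. The paper's version is a one-line comparison that needs no interpolation.
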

\begin{proof}
We first compare differences over distances $2b\delta$ and $a\delta$. Choose an integer $n\ge2b/a$ and set $z_j=(1-j/n)x+(j/n)y$ for $0\le j\le n$. Convexity keeps these points in $D$, and telescoping gives
\begin{equation}\label{m:display:092}
|v(y)-v(x)|^2\le n\sum_{j=0}^{n-1}|v(z_{j+1})-v(z_j)|^2.
\end{equation}
For each $j$, the linear map $(x,y)\mapsto(z_j,z_{j+1})$ has determinant $n^{-d}$ in $\R^{2d}$ and sends pairs at distance at most $2b\delta$ to pairs at distance at most $a\delta$. Changing variables and enlarging the image of each map therefore gives
\begin{equation}\label{end:eq:energycompare}
\iint_{\substack{x,y\in D\\|x-y|\le2b\delta}}|v(y)-v(x)|^2\dd x\dd y\le n^{d+2}\iint_{\substack{x,y\in D\\|x-y|\le a\delta}}|v(y)-v(x)|^2\dd x\dd y.
\end{equation}
The density bounds and the lower kernel bound imply that the left-hand side is at most $C\delta^{d+2}\cE_\delta(v)$.

Jensen's inequality and Fubini's theorem give the $L^2$ bound for $\mathcal S_\delta$. Jensen's inequality also bounds $|v(x)-\mathcal S_\delta v(x)|^2$ by $C\delta^{-d}\int_{D\cap B(x,2b\delta)}|v(y)-v(x)|^2\dd\vartheta(y)$. Integration and \eqref{end:eq:energycompare} yield the approximation estimate. The derivative in $x$ of the normalized kernel in \eqref{end:eq:smoothing} has zero $\vartheta$-integral and is bounded by $C\delta^{-d-1}\1_{\{|x-y|\le2b\delta\}}$. Subtracting $v(x)$ inside the differentiated integral and applying Cauchy--Schwarz gives
\begin{equation}\label{m:display:093}
|\nabla\mathcal S_\delta v(x)|^2\le C\delta^{-d-2}\int_{D\cap B(x,2b\delta)}|v(y)-v(x)|^2\dd\vartheta(y).
\end{equation}
Integration proves the gradient estimate. In particular, no derivative of the density of $\vartheta$ is needed.

Apply Poincar\'e's inequality on $D$ to $\mathcal S_\delta v$. Its $\vartheta$-mean differs from that of $v$ by at most $\|\mathcal S_\delta v-v\|_2$, so \eqref{end:eq:smoothingbounds} yields \eqref{end:eq:poincare}. Finally, the upper kernel bound and the choice of $\chi$ give $Kv\le C\mathcal S_\delta v$ for $v\ge0$. The Sobolev embedding $H^1(D)\hookrightarrow L^{2\kappa}(D)$ and \eqref{end:eq:smoothingbounds} then imply \eqref{end:eq:averagedsobolev}. The fixed convex domain admits both inequalities, and passage between Lebesgue and $\vartheta$ norms changes only their constants.
\end{proof}

The gain of integrability in \eqref{end:eq:averagedsobolev} applies to $Kv$, not to $v$ itself. The equation supplies the comparison needed to use this estimate in the iteration below.

\subsubsection{Energy and finite-step Moser iteration}

\begin{proof}[Proof of \cref{end:thm:nonlocal}]
Symmetry of $K$ and antisymmetry of $F$ give the weak identity
\begin{equation}\label{end:eq:weak}
\iint K\,\Delta g\,\Delta\phi\dd(\vartheta\otimes\vartheta)=\iint KF\,\Delta\phi\dd(\vartheta\otimes\vartheta),\qquad \Delta v(x,y)=v(y)-v(x),
\end{equation}
for every $\phi\in L^2(\vartheta)$, with double integrals over $D\times D$. If $A=0$, testing with $g$ and using \eqref{end:eq:poincare} shows that $g$ is constant. Otherwise, divide $g,F$ by $A$ and subtract the $\vartheta$-mean of $g$. We may thus assume $\int_Dg\dd\vartheta=0$ and $|F|\le\delta$ on $\{K>0\}$.

Testing \eqref{end:eq:weak} with $g$ and applying Cauchy--Schwarz, together with $\iint K\dd(\vartheta\otimes\vartheta)=1$, gives $\cE_\delta(g)\le C$. By \eqref{end:eq:poincare}, $\|g\|_2\le C$. Rewrite the equation as
\begin{equation}\label{eq:nonlocalforcing}
g=Kg-f_F,\qquad f_F(x)=\int_DK(x,y)F(x,y)\dd\vartheta(y),\qquad |f_F|\le\delta.
\end{equation}
The upper kernel bound and the Markov property imply
$\|Kg\|_\infty\le C\delta^{-d/2}\|g\|_2.$
Thus $g$ is bounded for each fixed $\delta$. This preliminary bound is not uniform, but makes the following power tests admissible.

Set $U=1+g_+\ge1$. For $p\ge2$, take $\phi=U^{p-1}-1$ and write $V=U^{p/2}$. For each pair $(x,y)$,
\begin{equation}\label{m:display:095}
\Delta g\,\Delta\phi\ge\Delta U\,\Delta(U^{p-1})\ge\frac{4(p-1)}{p^2}|\Delta V|^2,
\end{equation}
and
\begin{equation}\label{m:display:096}
0\le\frac{\Delta\phi}{\Delta g}\le(p-1)\bigl(U(x)^{p-2}+U(y)^{p-2}\bigr),
\end{equation}
where the ratio is set to zero when $\Delta g=0$. Young's inequality yields
\begin{equation}\label{m:display:097}
|F\,\Delta\phi|\le\tfrac12\Delta g\,\Delta\phi+\tfrac12F^2\frac{\Delta\phi}{\Delta g}.
\end{equation}
Insert this estimate into \eqref{end:eq:weak} and absorb the first term. Using \eqref{m:display:095}--\eqref{m:display:096}, $|F|\le\delta$, and the Markov property, we obtain
\begin{equation}\label{end:eq:powerenergy}
\cE_\delta(V)\le Cp^2\int_DU^{p-2}\dd\vartheta\le Cp^2\|U\|_p^p.
\end{equation}

By \eqref{eq:nonlocalforcing} and Jensen's inequality for the positive part,
\begin{equation}\label{m:display:098}
U\le KU+\delta\le(1+\delta)KU,
\end{equation}
where the last inequality uses $KU\ge1$. Raising to the power $p/2$ and applying Jensen's inequality again gives
\begin{equation}\label{m:display:099}
V\le(1+\delta)^{p/2}KV.
\end{equation}
Apply \eqref{end:eq:averagedsobolev} to $V$ and use \eqref{end:eq:powerenergy}. Since $\|V\|_2=\|U\|_p^{p/2}$, taking the power $2/p$ yields
\begin{equation}\label{end:eq:moserstep}
\|U\|_{p\kappa}\le(1+\delta)(Cp)^{2/p}\|U\|_p.
\end{equation}

The factor $1+\delta$ prevents an infinite iteration with a uniform bound. Instead, reduce $\delta_0$ so that $\delta_0<e^{-2}$, set $p_j=2\kappa^j$, and stop at the first index $J$ for which $p_J\ge\log(1/\delta)$. Then
\begin{equation}\label{m:display:100}
\prod_{j=0}^{J-1}(Cp_j)^{2/p_j}\le C,\qquad (1+\delta)^J\le C.
\end{equation}
The first bound follows from $\sum_jp_j^{-1}\log(Cp_j)<\infty$, and the second from $J=O(1+\log\log(1/\delta))$. Since $\|U\|_2\le C$, the finite iteration gives $\|U\|_{p_J}\le C$. For the final step, use the upper kernel bound directly:
\begin{equation}\label{end:eq:laststep}
KU(x)\le\left(\int_DK(x,y)U(y)^{p_J}\dd\vartheta(y)\right)^{1/p_J}\le(C\delta^{-d})^{1/p_J}\|U\|_{p_J}\le C,
\end{equation}
where the last inequality uses
\begin{equation}\label{m:display:101}
\delta^{-d/p_J}=\exp\!\left(\frac{d\log(1/\delta)}{p_J}\right)\le e^d.
\end{equation}
Thus \eqref{m:display:098} gives $U\le C$. Repeating the argument for $-g$ with forcing $-F$ bounds the negative part. Undoing the normalization proves \eqref{end:eq:nonlocalbound}.
\end{proof}

\subsection{Proof of the potential convergence theorem}
\label{end:sec:completion}

\begin{proof}[Proof of \cref{end:thm:main}]
\textit{Uniform upper bounds.} Set $\bar\eta_\eps=\int_X\eta\dd\vartheta_\eps$. Apply \cref{end:thm:nonlocal} to \eqref{end:eq:kequation}, with $g=\eta$, $\delta=\ell$, reference measure $\vartheta_\eps$, and forcing $F=B_\eps$. The density bounds for $\vartheta_\eps$ and \cref{end:lem:two} verify the kernel hypotheses, while $\eta\in C(X)\subset L^2(\vartheta_\eps)$. Since $|B_\eps|\le C\ell^3$, we may take $A=C\ell^2$ and obtain
\begin{equation}\label{end:eq:etabound}
\|\eta-\bar\eta_\eps\|_{L^\infty(\vartheta_\eps)}\le C\ell^2.
\end{equation}
The positive density of $\vartheta_\eps$ and continuity of $\eta$ extend this bound to all of $X$. Since $\int_X\eta\dd\mu=0$ by \eqref{end:eq:emean},
\begin{equation}\label{m:display:102}
|\bar\eta_\eps|=\left|\int_X(\bar\eta_\eps-\eta)\dd\mu\right|\le C\ell^2.
\end{equation}
Hence $\|\eta\|_{L^\infty(X)}\le C\ell^2$. 
By \eqref{end:eq:errors}, \eqref{end:eq:emean}, and $T(X)=Y$,
\[
\|u_\eps-u_0\|_{L^\infty(X)}+\|v_\eps-v_0\|_{L^\infty(Y)}\le \|e\|_{L^\infty(X)}+2\|\eta\|_{L^\infty(X)}\le C\ell^2.
\]

\textit{Sharpness in all $L^p$ norms.} Recall the transport-cost excess and regularization penalty
\begin{equation}\label{m:display:103}
C_\eps=\frac12\int|x-y|^2\dd\pi_\eps-\OT\ge0,\qquad P_\eps=\frac\eps2\int h_\eps^2\dd(\mu\otimes\nu).
\end{equation}
Cauchy--Schwarz in each row, $\int_Yh_\eps(x,y)\dd\nu(y)=1$, and $\nu(S_x)\le C\ell^d$ yield
\begin{equation}\label{end:eq:penaltylower}
P_\eps\ge\frac\eps2\int_X\frac1{\nu(S_x)}\dd\mu(x)\ge c\ell^2.
\end{equation}
Since $q_\eps=\eps h_\eps$ wherever $h_\eps>0$, we have $\int q_\eps\dd\pi_\eps=2P_\eps$. Moreover, dual equality for the unregularized OT problem and the marginal constraints give $\int(u_0(x)+v_0(y)-x\cdot y)\dd\pi_\eps=C_\eps$. Integrating \eqref{eq:uniform-error-detachment} therefore yields
\begin{equation}\label{end:eq:meanidentity}
\int_Xr\dd\mu+\int_Ys\dd\nu=-C_\eps-2P_\eps.
\end{equation}
Under the symmetric normalization \eqref{eq:gauge}, the two means are equal, so
\begin{equation}\label{end:eq:individualmean}
\int_Xr\dd\mu=\int_Ys\dd\nu=-\tfrac12C_\eps-P_\eps\le-c\ell^2.
\end{equation}
For a probability measure, every $L^p$ norm dominates the absolute mean and is bounded by the supremum norm. Together with the uniform upper bounds, this proves \eqref{end:eq:main}, with constants independent of $p$.

\textit{Point normalization.} To impose $u_\eps(x_*)=u_0(x_*)$, replace $r$ by $r-r(x_*)$ and $s$ by $s+r(x_*)$. The shift is $O(\ell^2)$ by \eqref{end:eq:main}, so both upper bounds persist. %
\end{proof}

\section{Sharpness and further consequences}
\label{sec:consequences}

Throughout this section, we assume \cref{ass:geometrydata} and the symmetric normalization \eqref{eq:gauge}, and retain $\ell=\eps^{1/(d+2)}$, $r_\eps=u_\eps-u_0$, and $s_\eps=v_\eps-v_0$. The results of \cref{thm:geometry,bmo:thm:main,end:thm:main} establish \cref{thm:sharp} and the BMO bounds \eqref{eq:bmosharp-main}. We now complete the proofs of \cref{cor:consequences,cor:conditional-concentration}.

\paragraph{Conditional moments and convergence of the plans.}
The support estimate \eqref{eq:sharpfiber} gives the upper bounds in \eqref{eq:rowwise-moment-rates}. For the lower bounds, \eqref{end:eq:height} gives $h_\eps\le C\ell^{-d}$, so the density upper bound implies
\begin{equation}\label{m:display:104}
\pi_\eps^x(B(T(x),a\ell))=\int_{Y\cap B(T(x),a\ell)}h_\eps(x,y)\dd\nu(y)\le Ca^d\qquad(x\in X).
\end{equation}
Choose $a>0$ so that $Ca^d\le1/2$. At least half the conditional mass then lies at distance at least $a\ell$ from $T(x)$. Thus, for $1\le p<\infty$,
\begin{equation}\label{eq:row-moment-anticoncentration}
\left(\int_Y|y-T(x)|^p\pi_\eps^x(\dd y)\right)^{1/p}\ge2^{-1/p}a\ell\ge\tfrac a2\ell.
\end{equation}
The same mass bound yields the estimate for $p=\infty$. Interchanging the marginals proves the reverse estimates, completing \eqref{eq:rowwise-moment-rates}. For finite $p$, raising these estimates to the $p$th power and integrating over the conditioning point gives \eqref{eq:displacement}.

Set $\pi_0=(\id,T)_\#\mu$. The pushforward of $\pi_\eps$ under $(x,y)\mapsto((x,y),(x,T(x)))$ couples $\pi_\eps$ to $\pi_0$ with displacement $|y-T(x)|\le C\ell$. Hence $W_p(\pi_\eps,\pi_0)\le C\ell$ for every $1\le p\le\infty$. Conversely, any coupling to $\pi_0$ moves $(x,y)$ by at least its distance from $\operatorname{graph}T$. By \eqref{m:display:084} and the conditional lower bounds \eqref{eq:row-moment-anticoncentration},
\begin{equation}\label{eq:plan-wasserstein-lower}
W_p(\pi_\eps,\pi_0)\ge\frac{\bigl\|(x,y)\mapsto|y-T(x)|\bigr\|_{L^p(\pi_\eps)}}{\sqrt{1+\Lip(T)^2}}\ge c\ell,\qquad 1\le p\le\infty.
\end{equation}
Here the norms are interpreted as essential suprema when $p=\infty$. This proves \cref{cor:conditional-concentration}.

\paragraph{H\"older upper bounds.}
For distinct $x,x'\in X$, write $\Delta=|x-x'|$. The uniform potential and gradient estimates, together with the Hessian bounds \eqref{eq:hessians} and \cref{lem:classicalfinite}, give
\begin{equation}\label{m:display:105}
\begin{aligned}
|r_\eps(x)-r_\eps(x')|\le C\min\{\ell^2,\ell\Delta\},\qquad
|\nabla r_\eps(x)-\nabla r_\eps(x')|\le C\min\{\ell,\Delta\}.
\end{aligned}
\end{equation}
For $0<\beta\le1$ and $0<\gamma<1$, dividing by $\Delta^\beta$ and $\Delta^\gamma$, respectively, and distinguishing $\Delta\le\ell$ from $\Delta\ge\ell$ yields the required seminorm bounds. Combining these with the uniform potential and gradient bounds proves the upper estimates in \eqref{eq:holder}; the cases $\beta=0$ and $\gamma=0$ follow directly from the latter bounds. The argument for $s_\eps$ is identical.

\paragraph{H\"older lower bounds and failure of $C^{1,1}$ convergence.}
Fix $x\in\partial X$. By \eqref{eq:centerboundary} and $T(\partial X)=\partial Y$, we have $|\nabla r_\eps(x)|\ge c_0\ell$. The uniform interior cone condition supplies a unit vector $e$ with $|\nabla r_\eps(x)\cdot e|\ge c_1\ell$ and $x+te\in X$ for $0\le t\le t_0$, where $c_1,t_0>0$ are uniform. Since $\Lip(\nabla r_\eps)\le C$, choosing $t=b\ell$ with $b>0$ fixed sufficiently small gives
\begin{equation}\label{m:display:106}
|r_\eps(x+te)-r_\eps(x)|\ge t|\nabla r_\eps(x)\cdot e|-Ct^2\ge c\ell^2.
\end{equation}
For $0<\beta\le1$, division by $t^\beta$ yields $[r_\eps]_{C^{0,\beta}(X)}\ge c_\beta\ell^{2-\beta}$. The case $\beta=0$ follows from \eqref{eq:allp}.

For $0<\gamma\le1$, set $J_\gamma=[\nabla r_\eps]_{C^{0,\gamma}(X)}$. Taylor's formula on $X\cap B(x,R)$ has remainder bounded by $J_\gamma R^{1+\gamma}$. Comparing with the affine part by \eqref{bmo:eq:affine-estimates} gives
\begin{equation}\label{m:display:107}
[r_\eps]_{\BMO(X)}\ge cR|\nabla r_\eps(x)|-CJ_\gamma R^{1+\gamma},\qquad 0<R\le\diam X.
\end{equation}
By \eqref{m:display:080}, $[r_\eps]_{\BMO(X)}\le C_0\ell^2$. Choose a fixed $A>0$ with $cc_0A>2C_0$ and take $R=A\ell$, which is admissible for small $\eps$. Rearranging \eqref{m:display:107} yields $J_\gamma\ge c_\gamma\ell^{1-\gamma}$. For $\gamma=0$, use the uniform gradient lower bound. Interchanging the marginals proves the corresponding estimates for $s_\eps$ and completes \eqref{eq:holder}. Taking $\gamma=1$ shows that the Lipschitz seminorms of both gradient errors remain bounded away from zero, so neither potential converges in $C^{1,1}$.

\paragraph{Potential error on the Brenier graph.}
Equations~\eqref{end:eq:emean} and \eqref{bmo:eq:means}, together with $T_\#\mu=\nu$, give
$\|r_\eps+s_\eps\circ T\|_{L^\infty(X)}\le C\ell^2$ and $
\int_X(r_\eps+s_\eps\circ T)\dd\mu=-C_\eps-2P_\eps.
$ 
Since $C_\eps\ge0$ and $P_\eps\ge c\ell^2$, the absolute mean gives the matching lower bound. This proves \eqref{eq:graphsum} and completes \cref{cor:consequences}.

\paragraph{Declarations.} AGS was partially supported by the RSME--Fundaci\'on BBVA Jos\'e Luis Rubio de Francia Start-up Grant, awarded in the framework of the Jos\'e Luis Rubio de Francia Prize. MN was partially supported by NSF Grants DMS-2407074 and DMS-2606731. The authors declare no competing interests. No data were created or analyzed. AI was used in the preparation of this manuscript. Any remaining errors are our own.
\printbibliography[heading=bibintoc]
\end{document}